\pdfoutput=1

\documentclass[11pt,a4paper]{memoir}
\usepackage[T1]{fontenc}
\usepackage[utf8]{inputenc}
\usepackage{amsmath}
\usepackage{amsthm}
\usepackage{amsfonts}
\usepackage{amssymb}
\usepackage{mathtools}
\usepackage{hyperref}
\usepackage{enumitem}
\usepackage{tikz}
\usetikzlibrary{cd}
\usepackage{graphicx}
\usepackage{mathrsfs}
\usepackage{soul}
\usepackage{glossaries}
\usepackage{pdfpages}
\usepackage[style=alphabetic,giveninits=true]{biblatex}
\usepackage[total={210mm,297mm},left=20mm,right=20mm,bindingoffset=10mm,top=25mm,bottom=25mm]{geometry}

\newcommand*\circled[1]{\tikz[baseline=(char.base)]{
            \node[shape=circle,draw,inner sep=2pt] (char) {#1};}}

\newcommand{\R}{\mathbb{R}}
\newcommand{\Q}{\mathbb{Q}}
\newcommand{\Z}{\mathbb{Z}}
\newcommand{\N}{\mathbb{N}}
\newcommand{\C}{\mathbb{C}}
\newcommand{\im}{\mathrm{Im}}
\newcommand{\nuc}{\mathrm{Nuc}}
\newcommand{\sgn}{\mathrm{sgn}}
\newcommand{\vol}{\mathrm{vol}}
\newcommand{\grad}{\nabla}
\newcommand{\bsm}{\left[ \begin{smallmatrix}}
\newcommand{\esm}{\end{smallmatrix} \right]}
\newcommand{\del}{\partial}
\newcommand{\eps}{\varepsilon}
\newcommand{\dif}{\Delta}
\newcommand{\lam}{\lambda}
\newcommand{\orbit}{\mathcal{O}}
\newcommand{\vfi}{\varphi}
\newcommand{\til}{\sim}
\newcommand{\lcm}{\mathrm{lcm}}
\newcommand{\B}{\mathbb{B}}
\newcommand{\cat}{\mathscr{C}}
\newcommand{\perm}{\mathcal{S}}
\newcommand{\cyc}{\mathcal{C}}
\newcommand{\rooted}{\mathcal{A}}
\newcommand{\tree}{\mathfrak{a}}
\newcommand{\hit}{\mathfrak{h}}
\newcommand{\hirt}{\mathcal{H}}
\newcommand{\orit}{\mathfrak{o}}
\newcommand{\E}{\mathbb{E}}

\theoremstyle{plain}
\newtheorem{theorem}{Theorem}[section]
\newtheorem{corollary}[theorem]{Corollary}
\newtheorem{lemma}[theorem]{Lemma}
\newtheorem{prop}[theorem]{Proposition}

\theoremstyle{definition}
\newtheorem{defi}[theorem]{Definition}
\newtheorem{notn}[theorem]{Notation}
\newtheorem{exa}[theorem]{Example}

\setlist[enumerate]{label=\roman*), leftmargin=*, widest = iii}

\addbibresource{refbib.bib}

\OnehalfSpacing

\chapterstyle{madsen}   
\setsecheadstyle{\Large\bfseries\sffamily\raggedright}
\setsubsecheadstyle{\large\bfseries\sffamily\raggedright}
\setsubsubsecheadstyle{\bfseries\sffamily\raggedright}

\pagestyle{plain}
\makepagestyle{plain}
\makeevenfoot{plain}{\thepage}{}{}
\makeoddfoot{plain}{}{}{\thepage}
\makeevenhead{plain}{}{}{}
\makeoddhead{plain}{}{}{}

\begin{document}
	
%
%
	
\thispagestyle{empty}
	
{
\sffamily
\centering
\Large

\includegraphics[scale=0.25]{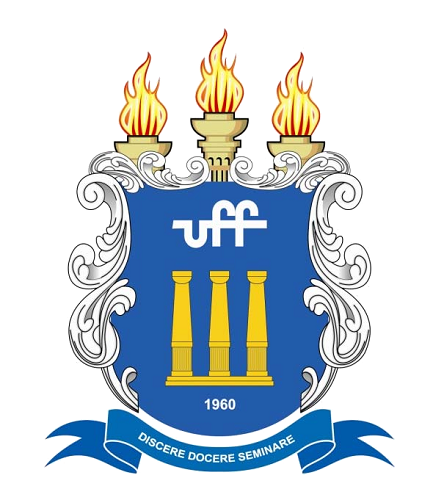}

~\vspace{2cm}

Universidade Federal Fluminense

\vspace{\fill}

{\huge
ON COMBINATORIAL DIFFERENTIAL OPERATORS ON SPECIES OF STRUCTURES

}

\vspace{3.5cm}

{\LARGE

Arthur Gonçalves Fidalgo

}

\vspace{\fill}

{\large Niterói }

{\large  03 / 2023}

}	

\clearpage

\thispagestyle{empty}

{

\sffamily
\centering
\Large

~\vspace{\fill}

{\huge

ON COMBINATORIAL DIFFERENTIAL OPERATORS ON SPECIES OF STRUCTURES

}

\vspace{3.5cm}

{\LARGE

Arthur Gonçalves Fidalgo

}

\vspace{3.5cm}

{\normalsize
\raggedleft	
\begin{minipage}{210pt}
Dissertação submetida ao Programa de Pós-Graduação
em Matemática da Universidade Federal Fluminense
como requisito parcial para a obtenção do grau de 
Mestre em Matemática.
\end{minipage}

}

\vspace{3.5cm}

Orientador: Prof. Slobodan Tanushevski

\vspace{\fill} 

{\large Niterói }

{\large 03 / 2023}

}	

\clearpage

\thispagestyle{empty}

{
\centering

\includepdf{fichacatalografica}

}
\clearpage

\thispagestyle{empty}

{

\sffamily\centering

\textbf{Dissertação de Mestrado da Universidade Federal Fluminense}

\vspace{1cm}

 por 

\vspace{1cm}

\textbf{Arthur Gonçalves Fidalgo}

\vspace{1.5cm}

apresentada ao Programa de Pós-Graduação em Matemática como requesito parcial para a obtenção do grau de

\vspace{1.5cm}

\textbf{Mestre em Matemática}

\vspace{1.5cm}

Título da tese:

\hrulefill

\begin{minipage}{0.8\textwidth}
\centering
\textbf{ON COMBINATORIAL DIFFERENTIAL OPERATORS ON SPECIES OF STRUCTURES}
\end{minipage}
~\vspace{5pt}
\hrule

\vspace{1cm}

\textit{Defendida publicamente em  03 de Março de 2023.}
	
\vspace{1cm}

{
\raggedright
Diante da banca examinadora composta por:
}

~\vspace{5pt}
%

\begin{tabular}{lll}
	Slobodan Tanushevski& Universidade Federal Fluminense & Orientador\\
	Taísa Lopes Martins& Universidade Federal Fluminense & Examinador\\
	Hugo de Holanda Cunha Nobrega& Universidade Federal do Rio de Janeiro & Examinador\\
\end{tabular}

~\vspace{\fill}

~\vspace{\fill}

}













	

\includepdf{page5}
\clearpage

\thispagestyle{empty}

{

~\vspace{\fill}

\raggedleft
Este trabalho é dedicado àqueles que empregam seu tempo a aumentar seu conhecimento, aos que se veem sempre aprendendo, e àqueles que se mantiveram ao meu lado ao longo desta jornada, em particular Patrícia, Guilherme e Ariane.
}

\clearpage

\thispagestyle{empty}

{

\sffamily

{\Large \centering	
	
  AGRADECIMENTOS
  
}

~\vspace{1cm}

Primeiramente, à minha família, em Patrícia, Guilherme e Ariane, por todo o apoio dado durante essa jornada.

A meus professores da graduação, em particular Laiz e Vinícius, que me indicaram o caminho correto a seguir.

A todos os meus professores da pós-graduação, por todo o conhecimento e capacitação que me passaram, e aos outros profissionais que lá fazem parte do nosso dia-a-dia.

E finalmente, a meus bons amigos, tanto os que fiz na UFF quanto os que trago da minha vida, pelo carinho que me deram durante este tempo, em particular Caio e Alicia, que vêm comigo desde a entrada na pós, e meu irmão do coração Flávio Isaac, que me acompanha há tantos anos.

O presente trabalho foi realizado com apoio da Coordenação de Aperfeiçoamento de Pessoal de Nível Superior - Brasil (CAPES) - Código de Financiamento 001. Esse trabalho foi apoiado com uma bolsa de mestrado da CAPES, e agradeço por tal.

}

\clearpage

\thispagestyle{empty}
\vspace*{\fill}
\begin{flushright}
\textit{"Mathematics was something infinitely more interesting(...)  Its practitioners dwelt in a veritable
conceptual heaven, a majestic poetic realm totally inaccessible to the unmathematical hoi polloi."\\ (Apostolos Doxiadis)}
\end{flushright}
\clearpage

\thispagestyle{empty}

{
	
	\sffamily
	
	{\Large	\centering
		
		RESUMO
		
	}

~\vspace{1cm}

Em 1981, André Joyal \cite{joyal} forneceu uma interpretação combinatória da álgebra de séries de potências formais, um aparato central dentre as ferramentas da combinatória enumerativa. Na teoria de espécies de estruturas de Joyal, espécies combinatórias (como permutações, grafos, partições etc.) são encarnadas em endofuntores da categoria de conjuntos finitos e bijeções. Espécies podem ser somadas, multiplicadas, compostas e derivadas; novas espécies surgem como soluções de equações diferenciais e funcionais.
Além disso, tudo o que se alcança a nível de espécies pode então ser diretamente traduzido para a linguagem de séries geradoras para enumerar estruturas rotuladas e não-rotuladas.

Mais recentemente, Labelle e Lamathe \cite{labelle} desenvolveram uma teoria geral de operadores diferenciais sobre espécies de estruturas, como ciclos ou diagramas de derivadas. O principal objetivo dessa dissertação é apresentar partes desta teoria.

\vspace{\onelineskip} 
\noindent 

\textbf{Palavras-chave}: combinatória, enumeração, espécies de estruturas, equações de recorrência, 
operadores diferenciais combinatórios.

}

\clearpage

\thispagestyle{empty}

{
	
	\sffamily
	
	{\Large\centering
		
		ABSTRACT
		
	}
	
~\vspace{1cm}

 In 1981, André Joyal \cite{joyal} provided a combinatorial interpretation of the algebra of formal power series, a central  gadget in the toolkit of enumerative combinatorics. In Joyal's theory of species of structures, combinatorial species (like permutations, graphs, partitions, etc.) are incarnated in endofunctors on the category of finite sets and bijections. Species can be added, multiplied, composed and differentiated; new species arise as solutions to functional and differential equations. 
 Moreover, everything achieved at the level of species can be directly translated into the language of generating series 
 for enumeration of labelled, as well as unlabelled structures. 

 More recently, Labelle and Lamathe \cite{labelle} developed a general theory of differential operators on species of structures, such as cycles or diagrams of derivatives. The main goal of this dissertation is to present some parts of this theory.

 \vspace{\onelineskip} 

\noindent \textbf{Keywords}: combinatorics, enumeration, species of structures, recurrence equations, combinatorial differential operators. 
	
}
\clearpage

\tableofcontents*

\clearpage

\setcounter{chapter}{-1}
\chapter{Introduction}

In their early mathematical lives, students of mathematics are presented with an intricate way of studying the variation of functions: Newton's (and Leibniz's) differential calculus. This comes with multiple utilities in the realm of physics; for example, the 
concept of velocity can only be properly understood through the notion of derivative.  
As their studies progress, students are swarmed with abstractions, and the techniques from calculus quickly surpass 
the kingdom of `well behaved' real valued functions. 


This opens the mathematician's mind to the possibility of existence of a multitude of (differential) operators with similar properties, but defined in other contexts and operating on different types of mathematical objects. One goal of this dissertation is precisely that: to present some differential operators that arise from the study of combinatorics. One example is the study of variations of functions defined on discrete spaces, where the concept of differentiation is closely tied to the notion of recurrence. 
The finite difference operator, which is the analogue of the differential in discrete spaces, can be used to find solutions to recurrence equations, but can also be used to solve some classical combinatorial problems, such as counting surjections between finite sets and evaluating power sums, and can even help answer a question about the geometry of Pythagorean triangles. 
A brief introduction to finite difference calculus is provided in Chapter~\ref{ch1}.


 After turning their heads away from the shadows, students of mathematics quickly discover that what hitherto appeared to be different is essentially identical (Eureka, ${0.99 \ldots=1}$). Pólya's theory is the art of not counting the same thing twice.
 It can help us enumerate unlabelled graphs, multicoloured necklaces and even chemical isomers (see \cite{polya}). 
 Pólya's theory is explored in Chapter~\ref{ch3}, and its ideas, in particular the use of the \emph{Zyklenzeiger}, were key tools for the study of enumeration done by Joyal.



By employing the unifying language of category theory, André Joyal \cite{joyal} presented a combinatorial interpretation
of the algebra of formal power series. In Joyal's theory of species of structures, combinatorial species (like permutations, graphs, partitions, etc.) are incarnated in endofunctors on the category of finite sets and bijections. Species can be added, multiplied, differentiated, composed, etc. The operations on species can then be directly translated into equivalent operations on generating series (for enumeration of labelled, as well as unlabelled structures). This movement circumvents the flow of information in the central dogma of enumerative combinatorics: structural relations of combinatorial objects are transcribed directly into algebraic relations between generating series. In Chapter \ref{ch4}, we begin to introduce Joyal's theory of species of structures.  
After developing the right intuition, the persistent reader will be pleasantly surprised to  
find the abstract technical machinery of the theory of species of structures being progressively reduced to a playful manipulation of pictures.     


Joyal's approach to enumeration reveals a deep connection between the recursiveness of tree-like structures and certain types of functional (and differential) equations. This loops back to the original idea of this dissertation: the abstraction of differentiation. 
The original Newtonian derivative is replaced by a combinatorial differential operator, and 
(enriched) trees naturally appear as solutions of differential equations in the realm of (virtual, linear) species. 
This is explored in Chapters \ref{ch5} and \ref{ch6}.

More recently, Labelle and Lamathe \cite{labelle} have developed a theory of general combinatorial differential operators, such as cyclic or graphical arrangements of derivatives. In particular, they extended the difference operator, discussed in Chapter~\ref{ch1}, to the realm of species of structures. This provides a complete overhaul of differentiation, and can be seen as an analogue in combinatorics to what this hypothetical student of mathematics experienced in the first paragraphs of this introduction. Part of the theory of Labelle and Lamathe is presented in Chapter \ref{ch7}.


The aim of this monograph is to trace the development of a general combinatorial theory of differentiation by following the footsteps of several notable mathematicians. We hope the reader finds this text enjoyable and inspiring.

\chapter*{Preliminary Notations and Definitions}


\begin{notn}
The set $\{1,2, \ldots ,n\}$ is denoted by $[n]$. We also define $[0]$ to be the empty set.
\end{notn}

\begin{notn}
Given two sets $A, B$, the set of functions from $A$ to $B$ is denoted by $B^A$.
\end{notn}

\begin{defi}
    The Kronecker delta $\delta_{n,k}$ is defined as $\delta_{n,k} = 1$ if $n=k$, and $\delta_{n,k} = 0$ otherwise.
\end{defi}

\begin{defi}
The polynomial $x^{\underline{k}} = x(x-1)\cdots(x-k+1)$, $k \geq 1$, is called the $k$-falling factorial of $x$. Also, $x^{\underline{0}} := 1$.
\end{defi}

\begin{defi}
The polynomial \(\binom{x}{k}\) $= \frac{1}{k!}x^{\underline{k}}$ is called $x$-choose-$k$, or the $x,k$-binomial. In particular, one can define $\binom{0}{k} = \delta_{0,k}$.
\end{defi}

\begin{notn}
The set of $k$-element subsets of a finite set $A$ is denoted by \(\binom{A}{k}\). Naturally, |\(\binom{A}{k}\)|$=$\(\binom{|A|}{k}\).
\end{notn}

\begin{defi}
    For $k \geq 2$, define the multinomial coefficient $\binom{n}{a_1,\ldots,a_k}$ as $$\binom{n}{a_1,\ldots,a_k} := \binom{n}{a_1}\binom{n-a_1}{a_2}\cdots\binom{n-a_1-\ldots-a_{k-1}}{a_k} = \frac{n!}{a_1!\cdots a_k!}.$$
\end{defi}

\chapter{Finite Difference Calculus}\label{ch1}

This chapter begins by defining the finite difference operator and some of its properties, and aims to give a brief overview of difference calculus, a discrete analogue to differential calculus. This is done by first presenting the operator itself, then proceeding to show its usefulness on the classical problem of counting surjections between finite sets. After that, we proceed to the study of difference calculus by showing an analogous to the Fundamental Theorem of Calculus, and also to differential equations. For more details on difference calculus, we refer the reader to \cite[Ch. 9]{wagner} and \cite[Ch. 3]{liu}.
\section{The Finite Difference Operator}

To work with differential calculus, one must first define the differential operator. In order to do calculus in discrete spaces, one must also define a linear operator with analogous properties.

\begin{defi}
Let $S \subseteq \C$ be a non-empty set such that, if $x \in S$, then $x+1 \in S$. Some good choices for $S$ are $\N,\Z$ and $\R$. When $S \subseteq \Z$, a function $f \in \C^S$ is often called a sequence. For $f \in \C^S$, define the difference operator $\dif$ as 
$$ \dif f(x) := f(x+1)-f(x).$$
\end{defi}

The difference operator has the following properties:

\begin{prop}
\label{8part}
Let $f,g \in \C^S$. Then
\end{prop}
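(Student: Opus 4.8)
The plan is to derive every item directly from the defining relation $\dif f(x) = f(x+1) - f(x)$, using nothing beyond elementary algebra in $\C$. I would dispatch the linearity-type properties first: for scalars $a, b$, the identity $\dif(af + bg)(x) = a\,\dif f(x) + b\,\dif g(x)$ is immediate, since evaluation at $x+1$ and at $x$ are both $\C$-linear and subtraction distributes over the resulting combination; the vanishing $\dif c = 0$ on a constant sequence is the degenerate case. For the exponential-type property, $\dif c^{x} = c^{x+1} - c^{x} = (c-1)c^{x}$ is a one-line computation.

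For the Leibniz (product) rule the essential device is a telescoping cross term. Starting from $\dif(fg)(x) = f(x+1)g(x+1) - f(x)g(x)$, I would add and subtract one of $f(x+1)g(x)$ or $f(x)g(x+1)$ and regroup, obtaining the product rule in the form where exactly one factor is shifted by a single step. The placement of the shift is dictated by which cross term is inserted, so I would match my insertion to the form stated in the proposition. The quotient rule is then the same manoeuvre: I place $f(x+1)/g(x+1) - f(x)/g(x)$ over the common denominator $g(x)g(x+1)$, add and subtract $f(x)g(x)$ in the numerator, and read off $\frac{g(x)\,\dif f(x) - f(x)\,\dif g(x)}{g(x)g(x+1)}$. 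This step presupposes that $g$ does not vanish on $S$, so that the quotient is well defined.

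The identities describing the action of $\dif$ on distinguished sequences are verified by direct expansion. For the falling factorial I would factor out the common block: writing both $(x+1)^{\underline{k}}$ and $x^{\underline{k}}$ as $x^{\underline{k-1}}$ times a single linear factor, the difference collapses to $x^{\underline{k-1}}\bigl((x+1) - (x-k+1)\bigr) = k\,x^{\underline{k-1}}$, the discrete analogue of the power rule. The companion identity $\dif\binom{x}{k} = \binom{x}{k-1}$ then follows by dividing through by $k!$ and invoking linearity, since $\binom{x}{k} = \frac{1}{k!}x^{\underline{k}}$.

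None of these steps is genuinely hard; the only real obstacle is bookkeeping, namely keeping track of which argument carries the $+1$ shift in the product and quotient rules, and checking the telescoping cancellation in the falling-factorial computation at the boundary values $k = 0$ and $k = 1$. Once the add-and-subtract pattern is fixed to match the stated forms, each part reduces to a routine verification.
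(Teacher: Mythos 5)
Your proposal matches the paper's proof essentially step for step: direct computation from the definition, the add-and-subtract cross-term device for the product and quotient rules, factoring out the common block $x^{\underline{n-1}}$ for the falling factorial, and division by $n!$ plus linearity for $\dif\binom{x}{n} = \binom{x}{n-1}$. The only item you do not name explicitly is $\dif x^n = \sum_{k=0}^{n-1}\binom{n}{k}x^k$, but it falls under your blanket "direct expansion" of distinguished sequences (via the binomial theorem applied to $(x+1)^n$), which is exactly how the paper handles it.
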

\begin{enumerate}
    \item $\dif$ is a linear operator on $\C^S$;
    \item If $f$ is a constant function on $S$, then $\dif f \equiv 0$;
    \item $\dif x^n = \sum_{k=0}^{n-1}$ \(\binom{n}{k}\)$x^k$;
    \item $\dif x^{\underline{n}} = nx^{\underline{n-1}}$;
    \item $\dif \binom{x}{n} = \binom{x}{n-1};$
    \item $\dif 2^x = 2^x$;
    \item $\dif [f(x)g(x)] = f(x+1)\dif g(x) + g(x)\dif f(x)$
    
    $\quad\quad\quad\quad\quad\ \, = f(x)\dif g(x) + g(x+1)\dif f(x) $
    
    $\quad\quad\quad\quad\quad\ \, = f(x)\dif g(x) + g(x)\dif f(x) + \dif f(x) \dif g(x)$;
    \item $\displaystyle \dif\frac{f(x)}{g(x)} = \frac{g(x)\dif f(x) - f(x) \dif g(x)}{g(x)g(x+1)}$.
\end{enumerate}

\begin{proof}
\begin{enumerate}
    \item \emph{Addition:} $\dif [f(x)+g(x)] = f(x+1)+g(x+1)-f(x)-g(x) = \dif f(x) + \dif g(x)$.

    \emph{Scalar multiplication:} $\dif \lambda f(x) = \lambda f(x+1)-\lambda f(x) = \lambda [f(x+1)-f(x)] = \lambda \dif f(x)$.
    \item If $f \equiv c \in \C$, then $f(x+1) = f(x) = c$ for all $x \in S$, and $\dif f(x) = c-c = 0$.
    \item We have $(x+1)^n = \sum_{k=0}^n$\(\binom{n}{k}\)$x^k$, from where we get $\dif x^n = (x+1)^n - x^n = \sum_{k=0}^{n-1}$\(\binom{n}{k}\)$x^k$.
    \item $\dif x^{\underline{n}} = (x+1)^{\underline{n}} - x^{\underline{n}} = (x+1)x \cdots (x-n+2)- x(x-1)\cdots (x-n+1)$ 
    
    $\quad\quad\,= [x(x-1) \cdots (x-n+2)](x+1-x+n-1) = nx^{\underline{n-1}}.$
    \item $\displaystyle \dif\binom{x}{n} = \dif \frac{x^{\underline{n}}}{n!} = \frac{1}{n!}\dif x^{\underline{n}}=\frac{nx^{\underline{n-1}}}{n!} = \frac{x^{\underline{n-1}}}{(n-1)!} = \binom{x}{n-1}$.
    \item $\dif 2^x = 2^{x+1}-2^x = 2^x(2-1) = 2^x$. 
    \item $
        \dif [f(x)g(x)] = f(x+1)g(x+1)-f(x)g(x)$
        
        $\quad\quad\quad\quad\quad\ \, = f(x+1)g(x+1) - f(x+1)g(x) + f(x+1)g(x) - f(x)g(x)$
        
        $\quad\quad\quad\quad\quad\ \, = f(x+1)\dif g(x) + g(x)\dif f(x)$. 
        
    The other two identities can be proved in a similar way.
    \item $
        \displaystyle \dif \frac{f(x)}{g(x)} = \frac{f(x+1)}{g(x+1)} - \frac{f(x)}{g(x)} = \frac{f(x+1)g(x)-f(x)g(x+1)}{g(x)g(x+1)}$
        
        $\displaystyle\quad\quad\quad\,= \frac{f(x+1)g(x)-f(x)g(x)+f(x)g(x)-f(x)g(x+1)}{g(x)g(x+1)} = \frac{g(x)\dif f(x) - f(x) \dif g(x)}{g(x)g(x+1)}$.
\end{enumerate}
\end{proof}

Let us now present an application of the difference operator in the realm of geometry.

\begin{defi}
A triangle with a right angle is said to be Pythagorean when the lengths of its three sides are positive integers. A triplet of positive integers $(a,b,c)$, with $a< b<c$, is said to be a Pythagorean triplet when $a^2 + b^2 = c^2$.
\end{defi}

\begin{defi}
Two Pythagorean triplets $(a,b,c),(x,y,z)$ are said to be of the same class when there exists a positive integer $\lambda$ such that $(a,b,c) = (\lambda x, \lambda y, \lambda z)$.
\end{defi}

\begin{theorem}
There are infinitely many Pythagorean triangles with different shapes, that is, there are infinitely many Pythagorean triplet classes.
\end{theorem}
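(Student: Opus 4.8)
The plan is to exhibit an explicit infinite family of Pythagorean triplets, no two of which lie in the same class, by exploiting the fact that the difference operator turns the square function into the sequence of odd numbers. The guiding idea is that the gap between two consecutive squares is odd, and conversely every odd square can be realized as such a gap.

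First I would record the key computation, which is the instance of part iii) of Proposition~\ref{8part} with $n=2$:
$$\dif x^2 = (x+1)^2 - x^2 = 2x+1.$$
Now, for each odd integer $m \geq 3$ the square $m^2$ is itself odd, so I can solve $2x+1 = m^2$ with the positive integer $x = (m^2-1)/2$. Substituting into the identity above yields $m^2 + x^2 = (x+1)^2$, so setting $a = m$, $b = (m^2-1)/2$, $c = (m^2+1)/2$ produces a triple with $a^2 + b^2 = c^2$. A quick check (valid for $m \geq 3$) confirms $a < b < c$, so $(a,b,c)$ is a genuine Pythagorean triplet. This already gives infinitely many triplets, one per odd $m \geq 3$, namely $(3,4,5), (5,12,13), (7,24,25), \ldots$

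The crucial step — and essentially the only place an argument is needed — is to verify that distinct values of $m$ yield triplets in \emph{distinct} classes, since infinitely many triplets on their own could in principle collapse into finitely many classes. Here I would use the structural feature built into the construction: in each such triplet the two largest entries are consecutive integers, so $c - b = 1$. If two of these triplets satisfied $(a,b,c) = \lambda(a',b',c')$ for some positive integer $\lambda$, then comparing the top two coordinates would give $1 = c - b = \lambda(c' - b') = \lambda$, forcing $\lambda = 1$ and hence $m = m'$. Thus the assignment of the odd integer $m \geq 3$ to the class of its triplet is injective.

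Since there are infinitely many odd integers $m \geq 3$, this injectivity delivers infinitely many Pythagorean triplet classes, which is the claim. I do not anticipate a genuine obstacle; the single point requiring care is the one just emphasized, that one must certify distinctness of \emph{classes} rather than merely of triplets, which is exactly what the $c - b = 1$ observation supplies.
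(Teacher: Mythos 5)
Your proof is correct and follows essentially the same route as the paper: both apply $\dif x^2 = 2x+1$ to realize each odd square $m^2$ ($m \geq 3$) as the gap between consecutive squares, producing the triplets $\left(m, \frac{m^2-1}{2}, \frac{m^2+1}{2}\right)$. Your check that distinct $m$ yield distinct classes --- via the invariant $c-b=1$, which forces any scaling factor $\lambda$ to equal $1$ --- is a slightly cleaner formulation of the same verification the paper carries out on the first two coordinates.
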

\begin{proof}
From Proposition \ref{8part}.iii), $\dif x^2 = 2x+1$. Let $a \geq 3$ be a positive odd integer. Then, for $x = \frac{a^2 - 1}{2}$, one has $\dif x^2 = a^2$. In other words, ${(x+1)^2 = x^2 + a^2}$, and $(a, \frac{a^2 - 1}{2}, \frac{a^2 - 1}{2}+1)$ is a Pythagorean triplet. Also, if $b \geq 3$ is another positive odd integer, there is no integer $\lambda > 1$ such that $b = \lambda a$ and $\frac{b^2 - 1}{2} = \lambda\frac{a^2 - 1}{2}$; therefore one can pair positive odd integers $a \geq 3$ with Pythagorean classes of the form $(a, \frac{a^2 - 1}{2}, \frac{a^2 - 1}{2}+1)$, from where we get that there are infinitely many of them.
\end{proof}

Just like in the continuous case, one can define higher order difference operators inductively.

\begin{defi}
The $n$-th order difference operator is defined as follows: $\dif^0 f = f$, and $\dif^{n+1} f = \dif(\dif^n f)$.
\end{defi}

We still have linearity even in higher orders. By Proposition~\ref{8part}.iii), the difference operator reduces by one the degree of a polynomial. Hence, the $n$-th order difference operator reduces by $n$ the degree of a polynomial. 

By an easy induction, we get the following generalization of Proposition \ref{8part}.iv) and v).   

\begin{corollary}
\label{highord}
If $0 \leq k \leq n$, $\dif^k x^{\underline{n}} = n^{\underline{k}}x^{\underline{n-k}}$, and $\dif^k$\(\binom{x}{n}\) = \(\binom{x}{n-k}\).
\end{corollary}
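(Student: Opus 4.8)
The plan is to prove both identities by induction on $k$, treating the falling-factorial and binomial cases separately but in parallel, with Proposition \ref{8part}.iv) and v) supplying the respective base (single-application) steps and the linearity from Proposition \ref{8part}.i) doing the rest.

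For the binomial identity I would set up the induction on $k$ with the trivial base case $\dif^0 \binom{x}{n} = \binom{x}{n} = \binom{x}{n-0}$, and for the inductive step assume $\dif^k \binom{x}{n} = \binom{x}{n-k}$ for some $0 \leq k < n$. Then by the definition of the higher-order operator and Proposition \ref{8part}.v),
$$\dif^{k+1}\binom{x}{n} = \dif\!\left(\dif^k \binom{x}{n}\right) = \dif \binom{x}{n-k} = \binom{x}{n-k-1} = \binom{x}{n-(k+1)},$$
which closes the induction. The constraint $k < n$ guarantees $n-k \geq 1$, so that Proposition \ref{8part}.v) genuinely applies at each step.

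For the falling-factorial identity the structure is identical, with base case $\dif^0 x^{\underline{n}} = x^{\underline{n}} = n^{\underline{0}} x^{\underline{n-0}}$. For the step, assuming $\dif^k x^{\underline{n}} = n^{\underline{k}} x^{\underline{n-k}}$, I would use linearity (Proposition \ref{8part}.i)) to pull the scalar $n^{\underline{k}}$ outside the operator and then apply Proposition \ref{8part}.iv) to the remaining falling factorial:
$$\dif^{k+1} x^{\underline{n}} = \dif\!\left(n^{\underline{k}} x^{\underline{n-k}}\right) = n^{\underline{k}}\, \dif x^{\underline{n-k}} = n^{\underline{k}}(n-k)\, x^{\underline{n-k-1}}.$$
It then remains only to recognize the arithmetic identity $n^{\underline{k}}(n-k) = n^{\underline{k+1}}$, which is immediate from the definition of the falling factorial, to rewrite the right-hand side as $n^{\underline{k+1}} x^{\underline{n-(k+1)}}$.

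The argument is genuinely routine, so there is no serious obstacle; the only points demanding a little care are bookkeeping ones. I would make sure to invoke linearity explicitly when extracting the constant $n^{\underline{k}}$ (this is what lets the single-step rule iv) do all the work), and to track the inequality $0 \leq k \leq n$ so that the degree $n-k$ stays nonnegative and the base rules iv) and v) remain valid at every stage of the induction. The terminal case $k=n$ then recovers the expected values $\dif^n x^{\underline{n}} = n!$ and $\dif^n \binom{x}{n} = 1$, which serves as a useful consistency check.
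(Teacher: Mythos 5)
Your proof is correct and is precisely the "easy induction" from Proposition \ref{8part}.iv) and v) that the paper itself invokes (the paper gives no further detail), so the two arguments coincide. The care you take with the range $k < n$ in the inductive step and with the identity $n^{\underline{k}}(n-k) = n^{\underline{k+1}}$ fills in exactly the bookkeeping the paper leaves implicit.
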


In order to be able to work efficiently with higher order difference operators, we write $\dif$ as the difference of two simpler commuting operators.  

\begin{defi}
\label{shift}
The shift operator $E$ is defined as $Ef(x) = f(x+1)$, and higher order shift operators are defined by $E^n f(x) = f(x+n)$ for $n \in \N$. 
\end{defi}

\begin{defi}
The identity operator $I$ is defined as $If(x) = f(x)$.
\end{defi}

Clearly, $\dif = E - I$. With this we can see that the value of $\dif^k f(x)$ is determined by the values of $f(x), \ldots ,f(x+k)$.

\begin{theorem}
\label{teodk}
For all $k \in \N$, $\dif^kf(x) = \sum_{j=0}^k (-1)^{k-j}$\(\binom{k}{j}\)$f(x+j)$.
\end{theorem}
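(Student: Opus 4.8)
The plan is to exploit the decomposition $\dif = E - I$ recorded just above, together with the fact that $E$ and $I$ commute. Linear operators on $\C^S$ form a ring under composition, and since $I$ is the identity operator it commutes with everything; in particular the subring generated by $E$ and $I$ is commutative. This is precisely the setting in which the ordinary binomial theorem applies verbatim, so I would first note (or recall) that for any two commuting elements $a,b$ of a ring one has $(a+b)^k = \sum_{j=0}^k \binom{k}{j} a^j b^{k-j}$.

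Applying this with $a = E$ and $b = -I$ gives
$$\dif^k = (E-I)^k = \sum_{j=0}^k \binom{k}{j} E^j (-I)^{k-j} = \sum_{j=0}^k (-1)^{k-j} \binom{k}{j} E^j,$$
where in the last step I use $(-I)^{k-j} = (-1)^{k-j} I$ and the fact that $I$ is neutral under composition. Evaluating both sides at a function $f$ and invoking $E^j f(x) = f(x+j)$ from Definition \ref{shift} then yields exactly the claimed identity.

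The step I expect to require the most care is not a hard calculation but the justification that the binomial theorem transfers from scalars to operators: this hinges entirely on the commutativity of $E$ and $I$, and the expansion would be false for non-commuting operators. I would therefore state the commutativity hypothesis explicitly and make clear that it is what legitimizes the expansion, rather than treating the operator binomial theorem as automatic.

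As an alternative that sidesteps any appeal to an operator ring, one could argue by induction on $k$: the base case $k=0$ is immediate, and for the inductive step one writes $\dif^{k+1} f(x) = \dif^k f(x+1) - \dif^k f(x)$, substitutes the inductive hypothesis into both terms, reindexes the first sum by $i = j+1$, and merges the two sums using Pascal's rule $\binom{k}{i-1} + \binom{k}{i} = \binom{k+1}{i}$. Here the only delicate point is the bookkeeping of the boundary terms $i=0$ and $i=k+1$, which must reproduce the extremal coefficients $\binom{k+1}{0}$ and $\binom{k+1}{k+1}$. I expect the operator-theoretic argument to be the cleaner of the two and would present it as the main proof.
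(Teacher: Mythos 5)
Your proof is correct and follows the same approach as the paper: writing $\dif = E - I$, invoking the commutativity of $E$ and $I$ to expand $(E-I)^k$ by the binomial theorem, and evaluating via $E^j f(x) = f(x+j)$. Your extra care in justifying the operator binomial theorem, and the alternative induction sketch, go slightly beyond what the paper records, but the core argument is identical.
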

\begin{proof}
Since $I$ and $E$ commute, we have 
\begin{align*}
\dif^k f(x) = (E-I)^k f(x) &= \bigg[\sum_{j=0}^k \binom{k}{j} (-I)^{k-j} E^j\bigg] f(x) \\
&=\sum_{j=0}^k (-1)^{k-j} \binom{k}{j} E^j f(x) = \sum_{j=0}^k (-1)^{k-j} \binom{k}{j} f(x+j).   
\end{align*}
\end{proof}

The previous theorem can be used to prove a well-known orthogonality relation for binomial coefficients.

\begin{corollary}
For $n,k \in \N$, $\sum_{j=0}^k(-1)^{k-j}$\(\binom{k}{j}\)\(\binom{j}{n}\)$ = \delta_{k,n}$.
\end{corollary}
\begin{proof}
From Theorem \ref{teodk}, this sum is equal to $[\dif^k\binom{x}{n}]|_{x=0}$, and by Corollary \ref{highord}, it is equal to 
\(\binom{0}{n-k} =\delta_{k,n}\).
\end{proof}
\begin{notn}
Let $p(x) \in \C[x]$. Denote $p_k(m) := [\dif^k p(x)]|_{x=m}$.
\end{notn}

A polynomial $p(x) \in \C[x]$ of degree $n$ is fully determined by the values $p(0),\ldots ,p(n)$. From Theorem \ref{teodk}, we also have that these values determine $p_k(0)$, $0 \leq k \leq n$. Conversely, there is an interesting formula that can be used to recover the polynomial $p(x)$ from $p_k(0)$ ($0 \leq k \leq n$), giving a finite difference analogue to Taylor's theorem in the case of polynomials.
\newpage
\begin{theorem}
\label{taylor}
Let $p(x) \in \C[x]$ with $\deg p = n$. Then $p(x) = \sum_{k=0}^n p_k(0)$\(\binom{x}{k}\).
\end{theorem}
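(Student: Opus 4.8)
The plan is to exploit the fact that the binomials $\binom{x}{k}$, $0 \le k \le n$, form a basis for the space of complex polynomials of degree at most $n$. Since $\binom{x}{k}$ has degree exactly $k$, these $n+1$ polynomials have pairwise distinct degrees and are therefore linearly independent; as the space of polynomials of degree at most $n$ has dimension $n+1$, they span it. Consequently there exist unique scalars $c_0, \ldots, c_n \in \C$ with $p(x) = \sum_{k=0}^n c_k \binom{x}{k}$, and the whole content of the theorem reduces to the claim that $c_k = p_k(0)$.

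To identify the coefficients, I would apply $\dif^j$ to both sides and evaluate at $x = 0$. By linearity of $\dif^j$ and Corollary \ref{highord}, one has $\dif^j \binom{x}{k} = \binom{x}{k-j}$ whenever $j \le k$, while for $j > k$ the polynomial $\binom{x}{k}$ has degree $k < j$, so $\dif^j \binom{x}{k} = 0$. Hence $\dif^j p(x) = \sum_{k=j}^n c_k \binom{x}{k-j}$. Evaluating at $x = 0$ and using $\binom{0}{m} = \delta_{0,m}$, every term with $k > j$ vanishes and the $k = j$ term contributes $c_j \binom{0}{0} = c_j$; thus $p_j(0) = c_j$, as desired.

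An alternative route is induction on $n = \deg p$. The base case $n = 0$ is immediate, since a constant $p \equiv c$ satisfies $p_0(0) = c = c\binom{x}{0}$. For the inductive step one applies the hypothesis to $\dif p$ (which has degree $n-1$) and observes that $(\dif p)_k(0) = p_{k+1}(0)$, so that $\dif p(x) = \sum_{k=1}^n p_k(0)\binom{x}{k-1}$; comparing this with $\dif\big(\sum_{k=0}^n p_k(0)\binom{x}{k}\big)$, computed via Proposition~\ref{8part}.v), shows the two sides have equal image under $\dif$, and matching the constant terms at $x = 0$ (again via $\binom{0}{k} = \delta_{0,k}$) finishes the argument.

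Neither argument presents a genuine obstacle: the only point requiring care is the bookkeeping of index ranges when $\dif^j$ is applied, in particular the clean split into the vanishing terms ($k < j$ and $k > j$) and the single surviving diagonal term $k = j$. I would adopt the first approach, since it directly produces the coefficients and matches the stated goal of recovering $p$ from the data $p_0(0), \ldots, p_n(0)$.
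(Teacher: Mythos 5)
Your first argument is correct and is essentially the paper's own proof: both expand $p$ in the basis of binomial coefficients $\binom{x}{k}$, apply $\dif^k$ termwise via Corollary~\ref{highord}, and evaluate at $x=0$ to identify the coefficients as $p_k(0)$. Your treatment is slightly more detailed (justifying the basis claim and the vanishing of $\dif^j\binom{x}{k}$ for $j>k$ explicitly), but the route is the same, so nothing further is needed.
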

\begin{proof}
Since the binomial coefficients $\binom{x}{0},\binom{x}{1},\binom{x}{2},\ldots$ form a basis for $\C[x]$ (seen as a vector space over $\C$), we have that $p(x) = \sum_{j=0}^n a_j$\(\binom{x}{j}\) for some  $a_j \in \C$, $0 \leq j \leq n$. Then 
$$\dif^k p(x) = \sum_{j=0}^n a_j \dif^k \binom{x}{j} = \sum_{j=k}^n a_j \binom{x}{j-k},$$ 
from where we get $p_k(0) = [\dif^k p(x)]|_{x=0} = a_k$.
\end{proof}

\section{Counting Surjections}

The purpose of this section is to illustrate the usefulness of difference calculus to combinatorial analysis. We focus on the classical problem of counting surjections between finite sets.


Before taking on the task of counting surjections, we will first determine the number of injections between two finite sets.

\begin{theorem}
Let $A,B$ be finite sets. Then the number of injective functions from $A$ 
to $B$ is $|B|^{\underline{|A|}}$. In particular, if $|A|=|B|=n$, there are $n!$ bijections between $A$ and $B$.
\end{theorem}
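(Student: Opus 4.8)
The plan is to count injective functions $A \to B$ by choosing images for the elements of $A$ one at a time, which naturally produces the falling factorial. First I would dispose of the trivial case: if $|A| > |B|$, there can be no injection (pigeonhole), and indeed $|B|^{\underline{|A|}} = 0$ in that range since one of the factors $|B| - k$ with $0 \le k < |A|$ vanishes, so the formula is correct. Assuming $|A| \le |B|$, I would enumerate the elements of $A$ as $a_1, \ldots, a_m$ where $m = |A|$, and $n = |B|$.

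The core of the argument is a sequential counting step. To build an injection, I would assign an image to $a_1$, for which there are $n$ choices; then assign an image to $a_2$, which must avoid the single value already used, giving $n - 1$ choices; continuing, when assigning the image of $a_{k+1}$ I must avoid the $k$ values already chosen, leaving $n - k$ choices. By the multiplication principle, the total number of injections is
$$
n(n-1)(n-2)\cdots(n-m+1) = n^{\underline{m}} = |B|^{\underline{|A|}},
$$
which is exactly the claimed count. For full rigor one can phrase this as an induction on $m = |A|$: the base case $m = 0$ gives the single empty function and $n^{\underline{0}} = 1$, and the inductive step removes one element of $A$, counts injections of the smaller set into $B$, and multiplies by the number of remaining available targets.

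For the final sentence, when $|A| = |B| = n$, every injection between two sets of equal finite cardinality is automatically a bijection (an injection from a finite set to a finite set of the same size is surjective), so the number of bijections equals the number of injections, namely $n^{\underline{n}} = n(n-1)\cdots 1 = n!$. I do not anticipate a serious obstacle here, as the result is elementary; the only point requiring care is ensuring the multiplication-principle argument is stated cleanly and that the degenerate case $|A| > |B|$ is reconciled with the vanishing of the falling factorial, so that the single formula $|B|^{\underline{|A|}}$ holds uniformly.
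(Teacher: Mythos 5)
Your proposal is correct and follows essentially the same route as the paper's proof: enumerate the elements of $A$ and count choices sequentially via the multiplication principle, yielding $|B|^{\underline{|A|}}$, with bijectivity in the case $|A|=|B|=n$ following because an injection between finite sets of equal size is onto. The only additions — the explicit treatment of the case $|A|>|B|$ (where the falling factorial vanishes) and the remark that the counting can be formalized by induction — are harmless refinements of the same argument.
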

\begin{proof}
Let $A = \{a_1, \ldots ,a_n\}$. If $|B| = k$, then for an injective function $f \in B^A$, there are $k$ choices for $f(a_1)$, then $k-1$ possible choices for $f(a_2)$, and so on. Therefore, there are $k(k-1) \cdots (k-n+1) = k^{\underline{n}} = |B|^{\underline{|A|}}$ possible injections from $A$ to $B$.

If $|A|=|B|=n$, then there are $n^{\underline{n}} = n!$ injections from $A$ to $B$. Naturally, the injections take the $n$ elements of $A$ into $n$ distinct elements of $B$, that is, onto all of $B$, and are therefore bijections.
\end{proof}

Unlike counting injections, counting surjections isn't such a triviality. 

\begin{notn}
The number $\sigma(n,k)$ denotes the amount of surjective functions from $[n]$ to $[k]$.
\end{notn}

\begin{theorem}
For all $n,k \in \N$, $\sigma(n,0) = \sigma(0,n) = \delta_{n,0}$ and $$ \sigma(n,k) = k\sigma(n-1,k-1)+k\sigma(n-1,k). $$
\end{theorem}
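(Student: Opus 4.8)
The plan is to dispatch the boundary values by direct inspection of functions involving the empty set, and then to establish the recurrence by a combinatorial case analysis on the behaviour of a surjection at the distinguished point $n \in [n]$.

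For the boundary, I would first recall that $[0] = \emptyset$. A surjection from $[n]$ onto $[0]$ can exist only when $[n]$ is itself empty: for $n \geq 1$ there is no function at all into $\emptyset$, while for $n = 0$ the empty function is vacuously onto. This gives $\sigma(n,0) = \delta_{n,0}$. Dually, the empty function is the unique function out of $[0]$, and it is surjective onto $[k]$ precisely when $[k] = \emptyset$; hence $\sigma(0,k) = \delta_{k,0}$. These two facts agree at $(0,0)$, where $\sigma(0,0) = 1$.

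For the recurrence (so now $n,k \geq 1$), the key idea is to classify each surjection $f \colon [n] \to [k]$ according to the fibre over its value at $n$. Writing $j := f(n)$, there are two mutually exclusive and exhaustive cases: either $n$ is the only preimage of $j$, that is $f^{-1}(j) = \{n\}$, or $j$ is also attained by some element of $[n-1]$. In the first case the restriction $f|_{[n-1]}$ is a surjection from $[n-1]$ onto $[k] \setminus \{j\}$; conversely, any choice of $j \in [k]$ together with a surjection of $[n-1]$ onto the $(k-1)$-element set $[k] \setminus \{j\}$ extends uniquely to such an $f$. Since surjections onto a fixed $(k-1)$-element codomain are counted by $\sigma(n-1,k-1)$ (by relabelling the codomain), this case contributes $k\,\sigma(n-1,k-1)$. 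In the second case $f|_{[n-1]}$ is already a surjection of $[n-1]$ onto all of $[k]$, and conversely any surjection $g \colon [n-1] \to [k]$ together with a free choice of $j = f(n) \in [k]$ yields an $f$ of this type; this contributes $k\,\sigma(n-1,k)$. Summing the two disjoint cases gives $\sigma(n,k) = k\,\sigma(n-1,k-1) + k\,\sigma(n-1,k)$.

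I do not anticipate a genuine obstacle, as the argument is elementary; an alternative route would be to derive the explicit formula $\sigma(n,k) = [\dif^k x^n]|_{x=0}$ by inclusion-exclusion and verify the recurrence algebraically, but the bijective argument is more self-contained. The one point that requires care is checking that the two cases are exhaustive and that the restriction/extension correspondence is a genuine bijection in each case: in particular, confirming in the second case that $f|_{[n-1]}$ is really surjective (it attains $j$ through the extra preimage and attains every other value because $f$ does), and ensuring that the free choice of $j$ produces no overcounting between the two cases (the case is fixed by whether the image of the restriction is $[k]$ or a proper $(k-1)$-subset).
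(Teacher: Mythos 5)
Your proof is correct and takes essentially the same approach as the paper: the paper's brief ``balls into boxes'' argument is exactly your case analysis on whether $n$ is the sole preimage of $f(n)$, contributing $k\sigma(n-1,k-1)$ and $k\sigma(n-1,k)$ respectively. You additionally verify the boundary values $\sigma(n,0)=\sigma(0,n)=\delta_{n,0}$, which the paper asserts in the statement but does not address in its proof.
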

\begin{proof}
Consider possible distributions of $n$ labelled balls into $k$ labelled boxes with no box being left empty. There are $k\sigma(n-1,k-1)$ of those where the ball labelled $n$ is left alone, and $k\sigma(n-1,k)$ where it isn't.
\end{proof}


This theorem gives us a way to count surjections using recursiveness, but it would also be interesting to have a closed form formula for them. For that, we must correlate the polynomials $x^n$ and $\binom{x}{k}$.

\begin{theorem}
\label{xnsig}
For all $n \in \N$, $x^n = \sum_{k=0}^n \sigma(n,k)$\(\binom{x}{k}\).
\end{theorem}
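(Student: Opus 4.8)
The plan is to exploit the fact that both sides are polynomials in $x$ of degree $n$, so that it suffices to verify the identity at infinitely many points---say, at every positive integer $m$---and then invoke the standard fact that two polynomials agreeing on an infinite set coincide as elements of $\C[x]$.

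First I would fix a positive integer $m$ and count the functions from $[n]$ to $[m]$ in two ways. On one hand, there are exactly $m^n$ such functions. On the other hand, I would classify each function $f \in [m]^{[n]}$ according to the cardinality $k$ of its image $f([n])$, which satisfies $0 \leq k \leq n$. To specify a function whose image has size exactly $k$, one first selects the image as a $k$-element subset of $[m]$, which can be done in $\binom{m}{k}$ ways, and then chooses a surjection from $[n]$ onto that $k$-element set, of which there are $\sigma(n,k)$ by definition. Summing over $k$ gives $m^n = \sum_{k=0}^n \sigma(n,k)\binom{m}{k}$. Since $x^n$ and $\sum_{k=0}^n \sigma(n,k)\binom{x}{k}$ are polynomials in $x$ agreeing at every positive integer $m$, they are equal, which is the claim.

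The argument has no serious obstacle; the only points requiring care are that the classification by image size is a genuine partition of $[m]^{[n]}$, so that the two factors $\binom{m}{k}$ and $\sigma(n,k)$ multiply correctly, and that the count $\binom{m}{k}$ of $k$-element subsets agrees with the polynomial $\binom{x}{k}$ evaluated at $x = m$---which is precisely the notation fixed in the preliminaries.

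As an alternative route, I would apply Theorem~\ref{taylor} directly to $p(x) = x^n$. This expresses $x^n = \sum_{k=0}^n p_k(0)\binom{x}{k}$ with $p_k(0) = [\dif^k x^n]|_{x=0}$, and by Theorem~\ref{teodk} this coefficient equals $\sum_{j=0}^k (-1)^{k-j}\binom{k}{j} j^n$, the familiar inclusion--exclusion count of the surjections from $[n]$ onto $[k]$, namely $\sigma(n,k)$. Either approach yields the stated formula, and as a bonus the second one furnishes the closed-form expression for $\sigma(n,k)$.
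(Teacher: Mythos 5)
Your main argument is precisely the paper's proof: the same reduction to checking the identity at positive integers (two polynomials agreeing on an infinite set coincide), followed by the same double count of $[m]^{[n]}$ classified by image size, with $\binom{m}{k}$ choices of image and $\sigma(n,k)$ surjections onto it. One caveat on your alternative route: within the paper's logical ordering it is circular, since the identity $\sigma(n,k)=\sum_{j=0}^k(-1)^{k-j}\binom{k}{j}j^n$ is Theorem~\ref{sigma}, which the paper deduces \emph{from} the present theorem together with Theorems~\ref{taylor} and~\ref{teodk}; to use it here you would have to supply an independent inclusion--exclusion proof of that formula first.
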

\begin{proof}
Since two distinct polynomials in $\C[x]$ can not take the same values on an infinite set,    
it is enough to prove that, for all $r \in \N \setminus \{0\}$, ${r^n = \sum_{k=0}^n \sigma(n,k)\binom{r}{k}}$. 
To see that this is the case, note that among all functions $f \in [r]^{[n]}$, $\sigma(n,k)$\(\binom{r}{k}\) enumerates those for which $|\textrm{Im}(f)| = k$, as one can choose \(\binom{r}{k}\) subsets of $[r]$ with $k$ elements, and for each of these subsets there are $\sigma(n,k)$ surjections from $[n]$ to it.
\end{proof}

We digress briefly in order to present an interesting application of the theorem above.

\begin{corollary}
The $n$-th order difference of the polynomial $x^n$ is constant equal to $n!$, that is, $\Delta^n x^n \equiv n!$.
\end{corollary}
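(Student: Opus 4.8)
The plan is to apply the $n$-th order difference operator directly to the expansion of $x^n$ in the binomial basis furnished by Theorem~\ref{xnsig}, and then to collapse the resulting sum using the higher-order difference formula for binomial coefficients. This keeps the whole argument at the level of already-proved identities rather than any fresh computation.

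First I would recall that, by Theorem~\ref{xnsig}, $x^n = \sum_{k=0}^n \sigma(n,k)\binom{x}{k}$. Since $\dif$ is linear (Proposition~\ref{8part}.i), and higher-order differences inherit linearity, applying $\dif^n$ term by term gives $\dif^n x^n = \sum_{k=0}^n \sigma(n,k)\,\dif^n\binom{x}{k}$. The task then reduces to evaluating each $\dif^n\binom{x}{k}$ for $0 \le k \le n$.

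Next I would dispatch those terms using Corollary~\ref{highord}. For each $k$ in the range, that corollary gives $\dif^k\binom{x}{k} = \binom{x}{0} = 1$, so $\dif^n\binom{x}{k} = \dif^{\,n-k}(1)$. When $k < n$ this is the $(n-k)$-fold difference of the constant function $1$, which is $0$ by Proposition~\ref{8part}.ii; hence every term with $k < n$ vanishes. Only the top term $k = n$ survives, and there $\dif^n\binom{x}{n} = \binom{x}{0} = 1$. Consequently $\dif^n x^n = \sigma(n,n)$.

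Finally I would identify $\sigma(n,n)$. A surjection from $[n]$ to $[n]$ is forced to be a bijection, and there are exactly $n!$ such maps, so $\sigma(n,n) = n!$. Therefore $\dif^n x^n \equiv n!$, as claimed. I do not expect any genuine obstacle here: the two points that require care, namely the vanishing of the lower-order terms and the evaluation $\sigma(n,n) = n!$, both follow at once from results established earlier in the chapter, so the proof is essentially a bookkeeping argument on a telescoping sum.
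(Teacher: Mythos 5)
Your proof is correct, but it takes a mildly different route from the paper's. Both arguments start from Theorem~\ref{xnsig}, $x^n = \sum_{k=0}^n \sigma(n,k)\binom{x}{k}$, but they diverge at the key step. You apply $\dif^n$ termwise, using Corollary~\ref{highord} to get $\dif^n\binom{x}{k} = \dif^{n-k}\binom{x}{0} = \dif^{n-k}(1)$, which vanishes for $k<n$ by Proposition~\ref{8part}.ii and equals $1$ for $k=n$; this yields $\dif^n x^n = \sigma(n,n) = n!$ identically, as an equality of functions, with nothing further to check. The paper instead invokes Theorem~\ref{taylor} to read off only the value at a single point, $[\dif^n x^n]|_{x=0} = \sigma(n,n) = n!$, and then separately argues that $\dif^n x^n$ is constant because $\dif^n$ lowers the degree of a polynomial by $n$. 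So your version replaces the pair (Taylor-type theorem, degree-reduction argument) with the pair (Corollary~\ref{highord}, linearity plus vanishing of differences of constants): it is more computational and self-contained, while the paper's is slightly slicker but needs the degree observation to upgrade a pointwise evaluation to constancy. Both close the argument the same way, via $\sigma(n,n)=n!$ because a surjection of $[n]$ onto itself is a bijection, so the difference is one of bookkeeping rather than substance.
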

\begin{proof}
From Theorem \ref{xnsig} we have $x^n = \sum_{k=0}^n \sigma(n,k) \binom{x}{k}$. Combining that with Theorem \ref{taylor}, we get $[\dif^n x^n]|_{x = 0} = \sigma(n,n) = n!$. Since $\dif^n$ reduces the degree of a polynomial by $n$, we have that $\dif^n x^n$ is a constant (equal to $n!$).
\end{proof}

Theorems~\ref{teodk} and \ref{taylor} give us an interesting way to enumerate surjective functions between two finite sets.
\begin{theorem}
\label{sigma}
Given $n,k \in \N$, $\sigma(n,k) = \sum_{j=0}^k (-1)^{k-j}$\(\binom{k}{j}\)$j^n$.
\end{theorem}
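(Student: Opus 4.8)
The plan is to derive the closed form for $\sigma(n,k)$ by combining the two previous results, Theorem~\ref{xnsig} and Theorem~\ref{teodk}. The starting observation is that Theorem~\ref{xnsig} expresses $x^n = \sum_{k=0}^n \sigma(n,k)\binom{x}{k}$ as an expansion in the binomial coefficient basis. But by the finite difference Taylor theorem (Theorem~\ref{taylor}), the coefficients in such an expansion are precisely the values $p_k(0) = [\dif^k p(x)]|_{x=0}$. Hence, matching the two expansions of the polynomial $p(x) = x^n$, I would identify $\sigma(n,k) = [\dif^k x^n]|_{x=0}$.

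Once this identification is made, the rest is immediate: Theorem~\ref{teodk} gives the explicit formula $\dif^k f(x) = \sum_{j=0}^k (-1)^{k-j}\binom{k}{j} f(x+j)$. Applying this with $f(x) = x^n$ and then evaluating at $x=0$ turns $f(x+j) = (x+j)^n$ into $j^n$, yielding
\[
\sigma(n,k) = \sum_{j=0}^k (-1)^{k-j}\binom{k}{j} j^n,
\]
which is exactly the claim. So the proof is essentially a two-line synthesis: first pin down $\sigma(n,k)$ as the $k$-th difference of $x^n$ at zero, then expand that difference via Theorem~\ref{teodk}.

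I do not anticipate a genuine obstacle here, since both ingredients are already established. The only point requiring a word of care is the justification that the coefficients $\sigma(n,k)$ appearing in Theorem~\ref{xnsig} really coincide with the $p_k(0)$ of Theorem~\ref{taylor}. This follows because the binomial coefficients $\binom{x}{0}, \binom{x}{1}, \ldots$ form a basis of $\C[x]$, so the expansion of $x^n$ in this basis is unique; therefore the $\sigma(n,k)$ from Theorem~\ref{xnsig} must agree with the coefficients $p_k(0)$ produced by Theorem~\ref{taylor}. With uniqueness in hand, the identity $\sigma(n,k) = [\dif^k x^n]|_{x=0}$ is forced, and the substitution into Theorem~\ref{teodk} completes the argument.
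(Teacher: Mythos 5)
Your proposal is correct and follows exactly the paper's own proof: identify $\sigma(n,k) = [\dif^k x^n]|_{x=0}$ by combining Theorem~\ref{xnsig} with Theorem~\ref{taylor}, then expand via Theorem~\ref{teodk} and evaluate at $x=0$. The extra remark on uniqueness of the expansion in the binomial basis is a sound (and welcome) clarification of why the identification is legitimate.
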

\begin{proof}
By Theorem \ref{xnsig}, $x^n = \sum_{k=0}^n \sigma(n,k)$\(\binom{x}{k}\). From Theorem \ref{taylor}, ${\sigma(n,k) = [\dif^k x^n]|_{x=0}}$, and Theorem \ref{teodk} gives us that $[\dif^k x^n]|_{x=0} = \sum_{j=0}^k (-1)^{k-j}$\(\binom{k}{j}\)$j^n$.
\end{proof}

\section{The Fundamental Theorem of Finite Difference Calculus}
Given a function $f \in \C^\N$, if $\dif F = f$, then we say that $F$ is an antidifference of $f$, and we write 
$F = \dif^{\langle-1\rangle}f$. Clearly, if $F$ is an antidifference of $f$, then for every $c \in \C$, $F+c$ is also an 
antidifference of $f$. Moreover, this is a full description of all antidifferences of $f$. Indeed, suppose that $G$ is an antidifference of $f$; then $\dif (F - G) \equiv 0$, and consequently $G = F + c$ for some $c \in \C$.

The following theorem is the discrete analogue of the fundamental theorem of calculus.

\begin{theorem}
\label{fundamental-theorem}
Let $f \in \C^{\N}$, and let $a,b \in \N$ with $a \leq b$. Then 
\begin{enumerate}
\item $f$ has an antidifference;
\item $\sum_{x=a}^b f(x) = F(b+1)-F(a)$, where $F$ is any antidifference of $f$.
\end{enumerate}
\end{theorem}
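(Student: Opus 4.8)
The plan is to prove the two parts in the stated order, since part (ii) will make essential use of part (i). For part (i), the goal is to exhibit an explicit antidifference of an arbitrary $f \in \C^{\N}$. The natural candidate is the \emph{summation} operator: I would define $F(x) := \sum_{t=0}^{x-1} f(t)$ for $x \geq 1$, together with $F(0) := 0$ (the empty sum). This is the discrete analogue of the integral $\int_a^x f$, and it is the obvious guess given that we expect the Fundamental Theorem to relate $\dif$ to summation in the same way the classical theorem relates differentiation to integration.

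First I would verify directly that this $F$ is indeed an antidifference, i.e. that $\dif F = f$. By the definition of $\dif$ we have $\dif F(x) = F(x+1) - F(x) = \sum_{t=0}^{x} f(t) - \sum_{t=0}^{x-1} f(t) = f(x)$, which is a telescoping cancellation and settles part (i). Note this works for every $x \in \N$, including the boundary case $x = 0$, where $F(1) - F(0) = f(0) - 0 = f(0)$.

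For part (ii), I would invoke the remark preceding the theorem: any two antidifferences of $f$ differ by an additive constant, so it suffices to establish the evaluation formula for the \emph{particular} antidifference $F$ constructed above, and the general statement then follows because the constant cancels in the difference $F(b+1) - F(a)$. With this $F$, the right-hand side is
\begin{equation*}
F(b+1) - F(a) = \sum_{t=0}^{b} f(t) - \sum_{t=0}^{a-1} f(t) = \sum_{x=a}^{b} f(x),
\end{equation*}
which is exactly the claimed identity. Alternatively, and perhaps more in the spirit of a telescoping argument, one can write $\sum_{x=a}^{b} f(x) = \sum_{x=a}^{b} \dif F(x) = \sum_{x=a}^{b} \big( F(x+1) - F(x) \big)$ for any antidifference $F$ of $f$, and observe that the sum telescopes to $F(b+1) - F(a)$.

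I do not expect any genuine obstacle here; the result is elementary once the right candidate antidifference is written down. The only point requiring a little care is the bookkeeping of summation indices and the empty-sum convention at the lower boundary (ensuring $F(0) = 0$ and that the case $a = 0$ is handled consistently), but this is routine. The telescoping formulation for part (ii) has the mild advantage of being manifestly independent of the choice of antidifference, so it folds the uniqueness-up-to-constant remark into the computation itself rather than appealing to it separately.
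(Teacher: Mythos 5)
Your proposal is correct and follows essentially the same route as the paper: the same explicit antidifference $F(0)=0$, $F(x)=\sum_{t=0}^{x-1}f(t)$ for part (i), and the telescoping sum $\sum_{x=a}^{b}[F(x+1)-F(x)]=F(b+1)-F(a)$ for part (ii). Your alternative telescoping formulation for (ii), valid for an arbitrary antidifference, is in fact exactly the argument the paper gives.
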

\begin{proof}
$i)$ It is easy to check that the function defined by $F(0):=0$ and $F(x):=\sum_{i=0}^{x-1} f(i)$ for $x \geq 1$ is an antidifference of $f$.
\medskip

$ii)$ From the hypothesis that $F = \dif^{\langle-1\rangle}f$, we have $$\sum_{x=a}^b f(x) = \sum_{x=a}^b \dif F(x) = \sum_{x=a}^b [F(x+1)-F(x)]$$ and this last sum telescopes to $F(b+1)-F(a)$.
\end{proof}

Let $p(x)=\sum_{k=0}^n a_k\binom{x}{k} \in \C[x]$ be a polynomial of degree $n \geq 0$.  
It follows from Proposition~\ref{8part}.v) that the polynomial $\sum_{k=0}^n a_k\binom{x}{k+1} \in \C[x]$ is an 
antidifference of $p(x)$. Therefore, every antidifference of a non-zero polynomial of degree $n \geq 0$ is a polynomial of degree $n+1$.

Let us now integrate the polynomial $(x+1)^r$. 

\begin{defi}
Let $r,n \in \N$. The $r$-th power sum is defined as $S_r(n) := \sum_{k=0}^n k^r$. For $S_0(n)$ to make sense, also consider $0^0 = 1$.
\end{defi}

Note that $S_r(x)$ is an antidifference of $(x+1)^r$. It follows from the discussion above that $S_r(x)$ is a polynomial of degree 
$r+1$. 




\begin{theorem}
The $r$-th power sum can be written as $S_r(x) = \sum_{j=0}^r \sigma(r,j)$\(\binom{x+1}{j+1}\).
\end{theorem}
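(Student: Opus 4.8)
The plan is to exploit the characterization of $S_r(x)$ as an antidifference of $(x+1)^r$ together with the fact, recorded just above the statement, that any two antidifferences of a polynomial differ by an additive constant. First I would feed $x \mapsto x+1$ into Theorem~\ref{xnsig}, which immediately gives the expansion $(x+1)^r = \sum_{j=0}^r \sigma(r,j)\binom{x+1}{j}$ of the quantity to be summed, now written in the shifted binomial basis. This is the expression whose antidifference I want to recognize.

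Next I would exhibit an explicit antidifference of $(x+1)^r$ and verify it term by term. Writing $T_r(x) := \sum_{j=0}^r \sigma(r,j)\binom{x+1}{j+1}$, the claim is that $\dif T_r(x) = (x+1)^r$. The decisive (and only slightly delicate) computation is $\dif \binom{x+1}{j+1} = \binom{x+1}{j}$. Since the shift operator $E$ of Definition~\ref{shift} commutes with $\dif$ (both being built from $E$ and $I$), Proposition~\ref{8part}.v) yields $\dif \binom{x+1}{j+1} = \dif E\binom{x}{j+1} = E\,\dif\binom{x}{j+1} = E\binom{x}{j} = \binom{x+1}{j}$. Summing these against the coefficients $\sigma(r,j)$ and comparing with the expansion from the first step gives $\dif T_r(x) = (x+1)^r$, so that $T_r$ is an antidifference of $(x+1)^r$, exactly as $S_r$ is.

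It then remains only to pin down the additive constant. Since $\dif(S_r - T_r) \equiv 0$, we have $S_r - T_r \equiv c$ for some $c \in \C$, and it suffices to evaluate both polynomials at one convenient point. Taking $x = -1$ is cleanest: on the one hand $T_r(-1) = \sum_{j=0}^r \sigma(r,j)\binom{0}{j+1} = 0$, because $\binom{0}{j+1} = \delta_{0,j+1} = 0$ for every $j \geq 0$; on the other hand, evaluating the identity $\dif S_r(x) = (x+1)^r$ at $x=-1$ gives $S_r(0) - S_r(-1) = 0^r$, and since $S_r(0) = 0^r$ by definition of the power sum, we get $S_r(-1) = 0$ as well. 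Hence $c = 0$ and $S_r(x) = T_r(x)$, as desired. (Evaluating instead at $x=0$ works equally well, matching $T_r(0) = \sigma(r,0) = \delta_{r,0}$ against $S_r(0) = 0^r = \delta_{r,0}$.) The one place to be careful is the index-shift bookkeeping in the second paragraph: the entire argument hinges on correctly handling the interplay of $\dif$ with the substitution $x \mapsto x+1$, which is why I would route that step explicitly through the commutation $\dif E = E\dif$ rather than manipulating binomial coefficients by hand.
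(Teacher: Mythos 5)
Your proof is correct, but it takes a genuinely different route from the paper's. The paper treats the identity as a coefficient-extraction problem: it posits the expansion $S_r(x) = \sum_{j=0}^r a_{j+1}\binom{x+1}{j+1}$, reads off $a_{j+1} = [\dif^{j+1}S_r(x)]|_{x+1=0} = [\dif^j(x+1)^r]|_{x=-1}$ from the finite-difference Taylor theorem (Theorem~\ref{taylor}), expands that iterated difference as an alternating sum via Theorem~\ref{teodk}, and finally recognizes the sum as $\sigma(r,j)$ through the closed formula of Theorem~\ref{sigma}. You instead go in the synthetic direction: you verify that the candidate right-hand side $T_r(x)$ is an antidifference of $(x+1)^r$ — using only the shifted Theorem~\ref{xnsig}, Proposition~\ref{8part}.v), and the commutation $\dif E = E\dif$ — and then invoke the uniqueness of antidifferences up to an additive constant, killing the constant by evaluating at $x=-1$. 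Your route has a shorter dependency chain (it bypasses Theorems~\ref{teodk} and~\ref{sigma} entirely) and it handles explicitly a point the paper's proof glosses over, namely why the expansion of $S_r$ has no $\binom{x+1}{0}$ term: in your argument this is exactly the statement $S_r(-1)=0$, which you prove. What the paper's approach buys in exchange is that it never needs to know the answer in advance — Taylor's theorem produces the coefficients mechanically — and it stays aligned with the chapter's running theme of extracting coefficients by iterated differencing. Both proofs share the same mild implicit identification of $S_r \in \C^{\N}$ with its polynomial extension when evaluating at $x=-1$, so neither is more rigorous than the other on that score.
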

\begin{proof}
For $r=0$, this is trivially satisfied. For $r \geq 1$, write $S_r(x) = \sum_{j=0}^r a_{j+1}$\(\binom{x+1}{j+1}\). From Theorem \ref{taylor}, $a_{j+1} = [\dif^{j+1} S_r(x)]|_{x+1=0}$. We have $\dif S_r(x) = (x+1)^r$, from where we get $\dif^{j+1} S_r(x) = \dif^j (x+1)^r$. From Theorem \ref{teodk}, $$[\dif^j (x+1)^r]|_{x=-1} = \left[\sum_{k=0}^j (-1)^{j-k}\binom{j}{k}(x+1+k)^r\right]\Bigg|_{x=-1} = \sum_{k=0}^j (-1)^{j-k}\binom{j}{k}k^r$$
and from Theorem \ref{sigma}, $\sum_{k=0}^j (-1)^{j-k}\binom{j}{k}k^r = \sigma(r,j)$. We conclude that $a_{j+1} = \sigma(r,j)$.
\end{proof}

\section{Recurrence Equations}
\label{Recurrence Equations}
In usual calculus, the usage of derivatives leads to the concept of differential equations. In the same manner, the finite difference operator gives rise to the concept of recurrence equations. The goal of this section is to present linear recurrence equations and show a way to find closed form formulas for their solutions. 

All functions considered in this section belong to $\mathbb{C}^{\mathbb{N}}$.

\begin{defi}\label{141}
Let $d$ be a positive integer, $c_0, \ldots ,c_d, v_0, \ldots ,v_{d-1} \in \C$, with $c_0 c_d \neq 0$. A homogeneous linear recurrence equation of degree $d$ with constant coefficients is an equation of the form \begin{equation}\label{recequ} c_d f(n+d) + c_{d-1} f(n+d-1) + \ldots + c_1 f(n+1) + c_0 f(n) = 0 \end{equation} with initial conditions $ f(0) = v_0, f(1)=v_1, \ldots , f(d-1) = v_{d-1} $.
\end{defi}

In order to simplify the terminology, we refer to homogeneous linear recurrence equations with constant coefficients simply as recurrence equations. The name recurrence equation comes naturally from the fact that the initial conditions define recursively the solution: \begin{equation}\label{rec}
    f(n+d) = -\frac{1}{c_d}\sum_{k=0}^{d-1}c_kf(n+k), \, n \in \N.
\end{equation}

This recursiveness guarantees that, given initial conditions, there is always a unique solution to recurrence equations. This implies that the discrete analogue to Picard's theorem for finding solutions to differential equations is a triviality.

To unveil a method for describing solutions to a given recurrence equation, one must recall the operator $E = \dif + I$, as defined in \ref{shift}. Using the shift operator, one can rewrite equation \ref{recequ} as \begin{equation}\label{reqe}
    (c_dE^d + c_{d-1}E^{d-1}+ \ldots +c_1E+c_0I)(f) = \overline{0},
\end{equation}where $\overline{0}(n) = 0$ for all $n \in \N$.

\begin{notn}\label{l}
Let us denote by $L$ the linear operator $c_dE^d + \ldots +c_0I$, and by $N_L$ the set of all solutions to equation \ref{reqe}.
\end{notn}
\newpage
\begin{theorem}\label{nldim}
The set $N_L$ is a subspace of $\C^\N$ of dimension $d$.
\end{theorem}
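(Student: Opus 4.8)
The plan is to exhibit an explicit linear isomorphism between $N_L$ and $\C^d$, thereby proving both that $N_L$ is a subspace and that its dimension is exactly $d$. First I would verify that $N_L$ is a subspace of $\C^\N$: since $L = c_dE^d + \ldots + c_0I$ is a linear operator on $\C^\N$ (being a linear combination of the linear operators $E^j$), its kernel $N_L = \{f : L(f) = \overline{0}\}$ is automatically a linear subspace. This is the easy, formal part.

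The substantive content is the dimension count, and for this I would use the recursive reformulation already recorded in equation \ref{rec}. Consider the evaluation map $\Phi \colon N_L \to \C^d$ defined by $\Phi(f) = (f(0), f(1), \ldots, f(d-1))$, sending a solution to its tuple of initial values. I would show $\Phi$ is a linear isomorphism. Linearity is immediate from the pointwise definition of the vector space operations on $\C^\N$. Injectivity follows from the recursion: if $f(0) = \cdots = f(d-1) = 0$, then by equation \ref{rec} (whose use of $-\tfrac{1}{c_d}$ is legitimate precisely because $c_d \neq 0$) each successive value $f(n+d)$ is a fixed linear combination of the previous $d$ values, so an easy induction on $n$ forces $f \equiv \overline{0}$; hence $\ker \Phi = \{\overline{0}\}$. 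Surjectivity also follows from the recursion: given any target tuple $(v_0, \ldots, v_{d-1}) \in \C^d$, I define $f$ by setting $f(k) = v_k$ for $0 \leq k \leq d-1$ and then extending to all $n$ via the recurrence \ref{rec}; the resulting $f$ satisfies $L(f) = \overline{0}$ by construction and has the prescribed initial values, so $\Phi(f) = (v_0, \ldots, v_{d-1})$.

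Having established that $\Phi$ is a linear bijection between $N_L$ and $\C^d$, I conclude $\dim N_L = \dim \C^d = d$, which completes the proof. The one point requiring genuine care is the hypothesis $c_d \neq 0$ from Definition \ref{141}: without it the leading term could not be isolated, the recursion \ref{rec} would be invalid, and the clean correspondence between a solution and its first $d$ values would break down. I expect this to be the main (and essentially only) obstacle, and it is a conceptual rather than a computational one — the entire argument hinges on recognizing that the recurrence lets one propagate initial data forward uniquely and freely, which is exactly the discrete analogue of the existence-and-uniqueness remark made just before the statement.
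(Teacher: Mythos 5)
Your proposal is correct and follows exactly the paper's argument: the paper also uses the evaluation map $f \mapsto (f(0),\ldots,f(d-1))$ from $N_L$ to $\C^d$, citing injectivity by uniqueness of solutions and surjectivity by existence of solutions given initial conditions, both of which rest on the recursion \ref{rec} just as you spell out. Your version merely makes explicit the details (subspace verification, induction for injectivity, forward propagation for surjectivity, and the role of $c_d \neq 0$) that the paper leaves implicit.
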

\begin{proof}
The map $f \mapsto (f(0),\ldots ,f(d-1))$ is a linear map from $N_L$ to $\C^d$. It is injective by uniqueness of solutions, and is surjective by existence of solutions given initial conditions.
\end{proof}

Since the solutions to a recurrence equation form a vector space, it is only natural to ask for a basis. For that, we must first define the characteristic polynomial.

\begin{defi}
Given a linear operator $L$ as in \ref{l}, the characteristic polynomial $p_L(x)$ of $L$ is defined as 
$p_L(x) := c_dx^d + c_{d-1}x^{d-1} + \ldots + c_1x + c_0$.
\end{defi}
\begin{theorem}\label{plan}
Let $\alpha \in \C$. If $p_L(\alpha)=0$, then the function $f(n) = \alpha^n$ is a solution to $L(f)=\overline{0}$.
\end{theorem}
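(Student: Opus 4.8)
The plan is to show directly that applying the operator $L$ to the geometric sequence $f(n)=\alpha^n$ produces the zero sequence, using the hypothesis $p_L(\alpha)=0$. The key observation is that the shift operators act very simply on geometric sequences: since $E^k f(n) = f(n+k) = \alpha^{n+k} = \alpha^k \alpha^n$, each power of $E$ merely multiplies $f(n)$ by the corresponding power of $\alpha$. This is the mechanism that converts the operator $L$, a polynomial in $E$, into the scalar $p_L(\alpha)$, a polynomial in $\alpha$.

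Concretely, I would fix an arbitrary $n \in \N$ and compute $L(f)(n)$ by expanding $L = c_dE^d + \ldots + c_1 E + c_0 I$ term by term. The chain of equalities is
\begin{align*}
L(f)(n) &= \sum_{k=0}^d c_k E^k f(n) = \sum_{k=0}^d c_k f(n+k) \\
&= \sum_{k=0}^d c_k \alpha^{n+k} = \alpha^n \sum_{k=0}^d c_k \alpha^k = \alpha^n \, p_L(\alpha).
\end{align*}
By hypothesis $p_L(\alpha)=0$, so the right-hand side is $0$ for every $n$, which is precisely the statement that $L(f) = \overline{0}$, i.e.\ that $f(n)=\alpha^n$ solves equation \ref{reqe}.

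There is really no substantive obstacle here; the result is an immediate consequence of the multiplicative behaviour of $E$ on geometric sequences together with the definitions of $L$ and $p_L$. The only point deserving a word of care is the factorization step $\sum_k c_k \alpha^{n+k} = \alpha^n \sum_k c_k \alpha^k$, which simply pulls the common factor $\alpha^n$ out of the sum and is where the characteristic polynomial emerges; one should also note at the outset that $f(n)=\alpha^n$ genuinely lies in $\C^\N$ so that it is a legitimate candidate solution. Everything else is routine algebra, and the proof is complete once the displayed computation is exhibited.
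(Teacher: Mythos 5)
Your proof is correct and follows essentially the same route as the paper's: both expand $L(f)(n)=\sum_{k=0}^d c_k\alpha^{n+k}$, factor out $\alpha^n$ to obtain $\alpha^n p_L(\alpha)$, and invoke the hypothesis $p_L(\alpha)=0$. The only difference is presentational — you make the action of $E^k$ on geometric sequences explicit before summing, while the paper writes the factorization directly.
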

\begin{proof}
By hypothesis, $p_L(\alpha) = c_d \alpha^d + \ldots + c_1 \alpha + c_0 = 0$, therefore 
$$c_d \alpha^{n+d} + \ldots + c_1 \alpha^{n+1} + c_0 \alpha^n=\alpha^n p_L(\alpha)= 0$$ 
for all $n \in \N$, and $f(n) = \alpha^n$ is a solution to $L(f)=\overline{0}$.
\end{proof}

Now that we have an information on some solutions to recurrence equations, let us show that functions of the form $\alpha^n$ can be used to find a basis for $N_L$. We consider two different cases, one with all roots distinct and one more general.

\begin{theorem}\label{drbase}
Suppose that $p_L(x)$ has $d$ distinct roots $\alpha_1, \ldots ,\alpha_d$. Denote $f_j(n) = \alpha_j^n$. Then $\{f_1, \ldots ,f_d\}$ is a basis for $N_L$.
\end{theorem}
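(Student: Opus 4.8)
The plan is to combine the two facts already established in the excerpt: by Theorem~\ref{nldim} the solution space $N_L$ has dimension exactly $d$, and by Theorem~\ref{plan} each function $f_j(n) = \alpha_j^n$ lies in $N_L$, since $\alpha_j$ is by hypothesis a root of the characteristic polynomial $p_L$. Thus I will have produced $d$ vectors living in a $d$-dimensional vector space, and the whole problem reduces to checking that the family $\{f_1, \ldots, f_d\}$ is linearly independent; a linearly independent set of $d$ vectors in a space of dimension $d$ is automatically a basis, so no further verification of spanning is needed.

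To prove linear independence, I would suppose that $\sum_{j=1}^d \lambda_j f_j = \overline{0}$ for some scalars $\lambda_j \in \C$, which unwinds to the statement that $\sum_{j=1}^d \lambda_j \alpha_j^n = 0$ for every $n \in \N$. Restricting to the values $n = 0, 1, \ldots, d-1$ yields a homogeneous linear system in the unknowns $\lambda_1, \ldots, \lambda_d$ whose coefficient matrix is the Vandermonde matrix $\left( \alpha_j^{\, n} \right)_{0 \le n \le d-1,\; 1 \le j \le d}$. The determinant of this matrix is $\prod_{1 \le i < j \le d}(\alpha_j - \alpha_i)$, which is nonzero precisely because the roots $\alpha_1, \ldots, \alpha_d$ are assumed distinct. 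Hence the system has only the trivial solution $\lambda_1 = \cdots = \lambda_d = 0$, which is exactly linear independence.

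I expect the only genuine content of the argument to be the nonvanishing of the Vandermonde determinant; everything else is bookkeeping built on the dimension count of Theorem~\ref{nldim} and the membership statement of Theorem~\ref{plan}. The hardest point to get right is the justification that distinctness of the $\alpha_j$ forces the determinant to be nonzero, so if one prefers to keep the proof self-contained rather than quoting the Vandermonde formula, I would instead argue by induction on $d$. Starting from $g(n) := \sum_{j=1}^d \lambda_j \alpha_j^n \equiv 0$, I would apply the operator $E - \alpha_d I$ to obtain $\sum_{j=1}^{d-1} \lambda_j (\alpha_j - \alpha_d)\, \alpha_j^n \equiv 0$, in which the $j = d$ term has been annihilated. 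Since $\alpha_j - \alpha_d \neq 0$ for $j < d$ and the remaining bases are still distinct, the inductive hypothesis gives $\lambda_j = 0$ for $j < d$, and feeding this back into the original relation yields $\lambda_d = 0$ as well. This reduction step, keeping careful track of which terms survive the shift, is where I would concentrate the care.
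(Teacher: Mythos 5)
Your proof is correct and takes essentially the same route as the paper: both invoke Theorem~\ref{plan} for membership of the $f_j$ in $N_L$, Theorem~\ref{nldim} for the dimension count, and then establish linear independence by evaluating a vanishing linear combination at $n=0,\ldots,d-1$ and appealing to the nonvanishing of the Vandermonde determinant $\prod_{1\le i<j\le d}(\alpha_j-\alpha_i)$, which holds precisely because the roots are distinct. The self-contained inductive alternative via the operator $E-\alpha_d I$ that you sketch is a fine substitute for quoting the Vandermonde formula, but it is not needed.
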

\begin{proof}
By Theorem \ref{plan}, each $f_j$ is an element of $N_L$, and Theorem \ref{nldim} shows that the dimension of $N_L$ is $d$. Therefore, it is sufficient to show that the functions $f_1, \ldots ,f_d$ are linearly independent.

Let $A_1 f_1(x) + \ldots + A_d f_d(x) = \overline{0}(x)$. Evaluating for $x=0, \ldots, d-1$, we get the following system of equations:

\begin{equation}\label{sys0}
    \begin{cases}
    A_1 + A_2 + \ldots + A_{d-1} + A_d = 0\\
    A_1 \alpha_1 + A_2 \alpha_2 + \ldots + A_{d-1} \alpha_{d-1} + A_d \alpha_d = 0\\
    \quad\vdots\\
    A_1 \alpha_1^{d-1} + A_2 \alpha_2^{d-1} + \ldots + A_{d-1} \alpha_{d-1}^{d-1} + A_d \alpha_d^{d-1} = 0
    \end{cases}
\end{equation}

This is equivalent to the following matrix equation:

\begin{equation}\label{sysm0}
    \begin{bmatrix}
    1&1&...&1&1\\
    \alpha_1&\alpha_2&...&\alpha_{d-1}&\alpha_d\\
    \vdots&\vdots&\ddots&\vdots&\vdots\\
    \alpha_1^{d-1}&\alpha_2^{d-1}&...&\alpha_{d-1}^{d-1}&\alpha_d^{d-1}
    \end{bmatrix}
    \begin{bmatrix}
    A_1\\A_2\\\vdots\\A_{d-1}
    \end{bmatrix}=
    \begin{bmatrix}
    0\\0\\\vdots\\0
    \end{bmatrix}
\end{equation}

System \ref{sys0} has $A_1 = \ldots = A_d = 0$ as its unique solution if and only if the $d \times d$ matrix in \ref{sysm0} is invertible. This is equivalent to it having non-zero determinant. But this matrix is a Vandermonde matrix, and its determinant is given by $\prod_{1 \leq i < j \leq d} (\alpha_j - \alpha_i)$, as seen in \cite[Theorem 9.6.4]{wagner}. Since $\alpha_i \neq \alpha_j$ for $i \neq j$ by hypothesis, the matrix is invertible, and therefore, the vectors $(f_i(0), \ldots ,f_i(d-1)), 1 \leq i \leq d,$ are linearly independent in $\C^d$. Moving backwards through the isomorphism presented in the proof of Theorem \ref{nldim}, this implies that the set $\{f_1,\ldots ,f_d\}$ is linearly independent.
\end{proof}


\begin{theorem}\label{finddr}
Let $v_0, \ldots ,v_{d-1} \in \C$, and suppose that $p_L(x)$ has $d$ distinct roots $\alpha_1,...,\alpha_d$. 
Then $f(n) = A_1 \alpha_1^n + \ldots + A_d \alpha_d^n$, with $A_1,\ldots ,A_d$ given by
\begin{equation*}
    \begin{cases}
    A_1 + A_2 + ... + A_{d-1} + A_d = v_0\\
    A_1 \alpha_1 + A_2 \alpha_2 + ... + A_{d-1} \alpha_{d-1} + A_d \alpha_d = v_1\\
    \quad\vdots\\
    A_1 \alpha_1^{d-1} + A_2 \alpha_2^{d-1} + ... + A_{d-1} \alpha_{d-1}^{d-1} + A_d \alpha_d^{d-1} = v_{d-1},
    \end{cases}
\end{equation*}
is the unique solution to $L(f)=\overline{0}$ with initial conditions $f(0)=v_0,\ldots ,f(d-1)=v_{d-1}$.
\end{theorem}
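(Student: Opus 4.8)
The plan is to assemble the result from the structural facts already established, principally Theorems \ref{nldim} and \ref{drbase}. First I would observe that, since $p_L(x)$ has $d$ distinct roots, Theorem \ref{drbase} guarantees that $\{f_1, \ldots, f_d\}$, with $f_j(n) = \alpha_j^n$, is a basis for the solution space $N_L$. Hence every element of $N_L$ is of the form $f(n) = A_1\alpha_1^n + \ldots + A_d\alpha_d^n$ for a unique choice of scalars $A_1, \ldots, A_d \in \C$, and conversely any such $f$ automatically lies in $N_L$, i.e.\ solves $L(f) = \overline{0}$.

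Next I would impose the initial conditions. Evaluating the candidate $f(n) = \sum_{j=1}^d A_j\alpha_j^n$ at $n = 0, 1, \ldots, d-1$ and demanding $f(i) = v_i$ produces exactly the displayed linear system in the unknowns $A_1, \ldots, A_d$. The coefficient matrix of this system is the same $d \times d$ Vandermonde matrix that appeared in the proof of Theorem \ref{drbase}, whose determinant $\prod_{1 \leq i < j \leq d}(\alpha_j - \alpha_i)$ is nonzero precisely because the roots are distinct. Therefore the system admits a unique solution $(A_1, \ldots, A_d)$, which produces a function $f \in N_L$ realizing the prescribed initial values.

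Finally I would address uniqueness as a solution of the recurrence. By Theorem \ref{nldim}, the evaluation map $f \mapsto (f(0), \ldots, f(d-1))$ is an isomorphism from $N_L$ onto $\C^d$; in particular it is injective, so there is at most one element of $N_L$ taking the values $(v_0, \ldots, v_{d-1})$. Combined with the existence just established, this shows that the $f$ built from the solved Vandermonde system is \emph{the} unique solution to $L(f) = \overline{0}$ with the given initial conditions.

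I do not expect a genuine obstacle here: the theorem is essentially a bookkeeping corollary of the basis result. The only point requiring care is to keep the two uses of the Vandermonde matrix distinct — one to establish linear independence of the $f_j$ (already done in Theorem \ref{drbase}), and one to guarantee solvability of the inhomogeneous system for the $A_j$ — and to invoke the injectivity half of Theorem \ref{nldim} for uniqueness rather than re-deriving it from the recurrence.
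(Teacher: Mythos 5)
Your proposal is correct and takes essentially the same route as the paper: both arguments rest on Theorem~\ref{drbase} to write every element of $N_L$ as $A_1\alpha_1^n + \ldots + A_d\alpha_d^n$, impose the initial conditions by evaluating at $n = 0,\ldots,d-1$, and settle uniqueness by the fact that initial values determine the solution (your appeal to the injectivity in Theorem~\ref{nldim} is the same fact the paper cites directly via the recursion~\ref{rec}). The only cosmetic difference is that you justify solvability of the linear system by the nonvanishing Vandermonde determinant, whereas the paper obtains it implicitly: the recursively defined solution exists, lies in $N_L$, hence has the stated form, and its coefficients therefore satisfy the system.
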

\begin{proof}
Theorem \ref{drbase} shows that all functions in $N_L$ are of the form $f(n) = A_1 \alpha_1^n + \ldots + A_d \alpha_d^n$ for some $A_1, \ldots ,A_d$. Uniqueness of solutions comes from the recursiveness in \ref{rec}. The values of $A_1,\ldots ,A_d$ are then simply given by evaluating $A_1 \alpha_1^n + \ldots + A_d \alpha_d^n$ for $n = 0, \ldots ,d-1$.
\end{proof}

Theorem \ref{finddr} gives us a method to find a closed form formula for some recurrent sequences. In particular, 
we can use the theorem to find a formula for the Fibonacci sequence.

\begin{corollary}
The $n$-th Fibonacci number is given by $F_n = \frac{1}{\sqrt{5}}[\Phi^{n+1}-\phi^{n+1}]$, where $\Phi = \frac{1+\sqrt{5}}{2}$ and $\phi = \frac{1-\sqrt{5}}{2}$.
\end{corollary}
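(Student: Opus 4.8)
The plan is to recognize the Fibonacci sequence as the solution of a degree-two recurrence equation in the sense of Definition~\ref{141} and then invoke Theorem~\ref{finddr} directly. First I would record that, with the indexing forced by the exponent $n+1$ appearing in the target formula, the Fibonacci numbers satisfy $F(n+2) = F(n+1) + F(n)$ with initial conditions $F(0) = F(1) = 1$. Rewriting this as $F(n+2) - F(n+1) - F(n) = 0$ exhibits it as a recurrence equation with $c_2 = 1$ and $c_1 = c_0 = -1$, so that $c_0 c_d = -1 \neq 0$, as required by Definition~\ref{141}; the associated operator $L$ is then $E^2 - E - I$ in the notation of \ref{l}.

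Next I would form the characteristic polynomial $p_L(x) = x^2 - x - 1$ and compute its roots by the quadratic formula, obtaining exactly $\Phi = \frac{1+\sqrt{5}}{2}$ and $\phi = \frac{1-\sqrt{5}}{2}$. Since $\Phi \neq \phi$, Theorem~\ref{drbase} guarantees that $\{\Phi^n, \phi^n\}$ is a basis of $N_L$, and Theorem~\ref{finddr} tells us that the unique solution matching the prescribed initial data is $F(n) = A_1 \Phi^n + A_2 \phi^n$, where the pair $(A_1, A_2)$ is the solution of the linear system $A_1 + A_2 = 1$ and $A_1 \Phi + A_2 \phi = 1$.

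Solving this $2\times 2$ system is the only computation, and it is routine: using $\Phi - \phi = \sqrt{5}$ one finds $A_1 = \Phi / \sqrt{5}$ and $A_2 = -\phi / \sqrt{5}$. Substituting back gives
$$ F(n) = \frac{1}{\sqrt{5}}\left( \Phi \cdot \Phi^n - \phi \cdot \phi^n \right) = \frac{1}{\sqrt{5}}\left( \Phi^{n+1} - \phi^{n+1} \right), $$
which is precisely the claimed closed form.

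There is no genuine obstacle in this argument; the single point demanding care is the bookkeeping of the indexing convention. Because the statement carries the exponent $n+1$ rather than $n$, the relevant initial conditions must be taken as $F(0) = F(1) = 1$ (the convention under which the sequence reads $1, 1, 2, 3, 5, \ldots$) rather than the more common $F(0) = 0$; selecting these correctly is exactly what makes the constants $A_1, A_2$ collapse to the stated form after factoring out one power of each root. I would therefore emphasize checking both the coefficient hypothesis $c_0 c_d \neq 0$ and the initial data, since these are what legitimately license the application of Theorem~\ref{finddr}.
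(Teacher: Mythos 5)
Your proposal is correct and follows essentially the same route as the paper: identify the operator $L = E^2 - E - I$, find the roots $\Phi$ and $\phi$ of $p_L(x) = x^2 - x - 1$, and apply Theorem~\ref{finddr} with initial conditions $v_0 = v_1 = 1$ to obtain $A_1 = \Phi/\sqrt{5}$ and $A_2 = -\phi/\sqrt{5}$. Your extra emphasis on the indexing convention ($F(0)=F(1)=1$) and the hypothesis $c_0 c_d \neq 0$ is sound bookkeeping, but the argument itself is identical to the paper's.
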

\begin{proof}
It is known that $F_{n+2} = F_{n+1}+F_n$. This shows that, if $F(n) = F_n$, then $L(F) = \overline{0}$, with $L = E^2-E-I$. The two roots of $p_L(x) = x^2-x-1$ are $\Phi$ and $\phi$, so $F_n = A_1 \Phi^n + A_2 \phi^n$. Solving the system in Theorem \ref{finddr}, with initial conditions $v_0=v_1=1$, gives $A_1 = \frac{\Phi}{\sqrt{5}}$ and $A_2 = -\frac{\phi}{\sqrt{5}}$.
\end{proof}

Let us now consider the more general case where $p_L(x)$ may have roots with multiplicities.

\begin{theorem}
Let $p_L(x) = c_d \prod_{j=1}^k (x-\alpha_j)^{m_j}$, with all $m_j > 0$, all $\alpha_j$ pairwise distinct, and $m_1 + \ldots  + m_k = d$. Then a basis for $N_L$ is given by $\bigcup_{j=1}^k \{\alpha_j^n, n\alpha_j^n, \ldots ,n^{m_j -1}\alpha_j^n\}$.
\end{theorem}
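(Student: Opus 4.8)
The plan is to exploit the dimension count. By Theorem~\ref{nldim}, $N_L$ is a $d$-dimensional subspace of $\C^\N$, and the proposed set has exactly $\sum_{j=1}^k m_j = d$ elements. Hence it suffices to establish two things: (a) every function $n^i\alpha_j^n$ with $0 \le i \le m_j-1$ belongs to $N_L$, and (b) the collection of all such functions is linearly independent. Both parts will rest on factoring the operator as $L = c_d\prod_{j=1}^k (E-\alpha_j I)^{m_j}$, where the factors commute because they are polynomials in $E$, together with a single computational lemma describing how $E-\alpha I$ acts on functions of the form $Q(n)\alpha^n$ with $Q \in \C[n]$.

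That lemma is the following: for any polynomial $Q$ and scalars $\alpha,\beta$, a direct computation gives $(E-\beta I)(Q(n)\alpha^n) = \alpha^n\big(\alpha\, Q(n+1) - \beta\, Q(n)\big)$. I would record two consequences. First, taking $\beta = \alpha$ yields $(E - \alpha I)(Q(n)\alpha^n) = \alpha^{n+1}\dif Q(n)$; since $\dif$ lowers the degree of a polynomial by one (Proposition~\ref{8part}), the new polynomial factor has degree $\deg Q - 1$. Iterating, $(E-\alpha I)^{m}$ annihilates $Q(n)\alpha^n$ whenever $\deg Q < m$. Second, taking $\beta \ne \alpha$, the coefficient of the top-degree term of $\alpha Q(n+1)-\beta Q(n)$ equals $(\alpha-\beta)$ times the leading coefficient of $Q$, which is nonzero; hence $E - \beta I$ \emph{preserves} the degree of the polynomial factor when $\beta\ne\alpha$.

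For part (a), since the factors of $L$ commute, I would move $(E-\alpha_j I)^{m_j}$ to act first on $n^i\alpha_j^n$; because $i \le m_j - 1 < m_j$, the first consequence of the lemma forces this to vanish, so $n^i\alpha_j^n \in N_L$. For part (b), suppose $\sum_{j=1}^k P_j(n)\alpha_j^n \equiv \overline{0}$ with each $\deg P_j \le m_j - 1$. Fixing an index, say $1$, I would apply the operator $\prod_{\ell\ne 1}(E-\alpha_\ell I)^{m_\ell}$ to this identity. By the first consequence every term with $j\ne 1$ is killed, while by the second consequence (using $\alpha_1 \ne \alpha_\ell$ for $\ell\ne 1$) the surviving term becomes $R(n)\alpha_1^n$ with $\deg R = \deg P_1$. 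Since the total is zero, $R \equiv \overline{0}$, forcing $P_1 \equiv \overline{0}$; repeating for each index shows all $P_j$ vanish, which is exactly linear independence.

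I expect the linear independence in part (b) to be the main obstacle, since membership is an essentially mechanical consequence of the factorization. The delicate point is to separate the contributions of distinct roots; the operator-theoretic device above does this cleanly by using one family of factors to annihilate all but one root, while the degree-preservation property guarantees the isolated summand does not collapse. An alternative would be a generalized Vandermonde argument on the values $n^i\alpha_j^n$, but the commuting-operator approach avoids the associated determinant computation and dovetails with the factorization already used in part (a).
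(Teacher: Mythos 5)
Your proposal is correct, and its membership half runs along essentially the same lines as the paper's: the paper proves $(E-\alpha_jI)^{m+1}(n^{m}\alpha_j^{n})=\overline{0}$ by induction on $m$, using linearity and exactly the degree-lowering phenomenon your lemma $(E-\alpha I)(Q(n)\alpha^n)=\alpha^{n+1}\dif Q(n)$ isolates. Where you genuinely diverge is linear independence. The paper follows the template of Theorem~\ref{drbase}: it evaluates the candidate solutions at $n=0,\ldots,d-1$, assembles a generalized (confluent) Vandermonde matrix, and invokes the determinant formula
\begin{equation*}
\Bigl[\prod_{1 \leq i \leq k} \alpha_i^{\binom{m_i}{2}}\Bigr]\Bigl[\prod_{1 \leq i < j \leq k} (\alpha_i - \alpha_j)^{m_i m_j}\Bigr],
\end{equation*}
cited from \cite[Appendix 3.1]{liu}, together with the observation that $c_0 \neq 0$ forces all roots to be nonzero, to conclude invertibility. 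Your operator-annihilation argument --- apply $\prod_{\ell \neq j}(E-\alpha_\ell I)^{m_\ell}$ to isolate the $j$-th summand, kill the other summands because their polynomial degrees are too low, and keep the survivor's degree intact because $\alpha_j \neq \alpha_\ell$ --- replaces that externally cited determinant by a self-contained computation that reuses the same factorization of $L$ as your part (a). What the paper's route buys is uniformity with the distinct-root case and an explicit determinant; what yours buys is avoiding the confluent Vandermonde formula altogether. One point to patch: your last step passes from $R(n)\alpha_1^n \equiv \overline{0}$ (as a function on $\N$) to $R$ being the zero polynomial, and this requires $\alpha_1 \neq 0$; here you must invoke the standing hypothesis $c_0 \neq 0$, which guarantees that no root of $p_L$ vanishes --- precisely the fact the paper uses to see that its determinant is nonzero. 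With that one line added, your argument is complete.
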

\begin{proof}
We prove by induction in $m$ (with $0 \leq m < m_j$) that $(E-\alpha_jI)^{m+1}(n^m\alpha_j^n) = \overline{0}$.
The case $m=0$ being trivial, assume that $m \geq 1$.  

By the induction hypothesis, $(E-\alpha_jI)^{m}(n^r \alpha_j^n) = \overline{0}$ for all $0 \leq r \leq m - 1$, and by linearity, we have $(E-\alpha_jI)^{m}(p(n) \alpha_j^n) = \overline{0}$ for every polynomial $p(n)$ of degree at most $m-1$. 

Now it is easy to see that ${(E-\alpha_jI)(n^{m}\alpha_j^{n-1}) = p(n) \alpha_j^n}$ for some polynomial $p(n)$ of degree at most 
$m-1$. Hence, 
$$(E-\alpha_jI)^{m+1} (n^{m}\alpha_j^{n}) = \alpha_j(E-\alpha_jI)^{m}[(E-\alpha_jI)(n^{m}\alpha_j^{n-1})]
=\alpha_j(E-\alpha_jI)^{m}(p(n) \alpha_j^n)=\overline{0}.$$

It follows that $\{\alpha_j^n, n\alpha_j^n, \ldots ,n^{m_j -1}\alpha_j^n\}$ is a set of solutions to $L(f) = \overline{0}$, and it remains to prove that $\bigcup_{j=1}^k \{\alpha_j^n, n\alpha_j^n,\ldots,n^{m_j -1}\alpha_j^n\}$ is a linearly independent set. For that, let us use the same technique as in the proof of Theorem \ref{drbase}. 
Setting $h := d-1$, we need to show that the following matrix is invertible:  
\begin{equation*}
    \begin{bmatrix}
    0&0&...&1&...&0&0&...&1\\
    \alpha_1&\alpha_1&...&\alpha_1&...&\alpha_k&\alpha_k&...&\alpha_k\\
    2^{m_1 - 1}\alpha_1^2&2^{m_1 - 2}\alpha_1^2&...&\alpha_1^2&...&2^{m_k - 1}\alpha_k^2&2^{m_k - 2}\alpha_k^2&...&\alpha_k^2\\
    \vdots&\vdots&\ddots&\vdots&\ddots&\vdots&\vdots&\ddots&\vdots\\
    h^{m_1 - 1}\alpha_1^{h}&h^{m_1 - 2}\alpha_1^{h}&...&\alpha_1^{h}&...&h^{m_k - 1}\alpha_k^{h}&h^{m_1 - 2}\alpha_k^{h}&...&\alpha_k^{h}
    \end{bmatrix}
\end{equation*}

By \cite[Appendix 3.1]{liu}, this matrix has determinant equal to
\begin{equation*}
    [\prod_{1 \leq i \leq k} \alpha_i^{\binom{m_i}{2}}][\prod_{1 \leq i < j \leq k} (\alpha_i - \alpha_j)^{m_i m_j}].
\end{equation*}
Since $c_0 \neq 0$, all the roots of $p_L(x)$ are non-zero, and thus the determinant is non-zero. Therefore, the matrix is invertible and $\bigcup_{j=1}^k \{\alpha_j^n, n\alpha_j^n,...,n^{m_j -1}\alpha_j^n\}$ is a linearly independent set.
\end{proof}

Note as well that it is possible to build an analogue to Theorem \ref{finddr} for the general case. The method is precisely the same.

\chapter{Formal Power Series}\label{ch2}

In this chapter, we will introduce the algebra of complex formal power series. We begin this study by presenting Cauchy's algebra as the environment in which we'll work to develop this theory. We then present a topology to it, which allows us to develop a sense of convergence. After that, we define formal power series and see them as elements of our underlying environment. Finally, we look at derivatives, logarithms and exponentials in this context, as well as an interesting result about rational functions.
For further reading, we recommend \cite[Ch. 13]{wagner}.

\section{The Cauchy Algebra}

Before providing the definition of Cauchy's algebra, let us recall the definition of algebra.

\begin{defi}
    An algebra $\mathcal{A}$ over a field $\mathbb{K}$ is a vector space equipped with a bilinear product.
    If the product is associative (respectively, commutative), we say that $\mathcal{A}$ is an associative 
    (respectively, commutative) algebra.
\end{defi}

\begin{defi}
    The set of complex sequences $\C^\N$ together with the operations
    \begin{itemize}
        \item[ ] $(F+G)(n) = F(n)+G(n)$
        \item[ ] $(F \cdot G)(n) = \sum_{k=0}^n F(k)G(n-k)$
        \item[ ] $(\lambda \cdot F)(n) = \lambda F(n)$
    \end{itemize}
    ($F,G \in \C^\N$ and $\lambda \in \C$) is an associative and commutative $\C$-algebra, called Cauchy's algebra. 
    The additive identity of Cauchy's algebra is $\overline{0}(n) \equiv 0$; the sequence $\overline{1}(n) = \delta_{0,n}$
    is a multiplicative identity. Naturally, powers of $F \in \C^{\N}$ are given by $F^0 = \overline{1}$ and $F^{n} = F^{n-1} \cdot F$.
\end{defi}

\begin{prop}
 Let $F, G \in \C^{\N}$. If $F,G \neq \overline{0}$, then $F \cdot G \neq \overline{0}$.
 In other words, $\C^{\N}$ is an integral domain. 
\end{prop}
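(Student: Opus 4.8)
The plan is to track, for each sequence, the least index at which it is nonzero --- its \emph{order} --- and to show that the Cauchy product is nonzero precisely at the sum of these two orders. First I would observe that, since $F \neq \overline{0}$, the set of indices at which $F$ does not vanish is a nonempty subset of $\N$, so it has a minimum; define $m := \min\{k \in \N : F(k) \neq 0\}$, and similarly $n := \min\{k \in \N : G(k) \neq 0\}$ for the nonzero $G$. The key claim is that $(F \cdot G)(m+n) = F(m)G(n)$. Granting this, $F(m)G(n)$ is a product of two nonzero complex numbers and hence nonzero, so $(F \cdot G)(m+n) \neq 0$ and therefore $F \cdot G \neq \overline{0}$.

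To establish the claim, I would write out the defining convolution $(F \cdot G)(m+n) = \sum_{k=0}^{m+n} F(k)G(m+n-k)$ and argue that every summand except the one with $k = m$ vanishes. The index range splits cleanly about the pivot $k = m$: when $k < m$, minimality of $m$ forces $F(k) = 0$; when $k > m$, we have $m+n-k < n$, so minimality of $n$ forces $G(m+n-k) = 0$. Thus the only surviving term is $k = m$, contributing exactly $F(m)G(n)$, which proves the claim.

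There is no substantial obstacle here; the one point to get right is that the two minimality conditions are exactly complementary about the pivot $k = m$, so that every index is accounted for and precisely one nonzero term survives. Because $\C$ is a field, and in particular has no zero divisors, $F(m)G(n) \neq 0$, which completes the verification that $\C^\N$ has no zero divisors and is therefore an integral domain.
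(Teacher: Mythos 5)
Your proposal is correct and follows exactly the same argument as the paper: take the minimal nonzero indices of $F$ and $G$, evaluate the convolution at their sum, and observe that minimality kills every term except $F(m)G(n) \neq 0$. Nothing to add.
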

\begin{proof}
    Let $r = \min\{k ; F(k) \neq 0\}$, ${s = \min\{k ; G(k) \neq 0\}}$. Then 
    $$(F \cdot G)(r+s) = \sum_{k=0}^{r+s} F(k)G(r+s-k) = F(r)G(s) \neq 0,$$ 
    as if $k < r$, then $F(k) = 0$, and if $k > r$, $r+s-k < s$ and $G(k) = 0$. Hence, 
    $F \cdot G \neq \overline{0}$.
\end{proof}
\newpage
\begin{prop}
    A function $F \in \C^{\N}$ has a multiplicative inverse if and only if $F(0) \neq 0$.
\end{prop}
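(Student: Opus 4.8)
The plan is to prove both directions of the biconditional, treating the forward implication as the easy half and the converse as the substantive one. For the forward direction, suppose $F$ has a multiplicative inverse $G$, so that $F \cdot G = \overline{1}$. Evaluating the Cauchy product at $n=0$ gives $(F \cdot G)(0) = F(0)G(0)$, and since $\overline{1}(0) = \delta_{0,0} = 1$, we obtain $F(0)G(0) = 1$, which forces $F(0) \neq 0$.

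For the converse, suppose $F(0) \neq 0$. The plan is to construct the inverse $G$ explicitly by recursion on its entries, exploiting the triangular nature of the Cauchy product. I would require $(F \cdot G)(n) = \delta_{0,n}$ for all $n$, which unfolds as $\sum_{k=0}^n F(k)G(n-k) = \delta_{0,n}$. At $n=0$ this reads $F(0)G(0) = 1$, so I set $G(0) := 1/F(0)$, which is well-defined precisely because $F(0) \neq 0$. For $n \geq 1$, the equation reads $\sum_{k=0}^n F(k)G(n-k) = 0$; isolating the $k=0$ term gives $F(0)G(n) = -\sum_{k=1}^n F(k)G(n-k)$, so I define
$$ G(n) := -\frac{1}{F(0)}\sum_{k=1}^n F(k)G(n-k). $$
The crucial observation is that the right-hand side only involves $G(0), \ldots, G(n-1)$, so this recursion is well-posed and determines $G$ uniquely. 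Again, division by $F(0)$ is legitimate thanks to the hypothesis.

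The main subtlety — and the step I would treat most carefully — is verifying that the $G$ so constructed genuinely satisfies $F \cdot G = \overline{1}$ as sequences, rather than merely being a formal reshuffling. This is really a matter of reading the recursion backwards: by construction $G(0)$ satisfies the $n=0$ equation, and for each $n \geq 1$ the defining formula is exactly the rearrangement of $(F\cdot G)(n) = 0$, so every coordinate of $F \cdot G$ matches the corresponding coordinate of $\overline{1}$. I would then note that since $\C^\N$ is a commutative algebra, this one-sided inverse is automatically a two-sided inverse, completing the proof. Throughout, the only obstacle is purely bookkeeping: ensuring the recursion never references an undefined later term and that the invertibility of $F(0)$ as a scalar is used at exactly the two places where we divide by it.
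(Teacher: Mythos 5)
Your proposal is correct and follows essentially the same route as the paper: evaluating the Cauchy product at $n=0$ for the forward direction, and recursively constructing $G$ via $G(0)=1/F(0)$ and $G(n) = -\frac{1}{F(0)}\sum_{k=1}^{n}F(k)G(n-k)$ for the converse, with the verification that $F \cdot G = \overline{1}$ being immediate from the defining recursion. The only addition is your explicit remark that commutativity upgrades the one-sided inverse to a two-sided one, which the paper leaves implicit.
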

\begin{proof}
    \emph{Necessity: } Suppose that $F$ has a multiplicative inverse $F^{-1}$. 
    Then 
    $$F^{-1}(0)F(0)=(F^{-1} \cdot F)(0)= \overline{1}(0) = 1,$$ 
    and thus $F(0) \neq 0$.

    \emph{Sufficiency: }Define the function $G \in \C^\N$ inductively by $(i)\; G(0) = 1/F(0)$, and \newline ${(ii)\; G(n+1) = -\frac{1}{F(0)} \sum_{k=1}^{n+1} F(k)G(n+1-k)}$. 
    By definition, we have 
    \begin{equation*}
        F(0)G(n+1) + F(1)G(n) + \ldots + F(n)G(1) + F(n+1)G(0) = 0.
    \end{equation*}
    But the left hand side of this equation is exactly $(F \cdot G)(n+1)$, and thus, 
    $F \cdot G = \delta_{0,n} = \overline{1}$.
\end{proof}

It is also easy to see that if a function $F$ has a multiplicative inverse $F^{-1}$, then $F^r$, ${r \in \N \setminus \{0\}}$, has a multiplicative inverse, and $(F^r)^{-1} = (F^{-1})^r$. We write $F^{-r}$ for $(F^r)^{-1}$.

Consider now the set $\C^\N_1 := \{ F \in \C^\N ; F(0) = 1 \}$. This set has some properties that are analogous to those of the set $\R_+$ of positive real numbers. The following theorems illustrate some of those.

\begin{theorem}
    Let $F \in \C^\N_1$ and $r \in \N \setminus \{0\}$. Then there exists a unique $G \in \C^\N_1$, denoted $F^{\frac{1}{r}}$, such that $G^r = F$.
\end{theorem}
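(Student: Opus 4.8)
The plan is to construct $G$ directly, determining its values $G(0), G(1), G(2), \ldots$ one at a time by recursion on $n$, and thereby to establish existence and uniqueness simultaneously. The idea is exactly the one used above in the construction of the multiplicative inverse: at each stage the equation $G^r = F$ pins down the next coefficient of $G$ uniquely, since that coefficient enters linearly with a nonzero leading factor.

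First I would settle $G(0)$. Evaluating the identity $G^r = F$ at $n = 0$ gives $G(0)^r = F(0) = 1$; since we insist that $G \in \C^\N_1$, this forces $G(0) = 1$, with no other admissible choice. This already handles the base of the recursion and removes the ambiguity coming from the $r$ complex roots of unity.

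The heart of the argument is to understand how $G(n)$ enters the $n$-th coefficient of $G^r$. Expanding the $r$-fold convolution, one has
\begin{equation*}
(G^r)(n) = \sum_{k_1 + \cdots + k_r = n} G(k_1) \cdots G(k_r),
\end{equation*}
the sum being over $r$-tuples of non-negative integers. I would isolate the summands in which $G(n)$ appears: these are exactly the $r$ tuples with one $k_i = n$ and the rest $0$, each contributing $G(n)\,G(0)^{r-1} = G(n)$. Every other summand involves only $G(0), \ldots, G(n-1)$. Hence
\begin{equation*}
(G^r)(n) = r\,G(n) + P_n\big(G(0), \ldots, G(n-1)\big),
\end{equation*}
where $P_n$ is a fixed polynomial in the earlier coefficients. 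Imposing $(G^r)(n) = F(n)$ and using $r \neq 0$ yields
\begin{equation*}
G(n) = \frac{1}{r}\Big(F(n) - P_n\big(G(0), \ldots, G(n-1)\big)\Big),
\end{equation*}
which determines $G(n)$ uniquely once $G(0), \ldots, G(n-1)$ are known. Defining $G$ by this recursion (with $G(0) = 1$) produces, by construction, a sequence in $\C^\N_1$ satisfying $G^r = F$, while the fact that each $G(n)$ is forced delivers uniqueness.

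The main obstacle is the bookkeeping in the middle step: verifying that $G(n)$ occurs linearly in $(G^r)(n)$ with coefficient exactly $r\,G(0)^{r-1} = r$, and that all remaining terms depend only on the previously determined coefficients. Once this is checked, everything else is a routine recursion, entirely parallel to the inverse construction already carried out for functions with $F(0) \neq 0$.
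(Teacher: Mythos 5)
Your proposal is correct and follows essentially the same route as the paper: both fix $G(0)=1$, expand the $r$-fold convolution $(G^r)(n)$, separate the $r$ tuples containing an index equal to $n$ (each contributing $G(n)G(0)^{r-1}=G(n)$), and solve the resulting linear equation to obtain the recursion $G(n) = \frac{1}{r}\bigl(F(n) - \sum_{n_1+\cdots+n_r=n,\; n_i<n} G(n_1)\cdots G(n_r)\bigr)$, from which existence and uniqueness follow together. Your write-up merely makes explicit the bookkeeping (linearity of $G(n)$ in the $n$-th coefficient with coefficient exactly $r$) that the paper leaves implicit.
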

\begin{proof}
    We need to find $G \in \C^{\N}$ such that
    \begin{equation*}
        F(n) = \underset{n_i \geq 0}{\sum_{n_1 + \ldots + n_r = n}} G(n_1) \cdots G(n_r)
    \end{equation*}
    for all $n \geq 0$. We define $G$ recursively. For $n=0$, set $G(0) = 1$, and for $n=1$, $G(1) = \frac{F(1)}{r}$. 
    More generally, we can separate the case where $n_i = n$ for some $i=1, \ldots, r$ in the equation above, 
    and thus obtain a recursive formula for $G$:
    \begin{equation*}
        G(n) = \frac{1}{r} \Bigg[ F(n) - \underset{0 \leq n_i < n}{\sum_{n_1 + \ldots + n_r = n}} G(n_1) \cdots G(n_r) \Bigg].
    \end{equation*}
    The uniqueness of $G$ is now clear.
\end{proof}

\begin{corollary}
    Let $F \in \C^\N_1$, $r \in \N \setminus \{0\}$, and $q \in \Z$, there exists a unique $G \in \C^\N_1$, denoted $F^\frac{q}{r}$, such that $G^r = F^q$.
\end{corollary}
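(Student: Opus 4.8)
The plan is to deduce this corollary almost immediately from the preceding theorem (the existence and uniqueness of $r$-th roots in $\C^\N_1$), once we have checked that $F^q$ is itself a member of $\C^\N_1$. Indeed, the previous theorem asserts that \emph{any} element of $\C^\N_1$ has a unique $r$-th root in $\C^\N_1$; so if we can show $F^q \in \C^\N_1$, we may simply apply that theorem to the element $F^q$ in place of $F$, obtaining a unique $G \in \C^\N_1$ with $G^r = F^q$, which we then christen $F^{q/r}$.

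First I would record that $\C^\N_1$ is closed under the Cauchy product: if $F, G \in \C^\N_1$, then $(F \cdot G)(0) = F(0)G(0) = 1$, so $F \cdot G \in \C^\N_1$. By an easy induction this gives $F^q \in \C^\N_1$ for every $q \in \N$. To handle negative exponents, I would invoke the earlier proposition guaranteeing that $F$ has a multiplicative inverse $F^{-1}$ (applicable since $F(0) = 1 \neq 0$), and observe that evaluating $F^{-1} \cdot F = \overline{1}$ at $0$ forces $F^{-1}(0)F(0) = 1$, whence $F^{-1}(0) = 1$ and $F^{-1} \in \C^\N_1$. Combining this with closure under products shows $F^q = (F^{-1})^{|q|} \in \C^\N_1$ for $q < 0$ as well. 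Thus $F^q \in \C^\N_1$ for all $q \in \Z$.

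With $F^q \in \C^\N_1$ established, I would apply the preceding theorem verbatim: there exists a unique $G \in \C^\N_1$ such that $G^r = F^q$, and we set $F^{q/r} := G$. There is essentially no obstacle here, since all the analytic work (the recursive construction of roots and its uniqueness) was already carried out in the theorem being cited; the only thing to be careful about is the negative-exponent case, where one must confirm that passing to the inverse keeps us inside $\C^\N_1$ rather than merely inside the group of units of $\C^\N$. That verification is the single substantive point, and it follows cleanly from the normalization $F(0) = 1$.
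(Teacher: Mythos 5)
Your proof is correct and follows essentially the same route as the paper, which simply says ``replace $F$ with $F^q$ in the proof of the previous theorem.'' The one thing you add is the explicit verification that $F^q \in \C^\N_1$ (including the negative-exponent case via the multiplicative inverse), which the paper leaves implicit; this is a welcome clarification, not a departure.
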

\begin{proof}
    Replace $F$ with $F^q$ in the proof of the previous theorem.
\end{proof}

\begin{theorem}
    Let $F,G \in \R^\N$, and $r \in \N \setminus \{0\}$. Suppose $F^r = G^r$. If $r$ is odd, then $F = G$, and if $r$ is even, then $F = G$ or $F = -G$.
\end{theorem}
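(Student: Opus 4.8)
The plan is to localize the whole problem at the lowest-order nonzero coefficient, exploiting the computation already carried out in the proof that $\C^\N$ is an integral domain: if $F,G\neq\overline{0}$ and $r=\min\{k:F(k)\neq 0\}$, $s=\min\{k:G(k)\neq 0\}$, then $(F\cdot G)(r+s)=F(r)G(s)\neq 0$ while $(F\cdot G)(j)=0$ for $j<r+s$. Writing $v(F)$ for this order, this says $v(F\cdot G)=v(F)+v(G)$ and the leading coefficient of a product is the product of the leading coefficients. First I would dispose of the trivial case: if $F=\overline{0}$ then $G^r=\overline{0}$, and since $\C^\N$ has no zero divisors, $G=\overline{0}=F$. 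So from now on I assume $F,G\neq\overline{0}$.

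Applying $v$ to $F^r=G^r$ gives $r\,v(F)=r\,v(G)$, hence $v(F)=v(G)=:m$. Set $a:=F(m)$ and $b:=G(m)$, both nonzero, and compare leading coefficients (at degree $rm$) of $F^r=G^r$ to obtain $a^r=b^r$. The argument then reduces to two elementary facts about real numbers: for $r$ odd the map $t\mapsto t^r$ is injective on $\R$, so $a^r=b^r$ forces $a=b$; for $r$ even it forces $a=\pm b$. The key algebraic device is the factorization, valid in any commutative ring,
\[
F^r-G^r=(F-G)\,Q,\qquad Q:=\sum_{k=0}^{r-1}F^{\,r-1-k}G^{\,k}.
\]
Each summand of $Q$ has order $(r-1)m$ with leading coefficient $a^{\,r-1-k}b^{\,k}$, so the coefficient of $Q$ at degree $(r-1)m$ is $\sum_{k=0}^{r-1}a^{\,r-1-k}b^{\,k}$. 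When $a=b$ this equals $r\,a^{r-1}\neq 0$, so $Q\neq\overline{0}$; since $F^r-G^r=\overline{0}$ and $\C^\N$ is an integral domain, this forces $F-G=\overline{0}$, i.e. $F=G$. This settles the odd case outright (there $a=b$ is automatic) and the even case whenever $a=b$.

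The remaining even subcase $a=-b$ is exactly where this factorization fails: the leading coefficient of $Q$ becomes $a^{r-1}\sum_{k=0}^{r-1}(-1)^k=0$, which reflects that $F=-G$ is genuinely possible. The fix is to normalize signs first. Since $r$ is even, $(-G)^r=G^r=F^r$, and $(-G)(m)=-b=a$, so $F$ and $-G$ now have matching leading coefficients; applying the previous paragraph to the pair $(F,-G)$ yields $F=-G$. This completes the even case.

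The main obstacle is precisely recognizing that the naive factorization of $F^r-G^r$ cannot by itself distinguish $F=G$ from $F=-G$: its cofactor $Q$ is forced to vanish to leading order exactly in the sign-flipped configuration, so one must first normalize the leading coefficients (by possibly replacing $G$ with $-G$) before the integral-domain step applies. Everything else is routine order and leading-coefficient bookkeeping resting on the integral domain property already established.
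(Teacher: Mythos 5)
Your proof is correct, but it takes a genuinely different route from the paper's. The paper works over $\C$: it factors $F^r - G^r = \prod_{k=1}^r (F - \xi_r^k G)$, where $\xi_r = e^{2\pi i/r}$, uses the integral domain property to conclude that some factor $F - \xi_r^k G$ must vanish, and then invokes real-valuedness of $F$ and $G$ to exclude every non-real $\xi_r^k$, leaving only $\xi_r^k = 1$, or $\xi_r^k = -1$ when $r$ is even. You instead stay entirely inside $\R$: you pin down the common order $m$ and the real leading coefficients $a$ and $b$, reduce to the scalar equation $a^r = b^r$ in $\R$ (giving $a = b$ for odd $r$ and $a = \pm b$ for even $r$), and then apply the integral domain property to the two-term factorization $F^r - G^r = (F - G)Q$, showing $Q \neq \overline{0}$ by computing its coefficient at degree $(r-1)m$ to be $r a^{r-1}$ after sign normalization. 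Both arguments rest on the same two ingredients, the integral domain property and an inspection of coefficients (the paper needs the latter too, implicitly, to see that $F - \xi_r^k G \neq \overline{0}$ for non-real $\xi_r^k$). The paper's complex factorization delivers the dichotomy $F = \pm G$ in one stroke but quietly defers the non-vanishing of the non-real factors to the reader; your route is more elementary and self-contained, at the cost of the sign-normalization step you rightly flagged as essential: without first replacing $G$ by $-G$, the cofactor $Q$ genuinely vanishes to leading order when $a = -b$.
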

\begin{proof}
    If either $F$ or $G$ is $\overline{0}$, then this is trivially satisfied. Otherwise, let $\xi_r = e^\frac{2\pi i}{r}$ be the primitive $r$-th root of unity. Then we have $\prod_{k=1}^r (F - \xi^k_r G) = F^r - G^r = \overline{0}$. If $\xi^k_r \notin \R$, then $F - \xi^k_r G \neq \overline{0}$, as $F$ and $G$  are both real-valued and non-null. If $r$ is odd, then $\xi^k_r$ is real if and only if $k = r$, and $F = G$. If $r$ is even, then $\xi^k_r$ is real if and only if $k = r$ or $k = \frac{r}{2}$, 
    and then $F = G$ or $F = -G$.
\end{proof}

\section{Power Series and Convergence}

In this section, we will provide a proper definition of formal power series.
We first topologize Cauchy's algebra $\C^{\N}$. 

\begin{defi}
    The order function $o : \C^\N \to \N \cup \{\infty\}$ is defined as $o(\overline{0}) := \infty$ and $o(F) := \min\{ k ; F(k) \neq 0 \}$ when $F \neq \overline{0}$.
\end{defi}

The order function has the following properties.

\begin{prop}\label{227}
    For $F,G \in \C^\N$ and $\lambda \in \C \setminus \{0\}$,
    \begin{itemize}
        \item[ ] $\displaystyle o(F\cdot G) = o(F) + o(G) $
        \item[ ] $\displaystyle o(\lambda F) = o(F)$
        \item[ ] $\displaystyle o(F+G) \geq \min\{ o(F), o(G) \}$, and the equality holds whenever $o(F) \neq o(G)$.
    \end{itemize}
\end{prop}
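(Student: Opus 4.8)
The plan is to prove the three assertions separately, disposing of the degenerate cases (where some order equals $\infty$ because the sequence is $\overline{0}$) first, so that the substantive arguments only involve nonzero sequences. Throughout I would adopt the standard conventions $\infty + n = \infty$ and $\min\{\infty, n\} = n$, which make all three formulas read correctly once $\overline{0}$ is allowed as an argument.

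For the multiplicativity $o(F \cdot G) = o(F) + o(G)$, the degenerate case is immediate: if either factor is $\overline{0}$, then $F \cdot G = \overline{0}$ (this is precisely the integral domain property established above), so both sides equal $\infty$. Assuming $F, G \neq \overline{0}$, I would set $r = o(F)$ and $s = o(G)$ and then simply reuse the coefficient computation from the proof that $\C^\N$ is an integral domain: for every $m < r+s$ each term $F(k)G(m-k)$ of $(F\cdot G)(m)$ vanishes, since either $k < r$ (forcing $F(k) = 0$) or else $k \geq r$ and hence $m - k < s$ (forcing $G(m-k) = 0$); whereas at $m = r+s$ the only surviving term is $F(r)G(s) \neq 0$. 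Thus the least index at which $F\cdot G$ is nonzero is exactly $r+s$.

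The homogeneity $o(\lambda F) = o(F)$ for $\lambda \neq 0$ follows at once, because $(\lambda F)(k) = \lambda F(k)$ vanishes if and only if $F(k)$ does, so $\lambda F$ and $F$ have the same set of indices of nonvanishing and therefore the same minimum (the case $F = \overline{0}$ giving $\infty = \infty$). For the subadditivity I would put $m = \min\{o(F), o(G)\}$ and note that for every $k < m$ both $F(k) = 0$ and $G(k) = 0$, so $(F+G)(k) = 0$, giving $o(F+G) \geq m$. To get equality when $o(F) \neq o(G)$, assume without loss of generality $o(F) < o(G)$, so that $m = o(F)$; then $(F+G)(m) = F(m) + G(m)$ with $F(m) \neq 0$ and $G(m) = 0$ (because $m < o(G)$), whence $(F+G)(m) \neq 0$ and the inequality becomes an equality.

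I do not expect a genuine obstacle here: all three claims reduce to locating the lowest-order nonvanishing coefficient, and the only point requiring care is consistent bookkeeping of the $\infty$ convention in the degenerate cases. The multiplicative identity is the one worth stating most carefully, but its core is already contained in the integral-domain computation proved earlier, so I would cite that argument rather than reproduce it.
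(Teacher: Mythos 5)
Your proposal is correct and follows essentially the same route as the paper: reduce each claim to locating the least index of a nonvanishing coefficient, with the product case resting on the observation that below index $o(F)+o(G)$ every term of the convolution vanishes while at that index only $F(o(F))\,G(o(G)) \neq 0$ survives — the same computation the paper reuses from its integral-domain proposition. Your treatment is slightly more explicit than the paper's (which compresses the product argument into a chain of set-minimum identities and dispatches all degenerate cases with one remark), but the underlying ideas coincide.
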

\begin{proof}
If $F = \overline{0}$, then all identities trivially hold.
Thus we may assume that $F \neq \overline{0}$ and $G \neq \overline{0}$.
Then, we have 
\begin{align*}
 o(F \cdot G) &= \min\{ n ; \sum_{k=0}^n F(k)G(n-k) \neq 0 \} = \min\{ n = j+k ; F(j)G(k) \neq 0 \} \\      
&=\min\{ j ; F(j) \neq 0 \} + \min\{ k ; G(k) \neq 0 \} = o(F) + o(G);\\
o(\lambda F) &= \min\{ n ; \lambda F(n) \neq 0 \} = \min\{ n ; F(n) \neq 0 \} = o(F); \\
o(F + G) &= \min\{ n ; F(n) + G(n) \neq 0 \} \geq \min\{ n ; F(n) \neq 0 \text{ or } G(n) \neq 0 \} = \min\{ o(F), o(G) \}.
\end{align*}

Consider now the case when $o(F) \neq o(G)$. Without loss of generality assume that ${o(F) < o(G)}$. 
Then $o(F+G) = o(F) =: n$, as $F(n) + G(n) = F(n) + 0 = F(n)$ and $F(k) + G(k) = 0+0 = 0$ for all $k < n$.
\end{proof}

Now we define an absolute value on $\C^\N$.

\begin{defi}\label{228}
    Let $F \in \C^\N$. Then we define the absolute value of $F$ as $|F| := 2^{-o(F)}$, where $2^{-\infty} = 0$.
\end{defi}

\begin{prop}\label{229}
    The absolute value function has the following properties:
    \begin{enumerate}
        \item $|F| = 0$ if and only if $F = \overline{0}$;
        \item $|F \cdot G| = |F| \cdot |G|$;
        \item $|\lambda F| = |F|$ for all $\lambda \neq 0$;
        \item $|F+G| \leq \max\{|F|,|G|\}$, with the equality holding when $|F| \neq |G|$.
    \end{enumerate}
\end{prop}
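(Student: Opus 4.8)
The plan is to derive each of the four properties directly from the corresponding properties of the order function established in Proposition~\ref{227}, exploiting the observation that $|\cdot|$ is obtained from $o$ by composing with the strictly decreasing map $t \mapsto 2^{-t}$ (under the convention $2^{-\infty} = 0$). In this way each statement about $|\cdot|$ is merely a translation of a statement about $o$ through this map, and the proof reduces to checking that the map interacts correctly with sums, products, and the extended value $\infty$.

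First I would dispose of property i): by definition $|F| = 2^{-o(F)}$ vanishes precisely when $o(F) = \infty$, which by the definition of the order function happens exactly when $F = \overline{0}$. For property ii) I would write $|F \cdot G| = 2^{-o(F \cdot G)} = 2^{-(o(F) + o(G))}$ using the additivity $o(F \cdot G) = o(F) + o(G)$ from Proposition~\ref{227}, and then split the exponent to obtain $2^{-o(F)} \cdot 2^{-o(G)} = |F| \cdot |G|$. Property iii) is immediate from $o(\lambda F) = o(F)$ together with the definition of $|\cdot|$.

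The substance of the argument lies in property iv). Since $t \mapsto 2^{-t}$ is decreasing, the inequality $o(F+G) \geq \min\{o(F), o(G)\}$ from Proposition~\ref{227} yields $|F+G| = 2^{-o(F+G)} \leq 2^{-\min\{o(F),o(G)\}}$, and because the map is decreasing the minimum in the exponent becomes a maximum, so that $2^{-\min\{o(F),o(G)\}} = \max\{|F|, |G|\}$. For the equality clause I would note that $|F| \neq |G|$ is equivalent to $o(F) \neq o(G)$ (injectivity of $t \mapsto 2^{-t}$ on $\N \cup \{\infty\}$), and in that case Proposition~\ref{227} gives $o(F+G) = \min\{o(F),o(G)\}$, so the displayed inequality becomes an equality.

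The only point demanding care, which I expect to be the main (and rather minor) obstacle, is the consistent handling of the extended value $\infty$ under the map $t \mapsto 2^{-t}$: one must confirm that the identities $2^{-(a+b)} = 2^{-a} \cdot 2^{-b}$ and $2^{-\min\{a,b\}} = \max\{2^{-a}, 2^{-b}\}$ persist when $a$ or $b$ equals $\infty$, i.e. when one of the series is $\overline{0}$. Both hold under the convention $2^{-\infty} = 0$, so no genuinely new idea is required beyond Proposition~\ref{227}.
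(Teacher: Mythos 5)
Your proposal is correct and follows exactly the route the paper takes: the paper's proof of this proposition is simply ``Immediate from Proposition~\ref{227},'' and your argument spells out precisely why, translating each property of the order function through the decreasing map $t \mapsto 2^{-t}$ with the convention $2^{-\infty} = 0$. The care you give to the $\infty$ case is a sensible elaboration of what the paper leaves implicit.
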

\begin{proof}
    Immediate from Proposition~\ref{227}.
\end{proof}

\begin{defi}\label{2210}
    We define a distance mapping on $\C^\N$ as $d(F,G) := |F-G|$.
\end{defi}
\newpage
\begin{prop}
    Let $F,G,H \in \C^\N$. The distance mapping has the following properties:
    \begin{enumerate}
        \item $d(F,G) = 0$ if and only if $F = G$;
        \item $d(F,G) = d(G,F)$;
        \item $d(F,H) \leq d(F,G) + d(G,H)$;
        \item $d(F,H) \leq \max\{ d(F,G),d(G,H) \}$, with the equality holding when $d(F,G) \neq d(G,H)$.
    \end{enumerate}
\end{prop}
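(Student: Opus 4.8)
The plan is to reduce every statement to the corresponding property of the absolute value proved in Proposition~\ref{229}, since $d$ is defined directly by $d(F,G) = |F - G|$. This makes the whole proposition a matter of translation, with no genuinely new content to establish.

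First I would dispatch the two easy items. For (i), note that $d(F,G) = |F-G| = 0$ holds, by Proposition~\ref{229}.i, exactly when $F - G = \overline{0}$, i.e.\ when $F = G$. For (ii), the key observation is that $G - F = (-1)(F-G)$, so the scaling invariance Proposition~\ref{229}.iii, applied with $\lambda = -1 \neq 0$, yields $d(G,F) = |G-F| = |F-G| = d(F,G)$.

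The substantive step is (iv), the strong (ultrametric) triangle inequality, and it is here that the non-archimedean nature of $|\cdot|$ is genuinely used. Writing $F - H = (F-G) + (G-H)$ and applying Proposition~\ref{229}.iv directly gives
$$d(F,H) = |(F-G)+(G-H)| \leq \max\{|F-G|,\,|G-H|\} = \max\{d(F,G),\,d(G,H)\},$$
and the same proposition supplies the equality clause whenever $|F-G| \neq |G-H|$, that is, whenever $d(F,G) \neq d(G,H)$. Property (iii) is then free: since the values of $|\cdot|$ are non-negative reals, one always has $\max\{a,b\} \leq a+b$, so (iv) immediately yields $d(F,H) \leq d(F,G) + d(G,H)$.

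There is no real obstacle here; the only points deserving a moment's attention are recognizing that symmetry comes from $\lambda = -1$ invariance rather than from a separate argument, and that the ordinary triangle inequality need not be proved on its own once the sharper ultrametric bound is in hand.
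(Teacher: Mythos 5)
Your proof is correct and follows exactly the route the paper takes: the paper's own proof is simply ``Immediate from Proposition~\ref{229},'' and your argument fills in precisely those reductions (item by item, including deriving the ordinary triangle inequality from the ultrametric one). Nothing to add.
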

\begin{proof}
    Immediate from Proposition~\ref{229}.
\end{proof}

With this metric defined, we are prepared to deal with convergence. 
In what follows, $(F_n)_{n \in \N}$ denotes a sequence in $\C^\N$ and $F \in \C^\N$.

\begin{defi}
    We say that the sequence $(F_n)_{n \in \N}$ converges to $F$, and write $\displaystyle \lim_{n \to \infty} F_n = F$, if $\displaystyle \lim_{n \to \infty} d(F_n,F) = 0$. Since $d(F_n,F)$ tends to $0$ if and only if $o(F_n - F)$ tends to infinity, 
    we can define this limit in an equivalent manner by saying that the sequence $(F_n)_{n \in \N}$ converges to $F$ if, 
    for all $j \in \N$, there exists an $n_j \in \N$ such that $n \geq n_j$ implies $F_n(i) = F(i)$ for all $0 \leq i \leq j$.
\end{defi}
 
We can extend this definition to series.

\begin{defi}
    Denote $S_n := F_0 + \ldots + F_n$. If there exists $S \in \C^\N$ such that $\displaystyle \lim_{n \to \infty} S_n = S$, then we say that the series $\sum_{n \in \N} F_n$ converges to $S$, and write $\sum_{n \in \N} F_n = S$.
\end{defi}

\begin{theorem}
\label{criteria of convergence}
    The series $\sum_{n \in \N} F_n$ converges if and only if $\displaystyle \lim_{n \to \infty} F_n = \overline{0}$.
\end{theorem}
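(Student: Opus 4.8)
The plan is to prove both implications by exploiting the concrete coordinate-wise characterization of convergence supplied just before the statement: $F_n \to F$ holds exactly when, for every $j \in \N$, the values $F_n(i)$ agree with $F(i)$ for all $0 \leq i \leq j$ once $n$ is large enough. Writing $S_n = F_0 + \ldots + F_n$ for the partial sums, the entire argument reduces to tracking, for each fixed coordinate $i$, the scalar sequence $n \mapsto S_n(i) = \sum_{k=0}^n F_k(i)$.

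For necessity, suppose $\sum_n F_n = S$, so that $S_n \to S$. For $n \geq 1$ one has $F_n = S_n - S_{n-1}$, whence by the ultrametric inequality of Proposition~\ref{229}.iv) we get $|F_n| \leq \max\{ d(S_n,S), d(S_{n-1},S) \}$. Both distances on the right tend to $0$, so $|F_n| \to 0$, which is exactly $F_n \to \overline{0}$. Equivalently, one can argue coordinate-wise: fixing $j$ and taking $n$ past the stabilization index of $(S_n)$, each $F_n(i) = S_n(i) - S_{n-1}(i) = 0$ for $0 \leq i \leq j$, so $o(F_n) > j$.

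For sufficiency — the direction in which this result departs sharply from ordinary real analysis — assume $F_n \to \overline{0}$, i.e. $o(F_n) \to \infty$. The key observation is that $o(F_n) \to \infty$ means precisely: for every $j$ there is an $N$ with $F_n(i) = 0$ for all $n \geq N$ and all $0 \leq i \leq j$. Consequently, along any fixed coordinate $i$ the partial sums $S_n(i)$ are eventually constant, and I define $S(i)$ to be that eventual value, obtaining an element $S \in \C^\N$. I then verify $S_n \to S$ directly: given $j$, pick $N$ with $o(F_n) > j$ for all $n \geq N$; then for $n \geq N$ the coordinates $0, \ldots, j$ of $S_n$ no longer change, so $S_n(i) = S(i)$ there, which is exactly the defining condition for convergence to $S$.

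The main conceptual obstacle is sufficiency, and specifically resisting the temptation to route it through a Cauchy-sequence argument: although the strong triangle inequality readily makes $(S_n)$ a Cauchy sequence whenever $|F_n| \to 0$, concluding convergence from that would require completeness of $(\C^\N, d)$, which has not been established in the text. Using the explicit coordinate-wise definition sidesteps this entirely, since the stabilization of each scalar sequence $S_n(i)$ is elementary and furnishes the limit $S$ constructively.
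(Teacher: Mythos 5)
Your proof is correct and follows essentially the same route as the paper: necessity via telescoping $F_n = S_n - S_{n-1}$ coordinate-by-coordinate, and sufficiency by observing that each scalar sequence $S_n(i)$ eventually stabilizes and taking those eventual values as the limit $S$ --- which is precisely the paper's construction $S(j) := F_0(j) + \cdots + F_{n_j - 1}(j)$. Your closing remark about deliberately avoiding a Cauchy/completeness argument is a sound observation, but it does not alter the substance, which coincides with the paper's proof.
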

\begin{proof}
    \emph{Necessity: }Suppose that $\sum_{n \in \N}F_n = S$. Then for all $j \in \N$ there exists $n_j \in \N$ such that $n \geq n_j$ implies $S_n(j) = S(j)$. If $n \geq n_j + 1$, then ${F_n(j) = S_{n}(j) - S_{n-1}(j) = S(j) - S(j) = 0}$, and thus $\displaystyle \lim_{n \to \infty}F_n = \overline{0}$.

\medskip

    \emph{Sufficiency: }Suppose that $\displaystyle \lim_{n \to \infty}F_n = \overline{0}$. Then for each $j \in \N$ there exists a minimal $n_j \in \N$ such that $n \geq n_j$ implies $F_n(j) = 0$. Now define $S \in \C^{\N}$ by 
    $S(j) := F_0(j) + \ldots + F_{n_j - 1}(j)$. Clearly, for each $j \in \N$, we have $S(j)=S_n(j)$ for all $n \geq n_j$.
    It follows that $\sum_{n \in \N} F_n=S$.
\end{proof}

Note that this theorem is not true in the case of usual analysis. For instance,
the series $\sum_{n \in \mathbb{N}} \frac{1}{n}$ does not converge, even though 
${\displaystyle \lim_{n \to \infty}} \frac{1}{n}=0$. 

We may now start to think about what formal power series are \emph{per se}. For that, let us look at a particular function.

\begin{defi}
    The "indeterminate" function $X : \N \to \C$ is defined as $X(n) = \delta_{1,n}$. It is easy to check that $X^r(n) = \delta_{r,n}$.
\end{defi}

\begin{theorem}
\label{formal power series}
Let $(a_n)_{n \in \N}$ be a sequence of complex numbers. Then the series $\sum_{n \in N} a_n X^n$ converges.
Furthermore, if $(b_n)_{n \in \N}$ is another sequence of complex numbers, then
$$\sum_{n \in \N} a_n X^n=\sum_{n \in \N} b_n X^n \, \text{ implies } \, a_n=b_n \text{ for all } n \in \N.$$
\end{theorem}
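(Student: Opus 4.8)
The plan is to treat the two assertions separately, reducing both to the coordinate-wise description of convergence in $\C^\N$ together with the computed identity $X^n(k) = \delta_{n,k}$. The whole argument hinges on the fact that $a_n X^n$ is supported at a single coordinate, which controls its order and hence its absolute value.

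First I would prove convergence by invoking Theorem~\ref{criteria of convergence}, which states that $\sum_{n \in \N} F_n$ converges if and only if $\lim_{n \to \infty} F_n = \overline{0}$. Taking $F_n = a_n X^n$, the scalar multiplication rule gives $(a_n X^n)(k) = a_n \delta_{n,k}$, so this sequence vanishes at every coordinate except possibly the $n$-th. Therefore $o(a_n X^n) \geq n$, and by Definition~\ref{228} we get $|a_n X^n| = 2^{-o(a_n X^n)} \leq 2^{-n}$, which tends to $0$. Thus $\lim_{n \to \infty} a_n X^n = \overline{0}$, and the criterion yields convergence of $\sum_{n \in \N} a_n X^n$.

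For the uniqueness part, I would identify the limit sum explicitly at each coordinate. Writing $S_m = \sum_{k=0}^m a_k X^k$ for the partial sums, we have $S_m(j) = \sum_{k=0}^m a_k \delta_{k,j}$, which equals $a_j$ as soon as $m \geq j$ and is $0$ otherwise. By the coordinate-wise characterization of convergence (for each fixed $j$ the partial sums eventually agree with the limit at coordinate $j$), this forces $S(j) = a_j$ for every $j$. In other words, the series $\sum_{n \in \N} a_n X^n$ is, coordinate by coordinate, nothing but the original sequence $(a_n)_{n \in \N}$. Consequently the hypothesis $\sum_{n \in \N} a_n X^n = \sum_{n \in \N} b_n X^n$ means the two limit sequences agree at every coordinate $j$, giving $a_j = b_j$ for all $j$, which is exactly the desired conclusion.

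I do not anticipate a genuine obstacle here; the argument is short once the single-coordinate support of $a_n X^n$ is noticed. The only point requiring care is the passage $S(j) = a_j$: this must be justified from the precise definition of convergence (eventual agreement at each fixed coordinate) rather than asserted informally, since the topology on $\C^\N$ is the one induced by the order-based absolute value and not the naive pointwise limit of complex numbers.
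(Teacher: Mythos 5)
Your proposal is correct and follows essentially the same route as the paper: convergence via Theorem~\ref{criteria of convergence} applied to $F_n = a_n X^n$, and uniqueness by identifying the sum coordinate-wise as $S(j)=a_j$. The paper merely asserts $\lim_{n\to\infty} a_n X^n = \overline{0}$ and the identity $S(n)=a_n$ as evident, whereas you supply the supporting details (the bound $|a_n X^n| \leq 2^{-n}$ and the partial-sum stabilization), which is a faithful elaboration rather than a different argument.
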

\begin{proof}
Since $\displaystyle \lim_{n \to \infty} a_nX^n=\overline{0}$, it follows from Theorem~\ref{criteria of convergence} that 
$\sum_{n \in N} a_n X^n$ converges. Moreover, it is easy to see that 
$S=\sum_{n \in N} a_n X^n$ where $S(n)=a_n$ for all $n \in \N$. 
\end{proof}

\begin{defi}
    A series of the form $\sum_{n \in \N} a_n X^n$, with each $a_n \in \C$, is called a formal power series with coefficients in $\C$.
    We denote by $\C[[X]]$ the set of all formal power series with coefficients in $\C$.
\end{defi}


\begin{theorem}
\label{representation as fps}
    For all $F \in \C^\N$,
    \begin{equation}\label{fnxn}
        F = \sum_{n \in \N} F(n)X^n.
    \end{equation}
Moreover, this representation of $F$ as a formal power series is unique.
\end{theorem}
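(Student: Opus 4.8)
The plan is to treat this as an almost immediate corollary of Theorem~\ref{formal power series}, splitting the statement into its two assertions: that the displayed series converges to $F$, and that the coefficients $F(n)$ are the only ones producing $F$.

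First I would settle the representation \eqref{fnxn}. I would apply Theorem~\ref{formal power series} to the particular coefficient sequence $a_n := F(n)$. That theorem guarantees both that $\sum_{n \in \N} F(n) X^n$ converges and that its sum $S$ satisfies $S(n) = F(n)$ for every $n \in \N$. Since two elements of $\C^\N$ coincide exactly when they agree at every argument, this forces $S = F$, which is precisely \eqref{fnxn}. If one prefers a self-contained argument, the same conclusion drops straight out of the metric: writing $S_N := \sum_{n=0}^N F(n) X^n$ and using $X^n(k) = \delta_{n,k}$, one checks that $S_N(k) = F(k)$ for all $k \leq N$, so $(S_N - F)(k) = 0$ whenever $k \leq N$, whence $o(S_N - F) \geq N+1$ and $|S_N - F| \leq 2^{-(N+1)} \to 0$.

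For uniqueness I would suppose $F = \sum_{n \in \N} a_n X^n$ for some complex coefficients $a_n$. The coefficient-identification clause of Theorem~\ref{formal power series}, namely that a convergent sum $\sum_{n \in \N} a_n X^n$ takes value $S$ with $S(n) = a_n$, applies verbatim and yields $a_n = F(n)$ for all $n$. Hence the only coefficient sequence representing $F$ is $(F(n))_{n \in \N}$, which is exactly the asserted uniqueness.

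The honest remark is that there is no real obstacle here: all the analytic content, that is, convergence of power series in this topology together with the fact that the $n$-th coefficient is recovered as the value at $n$, was already established in Theorems~\ref{criteria of convergence} and \ref{formal power series}. The only point demanding a little care is interpreting uniqueness of the representation correctly, namely as the claim that the coefficient sequence is forced to be $(F(n))_{n \in \N}$, rather than as some uniqueness of the sum, which is automatic in a metric space.
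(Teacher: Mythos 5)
Your proposal is correct and follows exactly the route the paper takes: the paper's proof is the single line that the result ``follows immediately from Theorem~\ref{formal power series}'', and you have simply spelled out how the convergence clause and the coefficient-identification clause of that theorem yield the representation and its uniqueness, respectively. The optional self-contained metric argument you sketch is also valid, but it is just an unfolding of the same content already proved there.
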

\begin{proof}
This follows immediately from Theorem~\ref{formal power series}.
\end{proof}

It follows from the theorem above that we can identify the set $\C^\N$ with the set $\C[[X]]$ of formal power series with coefficients in $\C$. We use the name polynomials for those power series  ${F(X)=\sum_{n \in \N} F(n)X^n \in \C[[X]]}$ for which $F(n)=0$ for all but finitely many $n$. We denote the set of polynomials by $\C[X]$.

We can now rewrite the operations of Cauchy's algebra in the notation of formal power series:

\begin{itemize}
    \item[ ] $\displaystyle \sum_{n \in \N} a_n X^n + \sum_{n \in \N} b_n X^n = \sum_{n \in \N} (a_n + b_n) X^n$
    \item[ ] $\displaystyle \sum_{n \in \N} a_n X^n \cdot \sum_{n \in \N} b_n X^n = \sum_{n \in \N} \Big( \sum_{k=0}^n a_k b_{n-k} \Big) X^n$
    \item[ ] $\displaystyle \lambda \sum_{n \in \N} a_n X^n = \sum_{n \in \N} (\lambda a_n) X^n$
\end{itemize}

One more equivalence that is not true in usual analysis, besides the one in Theorem \ref{criteria of convergence}, but holds in the formal power series case, is the equivalence of convergence and summability.
\begin{defi}
    A sequence $(a_n)_{n \in \N}$ is summable when (\emph{i}) $\sum_{n \in \N}a_n$ converges, and (\emph{ii}) $\sum_{n \in \N}a_{\sigma(n)}$ converges for any permutation $\sigma$ of $\N$ and $\sum_{n \in \N} a_{\sigma(n)} = \sum_{n \in \N} a_n$.
\end{defi}
\begin{theorem}
    A sequence $(F_n)_{n \in \N}$ in $\C^\N$ is convergent if and only if it is summable.
\end{theorem}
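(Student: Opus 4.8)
The plan is to prove both implications, with all the substance lying in the forward direction. Summability already contains convergence as its clause (\emph{i}), so the "only if" direction (summable implies convergent) is immediate and needs no argument. I will therefore spend all the effort on the converse.

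For the converse, suppose the series $\sum_{n \in \N} F_n$ converges. By Theorem~\ref{criteria of convergence} this is equivalent to $\lim_{n \to \infty} F_n = \overline{0}$, which in terms of the order function means $o(F_n) \to \infty$; equivalently, for every $M \in \N$ the set $\{ n : o(F_n) \leq M \}$ is finite. The first thing I would establish is that an arbitrary permutation $\sigma$ of $\N$ preserves this property. Since $\{ n : o(F_{\sigma(n)}) \leq M \} = \sigma^{-1}(\{ n : o(F_n) \leq M \})$ is the image of a finite set under a bijection, it is again finite; hence $o(F_{\sigma(n)}) \to \infty$, and by Theorem~\ref{criteria of convergence} once more the rearranged series $\sum_{n \in \N} F_{\sigma(n)}$ converges to some $T \in \C^\N$.

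It then remains to show that $S := \sum_{n \in \N} F_n$ and $T$ agree. Here I would work coordinate-wise, exploiting the key structural feature of the order function: for a fixed $j \in \N$, the condition $o(F_n) \to \infty$ forces $F_n(j) = 0$ for all but finitely many $n$. Consequently $S(j) = \sum_{n \in \N} F_n(j)$ and $T(j) = \sum_{n \in \N} F_{\sigma(n)}(j)$ are both genuinely finite sums of complex numbers, differing only by the reordering induced by $\sigma$. Since finite sums in $\C$ are commutative, $S(j) = T(j)$; as $j$ was arbitrary, $S = T$, and the sequence is summable.

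The conceptual crux is precisely this reduction to finite sums. Once one observes that convergence in the ultrametric topology of $\C^\N$ forces each coordinate to stabilize after finitely many terms, rearrangement invariance becomes a triviality, in sharp contrast to the analytic situation where conditionally convergent real series can be rearranged to an arbitrary sum. No delicate estimate is required; the entire work is in unpacking the definitions of $o$ and of convergence, so I expect the main (mild) obstacle to be merely the bookkeeping that justifies treating each coordinate as a finite sum.
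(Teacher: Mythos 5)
Your proof is correct and follows essentially the same route as the paper: the paper's argument with the matrices $M$ and $M^\sigma$ (columns having finitely many non-zero entries, column sums preserved under row permutation) is exactly your coordinate-wise observation that each $F_n(j)$ vanishes for all but finitely many $n$, so every coordinate sum is a finite, rearrangement-invariant sum. Your additional explicit step — using Theorem~\ref{criteria of convergence} and the finiteness of $\sigma^{-1}(\{n : o(F_n)\leq M\})$ to confirm that the rearranged series converges — is the same fact the paper states more tersely.
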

\begin{proof}
\emph{Sufficiency: }True by definition.

\emph{Necessity: }Consider the following matrices.
    \begin{equation*}
        M = \left[\begin{array}{cccc} 
        F_0(0) & F_0(1) & F_0(2) & \cdots \\
        F_1(0) & F_1(1) & F_1(2) & \cdots \\
        F_2(0) & F_2(1) & F_2(2) & \cdots \\
        \vdots & \vdots & \vdots & \ddots
        \end{array}\right]
        \quad\quad M^\sigma = \left[\begin{array}{cccc} 
        F_{\sigma(0)}(0) & F_{\sigma(0)}(1) & F_{\sigma(0)}(2) & \cdots \\
        F_{\sigma(1)}(0) & F_{\sigma(1)}(1) & F_{\sigma(1)}(2) & \cdots \\
        F_{\sigma(2)}(0) & F_{\sigma(2)}(1) & F_{\sigma(2)}(2) & \cdots \\
        \vdots & \vdots & \vdots & \ddots
        \end{array}\right]
    \end{equation*}
    Since $\displaystyle \lim_{n \to \infty} F_n = \overline{0}$, all columns of $M$ have only a finite number of non-zero entries. Since $M^\sigma$ comes from $M$ via a permutation of its rows, all columns of $M^\sigma$ also have only finitely many non-zero entries. Moreover,the column sums of the two matrices are precisely the same. We then have that 
    $\displaystyle \lim_{n \to \infty} F_{\sigma(n)} = \overline{0}$, and ${\sum_{n \in \N} F_n = \sum_{n \in \N} F_{\sigma(n)}}$.
\end{proof}
\newpage
\begin{prop}
    Let $(F_n)_{n \in \N}, (G_n)_{n \in \N}$ be summable sequences in $\C^\N$, and $H \in \C^\N$. Then $(F_n + G_n)_{n \in \N}$ and $(H \cdot F_n)_{n \in \N}$ are summable, with
    \begin{itemize}
        \item[ ] $\displaystyle \sum_{n \in \N} (F_n + G_n) = \sum_{n \in \N} F_n + \sum_{n \in \N} G_n$;
        \item[ ] $\displaystyle \sum_{n \in \N} (H \cdot F_n) = H \cdot \sum_{n \in \N} F_n$.
    \end{itemize}
\end{prop}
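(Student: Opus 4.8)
The plan is to deduce everything from the equivalence between summability and convergence established in the preceding theorem, together with the valuation-type estimates for the absolute value in Proposition~\ref{229}. In this ultrametric setting both addition and multiplication are continuous, and the whole statement is essentially a restatement of that continuity at the level of series.

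First I would treat $(F_n + G_n)_{n \in \N}$. By the preceding theorem, summability of $(F_n)$ and $(G_n)$ means $\lim_{n\to\infty}F_n = \overline{0}$ and $\lim_{n\to\infty}G_n = \overline{0}$. Using Proposition~\ref{229}.iv), $|F_n + G_n| \le \max\{|F_n|,|G_n|\} \to 0$, so $\lim_{n\to\infty}(F_n+G_n)=\overline{0}$; by Theorem~\ref{criteria of convergence} the series $\sum_{n\in\N}(F_n+G_n)$ converges, hence the sequence is summable. For the value, write $S:=\sum_{n\in\N} F_n$ and $T:=\sum_{n\in\N} G_n$, with partial sums $S_n$ and $T_n$. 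The $n$-th partial sum of $(F_n+G_n)$ is $S_n + T_n$, and $|(S_n+T_n)-(S+T)| = |(S_n-S)+(T_n-T)| \le \max\{|S_n-S|,|T_n-T|\} \to 0$, giving $\sum_{n\in\N}(F_n+G_n)=S+T$.

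Next, for $(H\cdot F_n)_{n \in \N}$, Proposition~\ref{229}.ii) gives $|H\cdot F_n| = |H|\,|F_n|$, which tends to $0$ since $|F_n|\to 0$ and $|H|$ is a fixed constant (the case $H=\overline{0}$ being trivial, as then $|H|=0$). Hence $(H\cdot F_n)$ converges to $\overline{0}$ and is therefore summable. Since multiplication distributes over finite sums in Cauchy's algebra, the $n$-th partial sum of $(H\cdot F_n)$ is $H\cdot S_n$, and $|H\cdot S_n - H\cdot S| = |H\cdot(S_n-S)| = |H|\,|S_n-S| \to 0$, so $\sum_{n\in\N}(H\cdot F_n)=H\cdot S$.

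There is no genuine obstacle here; the only point worth isolating is that the three estimates above are precisely continuity of addition and multiplication in the metric of Definition~\ref{2210}, each a direct consequence of Proposition~\ref{229}. The one degenerate case to keep explicit is $H=\overline{0}$ in the multiplicative part, where both sides vanish and the claim holds at once.
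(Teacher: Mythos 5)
Your proof is correct, but it takes a genuinely different route from the paper's. The paper dismisses summability as straightforward and proves the two identities by a purely coefficientwise computation: for each $j$, the coefficient of $X^j$ in $\sum_{n} (F_n+G_n)$, respectively in $\sum_{n} (H \cdot F_n)$, receives only finitely many non-zero contributions, so finite sums of complex numbers can be rearranged and reassembled into the desired power series. You instead argue metrically: you establish summability explicitly from the equivalence between summability and convergence of the series, combined with the ultrametric estimates $|F+G| \leq \max\{|F|,|G|\}$ and $|F \cdot G| = |F|\,|G|$ of Proposition~\ref{229}, and you obtain the values of the sums by passing to the limit in the partial sums, i.e., by continuity of addition and of multiplication by a fixed $H$ with respect to the distance of Definition~\ref{2210}. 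What your approach buys: it actually supplies the summability argument the paper omits, and it packages both identities as instances of one principle (continuity of the algebra operations in the ultrametric topology), which transfers verbatim to similar statements. What the paper's approach buys: it stays entirely at the level of coefficients, exhibiting concretely how each coefficient of the total sum is assembled from finitely many terms, which is the formal-power-series viewpoint used throughout the chapter. One cosmetic remark: your case distinction for $H=\overline{0}$ is unnecessary, since $|H|\,|F_n| \to 0$ holds for any fixed $H$.
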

\begin{proof}
    We will not be demonstrating summability in this proof, as it is straightforward. 
    Let us then show that the equations hold.
    \begin{align*}
        \sum_{n \in \N} (F_n + G_n) &= \sum_{j \in \N} \left[ \sum_{n \in \N} F_n(j) + G_n(j) \right] X^j\\
        &= \sum_{j \in \N} \left[\sum_{n \in \N} F_n(j)\right] X^j  + \sum_{j \in \N}\left[\sum_{n \in \N} G_n(j)\right] X^j  = 
        \sum_{n \in \N} F_n + \sum_{n \in \N} G_n;\\
        \sum_{n \in \N} (H \cdot F_n) &= \sum_{j \in \N} \sum_{n \in \N}  \left[ \sum_{k=0}^j H(k)F_n(j-k) \right] X^j \\&= \sum_{j \in \N} \sum_{k=0}^j H(k) \left[ \sum_{n \in \N} F_n(j-k) \right] X^j = H \cdot \sum_{n \in \N} F_n.
    \end{align*}
\end{proof}

Consider now the set $\C^\N_0 := \{ F \in \C^\N ; F(0) = 0 \}$. This set can also be defined as the set of those elements 
$F \in \C^{\N}$ such that $|F| < 1$, as $F(0) = 0$ if and only if $o(F) > 0$, which is equivalent to saying $|F| = 2^{-o(F)} < 1$. Just like in classical analysis, we can define the geometric series on this set.

\begin{theorem}
    If $F \in \C^\N_0$, then $(F^n)_{n \in \N}$ is summable and the geometric series $\sum_{n \in \N} F^n$ converges to 
    $(\overline{1}-F)^{-1}$. In particular, $\sum_{n \in \N} X^n = (\overline{1}-X)^{-1}$.
\end{theorem}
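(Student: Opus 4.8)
The plan is to split the statement into two essentially independent tasks: first establishing that the geometric series $\sum_{n\in\N}F^n$ converges (equivalently, that $(F^n)_{n\in\N}$ is summable), and then identifying its limit as $(\overline{1}-F)^{-1}$. For the summability half I would lean entirely on the machinery already built: the equivalence between convergence and summability proved just above lets me replace summability of $(F^n)_{n\in\N}$ by mere convergence, and the convergence criterion (Theorem~\ref{criteria of convergence}) reduces convergence of the series to the single condition $\lim_{n\to\infty}F^n=\overline{0}$. This last limit is where the hypothesis enters: $F\in\C^\N_0$ means $F(0)=0$, i.e. $o(F)\ge 1$, and the additivity of the order under products (Proposition~\ref{227}) gives $o(F^n)=n\,o(F)\ge n$, so $o(F^n)\to\infty$ and hence $F^n\to\overline{0}$. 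This settles convergence and summability simultaneously.

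For the second task I would first record that $\overline{1}-F$ is invertible, since $(\overline{1}-F)(0)=1-F(0)=1\neq 0$ and an element of $\C^\N$ is a unit exactly when its $0$-th coordinate is nonzero. Writing $S_N:=\sum_{n=0}^N F^n$, the finite telescoping identity $(\overline{1}-F)\,S_N=\overline{1}-F^{N+1}$ holds for every $N$. I would then pass to the limit on both sides: the partial sums satisfy $S_N\to S:=\sum_{n\in\N}F^n$ by definition of the sum, and $F^{N+1}\to\overline{0}$ by the computation above, so the left-hand side tends to $(\overline{1}-F)\,S$ and the right-hand side tends to $\overline{1}$. Hence $(\overline{1}-F)\,S=\overline{1}$, which is precisely the assertion $S=(\overline{1}-F)^{-1}$.

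The one genuinely delicate point, and the step I expect to require the most care, is justifying the limit passage in the telescoping identity, that is, the continuity of the algebra operations with respect to the metric $d$. This is where Proposition~\ref{229} does the work: for multiplication by the fixed element $\overline{1}-F$ one has $d\big((\overline{1}-F)S_N,\,(\overline{1}-F)S\big)=|\overline{1}-F|\cdot|S_N-S|\to 0$ since $|\overline{1}-F|$ is a fixed constant and $|S_N-S|\to 0$; and for the right-hand side $|(\overline{1}-F^{N+1})-\overline{1}|=|F^{N+1}|\to 0$. Once these two continuities are in place the argument closes cleanly. Finally, the particular case $\sum_{n\in\N}X^n=(\overline{1}-X)^{-1}$ is immediate, because $X(0)=\delta_{1,0}=0$ places $X$ in $\C^\N_0$, so it is just the general result applied to $F=X$.
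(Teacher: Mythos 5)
Your proposal is correct, and its first half (summability) is the same argument as the paper's: you derive $F^n \to \overline{0}$ from $o(F^n)=n\,o(F)\ge n$, which is just the paper's $|F^n|=|F|^n<1$ restated through the order function, and then invoke the convergence criterion and the convergence--summability equivalence exactly as the paper does. The second half, however, follows a genuinely different route. The paper never touches partial sums: it multiplies the \emph{infinite} sum by $\overline{1}-F$ directly, using the previously proved proposition that $H \cdot \sum_{n \in \N} F_n = \sum_{n \in \N} (H \cdot F_n)$ for summable families, and telescopes inside the infinite sum to get $(\overline{1}-F)\cdot\sum_{n\in\N}F^n=\overline{1}$. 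You instead telescope at the level of finite partial sums, $(\overline{1}-F)S_N=\overline{1}-F^{N+1}$, and pass to the limit by proving continuity of multiplication by a fixed element from the multiplicativity of the absolute value (Proposition \ref{229}). Both are complete proofs; the paper's buys brevity by reusing its algebraic machinery on summable families, while yours is more self-contained in the metric-topological spirit and makes explicit a fact the paper never states, namely that multiplication in $\C^\N$ is continuous for the metric $d$ --- a reusable observation. One small remark: your preliminary verification that $\overline{1}-F$ is invertible via the unit criterion is harmless but not strictly needed, since the identity $(\overline{1}-F)\,S=\overline{1}$ already exhibits $S$ as the inverse in the commutative ring $\C^\N$.
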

\begin{proof}
    By Proposition \ref{229}.ii), $|F^n| = |F|^n$, and then $|F| < 1$ implies $\displaystyle \lim_{n \to \infty} |F^n| = 0$. This gives us $\displaystyle \lim_{n \to \infty} F^n = \overline{0}$, and thus $(F^n)_{n \in \N}$ is summable. Moreover,
    \begin{align*}
        (\overline{1}-F) \cdot \sum_{n \in \N} F^n &= \sum_{n \in \N} (\overline{1}-F)\cdot F^n = \sum_{n \in \N} (F^n - F^{n-1})\\ &= \lim_{n \to \infty} [(F^0 - F^1) + \ldots + (F^n - F^{n-1})] = \lim_{n \to \infty} (\overline{1}-F^{n-1}) = \overline{1}.
    \end{align*}
    Therefore, $\sum_{n \in \N} F^n = (\overline{1} - F)^{-1}$.
\end{proof}

\section{Formal Derivatives}

\begin{defi}
    The derivative $D : \C^\N \to \C^\N$ is defined by ${DF(n) := (n+1)F(n+1)}$ for all $n \in \N$. 
    In power series notation, this translates to
    \begin{equation*}
        D \sum_{n \in \N} F(n)X^n = \sum_{n \in \N} (n+1)F(n+1)X^n.
    \end{equation*}
    We sometimes write $F'$ instead of $DF$.
    Non-negative integral powers of $D$ are defined recursively by $D^0F = F$, $D^1F = DF$, and $D^{n}F = D(D^{n-1}F)$.
\end{defi}

Like the usual derivative operator, the formal derivative has the following properties.

\begin{prop}\label{232}
    Let $F, G \in \C^\N$, $\lambda \in \C$, $n \in \N \setminus \{0\}$, and $r \in \Q$. Then
    \begin{enumerate}
        \item $D(F+G) = DF+DG$;
        \item $D(\lambda F) = \lambda DF$;
        \item $D(F \cdot G) = DF \cdot G + F \cdot DG$;
        \item $D(F^n) = nF^{n-1} \cdot DF$;
        \item If $F$ is invertible, then $D(F^{-n}) = -n F^{-n-1} \cdot DF$;
        \item If $F \in \C^\N_1$, then $D(F^r) = rF^{r-1} \cdot DF$;
        \item If $F, G \in \C^\N_0$ and $DF = DG$, then $F = G$;
        \item $DF = \overline{0}$ if and only if $F = \lambda \cdot \overline{1}$ for some $\lambda \in \C$.
    \end{enumerate}
\end{prop}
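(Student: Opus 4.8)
The plan is to establish the eight items in sequence, letting each rest on its predecessors. Items (i) and (ii) are immediate from the definition $DF(n) = (n+1)F(n+1)$ together with the coefficientwise description of addition and scalar multiplication in $\C^\N$: evaluating $D(F+G)$ and $D(\lambda F)$ at $n$ and factoring out $(n+1)$ reproduces $DF(n)+DG(n)$ and $\lambda\, DF(n)$. For the Leibniz rule (iii) I would compute directly with coefficients. On one side, $D(F \cdot G)(n) = (n+1)\sum_{k=0}^{n+1} F(k)G(n+1-k)$; on the other, I expand $(DF \cdot G)(n)$ and $(F \cdot DG)(n)$ via the Cauchy product and reindex the first by $j=k+1$, so that both become sums of $F(j)G(n+1-j)$. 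The coefficient of each $F(j)G(n+1-j)$ then adds up to $j + (n+1-j) = n+1$, with the extreme terms $j=0$ and $j=n+1$ contributed by only one of the two sums; this matches $D(F \cdot G)(n)$ exactly.

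Item (iv) is then a one-line induction on $n$: the step $D(F^{n+1}) = D(F^n \cdot F) = D(F^n)\cdot F + F^n \cdot DF$ combines (iii) with the inductive hypothesis $D(F^n) = nF^{n-1}\cdot DF$. For (v) I would differentiate the identity $F \cdot F^{-1} = \overline{1}$ with (iii), using that $D\overline{1} = \overline{0}$, to get $D(F^{-1}) = -F^{-2}\cdot DF$, and then apply (iv) to $F^{-1}$ to obtain $D(F^{-n}) = -nF^{-n-1}\cdot DF$.

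The delicate case is (vi). Writing $r = q/s$ with $s \in \N\setminus\{0\}$ and $q \in \Z$, I would begin from the defining relation $(F^r)^s = F^q$ (valid since $F \in \C^\N_1$) and differentiate both sides, applying (iv) on the left and (iv) or (v) on the right, to get $s\,(F^r)^{s-1}\cdot D(F^r) = q\,F^{q-1}\cdot DF$. Solving for $D(F^r)$ forces me to simplify $F^{q-1}/(F^r)^{s-1}$; rewriting $(F^r)^{s-1} = F^q \cdot (F^r)^{-1}$ and then reducing $F^{q-1}\cdot F^{-q} = F^{-1}$ and $F^{-1}\cdot F^r = F^{r-1}$ yields $D(F^r) = rF^{r-1}\cdot DF$. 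I expect the main obstacle to be exactly these exponent identities mixing integer and rational powers; I would justify $F^{-1}\cdot F^r = F^{r-1}$ by the uniqueness of roots in $\C^\N_1$ established earlier in the chapter, checking that both sides satisfy the same defining equation.

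Finally, (vii) and (viii) turn on the factor $(n+1)$ never vanishing. If $DF = DG$ then $(n+1)F(n+1) = (n+1)G(n+1)$ gives $F(k) = G(k)$ for every $k \geq 1$, and $F(0) = G(0) = 0$ by membership in $\C^\N_0$, so $F = G$. Likewise $DF = \overline{0}$ forces $F(k) = 0$ for all $k \geq 1$, whence $F = F(0)\cdot\overline{1} = \lambda\cdot\overline{1}$; the converse is the remark that $D\overline{1} = \overline{0}$.
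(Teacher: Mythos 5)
Your proposal is correct and follows essentially the same route as the paper's proof: coefficientwise verification for (i)--(iii) via the Cauchy product, induction using the Leibniz rule for (iv), differentiating an identity involving the inverse for (v), differentiating $(F^r)^s = F^q$ and solving for (vi), and the non-vanishing of the factor $(n+1)$ for (vii)--(viii). The only deviations are cosmetic: in (v) you first obtain $D(F^{-1})$ from $F \cdot F^{-1} = \overline{1}$ and then apply (iv) to $F^{-1}$, whereas the paper differentiates $F^{-n} \cdot F^n = \overline{1}$ directly, and in (vi) you spell out the mixed integer/rational exponent identities (justified by uniqueness of roots in $\C^\N_1$) that the paper passes over silently.
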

\begin{proof}
    \begin{enumerate}
        \item $\displaystyle D\sum_{n \in \N} (F+G)(n) X^n = \sum_{n \in \N} (n+1)(F+G)(n+1) X^n$
        
        $\displaystyle\quad\quad\quad\quad\quad\quad\quad\quad\quad\quad\quad\quad\; = \sum_{n \in \N} (n+1)[F(n+1) + G(n+1)] X^n = DF + DG$;
        \item $D \displaystyle \sum_{n \in \N} \lambda F(n)X^n = \sum_{n \in \N} \lambda (n+1)F(n+1)X^n = \lambda \sum_{n \in \N} (n+1)F(n+1)X^n = \lambda DF$;
        \item $\displaystyle DF \cdot G = \sum_{n \in \N} (n+1)F(n+1) X^n \cdot \sum_{n \in \N} G(n)X^n = \sum_{n \in \N} \left[ \sum_{k=0}^n (k+1)F(k+1)G(n-k) \right]X^n$.

        Analogously, $\displaystyle F \cdot DG = \sum_{n \in \N} \left[ \sum_{k=0}^n (n-k+1) F(k)G(n-k+1) \right]X^n$. We then have
        \begin{align*}
            (DF \cdot G + F \cdot DG)(n) &= (n+1)F(0)G(n+1) + \sum_{k=1}^n (n-k+1)F(k)G(n-k+1) \\&\quad\quad+ \sum_{k=0}^{n-1} (k+1) F(k+1)G(n-k) + (n+1) F(n+1)G(0) \\&= (n+1)F(0)G(n+1) + \sum_{k=1}^{n} (n-k+1) F(k)G(n-k+1) \\&\quad\quad+ \sum_{k=1}^n k F(k)G(n-k+1) + (n+1) F(n+1)G(0) \\&= (n+1)F(0)G(n+1) + \sum_{k=1}^{n} (n-k+1+k) F(k)G(n-k+1) \\&\quad\quad+ (n+1) F(n+1)G(0) = \sum_{k=0}^{n+1} (n+1) F(k)G(n-k+1) \\&= (n+1)(F \cdot G)(n+1) = [D(F \cdot G)](n);
        \end{align*}
        \item The proof is by induction in $n$. For $n=1$, we have 
        $D(F^1) = DF = 1 \cdot\overline{1} \cdot DF$, as claimed. 
        Suppose that the equality holds for $n \leq k$. Then 
        \begin{equation*}
            D(F^{k+1}) = D(F^k) \cdot F + F^k \cdot DF = kF^{k-1} \cdot DF \cdot F + F^k \cdot DF = (k+1)F^k \cdot DF.
        \end{equation*}
        \item From $F^{-n} \cdot F^n = \overline{1}$, we get that $D(F^{-n}) \cdot F^n + F^{-n} \cdot nF^{n-1} \cdot DF = \overline{0}$. We then have $D(F^{-n}) \cdot F^n = -nF^{-1} \cdot DF$, which implies $D(F^n) = -nF^{-n-1} \cdot DF$.
        \item Suppose $r = p/q$. We have $D[(F^{r})^{q}] =  q(F^{r})^{q-1} \cdot D(F^r)$. But we also have that ${D[(F^{r})^{q}] = D(F^p) = pF^{p-1} \cdot DF}$. Consequently, ${q(F^r)^{q-1} D(F^r) = pF^{p-1} DF}$, and thus $D(F^r) = \frac{p}{q} F^{\frac{p}{q}-1} \cdot DF = rF^{r-1} \cdot DF$.
        \item If $DF = DG$, then $F(n) = G(n)$ for all $n \geq 1$. Since $F, G \in \C^\N_0$, $F(0) = G(0) = 0$, and we have that $F(n) = G(n)$ for all $n \in \N$. We then conclude that $F = G$.
        \item \emph{Necessity: }If $DF = \overline{0}$, then $F(n) = 0$ for all $n \geq 1$. Naturally, $F = \lambda.\overline{1}$ for some $\lambda \in \C$.

        \emph{Sufficiency: }$D(\lambda \cdot \overline{1}) = \lambda \cdot D\overline{1} = \lambda \cdot\overline{0} = \overline{0}$.
    \end{enumerate}
\end{proof}

We can now present the formal power series equivalent of Maclaurin (or Taylor) series.

\begin{theorem}\label{macl}
    Let $F \in \C^\N$. Then
    \begin{equation*}
        F = \sum_{n \in \N} \frac{1}{n!}(D^nF)(0) X^n.
    \end{equation*}
\end{theorem}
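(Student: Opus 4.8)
The plan is to reduce the statement to the uniqueness of the representation of $F$ as a formal power series, established in Theorem~\ref{representation as fps}. That theorem already gives $F = \sum_{n \in \N} F(n) X^n$ with the coefficients $F(n)$ uniquely determined, so it suffices to prove the coefficient identity $\frac{1}{n!}(D^nF)(0) = F(n)$ for every $n \in \N$. Everything then follows by comparing the two series term by term.

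To obtain this identity, I would first establish a closed form for the iterated derivative evaluated at an arbitrary point. Specifically, I claim that for all $n, m \in \N$,
$$(D^nF)(m) = \frac{(m+n)!}{m!}\,F(m+n),$$
and I would prove this by induction on $n$. The base case $n=0$ is immediate from $D^0F = F$, since $\frac{m!}{m!}=1$. For the inductive step, I would apply the definition $DG(m) = (m+1)G(m+1)$ with $G = D^nF$, then invoke the induction hypothesis at the point $m+1$; the factor $(m+1)$ produced by the derivative combines with the $(m+1)!$ in the denominator to leave $m!$, yielding the formula for $n+1$.

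Evaluating the closed form at $m=0$ gives $(D^nF)(0) = n!\,F(n)$, which is exactly the desired coefficient identity after dividing by $n!$. Substituting this into the right-hand side of the asserted equation turns $\sum_{n \in \N} \frac{1}{n!}(D^nF)(0)X^n$ into $\sum_{n \in \N} F(n)X^n$, and this equals $F$ by Theorem~\ref{representation as fps}, completing the proof.

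There is no serious obstacle here: the entire argument is an elementary induction followed by an appeal to the uniqueness of the formal power series representation. The only point demanding care is the index bookkeeping in the inductive step—tracking how the shift in the argument of $D$ interacts with the factorials—so that the cancellation of the $(m+1)$ factor against the factorial is carried out correctly.
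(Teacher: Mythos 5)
Your proposal is correct and follows essentially the same route as the paper: the paper also reduces to the coefficient identity $F(n) = \frac{1}{n!}(D^nF)(0)$ and obtains it from the closed form $(D^nF)(k) = (k+1)(k+2)\cdots(k+n)F(k+n)$, which is exactly your $\frac{(m+n)!}{m!}F(m+n)$. The only difference is that you spell out the induction that the paper compresses into ``applying the definition of derivative multiple times.''
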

\begin{proof}
    It suffices to prove that $F(n) = \frac{1}{n!} (D^nF)(0)$ for all $n \in \N$. Applying the definition of derivative multiple times, we have that $(D^nF)(k) = (k+1)(k+2)\cdots(k+n)F(k+n)$, and thus $(D^nF)(0) = n!F(n)$.
\end{proof}

\begin{corollary}\label{234}
    Let $r \in \Q$ and $\lambda \in \C$. Then
    \begin{equation*}
        (\overline{1} + \lambda X)^r = \sum_{n \in \N} \binom{r}{n} \lambda^n X^n.
    \end{equation*}
\end{corollary}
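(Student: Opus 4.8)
The plan is to invoke the formal Maclaurin expansion of Theorem~\ref{macl}, which expresses any $F \in \C^\N$ as $F = \sum_{n \in \N} \frac{1}{n!}(D^nF)(0)X^n$. Applying this to $F = (\overline{1} + \lambda X)^r$ reduces the whole problem to computing the coefficients $(D^nF)(0)$ and recognizing them, after division by $n!$, as $\binom{r}{n}\lambda^n$. The work therefore splits into a derivative computation and a final identification using $\binom{r}{n} = \frac{1}{n!}r^{\underline{n}}$.

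First I would set $G := \overline{1} + \lambda X$ and record that $G \in \C^\N_1$, since $G(0)=1$. This matters because it is exactly the hypothesis needed to apply the rational power rule of Proposition~\ref{232}.vi). I would then compute $DG$ directly from the definition of $D$: as $G(1)=\lambda$ and $G(n)=0$ for $n \geq 2$, one gets $DG = \lambda\,\overline{1}$, the constant series $\lambda$. This simple form of $DG$ is what makes the iterated derivatives tractable.

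The key step is to prove by induction on $k$ that $D^k(G^r) = r^{\underline{k}}\,\lambda^k\,G^{r-k}$. The base case $k=0$ is immediate. For the inductive step I would apply $D$ to $r^{\underline{k}}\lambda^k G^{r-k}$ and use Proposition~\ref{232}.vi) in the form $D(G^{r-k}) = (r-k)G^{r-k-1}\cdot DG = (r-k)\lambda\,G^{r-k-1}$; the falling factorials combine via $r^{\underline{k}}(r-k) = r^{\underline{k+1}}$, yielding the claim for $k+1$. The one point requiring care is the bookkeeping with rational exponents: at each stage $r-k \in \Q$, and one must know that $G^{r-k}$ is well defined and again lies in $\C^\N_1$ (powers of elements of $\C^\N_1$ remain in $\C^\N_1$, by the construction of rational powers earlier in this chapter), so that Proposition~\ref{232}.vi) legitimately applies at every step. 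I expect this to be the main obstacle, though it is more a matter of vigilance than of difficulty.

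Finally I would evaluate at $0$: since $G^{r-k} \in \C^\N_1$ we have $G^{r-k}(0)=1$, so $(D^kF)(0) = r^{\underline{k}}\lambda^k$. Substituting into the Maclaurin formula of Theorem~\ref{macl} gives $F = \sum_{n \in \N} \frac{1}{n!}r^{\underline{n}}\lambda^n X^n = \sum_{n \in \N} \binom{r}{n}\lambda^n X^n$, which is the desired identity. The computation beyond the induction is entirely routine.
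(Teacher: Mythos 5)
Your proposal is correct and follows essentially the same route as the paper's proof: compute the iterated derivatives of $(\overline{1}+\lambda X)^r$ via Proposition~\ref{232}.vi), evaluate at $0$, and conclude with the formal Maclaurin expansion of Theorem~\ref{macl}. You simply make explicit what the paper leaves implicit — namely that $DG = \lambda\,\overline{1}$ as a series, the induction giving $D^k(G^r) = r^{\underline{k}}\lambda^k G^{r-k}$, and the fact that $G^{r-k} \in \C^\N_1$ so each application of the power rule and each evaluation at $0$ is legitimate.
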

\begin{proof}
    We have $[D(\overline{1} + \lambda X)](0) = \lambda$. Then, by applying Proposition \ref{232}.vi) $n$ times, we get $[D^n(\overline{1} + \lambda X)^r](0) = r^{\underline{n}} \lambda^n$. Finally, by Theorem \ref{macl}, 
    $$\displaystyle(\overline{1} + \lambda X)^r = \sum_{n \in \N} \frac{1}{n!} r^{\underline{n}} \lambda^n X^n = \sum_{n \in \N} \binom{r}{n} \lambda^n X^n.$$
\end{proof}

\begin{theorem}
    Let $(F_n)_{n \in \N}$ be a summable sequence in $\C^\N$. Then $(DF_n)_{n \in \N}$ is summable and
    \begin{equation}\label{235}
        D \sum_{n \in \N} F_n = \sum_{n \in \N} DF_n.
    \end{equation}
\end{theorem}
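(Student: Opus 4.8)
The plan is to reduce everything to coordinate-wise statements, exploiting that for a summable (equivalently, convergent) sequence each coordinate stabilizes after finitely many terms.

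First I would establish summability of $(DF_n)_{n \in \N}$. By the equivalence of convergence and summability, it suffices to show that $(DF_n)_{n \in \N}$ converges, and by the convergence criterion this amounts to checking $\lim_{n \to \infty} DF_n = \overline{0}$. Since $(F_n)$ is summable it converges to $\overline{0}$, so for each $j \in \N$ there is an index $n_{j+1}$ such that $n \geq n_{j+1}$ forces $F_n(k) = 0$ for all $0 \leq k \leq j+1$. As $DF_n(i) = (i+1)F_n(i+1)$, the vanishing of $F_n$ on $\{0,\ldots,j+1\}$ makes $DF_n(i) = 0$ for all $0 \leq i \leq j$. Thus $\lim_{n\to\infty} DF_n = \overline{0}$, which gives summability of $(DF_n)$. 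The one point requiring care is the index shift: to kill the first $j+1$ coordinates of $DF_n$ one must kill the first $j+2$ coordinates of $F_n$, so one invokes the convergence of $(F_n)$ at level $j+1$ rather than at level $j$.

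Next I would verify the identity by evaluating both sides at an arbitrary coordinate $j \in \N$. Writing $S = \sum_n F_n$, I recall from the construction of the sum that $S(j+1) = \sum_n F_n(j+1)$ is a finite sum, since only finitely many summands are nonzero. Then $(DS)(j) = (j+1)S(j+1) = (j+1)\sum_n F_n(j+1)$. On the other side, $\left(\sum_n DF_n\right)(j) = \sum_n DF_n(j) = \sum_n (j+1)F_n(j+1) = (j+1)\sum_n F_n(j+1)$, where again the sum is finite, so the constant $(j+1)$ factors out freely. The two expressions agree for every $j$, hence $D\sum_n F_n = \sum_n DF_n$.

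I expect no serious obstacle; the work is entirely bookkeeping about which coordinates vanish. The subtlest step is the finiteness argument in the last paragraph: one must use that summability guarantees each coordinate of the series is an honest finite sum, so that interchanging $D$ with the summation is legitimate rather than a genuine exchange of limits.
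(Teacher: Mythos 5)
Your proof is correct and follows essentially the same route as the paper's: first showing $\lim_{n \to \infty} DF_n = \overline{0}$ via the coordinate shift $(DF_n)(j) = (j+1)F_n(j+1)$, then verifying the identity coordinate-by-coordinate using the fact that each coordinate of the series is a finite sum. Your explicit remark about the index shift (needing level $j+1$ of $F_n$ to control level $j$ of $DF_n$) is the same observation the paper makes, just stated more carefully.
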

\begin{proof}
    Since $(F_n)_{n \in \N}$ is summable, $\displaystyle \lim_{n \to \infty} F_n = \overline{0}$. Therefore, for all $j \in \N$ there exists $n_j \in \N$ such that $n \geq n_j$ implies $F_n(j+1) = 0$, and also $(DF_n)(j) = (j+1)F_n(j+1) = 0$. It follows that $\displaystyle \lim_{n \to \infty} DF_n = \overline{0}$, and thus $(DF_n)_{n \in \N}$ is summable. 
    Moreover,
    \begin{align*}
        \left( D \sum_{n \in \N} F_n \right)(j) &= (j+1)\left( \sum_{n \in \N} F_n \right)(j+1) = (j+1) \sum_{n = 0}^{n_j} F_n(j+1) = \sum_{n=0}^{n_j} (j+1)F_n(j+1) \\&= \sum_{n \in \N} (j+1)F_n(j+1) = \sum_{n \in \N} (DF_n)(j) = \left( \sum_{n \in \N} DF_n \right)(j)
    \end{align*}
    and equation \ref{235} holds.
\end{proof}

\section{Formal Exponential and Logarithm}

Those readers familiar with complex analysis might 
recall the following power series expansions for the exponential and logarithmic functions:
\begin{itemize}
    \item[ ] $\displaystyle e^z = \sum_{n \in \N} \frac{1}{n!}z^n$
    \item[ ] $\displaystyle \log(1+z) = \sum_{n \geq 1} \frac{(-1)^{n+1}}{n} z^n$
\end{itemize}

In this section, we will present their equivalent forms in the context of formal power series.

\begin{defi}
    Let $F \in \C^\N$ be such that $|F| < 1$. Then the formal exponential function $\exp : \C^\N_0 \to \C^\N_1$ is defined as $\displaystyle \exp(F) := \sum_{n \in \N} \frac{1}{n!} F^n$.
\end{defi}

\begin{defi}
    Let $F \in \C^\N$ be such that $|F| < 1$. Then the formal logarithmic function $\log : \C^\N_1 \to \C^\N_0$ is defined as $\displaystyle \log(\overline{1} + F) := \sum_{n \geq 1} \frac{(-1)^{n+1}}{n} F^n$.
\end{defi}


\begin{prop}
    For all $F \in \C^\N_0$ and $G \in \C^\N_1$,
    \begin{enumerate}
        \item $D(\exp(F)) = \exp(F) \cdot DF$;
        \item $D(\log(G)) = G^{-1} \cdot DG$.
    \end{enumerate}
\end{prop}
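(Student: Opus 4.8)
The plan is to establish both identities by differentiating the defining power series term by term. This is legitimate because both series are assembled from summable sequences and $D$ commutes with summation of summable sequences by equation \eqref{235}; the individual summands can then be handled by the power rule $D(F^n) = nF^{n-1}\cdot DF$ from Proposition \ref{232}.iv).

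For part i), I would first observe that $F \in \C^\N_0$ gives $|F| < 1$, so $|F^n| = |F|^n \to 0$ and the sequence $(\tfrac{1}{n!}F^n)_{n \in \N}$ is summable. Applying \eqref{235} and then the power rule yields $D(\exp(F)) = \sum_{n \geq 1} \tfrac{1}{n!}\, nF^{n-1}\cdot DF = \sum_{n \geq 1} \tfrac{1}{(n-1)!}F^{n-1}\cdot DF$ (the $n=0$ term drops out, since $D$ annihilates constants). Factoring out the fixed element $DF$ and reindexing with $m = n-1$ recovers the series for $\exp(F)$, so the sum collapses to $\exp(F)\cdot DF$, as desired.

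For part ii), writing $G = \overline{1} + F$ with $F = G - \overline{1} \in \C^\N_0$, the same two steps give $D(\log(G)) = \sum_{n \geq 1}(-1)^{n+1}F^{n-1}\cdot DF$. Reindexing by $m = n-1$ turns the series factor into $\sum_{m \in \N}(-F)^m$, which, since $|-F| = |F| < 1$, is the geometric series summing to $(\overline{1} - (-F))^{-1} = (\overline{1}+F)^{-1} = G^{-1}$. Finally, since $D\overline{1} = \overline{0}$ we have $DF = DG$, so the product becomes $G^{-1}\cdot DG$.

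The only genuinely delicate point is the justification of term-by-term differentiation: one must confirm summability of the coefficient-weighted power sequences so that \eqref{235} applies, and then keep careful track of the index shift (together with the vanishing of the $n=0$ term in part i)). Everything else is a mechanical application of the power rule, the geometric series formula, and the identity $DF = DG$.
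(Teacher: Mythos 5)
Your proposal is correct and follows essentially the same route as the paper's proof: term-by-term differentiation of the defining series, the power rule $D(F^n)=nF^{n-1}\cdot DF$, an index shift, the geometric series identity for part ii), and the observation $DF=DG$. The only difference is that you make explicit the summability justification for differentiating term by term, which the paper leaves implicit.
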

\begin{proof}
    \begin{enumerate}
        \item $\displaystyle D(\exp(F)) = D\left( \sum_{n \in \N} \frac{1}{n!} F^n \right) = \sum_{n \geq 1} \frac{1}{(n-1)!} F^{n-1} \cdot DF$
        
        $\displaystyle \quad\quad\quad\quad\quad\quad\quad\quad\, = \sum_{n \in \N} \frac{1}{n!} F^n \cdot DF = \exp(F) \cdot DF$;
        \item Suppose $G = \overline{1} + H$ for some $H \in \C^\N_0$. Then
        \begin{align*}
            D(\log(G)) &= D \sum_{n \geq 1} \frac{(-1)^{n+1}}{n} H^n = \sum_{n \geq 1} (-1)^{n+1}H^{n-1} \cdot DH \\&= \sum_{n \in \N} (-1)^n H^n \cdot DH = (\overline{1} + H)^{-1} \cdot DH = G^{-1} \cdot DG
        \end{align*}
        as $DH = DG$.
    \end{enumerate}
\end{proof}

The following logarithmic properties still stand for the formal logarithm.
\begin{prop}
    Let $F, G \in \C^\N_1$. Then
    \begin{enumerate}
        \item $\log(F \cdot G) = \log(F) + \log(G)$;
        \item $\log(F^m) = m\log(F)$ for all $m \in \Z$;
        \item $\log(F^r) = r\log(F)$ for all $r \in \Q$.
    \end{enumerate}
\end{prop}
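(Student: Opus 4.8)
The plan is to reduce all three identities to the uniqueness principle recorded in Proposition~\ref{232}.vii): two elements of $\C^\N_0$ with equal formal derivatives must coincide. The crucial preliminary observation is that every quantity occurring in the statement lives in $\C^\N_0$. Indeed, if $F,G\in\C^\N_1$ then $(F\cdot G)(0)=F(0)G(0)=1$, so $F\cdot G\in\C^\N_1$ and $\log(F\cdot G)$ is defined and lands in $\C^\N_0$; likewise $\log F,\log G\in\C^\N_0$, and since $\C^\N_0$ is a $\C$-subspace of $\C^\N$ it is closed under the sums and scalar multiples appearing on the right-hand sides. Thus in each case it suffices to differentiate both sides and verify that the derivatives agree.

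For i), I would apply the derivative formula $D(\log H)=H^{-1}\cdot DH$ established just above (with $H=F\cdot G$) together with the product rule $D(F\cdot G)=DF\cdot G+F\cdot DG$ of Proposition~\ref{232}.iii):
\[
D\big(\log(F\cdot G)\big)=(F\cdot G)^{-1}\cdot\big(DF\cdot G+F\cdot DG\big)=F^{-1}\cdot DF+G^{-1}\cdot DG .
\]
On the other hand, by linearity of $D$ and the same log-derivative formula, $D(\log F+\log G)=F^{-1}\cdot DF+G^{-1}\cdot DG$. The two derivatives coincide, and since both $\log(F\cdot G)$ and $\log F+\log G$ belong to $\C^\N_0$, Proposition~\ref{232}.vii) yields i).

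Identity ii) then follows from i) by induction on $m\in\N$: the base case $m=0$ uses $F^0=\overline{1}$ and $\log\overline{1}=\overline{0}$ (immediate from the defining series of $\log$), while the inductive step writes $\log(F^{m+1})=\log(F^m\cdot F)=\log(F^m)+\log F$ and invokes the induction hypothesis. To reach negative exponents I would first note $F^{-1}\in\C^\N_1$ (since $F^{-1}(0)=1$) and deduce $\log(F^{-1})=-\log F$ from $\overline{0}=\log\overline{1}=\log(F\cdot F^{-1})=\log F+\log(F^{-1})$; writing $m=-k$ with $k>0$ and using $F^{-k}=(F^{-1})^k$ gives the claim. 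Finally, for iii) I would put $r=p/q$ with $q\in\N\setminus\{0\}$, recall that $F^r=F^{p/q}$ is the unique element of $\C^\N_1$ satisfying $(F^{p/q})^q=F^p$, and apply ii) twice:
\[
q\,\log(F^{r})=\log\big((F^{r})^{q}\big)=\log(F^{p})=p\,\log F ,
\]
so that $\log(F^r)=\tfrac{p}{q}\log F=r\log F$ after dividing by $q$ in the vector space $\C^\N_0$.

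I do not expect a serious obstacle: the mathematical content is concentrated in i), and that collapses to a one-line derivative computation once the product rule and the log-derivative formula are available. The only points requiring genuine care are bookkeeping ones, namely checking at every stage that the arguments of $\log$ actually lie in $\C^\N_1$ and that the outputs lie in $\C^\N_0$, so that both the derivative formula and the uniqueness statement of Proposition~\ref{232}.vii) apply; the passage from ii) to iii) additionally leans on the uniqueness of rational powers in $\C^\N_1$ proved earlier.
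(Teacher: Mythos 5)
Your proposal is correct, and for parts i) and iii) it coincides with the paper's proof: part i) is exactly the same reduction to equality of derivatives via Proposition~\ref{232}.vii), using $D(\log H)=H^{-1}\cdot DH$ and the product rule, and part iii) is the same manipulation $q\log(F^r)=\log((F^r)^q)=\log(F^p)=p\log F$ based on the uniqueness of rational powers. The genuine divergence is in part ii). The paper stays inside the differential framework: it computes $D(\log(F^m))=F^{-m}\cdot D(F^m)=mF^{-1}\cdot DF=D(m\log F)$ in one line, which handles all $m\in\Z$ uniformly but leans on the power rules for derivatives, Proposition~\ref{232}.iv) for positive exponents and Proposition~\ref{232}.v) for negative ones. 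You instead derive ii) purely algebraically from i): induction on $m\in\N$ with base case $\log\overline{1}=\overline{0}$, then the identity $\overline{0}=\log(F\cdot F^{-1})=\log F+\log(F^{-1})$ to get $\log(F^{-1})=-\log F$, and finally $F^{-k}=(F^{-1})^k$. Your route is slightly longer (an induction plus a case split on the sign of $m$) but buys independence from the derivative power rules, treating the logarithm as a homomorphism once i) is known; the paper's route buys brevity and uniformity in $m$ at the cost of invoking more of the differential calculus. Both are complete; your bookkeeping remarks (that all arguments of $\log$ lie in $\C^\N_1$, e.g.\ $F^m(0)=F(0)^m=1$ and $F^{-1}(0)=1$, and that both sides of each identity lie in $\C^\N_0$ so that Proposition~\ref{232}.vii) applies) are exactly the hypotheses that need checking, and they all hold.
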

\begin{proof}
    \begin{enumerate}
        \item Since $\log(F \cdot G)$ and $\log(F) + \log(G)$ are elements of $\C^\N_0$, it is sufficient to prove that $D(\log(F \cdot G)) = D(\log(F) + \log(G))$. This holds, as
        \begin{align*}
            D(\log(F \cdot G)) &= (F \cdot G)^{-1} \cdot D(F \cdot G) = (G^{-1} \cdot F^{-1}) \cdot (G \cdot DF + F \cdot DG) \\&= F^{-1} \cdot DF + G^{-1} \cdot DG = D(\log(F) + \log(G)).
        \end{align*}
        \item Again, it is sufficient to prove that their derivatives coincide. We have
        \begin{align*}
            D(\log(F^m)) = F^{-m} \cdot D(F^m) = F^{-m} \cdot mF^{m-1} \cdot DF = mF^{-1} \cdot DF = D(m\log(F)).
        \end{align*}
        \item Suppose $r = p/q$. Let $F^r = G$, so $F^p = G^q$ and $\log(F^p) = \log(G^q)$. By the previous item, this is the same as $p\log(F) = q\log(F^r)$, and thus $\log(F^r) = r\log(F)$.
    \end{enumerate}
\end{proof}

Let us now see the injectivity of the formal logarithm.

\begin{lemma}\label{log0}
    The equality $\log(G) = \overline{0}$ holds if and only if $G = \overline{1}$.
\end{lemma}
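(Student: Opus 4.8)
The plan is to prove the biconditional $\log(G) = \overline{0} \iff G = \overline{1}$ for $G \in \C^\N_1$. The sufficiency direction is immediate: if $G = \overline{1}$, then writing $G = \overline{1} + H$ with $H = \overline{0}$, every term $\frac{(-1)^{n+1}}{n} H^n$ vanishes, so $\log(\overline{1}) = \overline{0}$ directly from the definition.

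For the necessity direction, I would exploit the derivative machinery already established. Write $G = \overline{1} + H$ with $H \in \C^\N_0$, and suppose $\log(G) = \overline{0}$. Since $\log(G) \in \C^\N_0$, I can apply Proposition on formal derivatives: we have $D(\log(G)) = G^{-1} \cdot DG$. If $\log(G) = \overline{0}$, then $D(\log(G)) = \overline{0}$ as well, forcing $G^{-1} \cdot DG = \overline{0}$. Since $G \in \C^\N_1$ is invertible and $\C^\N$ is an integral domain, multiplying by $G$ yields $DG = \overline{0}$. By Proposition~\ref{232}.viii), $DG = \overline{0}$ implies $G = \lambda \cdot \overline{1}$ for some $\lambda \in \C$; but $G(0) = 1$ forces $\lambda = 1$, so $G = \overline{1}$.

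Alternatively, and perhaps more directly, I would argue by uniqueness of the representation as a formal power series together with the order function. If $\log(\overline{1} + H) = \overline{0}$ with $H \neq \overline{0}$, let $r = o(H) \geq 1$. The leading term of $\sum_{n \geq 1} \frac{(-1)^{n+1}}{n} H^n$ comes from the $n=1$ summand, since by Proposition~\ref{227} each higher power $H^n$ has order $o(H^n) = nr > r$. Thus $o(\log(\overline{1}+H)) = r < \infty$, contradicting $\log(\overline{1}+H) = \overline{0}$; hence $H = \overline{0}$ and $G = \overline{1}$.

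I expect the main subtlety to lie in justifying that the lowest-order term of the logarithmic series is genuinely not cancelled by higher-order contributions. This is exactly where the multiplicativity of the order function (Proposition~\ref{227}) does the work: since $o(H^n) = n \cdot o(H)$ is strictly increasing in $n$, the coefficient of $X^r$ in $\log(\overline{1}+H)$ equals $H(r) \neq 0$, with no interference from later terms. The derivative-based argument sidesteps this bookkeeping entirely by reducing to Proposition~\ref{232}.viii), so I would likely present that route as the clean proof, which is the approach I judge most robust.
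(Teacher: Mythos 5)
Your derivative-based argument is exactly the paper's proof: from $\log(G)=\overline{0}$ one gets $G^{-1}\cdot DG=\overline{0}$, hence $DG=\overline{0}$ since the Cauchy algebra is an integral domain, hence $G=\lambda\cdot\overline{1}$, and $G\in\C^\N_1$ forces $\lambda=1$; the sufficiency direction is likewise identical. Your alternative order-function argument is also correct (only the $n=1$ term can contribute at order $o(H)$, so $\log(\overline{1}+H)$ has finite order whenever $H\neq\overline{0}$), but since you designate the derivative route as your main proof, the proposal coincides with the paper's.
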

\begin{proof}
    \emph{Necessity: }Since $\log(G) = \overline{0}$, we have $G^{-1} \cdot DG = 0$. For obvious reasons, $G^{-1} \neq \overline{0}$, and hence $DG = \overline{0}$. It follows that $G = \lambda \cdot\overline{1}$ for some $\lambda \in \C$. 
    Since the domain of the logarithm function is $\C^\N_1$, $G \in \C^\N_1$, and consequently, $G = \overline{1}$.

    \emph{Sufficiency: }$\displaystyle \log(\overline{1} + \overline{0}) = \sum_{n \geq 1} \frac{(-1)^{n+1}}{n} \overline{0}^n = \overline{0}$.
\end{proof}

\begin{theorem}
    The formal logarithmic function is injective.
\end{theorem}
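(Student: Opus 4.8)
The plan is to reduce injectivity to the kernel computation already carried out in Lemma~\ref{log0}, exploiting the fact that the logarithm turns products into sums. Suppose that $F, G \in \C^\N_1$ satisfy $\log(F) = \log(G)$; the goal is to deduce $F = G$. The key preliminary observation is that $G$ is invertible and its inverse again lies in the domain of the logarithm: since $G \in \C^\N_1$ we have $G(0) = 1 \neq 0$, so $G^{-1}$ exists, and from $G^{-1}(0) = 1/G(0) = 1$ we get $G^{-1} \in \C^\N_1$. Thus $F \cdot G^{-1}$ is a legitimate argument for $\log$.

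Next I would apply the homomorphism property $\log(F \cdot G) = \log(F) + \log(G)$ together with the power rule $\log(F^m) = m\log(F)$ for $m \in \Z$, specialized to $m = -1$, which gives $\log(G^{-1}) = -\log(G)$. Combining these yields
\begin{equation*}
\log(F \cdot G^{-1}) = \log(F) + \log(G^{-1}) = \log(F) - \log(G) = \overline{0},
\end{equation*}
where the last equality uses the hypothesis $\log(F) = \log(G)$.

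Finally I would invoke Lemma~\ref{log0}: the only element of $\C^\N_1$ whose logarithm is $\overline{0}$ is $\overline{1}$. Hence $F \cdot G^{-1} = \overline{1}$, and multiplying both sides by $G$ gives $F = G$. This establishes injectivity.

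I do not expect any genuine obstacle here, since all the heavy lifting has been done: the additivity of $\log$, the power rule, and the triviality of the kernel (Lemma~\ref{log0}) are precisely the three ingredients that, for an ordinary group homomorphism, force injectivity once the kernel is trivial. The only point that requires a moment's care is verifying that $G^{-1}$ stays inside $\C^\N_1$ so that $\log(G^{-1})$ is defined; everything else is a direct substitution.
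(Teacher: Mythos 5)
Your proof is correct and follows exactly the paper's route: pass to $F \cdot G^{-1}$, use additivity and the power rule to get $\log(F \cdot G^{-1}) = \overline{0}$, and conclude via Lemma~\ref{log0}. The only difference is that you spell out the (worthwhile) check that $G^{-1} \in \C^\N_1$, which the paper leaves implicit.
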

\begin{proof}
    Suppose $\log(F) = \log(G)$ for $F,G \in \C^\N_1$. Then $\log(F \cdot G^{-1}) = \log(F) - \log(G) = \overline{0}$. From Lemma \ref{log0}, $F \cdot G^{-1} = \overline{1}$, and hence $F = G$.
\end{proof}

The following theorem shows that the inverse relation between logarithms and exponentials still stands in the context of formal power series.

\begin{theorem}
    Let $F \in \C^\N_0$ and $G \in \C^\N_1$. Then
    \begin{enumerate}
        \item $\log(\exp(F)) = F$;
        \item $\exp(\log(G)) = G$.
    \end{enumerate}
\end{theorem}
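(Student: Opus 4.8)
The plan is to prove both statements by exploiting the two differentiation formulas $D(\exp(F)) = \exp(F)\cdot DF$ and $D(\log(G)) = G^{-1}\cdot DG$ together with the uniqueness result in Proposition~\ref{232}.vii), and then to deduce the second identity from the first using the injectivity of the formal logarithm.

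For part (i), I would set $H := \log(\exp(F))$. First I would observe that the composition makes sense and lands in the right space: since $\exp(F) \in \C^\N_1$, its constant term is $1 \neq 0$, so $\exp(F)$ is invertible and $\log$ may legitimately be applied to it, giving $H \in \C^\N_0$. As $F$ also lies in $\C^\N_0$, Proposition~\ref{232}.vii) reduces the problem to checking that $DH = DF$. Here the computation is short: applying the two derivative rules in turn,
\begin{equation*}
DH = D(\log(\exp(F))) = (\exp(F))^{-1}\cdot D(\exp(F)) = (\exp(F))^{-1}\cdot \exp(F)\cdot DF = DF,
\end{equation*}
and uniqueness then closes the case.

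For part (ii), rather than differentiate again (which leads to a circular identity, since $D(\exp(\log(G))) = \exp(\log(G))\cdot G^{-1}\cdot DG$ does not simplify without already knowing the conclusion), I would feed the first part back into itself. Because $G \in \C^\N_1$, we have $\log(G) \in \C^\N_0$, so part (i) applies with $F := \log(G)$ and yields $\log(\exp(\log(G))) = \log(G)$. Now both $\exp(\log(G))$ and $G$ belong to $\C^\N_1$, the domain of $\log$, and the formal logarithm has already been shown to be injective; hence $\exp(\log(G)) = G$.

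I do not expect any genuine obstacle here: once the differentiation formulas are in place the algebra collapses immediately. The only points requiring care are the bookkeeping of domains — verifying that $\exp(F)$ is invertible so that $\log(\exp(F))$ is defined, and that $\log(G) \in \C^\N_0$ so that part (i) is legitimately applicable — and the recognition that part (ii) is cleanest to obtain from part (i) via injectivity rather than by a direct derivative argument, since Proposition~\ref{232}.vii) applies only to elements of $\C^\N_0$, whereas $\exp(\log(G))$ and $G$ both have constant term $1$.
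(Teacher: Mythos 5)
Your proof is correct and takes essentially the same route as the paper: part (i) is established by noting both sides lie in $\C^\N_0$ and have equal derivatives (via the differentiation rules for $\exp$ and $\log$, closed off by Proposition~\ref{232}.vii), and part (ii) follows by applying part (i) to $\log(G)$ and invoking the injectivity of the formal logarithm. Your additional bookkeeping about domains is sound but adds nothing beyond what the paper's argument already implicitly uses.
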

\begin{proof}
    \begin{enumerate}
        \item Since the codomain of the formal logarithmic function is $\C^\N_0$, it suffices to show that $D(\log(\exp(F))) = DF$. This holds, as
        \begin{align*}
            D(\log(\exp(F))) = [\exp(F)]^{-1} \cdot D(\exp(F)) = [\exp(F)]^{-1} \cdot \exp(F) \cdot DF = DF.
        \end{align*}
        \item From the previous item, $\log(\exp(\log(G))) = \log(G)$, and from the injectivity of the logarithmic function, we get $\exp(\log(G)) = G$.
    \end{enumerate}
\end{proof}

Let us now see a generalization of Corollary \ref{234}.

\begin{theorem}
    Let $F \in \C^\N_0$. Then, for all $r \in \Q$,
    \begin{equation*}
        (\overline{1} + F)^r = \sum_{n \in \N} \binom{r}{n} F^n.
    \end{equation*}
\end{theorem}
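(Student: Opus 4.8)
The plan is to exhibit both $(\overline{1}+F)^r$ and the candidate series as solutions of one and the same first-order linear formal differential equation sharing the same constant term, and then to prove a uniqueness statement by a valuation (order) argument. This sidesteps any attempt to manipulate the infinite series $\sum_{n}\binom{r}{n}F^n$ directly, which would be unwieldy.

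First I would set $G:=\sum_{n\in\N}\binom{r}{n}F^n$ and check that it is legitimate. Since $F\in\C^\N_0$ we have $|F|<1$, so by Proposition~\ref{229} every nonzero term satisfies $|\binom{r}{n}F^n|=|F|^n\to 0$; hence the sequence tends to $\overline{0}$ and is summable, and $G$ is a well-defined element of $\C^\N$ with $G(0)=\binom{r}{0}=1$, i.e. $G\in\C^\N_1$. The left-hand side $(\overline{1}+F)^r$ also lies in $\C^\N_1$, since $\overline{1}+F\in\C^\N_1$. The case $r=0$ is immediate from $\binom{0}{n}=\delta_{0,n}$, so I assume $r\neq 0$.

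Next I would show that both objects satisfy $(\overline{1}+F)\cdot DY=r\,DF\cdot Y$. For $Y=(\overline{1}+F)^r$ this is direct: as $D(\overline{1}+F)=DF$, Proposition~\ref{232} gives $D((\overline{1}+F)^r)=r(\overline{1}+F)^{r-1}DF$, and multiplying by $\overline{1}+F$ yields $r(\overline{1}+F)^r DF$. For $Y=G$, I would differentiate term by term (justified by the theorem on differentiation of summable series) to obtain $DG=DF\sum_{n\geq 1}n\binom{r}{n}F^{n-1}$, and then the desired identity reduces, coefficient by coefficient, to the generalized Pascal relation $(m+1)\binom{r}{m+1}+m\binom{r}{m}=r\binom{r}{m}$. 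This in turn follows from $(m+1)\binom{r}{m+1}=\frac{r^{\underline{m+1}}}{m!}=(r-m)\binom{r}{m}$, whence $(r-m)\binom{r}{m}+m\binom{r}{m}=r\binom{r}{m}$.

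For the uniqueness, I would set $P:=(\overline{1}+F)^r-G\in\C^\N_0$; linearity gives $(\overline{1}+F)DP=r\,DF\cdot P$. If $P\neq\overline{0}$, let $s:=o(P)\geq 1$. Then $o(DP)=s-1$, so by Proposition~\ref{227} the order of the left-hand side is $0+(s-1)=s-1$, whereas the order of $r\,DF\cdot P$ is $o(DF)+s\geq s$; since $s-1<s$, this contradicts the equation, forcing $P=\overline{0}$ and hence the theorem. The step I expect to be the main obstacle is keeping the differential-equation argument honest: the temptation is to \emph{assume} the result for exponent $r-1$ while differentiating $(\overline{1}+F)^r$, which would be circular, so the equation for $G$ must be verified intrinsically through the Pascal identity above. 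A secondary point requiring care is the order bookkeeping in the uniqueness step, where the key fact is that $D$ drops the order by exactly one while $\overline{1}+F$ contributes order $0$.
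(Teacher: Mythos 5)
Your proof is correct, and its engine coincides with the paper's: you define $G=\sum_{n}\binom{r}{n}F^n$, check summability, differentiate term by term, and use the relation $(m+1)\binom{r}{m+1}+m\binom{r}{m}=r\binom{r}{m}$ to arrive at the equation $(\overline{1}+F)\cdot DG=r\,DF\cdot G$ --- this is word for word the computation in the paper's proof. Where you genuinely diverge is the endgame. The paper rewrites that equation as $D(\log G)=r(\overline{1}+F)^{-1}\cdot DF=D\bigl(\log(\overline{1}+F)^r\bigr)$, equates the logarithms using Proposition~\ref{232}.vii (both lie in $\C^\N_0$ and have equal derivatives), and finishes with the injectivity of the formal logarithm. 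You instead observe, via Proposition~\ref{232}.vi, that $(\overline{1}+F)^r$ satisfies the same linear equation, and then prove uniqueness directly: the difference $P=(\overline{1}+F)^r-G$ lies in $\C^\N_0$, satisfies $(\overline{1}+F)\cdot DP=r\,DF\cdot P$, and if $P\neq\overline{0}$ the left side has order $o(P)-1$ while the right side has order at least $o(P)$ (since $o(DF)\geq 0$ for $F\in\C^\N_0$, and with the convention $o(\overline{0})=\infty$ this covers $DF=\overline{0}$ as well), a contradiction. Your route buys independence from the entire formal log/exp section --- only Propositions~\ref{227} and~\ref{232} and the summability theorems are needed --- at the price of the valuation bookkeeping, whose two load-bearing facts ($D$ drops the order of an element of positive order by exactly one, and multiplication by $\overline{1}+F$ preserves order) you identify correctly; handling $r=0$ separately, as you do, keeps the order count clean. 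The paper's route is shorter given that the logarithm machinery is already in place, but both arguments ultimately rest on the same two pillars: the power rule of Proposition~\ref{232}.vi and the generalized Pascal identity.
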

\begin{proof}
    The sequence $\left( \binom{r}{n}F^n \right)_{n \in \N}$ is clearly summable, as $\displaystyle \lim_{n \to \infty}|F^n| = 0$. Denote by ${G = \sum_{n \in \N} \binom{r}{n}F^n}$. Then $DG = DF \cdot \sum_{n \geq 1} n \binom{r}{n} F^{n-1}$, and
    \begin{align*}
        (\overline{1} + F) \cdot DG &= DF \cdot \sum_{n \geq 1} n \binom{r}{n} F^{n-1} + DF \cdot \sum_{n \geq 1} n \binom{r}{n} F^n \\&= DF \cdot \sum_{n \geq 1} n \binom{r}{n} F^{n-1} + DF \cdot \sum_{n \geq 2} (n-1) \binom{r}{n-1} F^{n-1} \\&= rDF + DF \cdot \sum_{n \geq 2} \left[ n \binom{r}{n} + (n-1) \binom{r}{n-1} \right] F^{n-1} \\&= rDF + rDF \cdot \sum_{n \geq 2} \binom{r}{n-1} F^{n-1} \\&= rDF \cdot \sum_{n \in \N} \binom{r}{n} F^n = rG \cdot DF.
    \end{align*}
    Multiplying both sides of the equation $(\overline{1} + F) \cdot DG = rG \cdot DF$ by $G^{-1} \cdot (\overline{1} + F)^{-1}$ yields $G^{-1} \cdot DG = r(\overline{1} + F)^{-1} \cdot DF$. It follows that
    \begin{align*}
        D(\log(G)) &= G^{-1} \cdot DG = r(\overline{1} + F)^{-1} \cdot DF = r(\overline{1} + F)^{-1} \cdot D(\overline{1} + F) \\&= D(r\log(\overline{1} + F)) = D(\log(\overline{1} + F)^r).
    \end{align*}
    Since $\log(G)$ and $\log(\overline{1} + F)^r$ are both in $\C^\N_0$, we may conclude that $\log(G) = \log(\overline{1} + F)^r$. The injectivity of the logarithmic function then guarantees $G = (\overline{1} + F)^r$.
\end{proof}

We conclude this section by using the theorem above to find another formula for the Fibonacci numbers.

\begin{exa}
    Let $F \in \C^\N$ be the function such that $F(n)$ is the $n$-th Fibonacci number. Then $F(0) = F(1) = 1$, and $F(n+2) = F(n) + F(n+1)$ for all $n \in \N$. We have
    \begin{align*}
        F &= \overline{1} + X + \sum_{n \geq 2} [F(n-2) + F(n-1)] X^n = \overline{1} + X + \sum_{n \geq 0} [F(n) + F(n+1)] X^{n+2} \\&= \overline{1} + X + X \sum_{n \geq 1} F(n)X^n + X^2 \sum_{n \geq 0} F(n) X^n = \overline{1} + X + X(F - \overline{1}) + X^2 F = \overline{1} + XF + X^2 F.
    \end{align*}
    This implies $\overline{1} = F(\overline{1} - X - X^2)$, from where $F = (\overline{1} - X - X^2)^{-1}$. Applying the 
    theorem above we get
    \begin{align*}
        F = \sum_{n \in \N} \binom{-1}{n} (-X-X^2)^{n} = \sum_{n \in \N} X^n(\overline{1} + X)^n = \sum_{n \in \N} \sum_{k = 0}^n \binom{n}{k} X^{n+k},
    \end{align*}
    which implies $\displaystyle F(n) = \sum_{k=0}^n \binom{n-k}{k}$.
\end{exa}

\section{The Fundamental Theorem of Rational Generating Functions}

In this section, we prove that a formal power series $\sum_{n=0}^{\infty} a_nX^n$ is a rational function if and only if the sequence of coefficients $(a_n)_{n \in \N}$ (eventually) satisfies a linear recurrence relation with constant coefficients. 

Recall that we denote by $E$ and $I$ the shift operator and the identity operator on $\C^{\N}$, respectively.

\begin{theorem}[Fundamental theorem of rational generating functions]
    Let $d$ be a positive integer, $c_0,\ldots,c_d$ be complex numbers such that $c_0 c_d \neq 0$, 
    $L = c_d E^d + \ldots +c_1 E + c_0 I$, and $p_L(X) = c_d X^d + \ldots + c_1 X + c_0 = c_d \prod_{i=1}^k (X - \alpha_i)^{m_i}$, with each $m_i > 0$, $m_1 + \ldots + m_k = d$, and $\alpha_1, \ldots, \alpha_k$ distinct complex numbers. 
    Consider the sets
    \begin{itemize}
        \item[ ] $N_L = \{ f : \N \to \C ; L(f)=\overline{0} \}$;
        \item[ ] $\displaystyle G_1 = \left\{ f : \N \to \C ; \sum_{n \in \N} f(n)X^n = \frac{q(X)}{X^d p_L(X^{-1})} \text{ for some }q \in \C[X], \deg q < d \right\}$;
        \item[ ] $\displaystyle G_2 = \left\{ f : \N \to \C ; \sum_{n \in \N} f(n)X^n = \sum_{i=1}^k \frac{g_i(X)}{(\overline{1} - \alpha_i X)^{m_i}} \text{ for some }g_i \in \C[X], \deg g_i < m_i \right\}$;
        \item[ ] $\displaystyle F = \left\{ f : \N \to \C ; f(n) = \sum_{i=1}^k \lambda_i(n) \alpha_i^n \text{ for some }\lambda_i \in \C[n], \deg \lambda_i < m_i \right\}$.
    \end{itemize}
    Then $N_L = G_1 = G_2 = F$.
\end{theorem}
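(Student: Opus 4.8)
The plan is to prove the three identities $N_L = F$, $N_L = G_1$, and $G_1 = G_2$; since equality is transitive, this yields $N_L = G_1 = G_2 = F$ at once. The first identity is almost immediate: $F$ is by definition the linear span of the functions $n \mapsto n^r \alpha_i^n$ with $1 \le i \le k$ and $0 \le r < m_i$, and the (unlabelled) theorem on recurrences with repeated roots proved just above exhibits exactly these $d$ functions as a basis of $N_L$. Hence $F = N_L$, and in particular (via Theorem~\ref{nldim}) all four sets, once shown equal, will have dimension $d$.

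For $N_L = G_1$, introduce the \emph{reversed} polynomial $\tilde p(X) := X^d p_L(X^{-1}) = c_d + c_{d-1}X + \cdots + c_0 X^d \in \C[X]$. Its constant term is $\tilde p(0) = c_d \neq 0$, so $\tilde p$ is invertible in $\C^\N \cong \C[[X]]$. Given $f \in \C^\N$ with generating function $A := \sum_{n \in \N} f(n) X^n$, a direct convolution computation gives, for every $m \ge d$,
\begin{equation*}
(\tilde p \cdot A)(m) = \sum_{j=0}^d c_j\, f(m-d+j) = (Lf)(m-d),
\end{equation*}
while the coefficients of $X^0,\ldots,X^{d-1}$ in $\tilde p \cdot A$ are unconstrained. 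Consequently $f \in N_L$ \emph{iff} the tail of $\tilde p \cdot A$ vanishes, i.e. iff $\tilde p \cdot A = q$ for some $q \in \C[X]$ with $\deg q < d$, which — after dividing by $\tilde p$ — is precisely the defining condition of $G_1$. This is the step I expect to be the crux: recognising that multiplication by the reversed characteristic polynomial converts the recurrence $Lf = \overline{0}$ into the single statement that $\tilde p \cdot A$ is a polynomial of degree $< d$.

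It remains to prove $G_1 = G_2$. Writing $X^d = \prod_i X^{m_i}$ and using $p_L(X) = c_d \prod_{i=1}^k (X - \alpha_i)^{m_i}$, one obtains the factorisation
\begin{equation*}
\tilde p(X) = X^d p_L(X^{-1}) = c_d \prod_{i=1}^k (\overline{1} - \alpha_i X)^{m_i}.
\end{equation*}
Because $c_0 \neq 0$, every root $\alpha_i$ is nonzero, so the factors $(\overline{1} - \alpha_i X)^{m_i}$ are pairwise coprime. Given $A = q/\tilde p \in G_1$ with $\deg q < d = \sum_i m_i$, the standard partial-fraction decomposition over $\C$ produces unique polynomials $g_i$ with $\deg g_i < m_i$ and $A = \sum_{i=1}^k g_i/(\overline{1} - \alpha_i X)^{m_i}$, giving $G_1 \subseteq G_2$. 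Conversely, placing any such sum over the common denominator $\prod_i (\overline{1} - \alpha_i X)^{m_i}$ yields a numerator of degree at most $\max_i (\deg g_i + \sum_{j \neq i} m_j) < d$, so $A \in G_1$ and $G_2 \subseteq G_1$.

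Finally, I note an alternative, self-contained route to $G_2 = F$ that avoids the preceding theorem: by the generalised binomial expansion (Corollary~\ref{234}) the coefficient of $X^n$ in $(\overline{1} - \alpha_i X)^{-m_i}$ is $\binom{n+m_i-1}{m_i-1}\alpha_i^n$, a polynomial in $n$ of degree $m_i-1$ times $\alpha_i^n$; convolving with $g_i$ (of degree $< m_i$) then shows the coefficient of $X^n$ in each $G_2$-summand is $\lambda_i(n)\alpha_i^n$ with $\deg \lambda_i < m_i$. The delicate point here — and the reason I prefer to deduce $G_2 = F$ from $N_L = F$ together with the chain above — is verifying that this polynomial-times-$\alpha_i^n$ description holds for \emph{all} $n \ge 0$, including the small indices $n < m_i$ where the naive shifted-binomial formula needs care; matching dimensions ($\dim F = \dim G_2 = d$) with a single inclusion settles this cleanly.
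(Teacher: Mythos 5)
Your proposal is correct, and its logical architecture differs from the paper's in an instructive way. The paper first checks that all four sets are $d$-dimensional subspaces of $\C^{\N}$ (via explicit isomorphisms with $\C^d$), recalls $N_L = F$ from Section~\ref{Recurrence Equations}, and then gets away with proving only the two \emph{easy} inclusions $G_1 \subseteq N_L$ and $G_2 \subseteq G_1$, letting the dimension count upgrade each inclusion to an equality. You instead prove a chain of genuine two-way equalities and never invoke dimensions: your observation that $(\tilde p \cdot A)(m) = (Lf)(m-d)$ for $m \geq d$ is the same coefficient computation the paper uses for $G_1 \subseteq N_L$, but combining it with the invertibility of $\tilde p$ in $\C[[X]]$ (constant term $c_d \neq 0$) makes the equivalence $f \in N_L \iff A = q/\tilde p$ immediate in both directions, which is arguably cleaner than the paper's one-way argument. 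The trade-off appears at $G_1 = G_2$: your inclusion $G_1 \subseteq G_2$ imports the existence and shape of the partial-fraction decomposition over $\C$ as a standard fact, whereas the paper's whole point in setting up the dimension count is precisely to \emph{avoid} needing partial fractions --- it only ever performs the elementary common-denominator computation for $G_2 \subseteq G_1$ (which you also do, degree bound and all). So the paper buys self-containedness at the price of the four isomorphism verifications; you buy a shorter, dimension-free chain at the price of one cited classical theorem. Your closing remark about deducing $G_2 = F$ directly from Corollary~\ref{234} and a dimension-plus-inclusion argument is a sound third option, and your caution about small indices $n < m_i$ is exactly the right thing to worry about there.
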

\begin{proof}
    Firstly, note that these four sets are subspaces of the vector space $\C^\N$. Moreover, they are all $d$-dimensional, as
    \begin{enumerate}
        \item the map $f \mapsto (f(0),\ldots,f(d-1))$ is an isomorphism from $N_L$ to $\C^d$;
        \item if $q(X) = a_0 + \ldots + a_{d-1} X^{d-1}$, then the map $f \mapsto (a_0, \ldots, a_{d-1})$ is an isomorphism from $G_1$ to $\C^d$;
        \item if $g_i(X) = a_{i,0} + \ldots + a_{i, m_i - 1} X^{m_i - 1}$, then the map $f \mapsto (a_{1,0},\ldots,a_{1, m_1 - 1},a_{2,0}, \ldots, a_{k, m_k - 1})$ is an isomorphism from $G_2$ to $\C^d$;
        \item if $\lambda_i(n) = l_{i,0} + \ldots + l_{i, m_i - 1} n^{m_i - 1}$, then the map $f \mapsto (l_{1,0}, \ldots, l_{1, m_1 - 1}, l_{2,0}, \ldots, l_{k, m_k - 1})$ is an isomorphism from $F$ to $\C^d$.
    \end{enumerate}
    We already proved in Section~\ref{Recurrence Equations} that $N_L=F$.
    Therefore, in order to establish the remaining equalities, it suffices to show that 
    i) $G_1 \subseteq N_L$ and ii) $G_2 \subseteq G_1$. 
    \begin{enumerate}
        \item If $f \in G_1$, then \begin{equation*}
            (c_d + \ldots + c_1 X^{d-1} + c_0 X^d) \sum_{n \in \N} f(n)X^n = q(X).
        \end{equation*}
        For all $n \in \N$, the coefficient of $X^{n+d}$ on the left-hand side of the equation above is equal to 
        $c_d f(n+d) + \ldots + c_1 f(n+1) + c_0 f(n)$, and on the right-hand side is equal to zero. It follows that 
        $L(f) = \overline{0}$ and $f \in N_L$.
        \item If $f \in G_2$, then
        \begin{equation*}
            \sum_{n \in \N} f(n) X^n = \sum_{i=1}^k \frac{g_i(X)}{(\overline{1} - \alpha_i X)^{m_i}}.
        \end{equation*}
        Adding the fractions on the right-hand side yields
        \begin{equation*}
            \sum_{n \in \N} f(n) X^n = \frac{c_d \sum_{j=1}^k g_j(X) \prod_{i \neq j} (\overline{1} - \alpha_i X)^{m_i}}{c_d \prod_{i = 1}^k (\overline{1} - \alpha_i X)^{m_i}}.
        \end{equation*}
        The denominator is equal to $X^d p_L(X^{-1})$. Moreover, since the degree of $g_j(X)$ is less than $m_j$, we have that the degree of the numerator is less than $d$, and thus we can take it as being our $q(X)$. Therefore, $f \in G_1$.
    \end{enumerate}
\end{proof}

\begin{exa}
    Let $F \in \C^\N$ be the function such that $F(n)$ is the $n$-th Fibonacci number. Denote $\Phi = \frac{1 + \sqrt{5}}{2}$, $\phi = \frac{1 - \sqrt{5}}{2}$, $A = \frac{\Phi}{\sqrt{5}}$, and $B = -\frac{\phi}{\sqrt{5}}$. Then the function $F$ can be characterized in the following ways.
    \begin{enumerate}
        \item $(E^2 - E - I)(F) = \overline{0}$, with $F(0) = F(1) = 1$;
        \item $\displaystyle \sum_{n \in \N} F(n) X^n = (\overline{1} - X - X^2)^{-1}$;
        \item $\displaystyle \sum_{n \in \N} F(n) X^n = \frac{A}{\overline{1} - \Phi X} + \frac{B}{\overline{1} - \phi X}$;
        \item $F(n) = A \Phi^n + B \phi^n$.
    \end{enumerate}
\end{exa}

\chapter{Pólya Theory}\label{ch3}

In this chapter we will introduce the \emph{Zyklenzeiger}, also known as the cycle index series, and use it to enumerate colourings. For that, we will give a brief presentation of Pólya theory, an interesting technique that allows us to count without repetition. We begin by studying some useful results on permutations, then follow by seeing Pólya's two theorems and using it to solve a problem of counting necklaces. We then conclude by using the techniques seen in the chapter to count graphs on unlabelled vertices, and we ask the reader to note this idea closely, as it will prove useful when trying to understand the concept of unlabelled structure in the next chapter. The \emph{Zyklenzeiger} will also prove very useful in the next chapters, when we introduce the concept of species of structures. This chapter assumes that the reader has some knowledge in group theory. For further reading, we refer the reader to \cite{polya}, \cite[Ch. 10]{wagner}, and \cite[App. 1]{bergeron2}.

\section{Permutations and Orbits}

Let $A$ be a non-empty finite set. Then the group of permutations of $A$ is denoted by $S(A)$ and is called the 
\emph{symmetric group} of $A$. When $A = [n]$, then we write $S_n$ instead of $S([n])$.

\begin{defi}
Given a finite set $A$, a group $G$ is said to be a permutation group of $A$ if $G$ is a subgroup of $S(A)$.
\end{defi}

\begin{defi}
Given a finite set $A$ and a permutation group $G$ of $A$, we say that two elements $x$ and $y$ in $A$ are congruent mod $G$ 
if there exists $g \in G$ such that $g(x)=y$. Congruence mod $G$ is an equivalence relation on $A$. The equivalence classes of this equivalence relation are the orbits of $G$. The orbit of $G$ that contains the element $x \in A$ is denoted by $\orbit_x$, and the subgroup of $G$ formed by the permutations that fix $x$ is denoted by $G_x$ and is called the stabilizer of $x$.
We write $\orbit(G)$ for the set of all orbits of $G$.
\end{defi}

\begin{theorem}[Orbit-Stabilizer]
 Let $G$ be a permutation group on a finite set $A$, and $x \in A$. Then 
 $$|\orbit_x|=[G:G_x].$$
\end{theorem}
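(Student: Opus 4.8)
The plan is to exhibit an explicit bijection between the orbit $\orbit_x$ and the set $G/G_x$ of left cosets of the stabilizer $G_x$ in $G$. Since $[G:G_x]$ is by definition the number of such cosets, producing this bijection immediately yields the claimed equality $|\orbit_x| = [G:G_x]$. Concretely, I would define a map $\vfi : G/G_x \to \orbit_x$ by sending a coset $gG_x$ to the element $g(x) \in A$, and then verify that $\vfi$ is well defined, injective, and surjective.

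The heart of the argument is a single equivalence, which I would establish first: for $g, h \in G$, one has $g(x) = h(x)$ if and only if $gG_x = hG_x$. Indeed, $g(x) = h(x)$ holds precisely when $h^{-1}g$ fixes $x$, that is, when $h^{-1}g \in G_x$, which is exactly the condition $gG_x = hG_x$. This one computation does almost all of the work: read from right to left it shows that $\vfi(gG_x)$ does not depend on the chosen coset representative, so $\vfi$ is well defined; read from left to right it shows that distinct cosets have distinct images, so $\vfi$ is injective. Surjectivity is then immediate, since by the very definition of the orbit every element of $\orbit_x$ has the form $g(x)$ for some $g \in G$, and is therefore the image of the coset $gG_x$. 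Hence $\vfi$ is a bijection and $|\orbit_x| = |G/G_x| = [G:G_x]$.

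I do not expect any serious obstacle here; the only point requiring care is to package the stabilizer condition cleanly so that well-definedness and injectivity both drop out of the same equivalence, rather than being argued twice. For completeness I would also note at the outset that $G_x$ really is a subgroup of $G$ — it contains the identity and is closed under composition and inverses, since permutations fixing $x$ compose to permutations fixing $x$ — so that the coset space $G/G_x$ and the index $[G:G_x]$ are meaningful objects to begin with.
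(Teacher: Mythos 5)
Your proposal is correct and follows essentially the same route as the paper: both exhibit the bijection between $\orbit_x$ and $G/G_x$ determined by the equivalence $g(x)=h(x) \iff gG_x = hG_x$, the only cosmetic difference being that you orient the map from cosets to orbit elements (and thereby fold well-definedness and injectivity into a single equivalence) while the paper maps orbit elements to cosets and checks the three properties separately.
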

\begin{proof}
    Consider the function $\psi : \orbit_x \to G/G_x, \, g(x) \mapsto gG_x$.
    Firstly, let us see that this function is well defined, that is, the choice of a permutation $g$ is irrelevant. If $g_1(x) = g_2(x)$, then $g_2^{-1} \circ g_1 (x) = x$; hence, $g_2^{-1} \circ g_1 \in G_x$, and thus $g_1 G_x = g_2 G_x$.

    Let us now see that this function is surjective. Clearly, if $g G_x \in G/G_x$, then $\psi(y)=gG_x$ where 
    $y = g(x)$.

    Finally, let us show that $\psi$ is injective. Let $y_1 = g_1(x)$ and $y_2 = g_2(x)$ be two elements in $\orbit_x$ such that $\psi(y_1) = \psi(y_2)$. This implies $g_1 G_x = g_2 G_x$, and hence $g_1^{-1} \circ g_2 \in G_x$. We then have $y_2 = g_2(x) = g_1 \circ g_1^{-1} \circ g_2(x) = g_1(x) = y_1$.
\end{proof}




Recall now that every permutation of a finite set can be written as a unique (up to change of order) composition of disjoint cycles. 

\begin{notn}
Let $G$ be a permutation group on a finite set $A$, and $g \in G$. Then we denote by $\lam_i(g)$ the amount of cycles of length $i$ in the decomposition of $g$.
\end{notn}

With these in hand, we can show the first useful lemma.

\begin{lemma}[Cauchy-Frobenius]\label{notBurn}
Let $G$ be a permutation group on a finite set $A$. The number of orbits of $G$ is equal to the average number of fixed points of permutations in $G$, or in other words, $$ |\orbit(G)| = \frac{1}{|G|} \sum_{g \in G} \lam_1(g).$$
\end{lemma}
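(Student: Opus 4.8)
The plan is to prove the Cauchy-Frobenius lemma (Burnside's lemma) by a standard double-counting argument on the set of fixed-point pairs, combined with the Orbit-Stabilizer theorem proved earlier in the chapter.

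First I would introduce the set of \emph{incidence pairs}
\[
P = \{ (g,x) \in G \times A ; g(x) = x \},
\]
and compute its cardinality in two different ways, exploiting the fact that summing over the first coordinate and then the second (or vice versa) must give the same total. Summing over $g \in G$ first, the inner count is precisely the number of points fixed by $g$, which by definition of $\lam_i(g)$ equals $\lam_1(g)$. This immediately yields
\[
|P| = \sum_{g \in G} \lam_1(g).
\]

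Next I would compute $|P|$ by summing over $x \in A$ first. For a fixed $x$, the number of permutations $g$ with $g(x) = x$ is exactly the size of the stabilizer $|G_x|$, so $|P| = \sum_{x \in A} |G_x|$. Here the key step is to convert the stabilizer sizes into orbit counts: by the Orbit-Stabilizer theorem, $|G_x| = |G| / |\orbit_x|$. Summing $1/|\orbit_x|$ over all $x$ in a single orbit gives $1$, since each orbit of size $m$ contributes $m$ terms each equal to $1/m$; hence summing over all of $A$ produces exactly $|\orbit(G)|$. This gives
\[
\sum_{x \in A} |G_x| = |G| \sum_{x \in A} \frac{1}{|\orbit_x|} = |G| \cdot |\orbit(G)|.
\]
Equating the two expressions for $|P|$ and dividing by $|G|$ delivers the claimed formula.

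The main obstacle — though it is conceptual rather than technical — is the regrouping of the sum $\sum_{x} 1/|\orbit_x|$ by orbits, which requires recognizing that the orbits partition $A$ (a fact already established, since congruence mod $G$ is an equivalence relation) and that each orbit's contribution telescopes to $1$. Everything else is a clean application of Orbit-Stabilizer and elementary counting, so I expect the proof to be short once the double-counting framework is set up.
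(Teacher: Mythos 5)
Your proposal is correct and is essentially the paper's own proof: the explicit incidence set $P$ is just a named version of the paper's interchange of the double sum $\sum_{g}\sum_{x:\,g(x)=x}1$, and both arguments then apply the Orbit-Stabilizer theorem to write $|G_x| = |G|/|\orbit_x|$ and regroup the sum over $A$ by orbits so that each orbit contributes exactly $|G|$. There is no substantive difference in route or in the lemmas invoked.
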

\begin{proof}
By the Orbit-Stabilizer theorem, we have   
\begin{align*}
\sum_{g \in G} \lam_1(g) &= \sum_{g \in G} \underset{g(x)=x}{\sum_{x \in A}} 1 = \sum_{x \in A} \sum_{g \in G_x} 1 = \sum_{x \in A} |G_x| = \sum_{x \in A} \frac{|G|}{|\orbit_x|} \\
    &= \sum_{O \in \orbit(G)} \underset{\orbit_x = O}{\sum_{x \in A}} \frac{|G|}{|O|} = \sum_{O \in \orbit(G)} |G| = |G| \cdot |\orbit(G)|.
\end{align*}
\end{proof}

Before moving on, let us present, as a corollary, a different proof to a well-known combinatorial result.

\begin{corollary}
There are $(n-1)!$ possible circular arrangements of $n$ distinctly colored beads.
\end{corollary}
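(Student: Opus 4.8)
The plan is to realize the circular arrangements of $n$ distinctly colored beads as the orbits of a suitable permutation group and then apply the Cauchy-Frobenius lemma (Lemma~\ref{notBurn}). Concretely, I would label the $n$ positions around the necklace by $[n]$ and identify a coloring with a bijection from $[n]$ to the set of $n$ distinct colors, so that there are $n!$ colorings in total. Two colorings should be declared equivalent when one is obtained from the other by a rotation of the necklace. Thus I would let $G$ be the cyclic group generated by the rotation $\rho$ that sends position $i$ to position $i+1 \pmod n$, acting on the set $A$ of all $n!$ colorings by rotating positions. The group $G$ is cyclic of order $n$, and the circular arrangements are exactly the orbits $\orbit(G)$.

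The key computation is to count, for each $g \in G$, the number of colorings fixed by $g$, i.e.\ $\lam_1(g)$ in the notation used in Lemma~\ref{notBurn} applied to the action on $A$. The crucial observation is that a nonidentity rotation $\rho^j$ (with $1 \le j \le n-1$) fixes \emph{no} coloring at all: since all $n$ colors are distinct, a rotation by a nonzero amount would force two distinct positions to carry the same color, which is impossible. Hence $\lam_1(\rho^j) = 0$ for every $j \neq 0$. The identity element fixes all $n!$ colorings, so $\lam_1(\mathrm{id}) = n!$.

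Plugging these into the Cauchy-Frobenius formula gives
\begin{equation*}
|\orbit(G)| = \frac{1}{|G|} \sum_{g \in G} \lam_1(g) = \frac{1}{n}\big(n! + 0 + \cdots + 0\big) = \frac{n!}{n} = (n-1)!,
\end{equation*}
which is the desired count.

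The main subtlety to get right is the careful setup of the group action rather than the arithmetic: one must be precise that the objects being permuted are the colorings themselves (the elements of $A$), so that $\lam_1(g)$ counts colorings fixed by $g$, and must justify cleanly that a nontrivial rotation can fix no coloring \emph{because the colors are distinct}. This distinctness hypothesis is exactly what makes all the nonidentity terms vanish and is the heart of why the answer is the clean $(n-1)!$; without it (for repeated colors) the other rotations would contribute and one would instead obtain a genuine Pólya-type sum. I would therefore emphasize that step and treat the final division as routine.
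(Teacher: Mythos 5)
Your proof is correct and follows essentially the same route as the paper: both realize the arrangements as orbits of the cyclic group of $n$ rotations acting on the $n!$ colorings, observe that only the identity has fixed points (precisely because the colors are distinct), and apply Lemma~\ref{notBurn} to get $\frac{1}{n}(n! + 0 + \cdots + 0) = (n-1)!$. Your write-up is somewhat more explicit about the group action setup and the vanishing of the nonidentity terms, but the argument is the same.
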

\begin{proof}
Let $A$ be the set of all $n!$ visually distinct circular arrangements of $n$ distinctly coloured beads, and $G$ be the set of the $n$ possible clockwise rotations. The only rotation that has any fixed point is the identical rotation, which fixes everything. Rotational equivalence classes coincide with orbits of $G$, and by Lemma \ref{notBurn} we have $\frac{1}{n}(n!+0+...+0) = (n-1)!$ such classes.
\end{proof}

\section{Pólya's Theorems}

In this section, we will present Pólya's two theorems. For this, let us choose two sets $C,D$, with $|C|=k,|D|=n$, and $n,k \geq 2$. We call $C$ the set of colours, and $\vfi \in C^D$ a colouring of $D$.

\begin{defi}
Let $G$ be a permutation group on $D$. We define an equivalence relation on $C^D$ by 
$\vfi_1 \til \vfi_2 \iff (\exists  g \in G) \, \vfi_1 \circ g = \vfi_2$. The equivalence classes of $\til$ are called $G$-classes.
\end{defi}

\begin{exa}\label{322}
    As an example to the usage of this equivalence relation, let $D$ be the set of the $6$ equally spaced beads of a hexagonal necklace, $C=\{black, white\}$, and let $G = D_6$ be the hexagon dihedral group (seen as a permutation group on $D$). 
    Then the set of all visually distinct hexagonal necklaces with black or white beads can naturally be identified with $C^D$.   
    Note that two visually distinct necklaces are in the same $G$-class if and only if they are nothing more but different drawings 
    of what is essentially the same necklace. For instance, the following visually distinct hexagonal necklaces are equivalent.
    \begin{center}

\tikzset{every picture/.style={line width=0.75pt}} 


    \end{center}
\end{exa}

Let us build a relation between $G$-classes and a permutation group on the set of colourings.

\begin{defi}
Let $G$ be a permutation group on $D$. For each $g \in G$, $\vfi \in C^D$, define $\overline{g}(\vfi) := \vfi \circ g$, an endofunction on $C^D$. Since $g$ is invertible, $\vfi_1 \circ g = \vfi_2 \circ g \implies \vfi_1 = \vfi_2$; hence, $\overline{g}$ is injective, and thus a permutation on $C^D$. Define $\overline{G}$ as the set of all such $\overline{g}$.
\end{defi}

\begin{lemma}\label{cardG}
The set $\overline{G}$ is a permutation group on $C^D$, $|\overline{G}|=|G|$, and $G$-classes coincide with orbits of $\overline{G}$.
\end{lemma}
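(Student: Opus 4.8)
The plan is to verify the three assertions in turn, the heart of the matter being the interplay between composition in $G$ and composition of the induced maps $\overline{g}$ on $C^D$. Since each $\overline{g}$ is already known to be a permutation of $C^D$, showing that $\overline{G}$ is a permutation group reduces to checking closure, the presence of an identity, and inverses.

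First I would establish the key computation $\overline{g_1} \circ \overline{g_2} = \overline{g_2 \circ g_1}$, obtained by unwinding the definitions: for any $\vfi \in C^D$ one has $(\overline{g_1} \circ \overline{g_2})(\vfi) = (\vfi \circ g_2) \circ g_1 = \vfi \circ (g_2 \circ g_1)$. Thus $g \mapsto \overline{g}$ reverses the order of composition (it is an anti-homomorphism), but since $G$ is a group this causes no difficulty: $g_2 \circ g_1 \in G$ gives closure, the identity $e \in G$ yields $\overline{e} = \mathrm{id}_{C^D}$, and $\overline{g^{-1}}$ serves as a two-sided inverse of $\overline{g}$, because $\overline{g} \circ \overline{g^{-1}} = \overline{g^{-1} \circ g} = \overline{e}$ and likewise on the other side.

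Next I would prove $|\overline{G}| = |G|$ by showing that the map $g \mapsto \overline{g}$ from $G$ onto $\overline{G}$ is injective (surjectivity holds by the very definition of $\overline{G}$). Suppose $g_1 \neq g_2$; then $g_1(d) \neq g_2(d)$ for some $d \in D$, and because $|C| \geq 2$ I can choose a colouring $\vfi$ assigning distinct colours to the two distinct elements $g_1(d)$ and $g_2(d)$. Then $(\vfi \circ g_1)(d) \neq (\vfi \circ g_2)(d)$, so $\overline{g_1} \neq \overline{g_2}$. This separating-colouring step is the only place where a genuine construction is required, and it is the main (albeit mild) obstacle, resting entirely on the standing hypothesis $|C| \geq 2$.

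Finally, the coincidence of $G$-classes with orbits of $\overline{G}$ should be essentially a matter of matching definitions. The orbit of $\vfi$ under $\overline{G}$ is $\{\overline{g}(\vfi) ; g \in G\} = \{\vfi \circ g ; g \in G\}$, which is exactly the set of colourings equivalent to $\vfi$ under the relation $\til$ that defines $G$-classes. Hence the two partitions of $C^D$ agree class by class, and the proof is complete.
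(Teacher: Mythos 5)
Your proposal is correct and follows essentially the same route as the paper: the same composition computation $\overline{g}\circ\overline{h} = \overline{h \circ g}$, the same separating-colouring argument using $|C| \geq 2$ for injectivity of $g \mapsto \overline{g}$, and the same definitional matching of $G$-classes with orbits. The only (harmless) difference is that you verify the identity and inverses explicitly, whereas the paper checks closure alone, implicitly relying on finiteness for the subgroup criterion.
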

\begin{proof}
In order to see that $\overline{G}$ is a subgroup of $S(C^D)$, it suffices to show that, for $\overline{g}, \overline{h} \in \overline{G}$, we have $\overline{g}\circ\overline{h} \in \overline{G}$. But, for all $\vfi \in C^D$, $\overline{g}\circ\overline{h}(\vfi) = \vfi \circ h \circ g = \overline{h \circ g}(\vfi)$, and since $h \circ g \in G$, $\overline{h \circ g} \in \overline{G}$.

For $|\overline{G}| = |G|$, it is enough to show that the map $g \mapsto \overline{g}$ is injective, as $\overline{G}$ is the image of this map by definiton. Let $g,h \in G$ be distinct permutations. For some $x \in D$, $g(x) \neq h(x)$. Since $k \geq 2$, there is some $\vfi \in C^D$ such that $\vfi(g(x)) \neq \vfi(h(x))$, and thus $\overline{g}(\vfi) \neq \overline{h}(\vfi)$. Hence, $\overline{g} \neq \overline{h}$, and the map is injective.

And finally, let $\vfi_1,\vfi_2 \in C^D$. By definition, 
$$\vfi_1 \til \vfi_2 \iff (\exists g \in G) \,  \vfi_1 \circ g = \vfi_2 \iff (\exists g \in G) \, \overline{g}(\vfi_1) = \vfi_2$$
and so the $G$-classes coincide with the orbits of $\overline{G}$.
\end{proof}


\begin{defi}\label{zyk}
The \emph{Zyklenzeiger} of a permutation group $G$ is the series $Z(G;x_1,x_2, \ldots)$ defined by
\begin{equation*}
    Z(G;x_1,x_2,\ldots) = \frac{1}{|G|} \sum_{g \in G} x_1^{\lam_1(g)} x_2^{\lam_2(g)} \ldots 
\end{equation*}
Naturally, since $G$ is a (finite) permutation group, the Zyklenzeiger is a polynomial
\begin{equation*}
    Z(G;x_1, \ldots ,x_n) = \frac{1}{|G|} \sum_{g \in G} x_1^{\lam_1(g)}\cdots x_n^{\lam_n(g)}.
\end{equation*}
\end{defi}

\begin{exa}\label{326}
    Consider the following hexagon as a notational standard.
    \begin{center}
        \begin{tikzcd}
                           & 1 \arrow[rd, no head] &                            \\
6 \arrow[ru, no head] &                            & 2 \arrow[d, no head]  \\
5 \arrow[u, no head]  &                            & 3 \arrow[ld, no head] \\
                           & 4 \arrow[lu, no head] &                           
\end{tikzcd}
    \end{center}
    Then the elements of $D_6$ are given by
    \begin{align*}
        &I = (1)(2)(3)(4)(5)(6) &S_{1,4} = (1)(4)(2\ 6)(3\ 5) \\
        &R = (1\ 2\ 3\ 4\ 5\ 6) &S_{2,5} = (2)(5)(1\ 3)(4\ 6) \\
        &R^2 = (1\ 3\ 5)(2\ 4\ 6) &S_{3,6} = (3)(6)(1\ 5)(2\ 4) \\
        &R^3 = (1\ 4)(2\ 5)(3\ 6) &S_{12,45} = (1\ 2)(3\ 6)(4\ 5) \\
        &R^4 = (1\ 5\ 3)(2\ 6\ 4) &S_{23,56} = (1\ 4)(2\ 3)(5\ 6) \\
        &R^5 = (1\ 6\ 5\ 4\ 3\ 2) &S_{34,61} = (1\ 6)(2\ 5)(3\ 4)
    \end{align*}
    It is then easy to see that $Z(D_6;x_1,\ldots,x_6) = \frac{1}{12} (x_1^6 + 4x_2^3 + 3x_1^2x_2^2 + 2x_3^2 + 2x_6)$.
\end{exa}

With these in hand, we can present Pólya's first theorem.

\begin{theorem}[Pólya's first theorem]\label{polya1}
The number of $G$-classes induced by $\sim$ is equal to $Z(G;k, \ldots ,k)$.
\end{theorem}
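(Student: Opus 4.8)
The plan is to apply the Cauchy--Frobenius lemma (Lemma \ref{notBurn}) to the permutation group $\overline{G}$ acting on $C^D$. By Lemma \ref{cardG}, the $G$-classes are exactly the orbits of $\overline{G}$, and $|\overline{G}| = |G|$. Hence the number of $G$-classes equals $|\orbit(\overline{G})|$, and Cauchy--Frobenius gives
$$|\orbit(\overline{G})| = \frac{1}{|\overline{G}|}\sum_{\overline{g}\in\overline{G}}\lam_1(\overline{g}) = \frac{1}{|G|}\sum_{g\in G}\lam_1(\overline{g}),$$
where $\lam_1(\overline{g})$ denotes the number of fixed points of $\overline{g}$ viewed as a permutation of $C^D$. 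The second equality uses that $g\mapsto\overline{g}$ is a bijection between $G$ and $\overline{G}$, as established in the proof of Lemma \ref{cardG}.

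The key step is then to count, for each $g\in G$, the number of colourings fixed by $\overline{g}$. A colouring $\vfi$ satisfies $\overline{g}(\vfi) = \vfi$ precisely when $\vfi\circ g = \vfi$, that is, $\vfi(g(x)) = \vfi(x)$ for every $x\in D$. I would argue that this holds if and only if $\vfi$ is constant along each cycle of the permutation $g$: iterating the relation $\vfi(g(x)) = \vfi(x)$ shows that $\vfi$ takes a single value on every orbit of $\langle g\rangle$, and conversely any colouring constant on the cycles of $g$ is manifestly fixed by $\overline{g}$. Consequently, a fixed colouring is obtained by independently assigning one of the $k$ colours to each cycle of $g$. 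Writing $c(g) = \lam_1(g) + \lam_2(g) + \cdots + \lam_n(g)$ for the total number of cycles of $g$, this gives exactly $k^{c(g)}$ fixed colourings, so $\lam_1(\overline{g}) = k^{c(g)}$.

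Substituting this count back, the number of $G$-classes equals
$$\frac{1}{|G|}\sum_{g\in G} k^{\lam_1(g) + \lam_2(g) + \cdots + \lam_n(g)},$$
which is precisely $Z(G;k,\ldots,k)$ by Definition \ref{zyk}, since setting every variable $x_i$ equal to $k$ turns each monomial $x_1^{\lam_1(g)}\cdots x_n^{\lam_n(g)}$ into $k^{\lam_1(g)+\cdots+\lam_n(g)}$. The only genuinely substantive point is the fixed-point count of the previous paragraph; everything else is bookkeeping resting on the two lemmas already proved.
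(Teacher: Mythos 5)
Your proof is correct and follows exactly the same route as the paper: Cauchy--Frobenius applied to $\overline{G}$ via Lemma \ref{cardG}, followed by the observation that a colouring is fixed by $\overline{g}$ precisely when it is constant on the cycles of $g$, yielding $k^{\lam_1(g)+\cdots+\lam_n(g)}$ fixed colourings per group element. You merely spell out the fixed-point count (iterating $\vfi(g(x))=\vfi(x)$ along each cycle) in slightly more detail than the paper does.
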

\begin{proof}
From Lemmas \ref{notBurn} and \ref{cardG}, we have
\begin{equation*}
    |\orbit(\overline{G})| = \frac{1}{|\overline{G}|} \sum_{\overline{g} \in \overline{G}} \lam_1(\overline{g}) = \frac{1}{|G|} \sum_{g \in G} |\{ \vfi \in C^D ; \vfi \circ g = \vfi \}| = \frac{1}{|G|} \sum_{g \in G} k^{\lam_1(g) + \ldots + \lam_n(g)}
\end{equation*}
since $\vfi \circ g = \vfi$ if and only if $\vfi$ is constant on the cycles of $g$, and $g$ has $\lam_1(g) + \ldots + \lam_n(g)$ cycles.
\end{proof}

\begin{exa}\label{328}
    Let us extend examples \ref{322} and \ref{326} by enumerating how many types of hexagonal necklaces there are with two colours of beads (namely, black and white). By Theorem \ref{polya1}, this number is equal to
    \begin{equation*}
        Z(D_6;2,\ldots,2) = \frac{1}{12}(2^6 + 4 \cdot 2^3 + 3 \cdot 2^2 \cdot 2^2 + 2 \cdot 2^2 + 2 \cdot 2) = \frac{156}{12} = 13,
    \end{equation*}
    i.e., there are $13$ equivalence classes of hexagonal necklaces with two colours of beads.
\end{exa}

Moving on, let us assume that $C=[k]$ and $D = [n]$. 

\begin{defi}
Given $\vfi \in [k]^{[n]}$, the frequency type of $\vfi$ is given by $$(f_1(\vfi), \ldots ,f_k(\vfi)) = (|\vfi^{-1}(\{1\})|, \ldots ,|\vfi^{-1}(\{k\})|).$$
\end{defi}

\begin{lemma}\label{sameclass}
If two colourings are in the same $G$-class, then they have the same frequency type.
\end{lemma}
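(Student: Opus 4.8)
The plan is to unwind the definition of lying in the same $G$-class and then exploit the single structural fact available to us: every $g \in G$ is a bijection of $D$, hence preserves cardinalities of subsets. So I would begin by fixing two colourings $\vfi_1, \vfi_2 \in [k]^{[n]}$ in the same $G$-class; by definition this provides some $g \in G$ with $\vfi_1 \circ g = \vfi_2$. The whole argument then reduces to expressing the fibre $\vfi_2^{-1}(\{i\})$ of an arbitrary colour $i \in [k]$ directly in terms of $\vfi_1$ and $g$.

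The key step is the chain of equivalences: for any $d \in D$, we have $d \in \vfi_2^{-1}(\{i\})$ if and only if $\vfi_2(d) = i$, i.e. $\vfi_1(g(d)) = i$, i.e. $g(d) \in \vfi_1^{-1}(\{i\})$, i.e. $d \in g^{-1}\big(\vfi_1^{-1}(\{i\})\big)$. This yields the identity $\vfi_2^{-1}(\{i\}) = g^{-1}\big(\vfi_1^{-1}(\{i\})\big)$. Since $g$ is a permutation of $D$, its inverse $g^{-1}$ restricts to a bijection from $\vfi_1^{-1}(\{i\})$ onto its image, so the two sets have the same number of elements. Hence $f_i(\vfi_2) = |\vfi_2^{-1}(\{i\})| = |\vfi_1^{-1}(\{i\})| = f_i(\vfi_1)$, and as $i$ was arbitrary the frequency types of $\vfi_1$ and $\vfi_2$ coincide.

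There is no serious obstacle here; the result is essentially immediate once the correct fibre identity is written down. The only point requiring care is bookkeeping the order of composition — the definition gives $\vfi_1 \circ g = \vfi_2$, so the colour of $d$ under $\vfi_2$ is read off from $\vfi_1$ at the \emph{image} point $g(d)$, which is why $g^{-1}$ (rather than $g$) appears when one passes back to fibres. Getting this direction right is the sole place an error could creep in, and invoking the bijectivity of $g$ is exactly what makes cardinalities match.
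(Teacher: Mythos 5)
Your proof is correct, but it takes a different route from the one in the paper. You work directly with the colour fibres: from $\vfi_1 \circ g = \vfi_2$ you derive the identity $\vfi_2^{-1}(\{i\}) = g^{-1}\bigl(\vfi_1^{-1}(\{i\})\bigr)$ and then invoke the bijectivity of $g$ to conclude that corresponding fibres have equal cardinality, which is exactly the statement that the frequency types agree. The paper instead reduces to the case where $g$ is a single cycle $(a_1\, a_2 \ldots a_j)$, using the disjoint cycle decomposition, and observes that the ordered multiset of colours $\vfi_1(a_1), \ldots, \vfi_1(a_j)$ is merely cyclically shifted when one passes to $\vfi_1 \circ g$, so the multiset of colours---and hence the frequency count---is unchanged. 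Your argument is shorter and more elementary, needing nothing beyond the fact that a bijection preserves cardinalities of preimages; it also generalises verbatim to arbitrary groups acting by bijections. The paper's cycle-by-cycle bookkeeping is heavier for this lemma, but it rehearses the viewpoint (colourings read along cycles of $g$) that is the engine of the proof of P\'olya's second theorem immediately afterwards, where one genuinely needs to know how colours distribute over cycles rather than merely that fibre sizes are preserved.
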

\begin{proof}
Let $\vfi_1$ and $\vfi_2$ be two colourings in the same $G$-class. Then $\vfi_1 \circ g = \vfi_2$ for some $g \in G$. 
Since $g$ is a permutation, it can be written as a product of disjoint cycles. 
Therefore, it suffices to show the result for the case when $g = (a_1 a_2 \ldots a_{j-1} a_j)$ is a single cycle.

Let $b_i=\vfi_1(a_i)$ for $1 \leq i \leq j$. Note that the $b_i's$ are not necessarily pairwise distinct. 
The image under $\vfi_1$ of the ordered set $\{ a_1, a_2, \ldots , a_{j-1}, a_j \}$ is the ordered multiset 
$\{ b_1, b_2, \ldots , b_{j-1}, b_j \}$. Naturally, the image of that same ordered set under $\vfi_1 \circ g$ is the ordered multiset $\{ b_2, b_3, ..., b_j, b_1 \}$, a simple reordering of the former. It is now clear that the frequency type is preserved.
\end{proof}

The above lemma allows us to talk about the frequency type of a $G$-class.

\begin{defi}
The pattern inventory polynomial of $G$-classes is defined as $$ P(G;x_1, \ldots ,x_k) := \underset{f_j \geq 0}{\sum_{f_1 + \ldots + f_k = n}} G_{f_1 \ldots f_k} x_1^{f_1} \cdots x_k^{f_k} $$
where $G_{f_1 \ldots f_k}$ is the amount of $G$-classes with frequency type $(f_1, \ldots ,f_k)$.
\end{defi}

In order to find a proper way to enumerate the coefficients of the pattern inventory polynomial, we must first define weights and generalize Cauchy-Frobenius's Lemma.

\begin{defi}\label{wei}
Let $G$ be a permutation group on a finite set $A$, and $R$ a commutative ring with $\Q \subseteq R$. Then a map $w : A \to R$ is a weight function on $A$ with respect to $G$ if $w$ is constant on the orbits of $G$. This induces naturally a map $W : \orbit(G) \to R$.
\end{defi}

\begin{lemma}\label{notb2}
In the context of the definition above,
\begin{equation*}
    \sum_{O \in \orbit(G)} W(O) = \frac{1}{|G|} \sum_{g \in G} \underset{g(x)=x}{\sum_{x \in A}} w(x)
\end{equation*}
\end{lemma}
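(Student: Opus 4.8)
The plan is to mirror, essentially verbatim, the computation used to prove the Cauchy--Frobenius lemma (Lemma~\ref{notBurn}), now carrying the weight $w(x)$ along through every step. I would start from the right-hand side and read the double sum $\sum_{g \in G} \sum_{g(x)=x} w(x)$ as a single sum over the set of fixed pairs $\{(g,x) \in G \times A : g(x) = x\}$. The decisive move is then to swap the order of summation: instead of first fixing $g$ and summing over the points it fixes, fix $x$ first and sum over those $g$ that fix it, i.e.\ over the stabilizer $G_x$. Since $w(x)$ does not depend on $g$, this gives
\[
\frac{1}{|G|} \sum_{g \in G} \underset{g(x)=x}{\sum_{x \in A}} w(x) = \frac{1}{|G|} \sum_{x \in A} |G_x|\, w(x).
\]

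Next I would invoke the Orbit--Stabilizer theorem in the form $|G_x| = |G|/|\orbit_x|$, which cancels the leading factor $1/|G|$ and turns the expression into $\sum_{x \in A} w(x)/|\orbit_x|$. Finally I would partition $A$ into its orbits and regroup the sum as $\sum_{O \in \orbit(G)} \sum_{x \in O} w(x)/|O|$. This is exactly where the hypothesis from Definition~\ref{wei} is used: because $w$ is constant on each orbit, $w(x) = W(O)$ for every $x \in O$, so the inner sum has $|O|$ identical terms and collapses to $\tfrac{|O|}{|O|} W(O) = W(O)$, leaving $\sum_{O \in \orbit(G)} W(O)$, as required.

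There is no genuine obstacle here; the argument is a routine weighted refinement of Lemma~\ref{notBurn}, and the only point demanding a little care is the very last regrouping. One must make sure that pulling $w(x)$ out of the inner sum over an orbit is legitimate, and this is precisely guaranteed by the defining property of a weight function (constancy on orbits) together with the well-definedness of the induced map $W : \orbit(G) \to R$. It is also worth noting for the record that setting $w \equiv 1$ recovers Lemma~\ref{notBurn} exactly, which is a useful sanity check on the bookkeeping.
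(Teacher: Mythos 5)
Your proposal is correct and follows essentially the same route as the paper's proof: swap the order of summation over fixed pairs to obtain $\sum_{x \in A}|G_x|\,w(x)$, apply the Orbit--Stabilizer theorem, and regroup by orbits using the constancy of $w$ on each orbit. The only cosmetic difference is that you carry the factor $\frac{1}{|G|}$ throughout while the paper clears it to the other side at the start.
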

\begin{proof}
\begin{align*}
\sum_{g \in G} \underset{g(x)=x}{\sum_{x \in A}} w(x) &= \sum_{x \in A} w(x) \underset{g(x)=x}{\sum_{g \in G}} 1 = \sum_{x \in A} w(x)|G_x| = |G| \sum_{x \in A} \frac{w(x)}{|\orbit_x|} \\
&= |G| \sum_{O \in \orbit(G)} \sum_{x \in O} \frac{w(x)}{|O|} = |G| \sum_{O \in \orbit(G)} \sum_{x \in O} \frac{W(O)}{|O|} = |G| \sum_{O \in \orbit(G)} W(O). 
\end{align*}
\end{proof}

With these we can recuperate the pattern inventory polynomial from the \emph{Zyklenzeiger}.

\begin{theorem}[Pólya's second theorem]\label{polya2}
The pattern inventory polynomial is related to the Zyklenzeiger by the following formula:
\begin{equation*}
    P(G;x_1, \ldots ,x_k) = Z(G;\sum_{j=1}^k x_j,\sum_{j=1}^k x_j^2, \ldots , \sum_{j=1}^k x_j^n)
\end{equation*}
\end{theorem}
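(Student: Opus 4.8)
The plan is to mimic the proof of Pólya's first theorem (Theorem \ref{polya1}), replacing the plain count of fixed colourings by a \emph{weighted} count through the generalized Cauchy-Frobenius lemma (Lemma \ref{notb2}). Set $C=[k]$ and $D=[n]$ and work with the permutation group $\overline{G}$ acting on $C^D$, whose orbits are exactly the $G$-classes by Lemma \ref{cardG}. Since the frequency type is constant on each $G$-class by Lemma \ref{sameclass}, the assignment $w(\vfi) := x_1^{f_1(\vfi)} \cdots x_k^{f_k(\vfi)}$ is a legitimate weight function on $C^D$ with respect to $\overline{G}$ in the sense of Definition \ref{wei} (with values in the polynomial ring in $x_1,\ldots,x_k$), and the induced map $W$ sends each $G$-class to the monomial recording its frequency type.

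With this setup, grouping the $G$-classes by frequency type shows that $\sum_{O \in \orbit(\overline{G})} W(O)$ is precisely $P(G;x_1,\ldots,x_k)$. Applying Lemma \ref{notb2} to $\overline{G}$ and using $|\overline{G}| = |G|$ together with the bijection $g \mapsto \overline{g}$ from Lemma \ref{cardG}, I would rewrite this as
$$P(G;x_1,\ldots,x_k) = \frac{1}{|G|} \sum_{g \in G} \underset{\vfi \circ g = \vfi}{\sum_{\vfi \in C^D}} x_1^{f_1(\vfi)} \cdots x_k^{f_k(\vfi)}.$$
Thus everything reduces to evaluating, for a fixed $g \in G$, the inner weighted sum over colourings fixed by $\overline{g}$.

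The key step is to factor this inner sum over the disjoint cycles of $g$. As in the proof of Theorem \ref{polya1}, a colouring $\vfi$ satisfies $\vfi \circ g = \vfi$ exactly when $\vfi$ is constant on each cycle of $g$; such a $\vfi$ is determined by independently choosing a colour for every cycle. Colouring a cycle of length $i$ with colour $j$ contributes $i$ to the frequency $f_j(\vfi)$, hence a factor $x_j^i$ to the monomial $w(\vfi)$. Summing the independent choices therefore factorizes, and the contribution of each length-$i$ cycle is $\sum_{j=1}^k x_j^i$. Since $g$ has $\lam_i(g)$ cycles of length $i$, I obtain
$$\underset{\vfi \circ g = \vfi}{\sum_{\vfi \in C^D}} x_1^{f_1(\vfi)} \cdots x_k^{f_k(\vfi)} = \prod_{i=1}^n \Bigg( \sum_{j=1}^k x_j^i \Bigg)^{\lam_i(g)}.$$

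Substituting this back gives $P(G;x_1,\ldots,x_k) = \frac{1}{|G|} \sum_{g \in G} \prod_{i=1}^n \big( \sum_{j=1}^k x_j^i \big)^{\lam_i(g)}$, which is exactly $Z(G;\sum_j x_j, \sum_j x_j^2, \ldots, \sum_j x_j^n)$ by Definition \ref{zyk}. The only genuinely delicate point is the factorization of the weighted fixed-colouring sum over cycles; once the weight is recognized as multiplicative over the independent per-cycle colour choices, the identity follows by the same cycle-counting bookkeeping already used for Pólya's first theorem.
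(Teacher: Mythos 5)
Your proposal is correct and follows essentially the same route as the paper's proof: the same weight function $w(\vfi)=x_1^{f_1(\vfi)}\cdots x_k^{f_k(\vfi)}$, the same appeal to Lemmas \ref{cardG}, \ref{sameclass} and \ref{notb2}, and the same reduction to a weighted sum over the colourings fixed by each $g \in G$. The only difference is cosmetic: where the paper evaluates that inner sum by grouping colourings according to the cycle-colour counts $t_{i,j}$ and recognizing a multinomial expansion, you factor it directly as a product over the cycles of $g$ via the distributive law --- the same computation read in the opposite direction.
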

\begin{proof}
For each colouring $\vfi$, define $ w(\vfi) := \prod_{i=1}^n x_{\vfi(i)} = x_1^{f_1(\vfi)}\cdots x_k^{f_k(\vfi)}$.
By Lemmas \ref{cardG} and \ref{sameclass},
$G$-classes coincide with orbits of $\overline{G}$, and $w$ is constant on the orbits of $\overline{G}$. 
Naturally derive $W$ from $w$, as in Definition \ref{wei}. Then the pattern inventory polynomial is clearly given by 
$P(G; x_1,\ldots ,x_k) = \sum_{O \in \orbit(\overline{G})} W(O)$. From Lemma \ref{notb2},
\begin{equation}\label{pst1}
    \sum_{O \in \orbit(\overline{G})} W(O) = \frac{1}{|G|} \sum_{g \in G} \underset{\overline{g}(\vfi)=\vfi}{\sum_{\vfi \in [k]^{[n]}}} w(\vfi) = \frac{1}{|G|}\sum_{g \in G} \underset{\overline{g}(\vfi)=\vfi}{\sum_{\vfi \in [k]^{[n]}}} 
    x_1^{f_1(\vfi)} \cdots x_k^{f_k(\vfi)}.
\end{equation}
Since each $\vfi$ in the second sum is constant on cycles of $g$, we can write each $f_i(\vfi)$ as $1t_{i,1}+2t_{i,2}+ \ldots +nt_{i,n}$, where $t_{i, j}$ is the number of cycles of $g$ of length $j$ to which $\vfi$ assigns the colour $i$. 
We have 
\begin{align*}
    x_1^{f_1(\vfi)}x_2^{f_2(\vfi)}\cdots x_k^{f_k(\vfi)} &= x_1^{1t_{1,1}+2t_{1,2}+ \ldots +nt_{1,n}} x_2^{1t_{2,1}+2t_{2,2}+ \ldots +nt_{2,n}} \cdots x_k^{1t_{k,1}+2t_{k,2}+ \ldots +nt_{k,n}} \\
    &= (x_1^{t_{1,1}} x_2^{t_{2,1}} \cdots x_k^{t_{k,1}})(x_1^{2t_{1,2}} x_2^{2t_{2,2}} \cdots x_k^{2t_{k,2}})\cdots(x_1^{nt_{1,n}} x_2^{nt_{2,n}} \cdots x_k^{nt_{k,n}}).
\end{align*}
Since $t_{1,j}+t_{2,j}+ \ldots +t_{k,j} = \lam_j(g)$, there are $\binom{\lam_1(g)}{t_{1,1},\ldots,t_{k,1}}\binom{\lam_2(g)}{t_{1,2},\ldots,t_{k,2}}\cdots\binom{\lam_n(g)}{t_{1,n},\ldots,t_{k,n}}$ colourings $\vfi$ with associated decomposition  $(x_1^{t_{1,1}} x_2^{t_{2,1}} \cdots x_k^{t_{k,1}})(x_1^{2t_{1,2}} x_2^{2t_{2,2}} \cdots x_k^{2t_{k,2}})\cdots(x_1^{nt_{1,n}} x_2^{nt_{2,n}} \cdots x_k^{nt_{k,n}})$, as these count the ways to choose which cycle gets which colour. Grouping accordingly, we then have that equation \ref{pst1} is equal to
\begin{equation*}
    \frac{1}{|G|}\sum_{g \in G} \left( \sum_{t_1 + \ldots + t_k = \lam_1(g)} \binom{\lam_1(g)}{t_{1},\ldots,t_{k}} x_1^{t_1} \cdots x_k^{t_k} \right) 
    \cdots \left( \sum_{t_1 + \ldots + t_k = \lam_n(g)} \binom{\lam_n(g)}{t_{1},\ldots,t_{k}} x_1^{nt_1} \cdots x_k^{nt_k} \right)
\end{equation*}
\begin{equation*}
    = \frac{1}{|G|} \sum_{g \in G} (x_1 + ...+ x_k)^{\lam_1(g)} \cdots (x_1^n + ... + x_k^n)^{\lam_n(g)} = Z(G;\sum_{j=1}^k x_j, \ldots, \sum_{j=1}^k x_j^n)
\end{equation*}
\end{proof}

\begin{exa}
    Let us conclude example \ref{328} by evaluating the distribution of hexagonal necklaces regarding the number of black and white beads. For that, let $b$ be the variable that counts black beads and $w$ be the variable that counts white beads. Then the polynomial that gives us this distribution, i.e. the pattern inventory, is given by
    \begin{align*}
        P(D_6;b,w) &= Z(D_6;b+w,b^2+w^2,\ldots,b^6+w^6) \\
        &= \frac{1}{12}[(b+w)^6 + 4(b^2+w^2)^3 +3(b+w)^2(b^2+w^2)^2 + 2(b^3+w^3)^2 + 2(b^6+w^6)] \\
        &= 1b^6 + 1b^5w^1 + 3b^4w^2 + 3b^3w^3 + 3b^2w^4 + 1b^1w^5 + 1w^6.
    \end{align*}
\end{exa}

\section{Isomorphic Graphs}\label{isograph}

In this section, we will use the theorems proved in the last section to count isomorphism types of graphs.

\begin{defi}
A graph on a finite set $V$ is an ordered pair $(V,E)$, where $E \subseteq \binom{V}{2}$. Elements of $V$ are called vertices, and elements of $E$ are called edges.
\end{defi}

\begin{defi}\label{isogr}
Two graphs $(V_1,E_1), (V_2,E_2)$ are said to be isomorphic if there is a bijection $\sigma : V_1 \to V_2$ such that $\{u,v\} \in E_1 \iff \{ \sigma(u),\sigma(v) \} \in E_2$. 
We say that two isomorphic graphs have the same isomorphism type.
\end{defi}

Let us now assume that all graphs are on a set $V = [n]$ with $n \geq 3$. 
Our goal here is to enumerate isomorphism types of graphs on $V=[n]$ using Pólya theory. For that, let us associate to each graph $([n],E)$ a colouring $\vfi : \binom{[n]}{2} \to \{yes,no\}$, where $\vfi(\{u,v\}) = yes$ if $\{u,v\} \in E$, and $\vfi(\{u,v\}) = no$ otherwise. Denote as usual by $S_n$ the symmetric group on $[n]$ and by $S(\binom{[n]}{2})$ the symmetric group on $\binom{[n]}{2}$.

\begin{defi}
For each $\sigma \in S_n$ and $\{u,v\} \in \binom{[n]}{2}$, define $g_\sigma(\{u,v\}) := \{\sigma(u),\sigma(v)\}$.
\end{defi}

Since $\sigma : [n] \to [n]$ is injective, $g_\sigma : \binom{[n]}{2} \to \binom{[n]}{2}$ is also injective, and therefore a permutation. Since $g_{\sigma_1 \circ \sigma_2} = g_{\sigma_1} \circ g_{\sigma_2}$, the map $\sigma \mapsto g_\sigma$ is a 
homomorphism from $S_n$ into $S(\binom{[n]}{2})$. Furthermore, this homomorphism is injective. Indeed, $g_{\sigma_1}(\{1,k\}) = g_{\sigma_2}(\{1,k\})$ for all  $k = 2,\ldots ,n$ implies $\sigma_1(j) = \sigma_2(j)$ for all $j \in [n]$. 
(Note that for $n=2$ the homomorphism is not injective, as there are exactly two permutations such that $g_{\sigma_1}(\{1,2\}) = g_{\sigma_2}(\{1,2\})$, namely, $\sigma_1 = Id_2$ and $\sigma_2 = (1\ 2)$.) 

\begin{notn}
Denote by $S_n^{(2)}$ the image of the map $\sigma \mapsto g_\sigma$, and call it the pair group.
\end{notn}

The pair group allows us to see our problem from the perspective of Pólya theory. We can then rewrite Definition \ref{isogr} as

\begin{defi}
Two graphs $([n],E_1),([n],E_2)$ are of the same isomorphism type when their respective colourings $\vfi_1,\vfi_2$ are of the same $S_n^{(2)}$-class, that is, there exists $\sigma \in S_n$ such that $\vfi_1 = \vfi_2 \circ g_\sigma$.
\end{defi}

The following two theorems are then consequences of Pólya's two theorems.

\begin{theorem}\label{336}
The number of isomorphism types of graphs on $n$ vertices is given by \newline$Z(S_n^{(2)};x_1,\ldots ,x_{\binom{n}{2}})|_{x_i = 2}$.
\end{theorem}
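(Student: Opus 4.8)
The plan is to recognize this theorem as an immediate application of Pólya's first theorem (Theorem~\ref{polya1}) to the concrete permutation group and colouring setup established just above its statement. All the conceptual work has already been done: we associated to each graph $([n],E)$ a colouring $\vfi : \binom{[n]}{2} \to \{yes,no\}$, and we reformulated graph isomorphism so that two graphs are of the same isomorphism type precisely when their colourings lie in the same $S_n^{(2)}$-class. Thus the number of isomorphism types of graphs on $[n]$ equals the number of $S_n^{(2)}$-classes of colourings in $\{yes,no\}^{\binom{[n]}{2}}$.

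First I would fix the correspondence with the hypotheses of Theorem~\ref{polya1}. In the notation of that theorem one takes a permutation group $G$ acting on a finite set $D$ of size $n$, a colour set $C$ of size $k \geq 2$, and concludes that the number of $G$-classes is $Z(G;k,\ldots,k)$. Here the roles are played by $G = S_n^{(2)}$, by the coloured set $D = \binom{[n]}{2}$, and by the colour set $C = \{yes,no\}$, so that $k = 2$. The condition $k \geq 2$ of the theorem is satisfied, and the cardinality of the coloured set is $|D| = \binom{n}{2}$.

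Then I would simply invoke Theorem~\ref{polya1}: the number of $S_n^{(2)}$-classes is $Z(S_n^{(2)};2,\ldots,2)$. Since $S_n^{(2)}$ is a permutation group acting on a set of $\binom{n}{2}$ elements, every cycle in the decomposition of an element of $S_n^{(2)}$ has length at most $\binom{n}{2}$, so by Definition~\ref{zyk} its \emph{Zyklenzeiger} is the polynomial $Z(S_n^{(2)};x_1,\ldots,x_{\binom{n}{2}})$ in $\binom{n}{2}$ variables. Setting every variable equal to $2$ therefore yields exactly $Z(S_n^{(2)};x_1,\ldots,x_{\binom{n}{2}})\big|_{x_i=2}$, which by the previous step counts the isomorphism types of graphs on $n$ vertices.

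There is essentially no obstacle here beyond bookkeeping: the only point requiring care is correctly matching the two distinct uses of $n$, namely the number of vertices (which fixes the group $S_n^{(2)}$ and the number of variables $\binom{n}{2}$ in the cycle index) versus the size of the set being coloured in the generic statement of Pólya's theorem (which is $\binom{n}{2}$, not $n$). Once this identification is made explicit, the theorem follows directly as a corollary of Theorem~\ref{polya1}.
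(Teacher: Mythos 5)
Your proposal is correct and follows exactly the paper's route: the paper's proof is simply ``Immediate from Theorem~\ref{polya1},'' relying on the same identification you spell out, namely $G = S_n^{(2)}$ acting on the set $\binom{[n]}{2}$ of pairs with the two colours $\{yes, no\}$, so that the count of isomorphism types is $Z(S_n^{(2)};2,\ldots,2)$. Your version just makes the bookkeeping (the roles of $n$ versus $\binom{n}{2}$, and why the cycle index is a polynomial in $\binom{n}{2}$ variables) explicit, which the paper leaves implicit.
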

\begin{proof}
Immediate from Theorem \ref{polya1}.
\end{proof}

\begin{theorem}\label{337}
Let $\mathcal{G}_j$ denote the number of isomorphism types of graphs on $n$ vertices with exactly $j$ edges. Then $$\sum_{j=0}^{\binom{n}{2}} \mathcal{G}_j x^j = Z(S_n^{(2)};1+x,1+x^2, \ldots ,1+x^{\binom{n}{2}})$$
\end{theorem}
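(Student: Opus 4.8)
The plan is to recognize this theorem as a direct specialization of Pólya's second theorem (Theorem \ref{polya2}) applied to the pair group $S_n^{(2)}$ acting on $D = \binom{[n]}{2}$ with exactly two colours. First I would invoke the correspondence established just before Theorem \ref{336}: a graph $([n],E)$ carries the same information as a colouring $\vfi : \binom{[n]}{2} \to \{\text{yes},\text{no}\}$, with $\vfi(\{u,v\}) = \text{yes}$ precisely when $\{u,v\} \in E$, and two graphs are isomorphic exactly when their colourings lie in the same $S_n^{(2)}$-class. Hence isomorphism types of graphs on $n$ vertices are in bijection with $S_n^{(2)}$-classes of such colourings.

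Next I would apply Theorem \ref{polya2} with $C = \{\text{yes},\text{no}\}$, so that $k = 2$, and with the underlying set being $D = \binom{[n]}{2}$, so that the index $n$ appearing in the general statement is here $N := \binom{n}{2}$. Writing $x_1, x_2$ for the two colour variables, Pólya's second theorem gives
\begin{equation*}
P(S_n^{(2)}; x_1, x_2) = Z\!\left(S_n^{(2)}; x_1 + x_2,\, x_1^2 + x_2^2,\, \ldots,\, x_1^{N} + x_2^{N}\right).
\end{equation*}
I would then substitute $x_1 = x$ (the variable counting the colour "yes", i.e.\ edges) and $x_2 = 1$.

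It remains to interpret both sides of this specialization. Setting $x_2 = 1$ collapses the pattern inventory $P(S_n^{(2)}; x_1, x_2) = \sum_{f_1 + f_2 = N} (S_n^{(2)})_{f_1 f_2}\, x_1^{f_1} x_2^{f_2}$ into $\sum_{f_1 = 0}^{N} (S_n^{(2)})_{f_1,\, N - f_1}\, x^{f_1}$. By Lemma \ref{sameclass} the frequency type is an invariant of a $G$-class, so it is well-defined to speak of the frequency type of an isomorphism type; moreover the frequency $f_1$ of the colour "yes" is exactly the number of edges of the corresponding graph. Hence $(S_n^{(2)})_{f_1,\, N-f_1} = \mathcal{G}_{f_1}$, and the left side becomes $\sum_{j=0}^{N} \mathcal{G}_j x^j$. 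On the right, the substitution turns each argument $x_1^i + x_2^i$ into $x^i + 1$, yielding $Z(S_n^{(2)}; 1+x, 1+x^2, \ldots, 1+x^{\binom{n}{2}})$, which is precisely the claimed expression.

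There is essentially no hard step here: the whole content is already packaged in Theorem \ref{polya2}, and the only thing requiring care is the bookkeeping of which symbol plays which role --- in particular that the ground set has size $\binom{n}{2}$ (so the \emph{Zyklenzeiger} carries $\binom{n}{2}$ variables) and that assigning the weight $x$ to "yes" and the weight $1$ to "no" converts the pattern inventory into the edge-counting generating polynomial. Once these identifications are made explicit, the identity follows at once.
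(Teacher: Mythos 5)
Your proposal is correct and follows exactly the paper's route: the paper's proof is the one-line remark that the result is immediate from Theorem~\ref{polya2} upon substituting $1$ for the variable of the colour ``no'' and $x$ for the variable of the colour ``yes''. Your write-up simply makes explicit the bookkeeping (the ground set of size $\binom{n}{2}$, the graph--colouring correspondence, and Lemma~\ref{sameclass}) that the paper leaves implicit.
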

\begin{proof}
Immediate from Theorem \ref{polya2}, replacing the variable related to $no$ with 1, and the variable related to $yes$ with $x$.
\end{proof}

Now the last step to enumerate these isomorphism types is to calculate the \emph{Zyklenzeiger} of $S_n^{(2)}$. There is an algorithm to do so, and it comes from a relation with the \emph{Zyklenzeiger} of $S_n$. From \cite[Theorem 10.2.3]{wagner}, we have
\begin{equation}\label{zyksn}
    Z(S_n;x_1, \ldots ,x_n) = \sum_{\lam_1+2\lam_2+ \ldots +n\lam_n = n} \frac{x_1^{\lam_1}x_2^{\lam_2} \cdots x_n^{\lam_n}}{1^{\lam_1}\lam_1!2^{\lam_2}\lam_2! \cdots n^{\lam_n}\lam_n!}
\end{equation}

The following few lemmas will present the relations between permutations in $S_n$ and in $S_n^{(2)}$. From now on, assume that $\sigma$ and $g_\sigma$ are as before.

\begin{notn}
Denote by $a$ mod $m$ the unique $r \in [m]$ such that $a \equiv r$ mod $m$.
\end{notn}

\begin{lemma}\label{sn21}
If $m \in \N$, each cycle $(v_1 v_2 \ldots v_{2m+1})$ of the disjoint cycle decomposition of $\sigma$ gives rise in $g_\sigma$ to $m$ cycles of length $2m+1$, where the entries of the latter cycles are pairs of vertices from $\{v_1,v_2,\ldots,v_{2m+1}\}$.
\end{lemma}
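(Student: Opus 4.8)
The plan is to track how $g_\sigma$ permutes the $\binom{2m+1}{2} = m(2m+1)$ pairs whose both entries lie in $\{v_1, \ldots, v_{2m+1}\}$, and to show that this set splits into exactly $m$ orbits, each of size $2m+1$. Writing the indices modulo $2m+1$ so that $\sigma(v_i) = v_{i+1}$, the induced permutation acts by $g_\sigma(\{v_i, v_j\}) = \{v_{i+1}, v_{j+1}\}$; in particular it sends in-cycle pairs to in-cycle pairs, so this collection of pairs is $g_\sigma$-invariant and I may study the restriction of $g_\sigma$ to it.

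First I would attach to each in-cycle pair $\{v_i, v_j\}$ its \emph{circular distance}, the unique $d \in \{1, \ldots, m\}$ with $j - i \equiv \pm d \pmod{2m+1}$; this is well-defined precisely because $2m+1$ is odd, so the nonzero residues split cleanly into the $m$ pairs $\{d, -d\}$. Since $g_\sigma$ shifts both indices by $1$, it preserves this distance; hence each distance class $D_d := \{\{v_i, v_{i+d}\} : i \in \Z/(2m+1)\}$ is invariant under $g_\sigma$, and the $m$ classes $D_1, \ldots, D_m$ partition all the in-cycle pairs.

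Next I would show that each $D_d$ has exactly $2m+1$ elements and forms a single cycle. The assignment $i \mapsto \{v_i, v_{i+d}\}$ from $\Z/(2m+1)$ onto $D_d$ is surjective by definition; it is also injective, for $\{v_i, v_{i+d}\} = \{v_{i'}, v_{i'+d}\}$ forces either $i = i'$ or else $i \equiv i' + d$ together with $i' \equiv i + d$, the latter giving $2d \equiv 0 \pmod{2m+1}$, which is impossible for $1 \le d \le m$ when $2m+1$ is odd. Under this bijection $g_\sigma$ becomes the shift $i \mapsto i+1$ on $\Z/(2m+1)$, which is a single $(2m+1)$-cycle. Thus each $D_d$ constitutes one cycle of length $2m+1$, and summing over $d = 1, \ldots, m$ yields exactly $m$ such cycles, as claimed.

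The crux of the argument, and the only place where the hypothesis that the cycle has odd length is genuinely used, is the injectivity step: oddness guarantees $2d \not\equiv 0 \pmod{2m+1}$ for every admissible $d$, so no distance class collapses to fewer than $2m+1$ pairs and the shift has full order $2m+1$ on it. For an even cycle the antipodal distance $d = \ell/2$ would behave differently, which is exactly why that case must be handled separately.
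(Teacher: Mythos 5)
Your proof is correct. It reaches the conclusion by a genuinely different organization than the paper's, even though the crux computation coincides. The paper picks an arbitrary in-cycle pair $\{v_j,v_k\}$, shows that its $g_\sigma$-orbit consists of $2m+1$ distinct pairs (ruling out a premature return by the same two-case modular argument you use for injectivity: the crossing case forces $j \equiv k \pmod{2m+1}$, impossible), and then obtains the count $m$ by division: since every cycle has length $2m+1$ and the cycles partition the $\binom{2m+1}{2}$ in-cycle pairs, there are $\binom{2m+1}{2}/(2m+1) = m$ of them. You instead introduce the circular distance as a $g_\sigma$-invariant, partition the pairs up front into the $m$ distance classes $D_1,\ldots,D_m$, and show each class is a single $(2m+1)$-cycle via the explicit bijection with $\Z/(2m+1)$, under which $g_\sigma$ is the unit shift. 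In both arguments the only real use of oddness is the same: $2d \not\equiv 0 \pmod{2m+1}$ for $1 \le d \le m$, so no orbit collapses. What your route buys is an explicit parametrization of the $m$ cycles, one per distance, which makes the cycle structure of $g_\sigma$ transparent and explains conceptually why the even case breaks (the antipodal distance $d = \ell/2$ is self-paired); the paper's route is shorter and its counting-by-partition device is exactly the one it reuses in the subsequent lemma for even cycles, where the cycle lengths are no longer all equal.
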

\begin{proof}
Consider $\{v_j,v_k\}$ with $j,k$ distinct in $[2m+1]$. We claim that the cycle of $g_\sigma$ containing $\{v_j,v_k\}$ is
\begin{equation*}
    (\{v_{j+i-1 \!\!\!\!\mod 2m+1},v_{k+i-1 \!\!\!\!\mod 2m+1}\})_{1 \leq i \leq 2m+1}
\end{equation*}
It is clear that $g_\sigma(\{v_{j+i-1 \!\!\mod 2m+1},v_{k+i-1 \!\!\mod 2m+1}\}) = \{v_{j+i \!\!\mod 2m+1},v_{k+i \!\!\mod 2m+1}\}$. To see that this cycle is of length $2m+1$, we must show that if $i,i' \in [2m+1]$ and $$\{v_{j+i-1 \!\!\mod 2m+1},v_{k+i-1 \!\!\mod 2m+1}\} = \{v_{j+i'-1 \!\!\mod 2m+1},v_{k+i'-1 \!\!\mod 2m+1}\}$$ then $i=i'$. Consider the only two possible cases: 
\begin{enumerate}
\item $v_{j+i-1 \!\!\mod 2m+1} = v_{k+i'-1 \!\!\mod 2m+1}$ and $v_{k+i-1 \!\!\mod 2m+1} = v_{j+i'-1 \!\!\mod 2m+1}$, or
\item $v_{j+i-1 \!\!\mod 2m+1} = v_{j+i'-1 \!\!\mod 2m+1}$.
\end{enumerate}
But (i) implies $j+i-1 \equiv k+i'-1$ mod $2m+1$ and $k+i-1 \equiv j+i'-1$ mod $2m+1$, from where $j \equiv k$ mod $2m+1$ and $j=k$, contradicting the hypothesis; from (ii) we have ${j+i-1 \equiv j+i'-1}$ mod $2m+1$, from where $i \equiv i'$ mod $2m+1$, and $i = i'$.

Since we are taking the disjoint cycle decomposition of $g_\sigma$, these cycles partition the set $\binom{\{v_1,v_2,...,v_{2m+1}\}}{2}$ into blocks of size $2m+1$, therefore there are $\binom{2m+1}{2}/(2m+1) = m$ cycles.
\end{proof}

\begin{lemma}\label{sn22}
If $m \in \N \setminus \{0\}$, each cycle $(v_1 v_2 ... v_{2m})$ of $\sigma$ gives rise in $g_\sigma$ to $m-1$ cycles of length $2m$ and one cycle of length $m$, where the entries of the latter cycles are pairs of vertices from $\{ v_1,v_2,...,v_{2m} \}$.
\end{lemma}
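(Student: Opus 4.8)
The plan is to mirror the proof of Lemma \ref{sn21}, tracking the orbit under $g_\sigma$ of an arbitrary pair $\{v_j, v_k\}$ with $j,k$ distinct in $[2m]$. Since $g_\sigma$ shifts both indices by one, the cycle through $\{v_j, v_k\}$ is $(\{v_{j+i-1 \bmod 2m}, v_{k+i-1 \bmod 2m}\})_{1 \leq i}$, and first I would determine its length by finding the least $t > 0$ with $\{v_{j+t}, v_{k+t}\} = \{v_j, v_k\}$ (indices mod $2m$). This forces either $t \equiv 0$, giving $t = 2m$, or $j+t \equiv k$ and $k+t \equiv j \pmod{2m}$; adding the latter two congruences yields $2t \equiv 0 \pmod{2m}$, i.e. $t = m$, together with the constraint $2(k-j) \equiv 0 \pmod{2m}$, i.e. $k \equiv j + m$.

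This is the one genuinely new phenomenon compared with the odd case: because $2m$ is even, the congruence $2(k-j) \equiv 0$ admits the nontrivial solution $k - j \equiv m$, and this is precisely where the short cycle arises. I would therefore split into two cases. For the \emph{antipodal} pairs $\{v_j, v_{j+m}\}$ the cycle closes after $m$ steps, and since $g_\sigma$ sends $\{v_j, v_{j+m}\}$ to $\{v_{j+1}, v_{j+1+m}\}$, the $m$ antipodal pairs $\{v_1, v_{m+1}\}, \ldots, \{v_m, v_{2m}\}$ constitute a \emph{single} cycle of length $m$. For every non-antipodal pair the argument of Lemma \ref{sn21} applies verbatim: a coincidence of two entries forces either $k \equiv j + m$ (now excluded) or $i \equiv i' \pmod{2m}$, so the cycle attains full length $2m$.

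Finally I would count. The total number of pairs is $\binom{2m}{2} = m(2m-1)$; removing the $m$ antipodal pairs leaves $2m(m-1)$ pairs, which the disjoint cycle decomposition of $g_\sigma$ partitions into blocks of size $2m$, yielding $2m(m-1)/2m = m-1$ cycles of length $2m$, together with the single cycle of length $m$. The main obstacle is the case analysis surrounding the antipodal pairs: one must verify both that these pairs assemble into one length-$m$ cycle and that no other pair closes up early, which is exactly the point where the even case diverges from the odd case treated in Lemma \ref{sn21}.
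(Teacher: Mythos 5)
Your proposal is correct and follows essentially the same route as the paper: identify the antipodal pairs $\{v_j, v_{j+m}\}$ as forming the single cycle of length $m$, handle the remaining pairs by the argument of Lemma \ref{sn21}, and count the long cycles from the partition equation $m + 2mx = \binom{2m}{2}$. Your only addition is making explicit why the Lemma \ref{sn21} argument still works for non-antipodal pairs (the coincidence case now forces $k \equiv j+m$ rather than a contradiction), a point the paper leaves implicit under "an argument analogue to that in the proof of Lemma \ref{sn21}".
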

\begin{proof}
Consider $\{v_j,v_k\}$ with $j,k$ distinct in $[2m]$. If $j \equiv k$ mod $m$, then the cycle containing $\{v_j,v_k\}$ is $(\{v_1,v_{m+1}\}\{v_2,v_{m+2}\}\ldots\{v_m,v_{2m}\})$. If not, then by an argument analogue to that in the proof of Lemma \ref{sn21} the cycle containing $\{v_j,v_k\}$ is $(\{v_{j+i-1 \!\!\!\!\mod 2m},v_{k+i-1 \!\!\!\!\mod 2m}\})_{1 \leq i \leq 2m}$. To determine the number of such cycles, we see that these cycles partition $\binom{\{v_1,v_2,...,v_{2m}\}}{2}$ into one part of size $m$ and $x$ parts of size $2m$, from where $m+2mx = \binom{2m}{2}$, and $x = m-1$.
\end{proof}

\begin{lemma}\label{sn23}
If $r,s \in \N \setminus \{0\}$, each pair $\{ (u_1\ldots u_r),(v_1\ldots v_s) \}$ of cycles in $\sigma$ gives rise in $g_\sigma$ to $\gcd(r,s)$ cycles of length $\lcm(r,s)$, where entries of the latter cycles are of the form $\{u_j,v_k\}$.
\end{lemma}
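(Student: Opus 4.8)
The plan is to follow the same strategy as in the proofs of Lemmas \ref{sn21} and \ref{sn22}: fix one pair $\{u_j,v_k\}$, write down its orbit under the cyclic group generated by $g_\sigma$, and read off the orbit's length. Since $\sigma$ sends $u_a \mapsto u_{a+1 \bmod r}$ and $v_b \mapsto v_{b+1 \bmod s}$, the induced action on pairs is $g_\sigma(\{u_j,v_k\}) = \{u_{j+1 \bmod r}, v_{k+1 \bmod s}\}$, so after $i$ applications of $g_\sigma$ we reach $\{u_{j+i \bmod r}, v_{k+i \bmod s}\}$.

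First I would pin down the length of the cycle through $\{u_j,v_k\}$, namely the least $i \geq 1$ for which $\{u_{j+i \bmod r}, v_{k+i \bmod s}\} = \{u_j, v_k\}$. Here lies the only real subtlety, and the step I expect to be the main obstacle: a priori, equality of these unordered pairs could hold either coordinatewise ($u_{j+i \bmod r} = u_j$ and $v_{k+i \bmod s} = v_k$) or via a swap ($u_{j+i \bmod r} = v_k$ and $v_{k+i \bmod s} = u_j$). The swap is impossible precisely because the two cycles of $\sigma$ are disjoint, so no $u$ ever coincides with a $v$. Hence the orbit returns to $\{u_j,v_k\}$ exactly when $i \equiv 0 \pmod{r}$ and $i \equiv 0 \pmod{s}$ simultaneously, that is, when $\lcm(r,s) \mid i$. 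The smallest such positive $i$ is $\lcm(r,s)$, so every cycle of $g_\sigma$ among these pairs has length $\lcm(r,s)$.

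Finally I would count the cycles. The pairs of the form $\{u_j,v_k\}$ with $1 \leq j \leq r$ and $1 \leq k \leq s$ number exactly $rs$, and, since we take the disjoint cycle decomposition of $g_\sigma$, these pairs are partitioned into cycles all of the same length $\lcm(r,s)$. Therefore the number of such cycles is $rs/\lcm(r,s)$, which equals $\gcd(r,s)$ by the identity $\gcd(r,s)\,\lcm(r,s) = rs$. This yields the claimed $\gcd(r,s)$ cycles of length $\lcm(r,s)$, with entries of the form $\{u_j,v_k\}$.
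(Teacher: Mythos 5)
Your proof is correct and follows essentially the same route as the paper's: track the orbit of a pair $\{u_j,v_k\}$ under $g_\sigma$, identify its length as $\lcm(r,s)$, and count cycles via $rs/\lcm(r,s)=\gcd(r,s)$. In fact, where the paper merely asserts that the pairs in one orbit are ``clearly all distinct,'' you supply the justification—the swap case is excluded because the two cycles of $\sigma$ are disjoint—so your write-up is, if anything, slightly more complete.
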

\begin{proof}
It's clear that $g_\sigma(\{u_{j+i-1 \!\!\mod r},v_{k+i-1 \!\!\mod s}\}) = \{u_{j+i \!\!\mod r},v_{k+i \!\!\mod s}\}$, ${i \in [\lcm(r,s)]}$, and these pairs are clearly all distinct. Hence the $rs$ pairs in ${\{ u_1,\ldots,u_r\}\times\{v_1,\ldots,v_s \}}$ get partitioned into parts of length $\lcm(r,s)$, therefore there are $rs/\lcm(r,s) = \gcd(r,s)$ such cycles.
\end{proof}

Now we have the necessary tools to build the \emph{Zyklenzeiger} of $S_n^{(2)}$.
\newpage
\begin{theorem}\label{zyksn2}
The Zyklenzeiger of $S_n^{(2)}$ is derived from the Zyklenzeiger of $S_n$ in the following manner:
\begin{enumerate}
    \item A factor $x_{2m+1}$ in a term of the Zyklenzeiger of $S_n$ gives rise to a factor $x_{2m+1}^m$ in the Zyklenzeiger of $S_n^{(2)}$;
    \item A factor $x_{2m}$ in a term of the Zyklenzeiger of $S_n$ gives rise to a factor $x_m x_{2m}^{m-1}$ in the Zyklenzeiger of $S_n^{(2)}$;
    \item A factor $x_r x_s$ in a term of the Zyklenzeiger of $S_n$ gives rise to a factor $x_{\lcm(r,s)}^{\gcd(r,s)}$ in the Zyklenzeiger of $S_n^{(2)}$.
\end{enumerate}
\end{theorem}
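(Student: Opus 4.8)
The plan is to use the fact, established earlier in the text, that $\sigma \mapsto g_\sigma$ is an injective homomorphism from $S_n$ into $S(\binom{[n]}{2})$ for $n \geq 3$; hence it is a bijection onto $S_n^{(2)}$ and $|S_n^{(2)}| = n!$. Consequently
$$Z(S_n^{(2)};x_1,x_2,\ldots) = \frac{1}{n!}\sum_{\sigma \in S_n} x_1^{\lam_1(g_\sigma)} x_2^{\lam_2(g_\sigma)} \cdots,$$
so the whole theorem reduces to describing the cycle type of $g_\sigma$ in terms of the cycle type of $\sigma$. I would therefore fix $\sigma$, write its disjoint cycle decomposition as $C_1, \ldots, C_p$ with $\ell_i := |C_i|$, so that the numerator monomial attached to $\sigma$ in equation \ref{zyksn} is $\prod_{i=1}^p x_{\ell_i}$ (one factor per cycle), and aim to factor the monomial $\prod_\ell x_\ell^{\lam_\ell(g_\sigma)}$ accordingly.

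The decisive structural remark is that $\binom{[n]}{2}$ splits into the pairs contained in a single cycle $C_i$ and the pairs with one endpoint in $C_i$ and the other in a distinct $C_j$. Since $\sigma(C_i) = C_i$, the permutation $g_\sigma$ sends intra-cycle pairs to intra-cycle pairs of the same cycle and inter-cycle pairs to inter-cycle pairs of the same unordered pair of cycles; thus $g_\sigma$ preserves this partition, and every one of its cycles lies entirely inside a single block. The cycle type of $g_\sigma$ — and hence the monomial $\prod_\ell x_\ell^{\lam_\ell(g_\sigma)}$ — therefore factors as a product of contributions, one per single cycle $C_i$ and one per unordered pair $\{C_i, C_j\}$.

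It then suffices to read off each contribution from the three preceding lemmas. By Lemma \ref{sn21}, a single cycle of odd length $2m+1$ yields $m$ cycles of length $2m+1$, i.e. the factor $x_{2m+1}^m$, which is rule (i). By Lemma \ref{sn22}, a single cycle of even length $2m$ yields one cycle of length $m$ together with $m-1$ cycles of length $2m$, i.e. the factor $x_m x_{2m}^{m-1}$, which is rule (ii). By Lemma \ref{sn23}, an unordered pair of distinct cycles of lengths $r$ and $s$ yields $\gcd(r,s)$ cycles of length $\lcm(r,s)$, i.e. the factor $x_{\lcm(r,s)}^{\gcd(r,s)}$, which is rule (iii). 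Multiplying all these factors over the cycles and over the pairs of cycles reconstructs $\prod_\ell x_\ell^{\lam_\ell(g_\sigma)}$ exactly as the three rules prescribe. Since the resulting cycle type depends only on the multiset $\{\ell_1,\ldots,\ell_p\}$, that is, only on the cycle type of $\sigma$, the coefficients $1/(1^{\lam_1}\lam_1!\cdots n^{\lam_n}\lam_n!)$ occurring in equation \ref{zyksn} are carried along unchanged when one passes from $Z(S_n)$ to $Z(S_n^{(2)})$, and the proof is complete.

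The only point requiring care is the structural remark that $g_\sigma$ respects the intra/inter-cycle partition of $\binom{[n]}{2}$, since this is what permits Lemmas \ref{sn21}--\ref{sn23} to be applied independently and their outputs merely multiplied; but it follows at once from $\sigma(C_i) = C_i$. I therefore expect no real obstacle here: the combinatorial substance already resides in the three lemmas, and the theorem is essentially their assembly, together with the multiplicativity of the cycle-type monomial over disjoint blocks.
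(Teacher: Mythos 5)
Your proposal is correct and follows essentially the same route as the paper: the paper's proof likewise invokes the injectivity of $\sigma \mapsto g_\sigma$ (giving $|S_n^{(2)}|=|S_n|$) and then cites Lemmas \ref{sn21}, \ref{sn22}, \ref{sn23} together with Definition \ref{zyk}, declaring the result immediate. The only difference is that you spell out what the paper leaves implicit, namely that $g_\sigma$ preserves the partition of $\binom{[n]}{2}$ into intra-cycle and inter-cycle pairs, so the three lemmas' contributions can be multiplied block by block — a worthwhile clarification, but the same argument.
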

\begin{proof}
The injectivity of the map $\sigma \mapsto g_\sigma$ gives $|S_n| = |S_n^{(2)}|$. The theorem then follows immediately from Lemmas \ref{sn21}, \ref{sn22}, \ref{sn23}, and Definition \ref{zyk}.
\end{proof}

Let us now conclude this section by calculating the number of graphs on four unlabelled vertices using Theorem \ref{336} and also their distribution according to the amount of edges using Theorem \ref{337}. From formula \ref{zyksn}, we have
\begin{equation*}
    Z(S_4;x_1,x_2,x_3,x_4) = \frac{1}{24}(x_1^4 + 6x_1^2x_2 + 8x_1x_3 + 3x_2^2 + 6x_4)
\end{equation*}
From Theorem \ref{zyksn2}, we have the \emph{Zyklenzeiger} of $S_4^{(2)}$ by the following:
\begin{enumerate}
    \item Each of the four $x_1$ in $x_1^4$ gives rise to a factor 1, and each pair $\{ x_1,x_1 \}$ gives rise to a factor $x_1$; therefore $x_1^4$ gives rise to a factor $x_1^6$.
    \item Each of the two $x_1$ in $x_1^2x_2$ gives rise to a factor 1, the $x_2$ gives rise to a factor $x_1$, the pair $\{x_1,x_1\}$ gives rise to a factor $x_1$ and each pair $\{x_1,x_2\}$ gives rise to a factor $x_2$; therefore $6x_1^2x_2$ gives rise to a factor $6x_1^2x_2^2$.
    \item The $x_1$ in $x_1x_3$ gives rise to a factor $1$, the $x_3$ gives rise to a factor $x_3$, and the pair $\{ x_1,x_3 \}$ gives rise to a factor $x_3$; therefore $8x_1x_3$ gives rise to a factor $8x_3^2$.
    \item Each $x_2$ in $x_2^2$ gives rise to a factor $x_1$, and the pair $\{ x_2,x_2 \}$ gives rise to a factor $x_2^2$; therefore $3x_2^2$ gives rise to a factor $3x_1^2x_2^2$.
    \item The $x_4$ gives rise to a factor $x_2x_4$; therefore $6x_4$ gives rise to a factor $6x_2x_4$.
\end{enumerate}
From this, we have
\begin{equation*}
    Z(S_4^{(2)};x_1,x_2,x_3,x_4,x_5,x_6) = \frac{1}{24} (x_1^6 + 9x_1^2x_2^2 + 8x_3^2 + 6x_2x_4).
\end{equation*}
We then have the amount of graphs on four unlabelled vertices given by
\begin{equation*}
    \frac{1}{24} (x_1^6 + 9x_1^2x_2^2 + 8x_3^2 + 6x_2x_4)|_{x_i = 2} = \frac{1}{24} (2^6 + 9\cdot2^4 + 8\cdot2^2 + 6\cdot2^2) = 11,
\end{equation*}
and their distribution regarding the number of edges is given by the polynomial
\begin{equation*}
    \frac{1}{24} [(1+x)^6 + 9(1+x)^2(1+x^2)^2 + 8(1+x^3)^2 + 6(1+x^2)(1+x^4)] =
\end{equation*}
\begin{equation*}
    \frac{1}{24} [24+24x+48x^2+72x^3+48x^4+24x^5+24x^6] = 1+1x+2x^2+3x^3+2x^4+1x^5+1x^6.
\end{equation*}

\chapter{Species of Structures}\label{ch4}

In this chapter, we present an introduction to the concept of structure, species of structures, and some operations on the space of species. We begin by giving a brief introduction to category theory, as it bears the key concept of functor which will be used to define species. We then give a proper definition of species of structures and show the series that are used to enumerate them. After that, we show some operations that can be used to relate species, such as addition, product, and composition. Finally, we present the concepts of multisort and virtual species, which will prove useful in the subsequent chapters. The main references for this chapter are \cite{joyal}, \cite{bergeron2}, and \cite{bergeron1}.

\section{Category Theory}

In order to fully understand the concept of species of structures, it is useful to have some knowledge of category theory. 

\begin{defi}
    A category $\cat$ consists of the following mathematical entities: 
    \begin{enumerate}
        \item a class $\mathbf{Ob}(\cat)$, whose elements are called the objects of $\cat$;
        \item 
             for every pair $A,B \in \mathbf{Ob}(\cat)$, a set $\mathbf{Mor}(\cat)(A,B)$, whose elements are called the 
             morphisms from $A$ to $B$;
        \item for each $A,B,C \in \mathbf{Ob}(\cat)$, a composition mapping $$\mathbf{Mor}(\cat)(B,C) \times \mathbf{Mor}(\cat)(A,B) \to \mathbf{Mor}(\cat)(A,C), \quad (g, f) \mapsto g \circ f;$$
        \item for each $A \in \mathbf{Ob}(\cat)$, an identity morphism $Id_A \in \mathbf{Mor}(\cat)(A,A).$
    \end{enumerate}
    The composition of morphisms is required to satisfy the following conditions:
    \begin{enumerate}
        \item[v)] for $A,B,C,D \in \mathbf{Ob}(\cat)$, $f \in \mathbf{Mor}(\cat)(A,B)$, $g \in \mathbf{Mor}(\cat)(B,C)$, $h \in \mathbf{Mor}(\cat)(C,D)$, $$ h \circ (g \circ f) = (h \circ g) \circ f;$$
        \item[vi)] for $A,B,C \in \mathbf{Ob}(\cat)$, $f \in \mathbf{Mor}(\cat)(A,B)$, $g \in \mathbf{Mor}(\cat)(C,A)$, $$ f \circ Id_A = f, \ \ \  Id_A \circ g = g.$$
    \end{enumerate}
\end{defi}


\begin{exa}
    Here are some examples of categories. In each example, the composition of morphisms is the usual composition of functions. 
    \begin{itemize}
        \item The category $\mathbf{Grp}$ of groups and group homomorphisms;
        \item The category $\mathbf{FVect_\mathbb{K}}$ of finite dimensional vector spaces over a field $\mathbb{K}$ 
        and linear transformations;
        \item The category $\mathbf{Sp}$ of topological spaces and continuous maps;
        \item The category $\E$ of finite sets and functions;
        \item The category $\B$ of finite sets and bijections.
    \end{itemize}
\end{exa}

Let $\mathcal{C}$ be a category, and let $A, B \in \mathbf{Ob}(\mathcal{C})$. We replace the cumbersome notation 
${f \in \mathbf{Mor}(\cat)(A,B)}$ with the more intuitive $f:A \to B$.


\begin{defi}
    Let $\cat$ be a category and $f : A \to B$ a morphism in $\cat$. 
    The morphism $f$ is called an isomorphism if and only if there exists a morphism $g : B \to A$ such that $f \circ g = Id_B$ and $g \circ f = Id_A$. It is easy to see that if such $g$ exists, then it is unique. Hence, we are justified to call $g$ the inverse of $f$, and to write $g=f^{-1}$.
\end{defi}

For example, the isomorphisms in $\mathbf{FVect}_\mathbb{K}$ are the linear isomorphisms between vector spaces, and those in $\mathbf{Sp}$ are the homeomorphisms between topological spaces. Moreover, every morphism in $\B$ is an isomorphism.

\begin{defi}
    Let $\mathscr{A},\mathscr{B}$ be categories. A functor $F$ from $\mathscr{A}$ to $\mathscr{B}$ is given by the following data:
    \begin{enumerate}
        \item a function $F_{\mathbf{Ob}} : \mathbf{Ob}(\mathscr{A}) \to \mathbf{Ob}(\mathscr{B})$, called the object part of $F$;
        \item a family of functions $F_{\mathbf{Mor}} = \{ F_{C,D} :  \mathbf{Mor}(\mathscr{A})(C,D) \to \mathbf{Mor}(\mathscr{B})(F[C],F[D]) \}_{C, D \in \mathbf{Ob}(\mathscr{A})}$, called the morphism part of $F$, satisfying $F[Id_C] = Id_{F[C]}$ and  $F[g \circ f] = F[g] \circ F[f]$.
    \end{enumerate}
    If $\mathscr{A} = \mathscr{B}$, then we say that $F$ is an endofunctor of $\mathscr{A}$. For instance, for every category 
$\mathscr{A}$, we have an identity functor $Id: \mathscr{A} \to \mathscr{A}$ which fixes all objects and all morphisms. Also, in order to simplify the notation, we will call $F_\mathbf{Ob}$ and $F_{C,D}$ simply by $F$, and the context will make it clear if we refer to the object part or the morphism part of the functor.
\end{defi}

A nontrivial example of an endofunctor is the functor $F: \mathbf{FVect}_\mathbb{K} \to \mathbf{FVect}_\mathbb{K}$
defined on objects by $F(V)=V^{**}$, where $V^{**}=\mathrm{Hom}_\mathbb{K}(\mathrm{Hom}_\mathbb{K}(V, \mathbb{K}), \mathbb{K})$ is the double dual of the vector space $V$; for a linear transformation $T:V \to W$ between finite dimensional $\mathbb{K}$-vector spaces, 
set $F(T)=T^{**}$, where $T^{**}: V^{**} \to W^{**}$ is the linear transformation induced by $T$.
(More precisely, first define $T^*:\mathrm{Hom}_\mathbb{K}(W, \mathbb{K}) \to \mathrm{Hom}_\mathbb{K}(V, \mathbb{K})$ by 
$T^*(f)=f \circ T$, and then let $T^{**}(\phi):=T^* \circ \phi$ for all $\phi \in V^{**}$.)
\begin{prop}
    Functors preserve isomorphisms, that is, if $\cat, \mathscr{D}$ are two categories, ${F : \mathscr{C} \to \mathscr{D}}$ is a functor, $A,B \in \mathbf{Ob}(\cat)$, and $\sigma : A \to B$ is an isomorphism in $\cat$, then $F[\sigma]$ is an isomorphism in $\mathscr{D}$. Moreover, $F[\sigma]^{-1}=F[\sigma^{-1}]$.
\end{prop}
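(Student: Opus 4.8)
The plan is to use nothing more than the two defining properties of a functor: compatibility with composition, $F[g \circ f] = F[g] \circ F[f]$, and preservation of identities, $F[Id_C] = Id_{F[C]}$. Since $\sigma : A \to B$ is an isomorphism in $\cat$, by definition there is an inverse morphism $\sigma^{-1} : B \to A$ satisfying $\sigma \circ \sigma^{-1} = Id_B$ and $\sigma^{-1} \circ \sigma = Id_A$. The natural candidate for the inverse of $F[\sigma]$ is therefore $F[\sigma^{-1}] : F[B] \to F[A]$, and the whole argument amounts to checking that this candidate works.

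First I would apply $F$ to the identity $\sigma \circ \sigma^{-1} = Id_B$ and use functoriality to obtain
\[
F[\sigma] \circ F[\sigma^{-1}] = F[\sigma \circ \sigma^{-1}] = F[Id_B] = Id_{F[B]}.
\]
Symmetrically, applying $F$ to $\sigma^{-1} \circ \sigma = Id_A$ gives
\[
F[\sigma^{-1}] \circ F[\sigma] = F[\sigma^{-1} \circ \sigma] = F[Id_A] = Id_{F[A]}.
\]
These two equalities say precisely that $F[\sigma]$ admits $F[\sigma^{-1}]$ as a two-sided inverse, so $F[\sigma]$ is an isomorphism in $\mathscr{D}$.

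Finally, for the identity $F[\sigma]^{-1} = F[\sigma^{-1}]$, I would invoke the uniqueness of inverses already recorded in the definition of isomorphism: since $F[\sigma^{-1}]$ is an inverse of $F[\sigma]$ and inverses are unique, it must coincide with $F[\sigma]^{-1}$. There is no real obstacle here; the only point requiring a moment's attention is to apply the composition axiom in the correct order (so that the domains and codomains of $F[\sigma]$ and $F[\sigma^{-1}]$ match up), and to note that the argument is genuinely two-sided, which is why both identities $\sigma \circ \sigma^{-1} = Id_B$ and $\sigma^{-1} \circ \sigma = Id_A$ are used rather than just one.
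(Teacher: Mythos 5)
Your proposal is correct and follows exactly the paper's own argument: apply $F$ to both identities $\sigma \circ \sigma^{-1} = Id_B$ and $\sigma^{-1} \circ \sigma = Id_A$, use functoriality to see that $F[\sigma^{-1}]$ is a two-sided inverse of $F[\sigma]$, and conclude by uniqueness of inverses. No gaps.
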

\begin{proof}
    Functors preserve composition and identities, so 
    \begin{align*}
    &F[\sigma] \circ F[\sigma^{-1}]=F[\sigma \circ \sigma^{-1}]=F[Id_B]=Id_{F[B]}, \text{ and } \\
    &F[\sigma^{-1}] \circ F[\sigma]=F[\sigma^{-1} \circ \sigma]=F[Id_A]=Id_{F[A]}.    
    \end{align*}
    Therefore, $F[\sigma]$ is an isomorphism with inverse $F[\sigma^{-1}]$.
\end{proof}

To conclude, we define the concept of natural transformation.

\begin{defi}
    Let $\mathscr{A}, \mathscr{B}$ be categories, and $F,G$ be functors from $\mathscr{A}$ to $\mathscr{B}$. A natural transformation from $F$ to $G$ is a family of morphisms $\tau = \{ \tau_C : F[C] \to G[C] \}_{C \in \mathbf{Ob}(\mathscr{A})}$ such that
\[\begin{tikzcd}
{F[C]} \arrow{r}{\tau_C} \arrow{d}[left]{F[f]} & {G[C]} \arrow{d}{G[f]} \\
{F[D]} \arrow{r}[below]{\tau_D}                     & {G[D]}                    
\end{tikzcd}\]
commutes for all $f \in \mathbf{Mor}(\mathscr{A})(C,D)$. A natural transformation is called a natural isomorphism when $\tau$ is a family of isomorphisms.
\end{defi}
Let $F: \mathbf{FVect}_\mathbb{K} \to \mathbf{FVect}_\mathbb{K}$ be the double dual functor.
For a finite dimensional $\mathbb{K}$-vector space $V$, let $\tau_V:V \to V^{**}$ be the evaluation map 
defined by $\tau_V(v)[\phi]=\phi(v)$ for all $v \in V$ and $\phi \in V^{*}=\mathrm{Hom}_\mathbb{K}(V, \mathbb{K})$.
Then the family of linear transformations 
$$\tau = \{ \tau_V : Id[V] \to F[V] \}_{V \in \mathbf{Ob}(\mathbf{FVect}_\mathbb{K})}$$ 
defines a natural isomorphism from the identity functor on $\mathbf{FVect}_\mathbb{K}$ to the double dual functor. 
In contrast, even though 
every finite dimensional vector space $V$ is isomorphic to its dual $V^*$, there does not exist a natural way of
identifying $V$ and $V^*$.   

With all of these definitions in hand, we can move ahead and start the study of species of structures.

\section{Species of Structures and Their Associated Series}

The concept of structure is the very root of the subsequent studies. Let us first give an informal definition of structure.

\begin{defi}
   A structure is a pair $s = (U, \gamma)$, where $U$ is a finite set and $\gamma$ is a construction performed on $U$. We call $U$ the underlying set of $s$.
\end{defi}

This definition is somewhat vague, as $\gamma$ could be anything built on the underlying set $U$. Let us then give a few examples.


\begin{exa}
    Let $U = \{ a,b,c,d,e,f \}$, and $\gamma = (\{d\},\{ \{d,a\},\{d,c\},\{c,b\},\{c,f\},\{c,e\} \})$. Then $s = (U,\gamma)$ is the following rooted tree.
    \[\begin{tikzcd}
a & b                                          & f                                                            & e \\
  &                                            & c \arrow[lu, no head] \arrow[u, no head] \arrow[ru, no head] &   \\
  & \circled{$d$} \arrow[luu, no head] \arrow[ru, no head] &                                                              &  
\end{tikzcd} \]
\end{exa}

\begin{exa}
    Let $U = \{ a,b,c,d,e,f \}$, and $\gamma = \{ (c,b),(b,a),(a,d),(d,e),(e,f),(f,c) \}$. Then $s=(U,\gamma)$ is the following cycle.
    \[ \begin{tikzcd}
                         & d \arrow[ld, bend right] &                          \\
e \arrow[d]              &                          & a \arrow[lu, bend right] \\
f \arrow[rd, bend right] &                          & b \arrow[u]              \\
                         & c \arrow[ru, bend right] &                         
\end{tikzcd} \]
\end{exa}

\begin{exa}
    Let $U = \{ a,b,c,d,e,f \}$, and $\gamma = \{\{ a,b,e \},\{ c,f\} , \{ d\}\}$. Then $s = (U, \gamma)$ is a partition of the set $U$.
\end{exa}


Let us now begin the study of species.

\begin{defi}
    A species of structures is an endofunctor $F$ on the category $\B$ of finite sets and bijections. In some cases, it also makes sense to consider $F$ as a functor from $\B$ to $\E$. An $F$-structure is an element $s \in F[U]$ for some finite set $U$. Given a bijection $\sigma:U \to V$ between finite sets, $F[\sigma]$ is called the transport of $F$-structures along $\sigma$.
\end{defi}

Two possible representations of generic $F$-structures are as follows:

\begin{center} 
\end{center}


\begin{defi}
    Let $s_1 \in F[U], s_2 \in F[V]$ be two $F$-structures. A bijection $\sigma : U \to V$ is an isomorphism from $s_1$ to $s_2$ if $s_2 = F[\sigma](s_1)$. These two structures are then said to be of the same isomorphism type. If $s_1 = s_2$, then $\sigma$ is called an automorphism.
\end{defi}


\begin{exa}\label{13425}
    The species $\mathcal{G}$ of simple graphs is defined as follows. For a finite set $U$, we set 
    $\mathcal{G}[U] = \{ (U,E) ; E \subseteq \binom{U}{2} \}$, and given a bijection $\sigma : U \to V$ 
    between finite sets, we define $\mathcal{G}[\sigma]: \mathcal{G}[U] \to \mathcal{G}[V]$ by 
    $\mathcal{G}[\sigma]((U, E))=(V, \{\{\sigma(x), \sigma(y)\} ; \{x, y\} \in E\})$ for each $(U, E) \in \mathcal{G}[U]$.
    It is easy to check that $\mathcal{G}$ is indeed an endofunctor on $\B$.
    The following picture, where $U = V = [5]$ and $\sigma = (1\ 3\ 4\ 2\ 5)$, will make the definition of $\mathcal{G}[\sigma]$ clearer.
    \begin{equation*}
      \begin{tikzcd}
                                                  & 2 \arrow[r, no head] \arrow[dd, no head, dotted, bend right] & 4 \arrow[r, no head] \arrow[ld, no head] \arrow[dd, no head, dotted] & 5 \arrow[dd, no head, dotted] \\
1 \arrow[ru, no head] \arrow[dd, no head, dotted] & 3 \arrow[l, no head] \arrow[dd, no head, dotted, bend left]  &                                                                      &                               \\
                                                  & 5 \arrow[r, no head]                                         & 2 \arrow[r, no head]                                                 & 1                             \\
3 \arrow[ru, no head]                             & 4 \arrow[ru, no head] \arrow[l, no head]                     &                                                                      &                              
\end{tikzcd}  
    \end{equation*}
\end{exa}

In what follows, we have some examples of species of structures and the notation used for them throughout this text. The morphism part will be omitted, as it can be deduced naturally (use Example \ref{13425} as a guideline).

\begin{notn}
    The following is a list of species.
    \begin{itemize}
        \item the species $\mathcal{A}$ of rooted trees;
        \item the species $\mathcal{G}$ of simple graphs;
        \item the species $\mathfrak{a}$ of trees;
        \item the species Par of set partitions;
        \item the species $\wp$ of subsets;
        \item the species End of endofunctions;
        \item the species $\mathcal{S}$ of permutations;
        \item the species $\mathcal{C}$ of cyclic permutations;
        \item the species $L$ of linear (or total) orders;
        \item the species $E$ of sets, \emph{i.e.} $E[U] = \{U\}$ for every finite set $U$;
        \item the species $E_n$ of sets with cardinality $n$, \emph{i.e.} $E_n[U] = \{U\}$ if $|U| = n$ and $E_n[U] = \emptyset$ otherwise;
        \item the species $X$ of singletons, \emph{i.e.} $X[U] = \{U\}$ if $|U| = 1$ and $X[U] = \emptyset$ otherwise;
        \item the species $1$, characteristic of the empty set, \emph{i.e.} $1[\emptyset]=\{\emptyset\}$ and $1[U] = \emptyset$ if $|U| > 0$;
        \item the empty species 0, defined as $0[U] = \emptyset$ for all finite sets $U$.
    \end{itemize}
\end{notn}

Let us take a brief intermission to talk about permutations.

\begin{notn}
    Let $\sigma$ be a permutation of the finite set $U$. Then the cycle type of $\sigma$ is the sequence $(\sigma_1, \sigma_2, \ldots)$, where each $\sigma_j$ denotes the number of cycles of size $j$ in the (unique) disjoint cycle decomposition of $\sigma$. In particular, $\mathbf{fix}(\sigma) = \sigma_1$ is the number of elements of $U$ fixed by $\sigma$. Since we work with finite sets, we can ignore the zeroes that end the sequence and denote the cycle type by $(\sigma_1, \ldots, \sigma_n)$, where $n$ is greater than or equal to the maximum length of a cycle of $\sigma$.
\end{notn}

\begin{lemma}\label{permconj}
    Let $\sigma, \tau \in S_n$ be permutations. Then $\perm[\sigma](\tau) = \sigma \tau \sigma^{-1}$.
\end{lemma}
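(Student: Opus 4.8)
The plan is to unwind the definition of the species $\perm$ of permutations together with its transport of structures, and then to verify that this transport is exactly conjugation. Recall that for a finite set $U$ the set $\perm[U]$ consists of all permutations (bijections) of $U$, and that the morphism part of $\perm$ is obtained by relabelling along the given bijection, in the same spirit as the graph species of Example \ref{13425}: there an edge $\{x,y\}$ is carried to $\{\sigma(x),\sigma(y)\}$, and here a permutation $\tau$, viewed as the collection of arrows $u \mapsto \tau(u)$, should be carried to the collection of arrows $\sigma(u) \mapsto \sigma(\tau(u))$.

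First I would make this relabelling precise. Writing $\rho := \perm[\sigma](\tau)$ for the transported structure, the arrow $u \to \tau(u)$ of $\tau$ becomes the arrow $\sigma(u) \to \sigma(\tau(u))$ of $\rho$. Equivalently, $\rho$ is the unique permutation of the target set making the square
\[
\begin{tikzcd}
U \arrow{r}{\tau} \arrow{d}[left]{\sigma} & U \arrow{d}{\sigma} \\
V \arrow{r}[below]{\rho} & V
\end{tikzcd}
\]
commute, that is, $\rho \circ \sigma = \sigma \circ \tau$. Composing on the right with $\sigma^{-1}$ solves this relation as $\rho = \sigma \circ \tau \circ \sigma^{-1}$. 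Specializing to $U = V = [n]$, where both $\sigma$ and $\tau$ are elements of $S_n$, yields precisely $\perm[\sigma](\tau) = \sigma \tau \sigma^{-1}$, as claimed.

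As a consistency check I would confirm that this assignment is genuinely functorial, so that it really is the morphism part of a species: one has $\perm[Id](\tau) = Id\,\tau\,Id^{-1} = \tau$, and $\perm[\sigma_1 \circ \sigma_2](\tau) = (\sigma_1\sigma_2)\tau(\sigma_1\sigma_2)^{-1} = \sigma_1(\sigma_2\tau\sigma_2^{-1})\sigma_1^{-1} = \perm[\sigma_1](\perm[\sigma_2](\tau))$. There is no genuine difficulty here, since the statement is essentially the definition of how permutation structures are transported; the only point requiring care is the direction of composition and the placement of the inverse, and the commuting square above is exactly what removes any ambiguity between $\sigma\tau\sigma^{-1}$ and $\sigma^{-1}\tau\sigma$.
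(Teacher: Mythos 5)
Your proposal is correct and follows essentially the same route as the paper: both unwind the transport of $\perm$-structures as relabelling along $\sigma$ and verify pointwise that the relabelled permutation sends $\sigma(u)$ to $\sigma(\tau(u))$, which is exactly the relation $\rho \circ \sigma = \sigma \circ \tau$ you solve to get $\sigma\tau\sigma^{-1}$. The only cosmetic difference is that the paper presents $\tau$ by its disjoint cycle decomposition while you present it by its arrows, and your added functoriality check, while not needed for the lemma, is a harmless bonus.
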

\begin{proof}
    Let the (unique) disjoint cycle decomposition of $\tau$ be $(t_1 \ldots t_{k_1})(t_{k_1 + 1} \ldots t_{k_2})\cdots$. We want to prove the equation \begin{equation*}
        \sigma \tau \sigma^{-1}= \perm[\sigma](\tau) := (\sigma(t_1) \ldots \sigma(t_{k_1}))(\sigma(t_{k_1 + 1}) \ldots \sigma(t_{k_2}))\cdots.
    \end{equation*}
    Given $j \in [n]$, since $\sigma$ is bijective, we have $j = \sigma(i)$ for some $i \in [n]$.
    Without loss of generality, assume $j = \sigma(t_1)$. From the right-hand side, we have ${\perm[\sigma](\tau)(\sigma(t_1)) = \sigma(t_2)}$. 
    On the other hand, from $\tau(t_1) = t_2$, we get ${\sigma \tau \sigma^{-1} (\sigma(t_1)) = \sigma \tau (t_1) = \sigma(t_2)}$.
\end{proof}

We will now start to find a way to generalize what was done with graphs in Section \ref{isograph}. To do so, we must define three series and the concept of labelled and unlabelled structures. Since all transports are bijections, the cardinality of $F[U]$ depends only on that of $U$, and so one can assume $U = [n]$ for some $n \in \mathbb{N}$. To simplify notation, we will write $F[n]$ instead of $F[[n]]$.

\begin{defi}
    Let $F$ be a species of structures. A labelled $F$-structure is an element $s$ of $F[U]$ for some finite set $U$.
\end{defi}

\begin{defi}
    Let $F$ be a species of structures. An unlabelled $F$-structure is an isomorphism type 
    of structures of $F[U]$ for some finite set $U$.
\end{defi}

\begin{defi}
    Let $F$ be a species of structures. The (exponential) generating series of $F$ is the power series
    \begin{equation*}
        F(x) = \sum_{n=0}^\infty f_n \frac{x^n}{n!}
    \end{equation*}
    where $f_n = |F[n]|$. This series enumerates labelled $F$-structures.
\end{defi}

\begin{defi}
    Let $F$ be a species of structures. The (isomorphism) type generating series of $F$ is the power series
    \begin{equation*}
        \widetilde{F}(x) = \sum_{n=0}^\infty \widetilde{f_n}x^n
    \end{equation*}
    where $\Tilde{f_n}$ is the number of isomorphism types (unlabelled $F$-structures) of $F[n]$. More precisely, for $s_1, s_2 \in F[n]$, write $s_1 \sim s_2$ if and only if $s_1$ and $s_2$ are isomorphic 
    $F$-structures on $[n]$; then $\sim$ is an equivalence relation on $F[n]$, and $\widetilde{f_n}$ is exactly the number of equivalence classes of $\sim$.
\end{defi}

\begin{defi}
    Let $F$ be a species of structures. The \emph{Zyklenzeiger} of $F$ is the series
    \begin{equation*}
        Z_F(x_1,x_2,x_3, \ldots.) = \sum_{n=0}^\infty \frac{1}{n!} \sum_{\sigma \in S_n} \mathbf{fix}(F[\sigma]) x_1^{\sigma_1} x_2^{\sigma_2} x_3^{\sigma_3} \ldots 
    \end{equation*}
    where $S_n$ is the symmetric group on $[n]$.
\end{defi}

Without knowledge of operations on species, it is only possible to calculate these series in some simple cases. 

\begin{exa}
    The following generating series are verifiable by direct enumeration.
    \begin{itemize}
        \item $L(x) = \frac{1}{1-x}$, as there are $n!$ possible linear orders of $n$ elements;
        \item $\perm(x) = \frac{1}{1-x}$, as there are $n!$ permutations in the symmetric group on $[n]$;
        \item $\cyc(x) = -\log(1-x)$, as there are $(n-1)!$ cycles on $n$ elements and 
        ${-\log(1-x) = \sum_{n=1}^\infty \frac{x^n}{n}}$;
        \item $E(x) = e^x$, as there is only one element in $E[U]$ for any finite set $U$;
        \item $\wp(x) = e^{2x}$, as there are $2^n$ subsets of $[n]$;
        \item $X(x) = x$;
        \item $1(x) = 1$;
        \item $0(x) = 0$;
        \item $\mathcal{G}(x) = \sum_{n=0}^\infty 2^{\binom{n}{2}} \frac{x^n}{n!}$, as there are $2^{\binom{n}{2}}$ possible choices for the set of edges;
        \item End$(x) = \sum_{n=0}^\infty n^n \frac{x^n}{n!}$, as there are $n^n$ functions in $[n]^{[n]}$.
    \end{itemize}
\end{exa}

\begin{exa}\label{extil}
    The following type generating series are verifiable by direct enumeration.
    \begin{itemize}
        \item $\widetilde{L}(x) = \frac{1}{1-x}$, as all linear orders on $n$ elements are isomorphic;
        \item $\widetilde{\perm}(x) = \sum_{n=0}^\infty p(n) x^n = \prod_{k=1}^\infty \frac{1}{1-x^k}$, where $p(n)$ is the number of partitions of $n$, as two permutations are isomorphic if and only if they have the same cycle type;
        \item $\widetilde{\cyc}(x) = \frac{x}{1-x}$ by the same argument as above;
        \item $\widetilde{E}(x) = \frac{1}{1-x}$, as there is only one element in $E[U]$ for any finite set $U$;
        \item $\widetilde{X}(x) = x$;
        \item $\widetilde{1}(x) = 1$;
        \item $\widetilde{0}(x) = 0$.
    \end{itemize}
\end{exa}

Note that for linear orders and permutations the generating series are the same but the type generating series are different. This difference is due to the nature of isomorphisms.

\begin{exa}\label{zykspe}
    The following are \emph{Zyklenzeiger} of some species.
    \begin{itemize}
        \item $Z_0(x_1,x_2,...) = 0$;
        \item $Z_1(x_1,x_2,...) = 1$;
        \item $Z_X(x_1,x_2,...) = x_1$;
        \item $Z_L(x_1,x_2,...) = \frac{1}{1-x_1}$;
        \item $Z_\perm(x_1,x_2,...) = \prod_{k=1}^\infty \frac{1}{1-x_k}$;
        \item $Z_E(x_1,x_2,...) = \exp(\sum_{i=1}^\infty \frac{x_i}{i})$.
    \end{itemize}
\end{exa}
\begin{proof}
We will only show the proof for the last three cases, as the first three are trivial.
    \begin{itemize}
        \item Given any linear order, it is only fixed by the identity permutation, and therefore $\mathbf{fix}(L[\sigma]) \neq 0$ if and only if $\sigma = Id$. We then have $$Z_L(x_1,x_2,...) = \sum_{n=0}^\infty \frac{1}{n!} \sum_{\sigma \in S_n} \mathbf{fix}(L[\sigma]) x_1^{\sigma_1} x_2^{\sigma_2}... = \sum_{n=0}^\infty \frac{1}{n!} \mathbf{fix}(L[Id_n])x_1^n = \sum_{n=0}^\infty x_1^n = \frac{1}{1-x_1}.$$
        \item From Lemma \ref{permconj}, a permutation $\tau$ in $\perm[n]$ is fixed by $\perm[\sigma]$, $\sigma \in S_n$, if and only if 
        $\sigma \tau \sigma^{-1} = \tau$, or equivalently, $\sigma \tau  =\tau \sigma$. Therefore, 
        $\mathbf{fix}(\perm[\sigma])$ is equal to the order of the centralizer of $\sigma$ in $S_n$, which in turn is equal to
        $\displaystyle \frac{n!}{|\mathcal{C}_{\sigma}|}$, where $\mathcal{C}_{\sigma}$ denotes the conjugacy class of $\sigma$.
        Note that two permutations are conjugate in $S_n$ if and only if they have the same cycle type.
        By \cite[Th. 6.6.6]{wagner}, there are $\displaystyle \frac{n!}{\prod i^{a_i}a_i!}$ permutations in $S_n$ with cycle type $(a_1,a_2,...,a_n)$. Consequently, $\mathbf{fix}(\perm[\sigma])=\prod_{j \geq 1} j^{\sigma_j} \sigma_j!$.
        It follows that
        \begin{align*}
            Z_\perm(x_1,x_2, \ldots) &= \sum_{n=0}^\infty \frac{1}{n!} \sum_{\sigma \in S_n} \mathbf{fix}(\perm[\sigma])x_1^{\sigma_1}x_2^{\sigma_2}\ldots \\ 
            &= \sum_{n=0}^\infty \frac{1}{n!} \underset{+na_n=n}{\sum_{a_1+2a_2+}} \Bigg[\frac{n!}{\prod_{i=1}^n i^{a_i}a_i!} (\prod_{i=1}^n i^{a_i}a_i!) x_1^{a_1} \cdots x_n^{a_n} \Bigg] \\&= \sum_{n=0}^\infty \underset{+na_n=n}{\sum_{a_1+2a_2+ \ldots }} x_1^{a_1} \cdots x_n^{a_n} = \prod_{k=1}^\infty \frac{1}{1-x_k}.
        \end{align*}
        \item Given any permutation $\sigma$ on a finite set $U$, $E[\sigma]$ fixes the unique element of $E[U]$, from where we have \begin{align*}
            Z_E(x_1,x_2, \ldots) &= \sum_{n=0}^\infty \frac{1}{n!} \sum_{\sigma \in S_n} x_1^{\sigma_1}x_2^{\sigma_2} \ldots = \sum_{n=0}^\infty \frac{1}{n!}\underset{+n\sigma_n=n}{\sum_{\sigma_1+2\sigma_2+ \ldots}} \frac{n!}{\prod_{i=1}^n i^{\sigma_i}\sigma_i!} x_1^{\sigma_1}x_2^{\sigma_2} \cdots x_n^{\sigma_n} \\&= \sum_{\sigma_1 \geq 0, \sigma_2 \geq 0, \ldots} \prod_{i \geq 1} \frac{x_i^{\sigma_i}}{i^{\sigma_i}\sigma_i!} = \prod_{i \geq 1} \sum_{k \geq 0} \frac{(x_i / i)^{k}}{k!} = \prod_{i \geq 1} \exp(\frac{x_i}{i}) = \exp(\sum_{i=1}^\infty \frac{x_i}{i}).
        \end{align*}
    \end{itemize}
\end{proof}

Just like in the previous chapter, the \emph{Zyklenzeiger} plays a fundamental part in connecting different enumeration techniques. In the study of species, it connects labelled and unlabelled enumeration.

\begin{theorem}
    For any species of structures $F$, we have $F(x) = Z_F(x,0,0,...)$, and \newline$\widetilde{F}(x) = Z_F(x,x^2,x^3,...)$.
\end{theorem}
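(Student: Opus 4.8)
The plan is to treat the two identities separately, since the first is an immediate substitution whereas the second rests on the Cauchy--Frobenius lemma (Lemma~\ref{notBurn}).

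For $F(x) = Z_F(x,0,0,\ldots)$, I would substitute $x_1 = x$ and $x_i = 0$ for all $i \geq 2$ directly into the definition of $Z_F$. In each monomial $x_1^{\sigma_1} x_2^{\sigma_2} \cdots$, every factor $x_i^{\sigma_i}$ with $i \geq 2$ vanishes unless $\sigma_i = 0$; hence the only surviving contributions come from permutations $\sigma \in S_n$ whose cycle type satisfies $\sigma_i = 0$ for all $i \geq 2$, i.e. $\sigma = \mathrm{Id}_n$, for which $\sigma_1 = n$ and the monomial equals $x^n$. Since functors preserve identities, $F[\mathrm{Id}_n] = \mathrm{Id}_{F[n]}$ fixes every element, so $\mathbf{fix}(F[\mathrm{Id}_n]) = |F[n]| = f_n$. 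Collecting terms yields $Z_F(x,0,0,\ldots) = \sum_{n=0}^\infty \frac{f_n}{n!} x^n = F(x)$.

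For $\widetilde{F}(x) = Z_F(x,x^2,x^3,\ldots)$, I would substitute $x_i = x^i$. Then each monomial $x_1^{\sigma_1} x_2^{\sigma_2} \cdots$ becomes $x^{\sigma_1 + 2\sigma_2 + 3\sigma_3 + \cdots} = x^n$, because the cycle type of any permutation of $[n]$ satisfies $\sum_{j \geq 1} j\sigma_j = n$. This collapses the Zyklenzeiger to $\sum_{n=0}^\infty \big( \tfrac{1}{n!} \sum_{\sigma \in S_n} \mathbf{fix}(F[\sigma]) \big) x^n$, so it remains to prove that the bracketed coefficient equals $\widetilde{f_n}$. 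The key observation is that $\sigma \mapsto F[\sigma]$ is a group homomorphism from $S_n$ into $S(F[n])$, since functoriality gives $F[\sigma \circ \tau] = F[\sigma] \circ F[\tau]$ and $F[\mathrm{Id}_n] = \mathrm{Id}$; this is therefore an action of $S_n$ on the finite set $F[n]$. By the definition of isomorphism type, two structures $s_1, s_2 \in F[n]$ lie in the same orbit of this action precisely when they are isomorphic, so the orbits are exactly the isomorphism types counted by $\widetilde{f_n}$, and Cauchy--Frobenius identifies their number with the average number of fixed points $\tfrac{1}{n!}\sum_{\sigma \in S_n} \mathbf{fix}(F[\sigma])$.

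The main obstacle is that Lemma~\ref{notBurn} is stated only for permutation groups (subgroups of $S(A)$), whereas $\sigma \mapsto F[\sigma]$ need not be injective, so $S_n$ may fail to embed into $S(F[n])$. I would circumvent this by passing to the image group $\overline{G} = \{\, F[\sigma] : \sigma \in S_n \,\} \leq S(F[n])$: writing $K$ for the kernel of $\sigma \mapsto F[\sigma]$, the first isomorphism theorem gives $|K| = n!/|\overline{G}|$, and each element of $\overline{G}$ arises from exactly $|K|$ permutations, so that $\tfrac{1}{n!}\sum_{\sigma \in S_n} \mathbf{fix}(F[\sigma]) = \tfrac{1}{|\overline{G}|}\sum_{\overline{g} \in \overline{G}} \mathbf{fix}(\overline{g})$, to which Lemma~\ref{notBurn} applies directly. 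Since $\overline{G}$ induces the same orbits on $F[n]$ as the full $S_n$-action, this number of orbits is $\widetilde{f_n}$, completing the argument.
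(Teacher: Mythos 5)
Your proof is correct and follows the same route as the paper's: direct substitution for $F(x) = Z_F(x,0,0,\ldots)$, and for the second identity the collapse of every monomial $x_1^{\sigma_1}x_2^{\sigma_2}\cdots$ to $x^n$ followed by an appeal to the Cauchy--Frobenius lemma. In fact, your treatment is more careful than the paper's own proof: the paper invokes the lemma directly on the average $\tfrac{1}{n!}\sum_{\sigma \in S_n}\mathbf{fix}(F[\sigma])$, even though the lemma is stated only for permutation groups (subgroups of $S(A)$) and the homomorphism $\sigma \mapsto F[\sigma]$ need not be injective; your passage to the image group $\overline{G} \leq S(F[n])$ via the kernel and the first isomorphism theorem, showing the $S_n$-average equals the $\overline{G}$-average and that the orbits coincide, fills exactly this gap.
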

\begin{proof}
    For the first equality, we have $Z_F(x,0,0,...) = \sum_{n \geq 0} \frac{1}{n!} \sum_{\sigma \in S_n} \mathbf{fix}(F[\sigma])x^{\sigma_1}0^{\sigma_2 + \sigma_3 + ...}$, and for each $n$ this second summation is non-zero if and only if $\sigma_1 = n$, that is, $\sigma = Id_n$; thus $Z_F(x,0,0,...) = \sum_{n \geq 0} \frac{1}{n!} \mathbf{fix}(F[Id_n])x^n = \sum_{n \geq 0} \frac{1}{n!} |F[n]| x^n = F(x)$.

    For the second equality, consider the equivalence relation $\til$ given by $s \til t \iff s,t$ are isomorphic. Then we use Lemma \ref{notBurn} to show $$ Z_F(x,x^2,x^3,...) = \sum_{n \geq 0} \frac{1}{n!} \sum_{\sigma \in S_n} \mathbf{fix}(F[\sigma])x^n = \sum_{n \geq 0} |F[n]/\til| x^n = \widetilde{F}(x). $$
\end{proof}

\section{Equality and Addition of Species}

In the next few sections we will present some operations on species of structures and use them to build and enumerate more complex species. 
We will assume that the reader is familiar with Möbius inversion (cf. \cite[Ch. 8]{wagner}). Firstly, one might ask when two species of structures are essentially the same, in the sense that they have the same combinatorial properties. 

\begin{defi}
    Two species $F,G$ are said to be (combinatorially) equal, and one writes $F \simeq G$ or $F = G$, if there exists a natural isomorphism between them. One can also say that $F$ and $G$ are isomorphic.
\end{defi}

Note, though, that having $|F[n]| = |G[n]|$ for all $n \in \N$ is not a sufficient condition for $F$ and $G$ to be isomorphic: for instance, $L$ and $\perm$ have this property but are not isomorphic.

Let us now begin presenting some operations on species. The first operation is addition.

\begin{defi}
    Let $F,G$ be two species of structures. The species $F+G$ is called the sum of $F$ and $G$, and is defined in the following manner: an $(F+G)$-structure on a finite set $U$ is an $F$-structure on $U$ xor a $G$-structure on $U$. More formally, $$(F+G)[U] = F[U] \sqcup G[U]$$ $$(F+G)[\sigma](s) = \begin{cases}
        F[\sigma](s), \text{if } s \in F[U],\\
        G[\sigma](s), \text{if } s \in G[U].
    \end{cases} $$
\end{defi}

In pictures, this operation can be visualised as
\begin{center}

\tikzset{every picture/.style={line width=0.75pt}} 


\end{center}

The definition of $F+G$ requires $F[U] \cap G[U] = \emptyset$. If this is not the case, one must first create disjoint copies of these sets. One can do this by replacing $F[U]$ with $F[U] \times \{1\}$, and $G[U]$ with $G[U] \times \{2\}$. Then we define $(F+G)[U] = (F[U] \times \{1\}) \sqcup (G[U] \times \{2\})$.

The operation of addition is associative and commutative (up to isomorphism). More so, the species 0 is the neutral element of this operation, that is, for any species of structures $F$, ${F+0=0+F=F}$.

\begin{theorem}
    Let $F,G$ be species of structures. Then
    \begin{itemize}
        \item[ ] $(F+G)(x) = F(x) + G(x);$
        \item[ ] $\widetilde{(F+G)}(x) = \widetilde{F}(x) + \widetilde{G}(x);$
        \item[ ] $Z_{F+G}(x_1,x_2,...) = Z_F(x_1,x_2,...) + Z_G(x_1,x_2,...)$
    \end{itemize}
\end{theorem}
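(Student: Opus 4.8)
The plan is to check each of the three identities directly against the definitions of $F+G$ and of the three associated series, using one shared structural fact as the backbone of all three arguments. That fact is: since $(F+G)[U] = F[U] \sqcup G[U]$ and the transport $(F+G)[\sigma]$ is defined to act as $F[\sigma]$ on the summand $F[U]$ and as $G[\sigma]$ on the summand $G[U]$, every transport preserves this decomposition. In particular, no transport can carry an $F$-structure to a $G$-structure or vice versa. I would isolate this observation at the very start, as it is precisely what makes all three sums split cleanly.

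For the generating series the argument is the shortest. First I would note that $|(F+G)[n]| = |F[n] \sqcup G[n]| = |F[n]| + |G[n]|$, so the coefficient of $x^n/n!$ in $(F+G)(x)$ is $f_n + g_n$; summing over $n$ yields $(F+G)(x) = F(x) + G(x)$.

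For the \emph{Zyklenzeiger}, I would compute $\mathbf{fix}((F+G)[\sigma])$. Because $(F+G)[\sigma]$ preserves the two summands, any structure fixed by it lies in exactly one of $F[U]$ or $G[U]$ and is fixed there; hence the set of fixed points of $(F+G)[\sigma]$ is the disjoint union of those of $F[\sigma]$ and $G[\sigma]$, giving $\mathbf{fix}((F+G)[\sigma]) = \mathbf{fix}(F[\sigma]) + \mathbf{fix}(G[\sigma])$. Substituting this into the definition of $Z_{F+G}$ and distributing the inner sum over $\sigma \in S_n$ separates the expression into $Z_F + Z_G$; the monomials $x_1^{\sigma_1} x_2^{\sigma_2} \cdots$, which depend only on the cycle type of $\sigma$, are common to both pieces and cause no interference.

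For the type generating series there are two clean routes. The direct route: the relation $\sim$ never relates an $F$-structure to a $G$-structure (again by preservation of summands), so the equivalence classes of $\sim$ on $(F+G)[n]$ are exactly the classes of $\sim$ on $F[n]$ together with those on $G[n]$; hence the number of isomorphism types of $(F+G)[n]$ is $\widetilde{f_n} + \widetilde{g_n}$, which gives $\widetilde{(F+G)}(x) = \widetilde{F}(x) + \widetilde{G}(x)$. Alternatively, I could simply invoke the earlier theorem that $\widetilde{H}(x) = Z_H(x,x^2,x^3,\ldots)$ and specialise the Zyklenzeiger identity just proved. I would present the direct route to keep the proof self-contained. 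None of the steps is a genuine obstacle; the only point needing care is the preservation-of-summands fact, after which each identity is a one-line consequence.
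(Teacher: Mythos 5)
Your proof is correct and is exactly the detailed version of the paper's argument: the paper's proof reads, in full, ``Immediate from the definition,'' and your preservation-of-summands observation together with the three resulting coefficient computations is precisely what that remark leaves to the reader. Nothing to change.
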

\begin{proof}
    Immediate from the definition.
\end{proof}

This concept of addition can be extended to infinite sums in the following case.

\begin{defi}
    A family $\{ F_i \}_{i \in I}$ of species is said to be summable if for any finite set $U$, we have $F_i[U] \neq \emptyset$  
    for only finitely many $i$. In this case,
    \begin{itemize}
        \item[ ] $ (\sum_{i \in I} F_i)[U] = \bigsqcup_{i \in I} (F_i[U] \times \{i\}) $
        \item[ ] $ (\sum_{i \in I} F_i)[\sigma](s,i) = (F_i[\sigma](s),i) $
        \item[ ] $ (\sum_{i \in I} F_i)(x) = \sum_{i \in I} F_i(x) $
        \item[ ] $ \widetilde{(\sum_{i \in I} F_i)}(x) = \sum_{i \in I} \widetilde{F_i}(x) $
        \item[ ] $ Z_{\sum_{i \in I} F_i}(x_1,x_2,...) = \sum_{i \in I} Z_{F_i}(x_1,x_2,...) $
    \end{itemize}
\end{defi}

The definition of summable families of species allows us to define a canonical decomposition for any species of structures. 

\begin{defi}
    For every species of structures $F$ there is an enumerable summable family of species $\{ F_n \}_{n \in \N}$, given by $F_n[U] = F[U]$ if $|U| = n$, and $F_n[U] = \emptyset$ otherwise, with the transports induced naturally by those of $F$. 
    We then have $F = F_0 + F_1 + F_2 + \ldots$. We also denote by $F_+$ the species $F_1 + F_2 + \ldots$.
\end{defi}

One more use of addition is the introduction of natural numbers as species of structures.

\begin{defi}
    For all $\mathbf{n} \in \N$, one can define the species $n := 1 + 1 + \ldots + 1$, adding the species $1$ exactly $\mathbf{n}$ times. The species $n$ has $\mathbf{n}$ structures on the empty set and none on sets of positive cardinality, and these structures are all non-isomorphic.
\end{defi}

\section{The Two Products}

The second operation that will be presented is the superposition, also known as the Cartesian product.

\begin{defi}
    Let $F,G$ be two species of structures. Then $F \times G$, the superposition of $F$ and $G$, is defined as follows: an $(F \times G)$-structure on a finite set $U$ is a pair $s = (f,g)$, where $f \in F[U]$ and $g \in G[U]$, and the transport is given by $(F \times G)[\sigma](f,g) = (F[\sigma](f),G[\sigma](g))$. These can be visualized by the following image.
    \begin{center}

\tikzset{every picture/.style={line width=0.75pt}} 


    \end{center}
\end{defi}

In order to relate the series of $F \times G$ to the series of $F$ and $G$, we need to define the Hadamard product of series.   
\begin{defi}
    Let $\mathcal{X} = (x_1,x_2,...)$ be a sequence of variables. Given a sequence of natural numbers 
    $\mathcal{N} = (n_1,n_2,...)$ containing only finitely many non-zero entries, let 
    $\mathcal{X}^\mathcal{N} = x_1^{n_1} x_2^{n_2} \ldots$ and $\text{aut}(\mathcal{N}) = 1^{n_1}n_1! 2^{n_2}n_2! \ldots$.
    Now if $f(\mathcal{X}) = \sum_{\mathcal{N}} f_\mathcal{N} \frac{\mathcal{X}^\mathcal{N}}{\text{aut}(\mathcal{N})}$ and $g(\mathcal{X}) = \sum_{\mathcal{N}} g_\mathcal{N} \frac{\mathcal{X}^\mathcal{N}}{\text{aut}(\mathcal{N})}$, we define 
    \begin{equation*}
        (f \times g)(\mathcal{X}) = \sum_{\mathcal{N}} f_\mathcal{N} g_\mathcal{N} \frac{\mathcal{X}^\mathcal{N}}{\text{aut}(\mathcal{N})}.
    \end{equation*}
\end{defi}

Let $F$ be a species of structures, $n \in \mathbb{N}$ and $\sigma_1, \sigma_2 \in S_n$.
Suppose that $\sigma_1$ and  $\sigma_2$ are conjugate in $S_n$. Then there exists $\tau \in S_n$ such that 
$\tau \sigma_1 \tau^{-1}=\sigma_2$. Since $F$ is a functor, we have that $F[\tau] F[\sigma_1] F[\tau]^{-1}= F[\sigma_2]$, and thus
$F[\sigma_1]$ and $F[\sigma_2]$ are conjugate permutations on $F[n]$. As a consequence of Lemma \ref{permconj}, conjugate permutations have the same cycle 
structure, from where we get that $\mathbf{fix}(F[\sigma_1])=\mathbf{fix}(F[\sigma_2])$. Therefore, 
grouping together conjugate permutations, we can rewrite the \emph{Zyklenzeiger} as
\begin{equation}\label{z48}
    Z_F(x_1,x_2,\ldots) = \sum_{n \geq 0} \underset{+na_n=n}{\sum_{a_1+2a_2+...}} \mathbf{fix}(F[\sigma])\frac{x_1^{a_1} \cdots x_n^{a_n}}{\prod_{i=1}^n i^{a_i}a_i!},
\end{equation}   
where $\sigma \in S_n$ is any permutation with cycle type $(a_1, a_2, \ldots, a_n)$. We
thus obtain a description of the \emph{Zyklenzeiger} consistent with the notation of the definition above.

\begin{theorem}
    Let $F,G$ be species of structures. Then
    \begin{itemize}
        \item[ ] $(F \times G)(x) = F(x) \times G(x)$
        \item[ ] $\widetilde{(F \times G)}(x) = (Z_F \times Z_G)(x,x^2,x^3,...)$
        \item[ ] $Z_{F \times G}(x_1,x_2,...) = Z_F(x_1,x_2,...) \times Z_G(x_1,x_2,...)$
    \end{itemize}
\end{theorem}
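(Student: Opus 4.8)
The plan is to reduce all three formulas to a single multiplicative identity for fixed points and then read the statements off from it.

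First I would establish the crucial observation that for every $n \in \N$ and every $\sigma \in S_n$,
$$\mathbf{fix}((F \times G)[\sigma]) = \mathbf{fix}(F[\sigma]) \cdot \mathbf{fix}(G[\sigma]).$$
This is immediate from the definition of the transport of the superposition: since $(F \times G)[\sigma](f,g) = (F[\sigma](f), G[\sigma](g))$, a structure $(f,g) \in F[n] \times G[n]$ is fixed by $(F \times G)[\sigma]$ exactly when $f$ is fixed by $F[\sigma]$ and $g$ is fixed by $G[\sigma]$. Hence the fixed-point set of $(F \times G)[\sigma]$ is the Cartesian product of the fixed-point sets of $F[\sigma]$ and $G[\sigma]$, and counting gives the displayed product.

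Next I would prove the \emph{Zyklenzeiger} identity, which is the central one. I would start from the conjugacy-class-grouped form of the \emph{Zyklenzeiger}, equation \ref{z48}, in which $Z_F$ is indexed by cycle types $\mathcal{N} = (a_1, a_2, \ldots)$ with coefficient $\mathbf{fix}(F[\sigma])$ over $\prod_{i} i^{a_i} a_i!$, where $\sigma$ is any permutation of cycle type $\mathcal{N}$. Writing $Z_{F \times G}$ in the same form and substituting the multiplicative identity above shows that its $\mathcal{N}$-coefficient is $\mathbf{fix}(F[\sigma]) \cdot \mathbf{fix}(G[\sigma])$, which is precisely the Hadamard product $Z_F \times Z_G$ by the definition of $\times$ on multivariable series. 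The reason for starting from \ref{z48} rather than from the raw sum over all $\sigma$ is that the Hadamard product is defined on series already expanded in the basis $\mathcal{X}^{\mathcal{N}}/\mathrm{aut}(\mathcal{N})$ indexed by cycle types, so this expansion is exactly the bridge to the definition.

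Finally I would deduce the remaining two identities by specializing variables, invoking the earlier theorem that expresses $F(x)$ and $\widetilde{F}(x)$ as specializations of $Z_F$. The type-generating-series identity is immediate: $\widetilde{(F \times G)}(x) = Z_{F \times G}(x, x^2, x^3, \ldots)$, and substituting the \emph{Zyklenzeiger} identity just proved gives $(Z_F \times Z_G)(x, x^2, x^3, \ldots)$. For the exponential generating series I would use $(F \times G)(x) = Z_{F \times G}(x, 0, 0, \ldots)$; setting $x_2 = x_3 = \cdots = 0$ annihilates every cycle type except $\mathcal{N} = (n, 0, 0, \ldots)$, whose representative is $Id_n$, leaving $\sum_n |F[n]|\,|G[n]| \frac{x^n}{n!}$, which is the single-variable Hadamard product $F(x) \times G(x)$ (equivalently, this follows directly from $|(F \times G)[n]| = |F[n]| \cdot |G[n]|$). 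I do not anticipate any serious obstacle; the only point needing care is the bookkeeping that matches the cycle-type expansion \ref{z48} with the indexing of the Hadamard product and identifies the single-variable specialization correctly.
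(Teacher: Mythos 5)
Your proposal is correct and follows essentially the same route as the paper: the paper likewise reduces everything to the \emph{Zyklenzeiger} identity, observes that $\mathbf{fix}((F \times G)[\sigma]) = \mathbf{fix}(F[\sigma])\,\mathbf{fix}(G[\sigma])$, and concludes via the cycle-type expansion of equation \ref{z48} and the definition of the Hadamard product. The only difference is that you spell out the variable specializations $(x,0,0,\ldots)$ and $(x,x^2,x^3,\ldots)$ that justify the reduction, which the paper leaves implicit.
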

\begin{proof}
    It is sufficient to prove the last equality, which is trivial, as an $(F \times G)$-structure is fixed if and only if each of the two structures which compose it are fixed, from where $\mathbf{fix}((F \times G)[\sigma]) = \mathbf{fix}(F[\sigma])\mathbf{fix}(G[\sigma])$, and then
    \begin{align*}
        Z_{F \times G}(x_1,x_2,\ldots) &= \sum_{n \geq 0} \underset{+na_n=n}{\sum_{a_1+2a_2+...}} \mathbf{fix}((F \times G)[\sigma]) \frac{x_1^{a_1} \cdots x_n^{a_n}}{\prod_{i=1}^n i^{a_i}a_i!} \\&= \sum_{n \geq 0} \underset{+na_n=n}{\sum_{a_1+2a_2+...}} \mathbf{fix}(F[\sigma])\mathbf{fix}(G[\sigma]) \frac{x_1^{a_1} \cdots x_n^{a_n}}{\prod_{i=1}^n i^{a_i}a_i!} \\&= Z_F(x_1,x_2,\ldots) \times Z_G(x_1,x_2,\ldots).
    \end{align*}
\end{proof}


The neutral element of superposition is the set species $E$, that is, for any species $F$, $E \times F = F \times E = F$, and also $E_n \times F = F \times E_n = F_n$. However, the law of cancellation is not valid in the case of Cartesian products, that is, $A \times B = A \times C$ does not imply $B = C$. For example, one can show that $L \times L = L \times \perm$, but $L \neq \perm$.

Let us now move on to the next operation: the product.

\begin{defi}
    Let $F,G$ be species of structures. The species $F \cdot G$ (also $FG$) is called the product of $F$ and $G$, and is defined as follows: an $FG$-structure on a finite set $U$ is an ordered pair $s = (f,g)$, where $f \in F[U_1]$, $g \in G[U_2]$, $U_1 \cup U_2 = U$ and $U_1 \cap U_2 = \emptyset$. In other terms,
    \begin{equation*}
        (FG)[U] = \sum_{U_1 \sqcup U_2 = U} (F[U_1] \times G[U_2])
    \end{equation*}
    and the transport is given by
    \begin{equation*}
        (FG)[\sigma](f,g) = (F[\sigma|_{U_1}](f),G[\sigma|_{U_2}](g))
    \end{equation*}\newpage
    In pictures, an $FG$-structure can be seen in these two manners:
    \begin{center}

\tikzset{every picture/.style={line width=0.75pt}} 


    \end{center}
\end{defi}

The product of species is associative and commutative (up to isomorphism), and the species $1$ is the neutral element, that is, 
$1F = F1 = F$. More so, $0F = F0 = 0$, and the distributive property of multiplication over addition holds.

\begin{theorem}
    Let $F,G$ be species of structures. Then
    \begin{itemize}
        \item[ ] $ (FG)(x) = F(x)G(x) $
        \item[ ] $ \widetilde{(FG)}(x) = \widetilde{F}(x)\widetilde{G}(x) $
        \item[ ] $ Z_{FG}(x_1,x_2,...) = Z_F(x_1,x_2,...)Z_G(x_1,x_2,...) $
    \end{itemize}
\end{theorem}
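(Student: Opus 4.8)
The plan is to prove the third (cycle index) identity first and then obtain the other two for free, exactly as in the proof of the analogous theorem for the Cartesian product. Once we know $Z_{FG}=Z_F\,Z_G$, the theorem relating a species to its \emph{Zyklenzeiger} (namely $H(x)=Z_H(x,0,0,\dots)$ and $\widetilde{H}(x)=Z_H(x,x^2,x^3,\dots)$) finishes the job: both evaluations $(x_1,x_2,\dots)\mapsto(x,0,0,\dots)$ and $(x_1,x_2,\dots)\mapsto(x,x^2,x^3,\dots)$ are ring homomorphisms into $\C[[x]]$ (each monomial $\prod_i x_i^{a_i}$ of weight $\sum_i i\,a_i=k$ has a well-defined image and there are finitely many of each weight), so they carry the product $Z_F Z_G$ to $F(x)G(x)$ and to $\widetilde F(x)\widetilde G(x)$ respectively. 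Alternatively, $(FG)(x)=F(x)G(x)$ can be checked directly from $|(FG)[n]|=\sum_{k}\binom{n}{k}|F[k]|\,|G[n-k]|$, the classical rule for products of exponential generating series.

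First I would establish the fundamental fixed-point count. Fix $n$ and $\sigma\in S_n$. An $FG$-structure on $[n]$ is a quadruple $(U_1,U_2,f,g)$ with $[n]=U_1\sqcup U_2$, $f\in F[U_1]$, $g\in G[U_2]$, and by the definition of the transport $(FG)[\sigma]$ such a structure is fixed precisely when $\sigma(U_1)=U_1$ (hence $\sigma(U_2)=U_2$), $F[\sigma|_{U_1}](f)=f$ and $G[\sigma|_{U_2}](g)=g$. This yields
\[
\mathbf{fix}\big((FG)[\sigma]\big)=\sum_{\substack{U_1\sqcup U_2=[n]\\ \sigma(U_1)=U_1}}\mathbf{fix}\big(F[\sigma|_{U_1}]\big)\,\mathbf{fix}\big(G[\sigma|_{U_2}]\big).
\]
Moreover, since the cycles of $\sigma$ are partitioned into those lying in $U_1$ and those in $U_2$, the monomial factors as $\prod_i x_i^{\sigma_i}=\prod_i x_i^{(\sigma|_{U_1})_i}\prod_i x_i^{(\sigma|_{U_2})_i}$.

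Next I would substitute this into the definition of $Z_{FG}$ and reorganize by choosing the splitting $U_1\sqcup U_2$ first. For a fixed $U_1$ of size $k$ the permutations $\sigma$ with $\sigma(U_1)=U_1$ are exactly the pairs $(\sigma_1,\sigma_2)\in S(U_1)\times S(U_2)$, so the double sum over $\sigma$ and admissible splittings factors into the product of an $F$-sum over $S(U_1)$ and a $G$-sum over $S(U_2)$. Because $\mathbf{fix}(F[\,\cdot\,])$ and the associated monomial depend only on the cycle type (functoriality, via the consequences of Lemma~\ref{permconj}), summing over all $U_1$ with $|U_1|=k$ contributes a factor $\binom{n}{k}$ relative to a fixed copy $S_k$; combined with the prefactor $\tfrac1{n!}$ this produces $\tfrac1{k!}\cdot\tfrac1{(n-k)!}$. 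Summing over $n$ and $k$ then exhibits exactly the Cauchy product of the graded series $Z_F=\sum_k Z_{F,k}$ and $Z_G=\sum_k Z_{G,k}$, where $Z_{H,k}$ collects the monomials $\prod_i x_i^{a_i}$ with $\sum_i i\,a_i=k$; this grading is respected because a permutation in $S_k$ yields monomials of total weight $k$. Hence $Z_{FG}=Z_F Z_G$.

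The step I expect to be most delicate is this last reindexing: one must verify that passing from the sum over all subsets $U_1$ (each with its own symmetric group) to $\binom{n}{k}$ times a sum over the single group $S_k$ is legitimate, which rests on the label-invariance of $\mathbf{fix}(F[\sigma])$ under relabelling, and that the weight grading $\sum_i i\,a_i=k$ matches the set-size $k$, so that the resulting series is genuinely a Cauchy product and not a Hadamard-type product. Everything else — the fixed-point formula and the final substitutions into $(x,0,0,\dots)$ and $(x,x^2,x^3,\dots)$ — is routine once these bookkeeping points are settled.
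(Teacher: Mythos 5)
Your proposal is correct and follows essentially the same route as the paper: it reduces everything to the \emph{Zyklenzeiger} identity (the other two series identities following from $F(x)=Z_F(x,0,0,\ldots)$ and $\widetilde{F}(x)=Z_F(x,x^2,x^3,\ldots)$), establishes that a fixed $FG$-structure forces $\sigma(U_1)=U_1$ and $\sigma(U_2)=U_2$ with both parts fixed, and then reorganizes the sum over $\sigma\in S_n$ into pairs $(\sigma^1,\sigma^2)\in S(U_1)\times S(U_2)$, using the $\binom{n}{k}$ count of subsets together with the prefactor $\tfrac{1}{n!}$ to produce $\tfrac{1}{k!}\cdot\tfrac{1}{(n-k)!}$ and factor the result as $Z_F\,Z_G$. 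The bookkeeping points you flag (label-invariance of $\mathbf{fix}$ and the matching of the weight grading with set size) are exactly the steps the paper handles implicitly, so no gap remains.
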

\begin{proof}
    It is sufficient to prove the third equality. Observe that an $FG$-structure is fixed by a permutation if and only if its two parts are both fixed. Hence, in order for a permutation $\sigma$ of $[n] = U_1 \sqcup U_2$ to fix a structure in 
    $F[U_1] \times F[U_2]$, we must have $\sigma(U_1) = U_1$ and $\sigma(U_2) = U_2$. Using that there are $\binom{n}{i}$ ways to choose a subset $U_1$ with cardinality $i$ from $[n]$, we get 
    \begin{align*}
        &Z_{FG}(x_1,x_2,...) = \sum_{n \geq 0} \frac{1}{n!} \sum_{U_1 \sqcup U_2 = [n]} \sum_{\sigma^1 \in S(U_1)} \sum_{\sigma^2 \in S(U_2)} \mathbf{fix}(F[\sigma^1])\mathbf{fix}(G[\sigma^2]) x_1^{\sigma_1^1 + \sigma_1^2} x_2^{\sigma_2^1 + \sigma_2^2}\ldots \\
        &= \sum_{n \geq 0} \frac{1}{n!} \underset{i,j \geq 0}{\sum_{i+j=n}} \binom{n}{i} \sum_{\sigma^1 \in S_i} \mathbf{fix}(F[\sigma^1]) x_1^{\sigma_1^1} x_2^{\sigma_2^1}... \sum_{\sigma^2 \in S_j} \mathbf{fix}(G[\sigma^2]) x_1^{\sigma_1^2} x_2^{\sigma_2^2}\ldots \\
        &= \sum_{i \geq 0} \sum_{j \geq 0} \frac{1}{i!j!} \sum_{\sigma^1 \in S_i} \mathbf{fix}(F[\sigma^1]) x_1^{\sigma_1^1} x_2^{\sigma_2^1}... \sum_{\sigma^2 \in S_j} \mathbf{fix}(G[\sigma^2]) x_1^{\sigma_1^2} x_2^{\sigma_2^2} \ldots \\
        &= \sum_{i \geq 0}\frac{1}{i!} \sum_{\sigma^1 \in S_i} \mathbf{fix}(F[\sigma^1]) x_1^{\sigma_1^1} x_2^{\sigma_2^1}... \sum_{j \geq 0} \frac{1}{j!} \sum_{\sigma^2 \in S_j} \mathbf{fix}(G[\sigma^2]) x_1^{\sigma_1^2} x_2^{\sigma_2^2}... = Z_F(x_1,x_2,...)Z_G(x_1,x_2,...).
    \end{align*}
\end{proof}

We can now use this last theorem to enumerate structures that decompose as products.

\begin{exa}
    A permutation can be seen as a set of fixed points alongside a derangement (a permutation without fixed points).
\begin{center}

\tikzset{every picture/.style={line width=0.75pt}} 


    \end{center}
    We can then write $\perm = E \cdot \mathrm{Der}$, and since we already have the series for $\perm$ and $E$, use this to find the series for $\mathrm{Der}$ and prove the well-known formula for derangements: $\displaystyle {d_n = \sum_{i=0}^n \frac{(-1)^in!}{i!}}$.\newpage We have
    \begin{itemize}
        \item[ ] $\frac{1}{1-x} = e^x \text{Der}(x)$
        \item[ ] $\prod_{k \geq 1} \frac{1}{1-x^k} = \frac{1}{1-x} \widetilde{\text{Der}}(x)$
        \item[ ] $\prod_{k \geq 1} \frac{1}{1-x_k} = e^{x_1 + \frac{x_2}{2} + \frac{x_3}{3} + \ldots} Z_{\text{Der}}(x_1,x_2, \ldots)$
    \end{itemize}
    and by simple calculations we get
    \begin{itemize}
        \item[ ] Der$(x) = \frac{e^{-x}}{1-x}$
        \item[ ] $\widetilde{\text{Der}}(x) = \prod_{k \geq 2} \frac{1}{1-x^k}$
        \item[ ] $Z_{\text{Der}}(x_1,x_2, \ldots) = e^{-(x_1 + \frac{x_2}{2} + \frac{x_3}{3} + \ldots)}\prod_{k \geq 1} \frac{1}{1-x_k}$
    \end{itemize}
    From the first formula above, we deduce
    \begin{equation*}
        \sum_{n \geq 0} d_n \frac{x^n}{n!} = \left( \sum_{i \geq 0} (-1)^i \frac{x^i}{i!} \right) \left( \sum_{j \geq 0} x^j \right) \implies d_n = \sum_{i=0}^n \frac{(-1)^i n!}{i!}.
    \end{equation*}
\end{exa}

\begin{exa}
    Given a set $U$, a subset $V \subseteq U$ can also be seen as an ordered pair $(V,U \setminus V)$. This gives the combinatorial equality $\wp = E \cdot E = E^2$, and from that the formula $\wp(x) = e^{2x}$ and another proof to the fact that a set with $n$ elements has $2^n$ possible subsets. We can also consider the subspecies $\wp^{[k]} = E_k \cdot E$ of subsets with cardinality $k$, from where we have $\wp^{[k]}(x) = e^x \frac{x^k}{k!}$, which expands to
    \begin{equation*}
        \sum_{n \geq 0} |\wp^{[k]}[n]|\frac{x^n}{n!} = \sum_{m\geq0} \frac{x^{m+k}}{m!k!} = \sum_{n \geq k} \frac{x^n}{k!(n-k)!} \implies |\wp^{[k]}[n]| = \frac{n!}{k!(n-k)!} = \binom{n}{k},
    \end{equation*}
    coming back to the definition of binomial coefficients. Since $\wp = \sum_{k=0}^n \wp^{[k]}$, we have a proof for the well-known identity $\sum_{k=0}^n \binom{n}{k} = 2^n$.
\end{exa}

\begin{exa}
    Given a species $F$, one has $nF = F + F + \ldots + F$ ($n$ terms). 
\end{exa}

\begin{exa}
    A linear order of $n$ elements can be seen as an ordered $n$-tuple of singletons, from where we have the identity $L_k = X^k$.
\end{exa}

\section{Composition of Species}

Moving forward, we have the next operation: composition.

\begin{defi}
    Let $F,G$ be two species of structures with $G[\emptyset] = \emptyset$. The species $F \circ G$ (also $F(G)$), called the (partitional) composite of $G$ in $F$, is defined as follows: an $F \circ G$-structure on a finite set $U$ is a triplet $s = (\pi,\vfi,\gamma)$ with
    \begin{enumerate}
        \item $\pi$ a partition on $U$;
        \item $\vfi$ an $F$-structure on the set of parts of $\pi$;
        \item $\gamma = \{ \gamma_p \}_{p \in \pi}$ a family of $G$-structures, where for each $p \in \pi$, $\gamma_p$ is a $G$-structure on $p$.
    \end{enumerate}
    In other words,
    \begin{equation*}
        (F \circ G)[U] = \sum_{\pi \in \text{Par}[U]} (F[\pi] \times \prod_{p \in \pi} G[p] )
    \end{equation*}
    and the transport is given by
    \begin{equation*}
        (F \circ G)[\sigma](\pi,\vfi,\gamma) = (\overline{\pi},\overline{\vfi},\{\overline{\gamma}_{\overline{p}}\}_{\overline{p} \in \overline{\pi}}),
    \end{equation*}
    where
    \begin{enumerate}
        \item $\overline{\pi}$ is the partition obtained by transport of $\pi$ along $\sigma$;
        \item for each $\overline{p} = \sigma(p) \in \overline{\pi}$, $\overline{\gamma}_{\overline{p}}$ is the $G$-structure obtained by $G$-transport of $\gamma_p$ along $\sigma|_{p}$;
        \item $\overline{\vfi}$ is the $F$-structure obtained by $F$-transport of $\vfi$ along the bijection $\overline{\sigma} : \pi \to \overline{\pi}$ induced by $\sigma$.
    \end{enumerate}
    An $F \circ G$-structure can also be interpreted as an $F$-assembly of $G$-structures. When $F = E$, we simply say assembly instead of $E$-assembly. These are more easily understood with pictures.
    \begin{center}

\tikzset{every picture/.style={line width=0.75pt}} 


    \end{center}
\end{defi}

The neutral element of composition is the species $X$ of singletons, that is, for all species $F$, we have $F = F(X) = X(F)$. 
Moreover, composition is associative (up to isomorphism).

The definition of composition is naturally extended to the exponential generating series and the \emph{Zyklenzeiger}, though it is not extended to the type generating series in such a simple manner. The proof of this is not as direct as in the other cases.

\begin{theorem}
    Let $F,G$ be species of structures with $G[\emptyset] = \emptyset$. Then
    \begin{itemize}
        \item[ ] $(F \circ G)(x) = F(G(x))$
        \item[ ] $\widetilde{(F \circ G)}(x) = Z_F(\widetilde{G}(x),\widetilde{G}(x^2),\widetilde{G}(x^3), \ldots )$
        \item[ ] $Z_{F \circ G}(x_1,x_2,x_3,\ldots) = Z_F(Z_G(x_1,x_2,x_3,\ldots),Z_G(x_2,x_4,x_6, \ldots),Z_G(x_3,x_6,x_9, \ldots),\ldots)$
    \end{itemize}
\end{theorem}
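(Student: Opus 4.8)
The plan is to establish the third (cycle index) identity first, since it is the deepest of the three, and then obtain the exponential generating series and the type series identities as specializations. Recall the theorem relating the three series, which gives $H(x) = Z_H(x,0,0,\ldots)$ and $\widetilde{H}(x) = Z_H(x,x^2,x^3,\ldots)$ for every species $H$. Granting the cycle index formula, abbreviate $A_k := Z_G(x_k,x_{2k},x_{3k},\ldots)$, so that the assertion reads $Z_{F \circ G} = Z_F(A_1,A_2,A_3,\ldots)$.

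To deduce $(F \circ G)(x) = F(G(x))$, I would set $x_1 = x$ and $x_j = 0$ for $j \geq 2$. Then $A_1 = Z_G(x,0,0,\ldots) = G(x)$, whereas for $k \geq 2$ every index occurring in $A_k$ is at least $2$, so $A_k = Z_G(0,0,\ldots)$; since $G[\emptyset] = \emptyset$ the series $Z_G$ has vanishing constant term, whence $A_k = 0$. The right-hand side therefore collapses to $Z_F(G(x),0,0,\ldots) = F(G(x))$. To deduce the type series identity, set instead $x_j = x^j$: then $A_k = Z_G(x^k,x^{2k},x^{3k},\ldots) = \widetilde{G}(x^k)$, giving $Z_F(\widetilde{G}(x),\widetilde{G}(x^2),\widetilde{G}(x^3),\ldots)$ exactly. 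Both steps are routine once the cycle index formula is available.

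For the cycle index formula itself the plan is a direct fixed-point count. I would fix $\sigma \in S_n$ and determine which $(F \circ G)$-structures $(\pi,\vfi,\gamma)$ on $[n]$ are fixed by $(F \circ G)[\sigma]$. Unwinding the transport of the composition, being fixed forces: (i) $\sigma$ to stabilize the partition $\pi$, inducing a permutation $\overline{\sigma}$ on the set of blocks; (ii) $\vfi$ to be fixed by $F[\overline{\sigma}]$; and (iii) along each $\overline{\sigma}$-cycle of blocks $(p_1\,p_2\,\cdots\,p_\ell)$, the structures $\gamma_{p_2},\ldots,\gamma_{p_\ell}$ to be the successive transports of $\gamma_{p_1}$, subject to the closure condition that $\gamma_{p_1}$ be fixed by $G[\tau]$, where $\tau := \sigma^\ell|_{p_1}$ is the permutation of the block $p_1$ obtained by restricting $\sigma^\ell$. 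Hence the number of admissible families $\gamma$ on such a block-cycle is precisely $\mathbf{fix}(G[\tau])$, a quantity depending only on the cycle type of $\tau$. The decisive combinatorial lemma is the computation of that cycle type: any $\sigma$-cycle contained in $p_1 \cup \cdots \cup p_\ell$ has length a multiple $\ell c'$ of $\ell$ and meets $p_1$ in a single $\tau$-cycle of length $c'$. Thus a $\sigma$-cycle of length $\ell c'$ inside the block-cycle contributes the monomial $x_{\ell c'}$ on the $Z_{F \circ G}$ side, while on the $Z_G$ side it is recorded by the variable sitting in position $c'$ of $A_\ell = Z_G(x_\ell,x_{2\ell},x_{3\ell},\ldots)$, namely $x_{c'\ell}$; the two bookkeepings coincide. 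Summing over all $\sigma$, all $\sigma$-invariant partitions, and all compatible $\vfi$ and $\gamma$, organized by first choosing the action of $\overline{\sigma}$ on the blocks (which yields a factor from $Z_F$) and then filling each block-cycle of length $\ell$ with a fixed $G$-structure (contributing $A_\ell$), should reassemble $Z_F(A_1,A_2,\ldots)$.

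The main obstacle is exactly this final bookkeeping: reconciling the weight $\frac{1}{n!}$, the multinomial factors counting how the $n$ points distribute among blocks and how the blocks distribute among $\overline{\sigma}$-cycles, and the index shift $x_j \mapsto x_{j\ell}$, so that the resulting double sum telescopes into a plethystic substitution into $Z_F$. One must verify that transporting $\gamma_{p_1}$ around a block-cycle introduces no overcounting and that the weight carried by each $\sigma$-cycle lands in the correct argument of $Z_F$. This plethystic reindexing is the genuine heart of the statement; everything surrounding it is formal manipulation.
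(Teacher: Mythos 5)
Your handling of the first two identities is correct, and it in fact supplies the justification that the paper leaves implicit when it asserts that ``it is enough to prove the third equality'': with $x_1 = x$ and $x_j = 0$ for $j \geq 2$, each argument $Z_G(x_k,x_{2k},\ldots)$ with $k \geq 2$ reduces to the constant term of $Z_G$, which vanishes because $G[\emptyset] = \emptyset$, so the right-hand side collapses to $Z_F(G(x),0,0,\ldots) = F(G(x))$; the substitution $x_j = x^j$ likewise yields the type series formula, and both substitutions are legitimate formal operations precisely because $G(x)$ and $\widetilde{G}(x)$ have zero constant term. For the third identity the two routes diverge: the paper gives no argument at all and simply cites \cite[Section 4.3]{bergeron2}, whereas you outline the direct fixed-point computation, which is in substance the proof given in that reference. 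Your structural analysis is sound: a triple $(\pi,\vfi,\gamma)$ is fixed by $(F \circ G)[\sigma]$ exactly when $\sigma$ stabilizes $\pi$, $\vfi$ is fixed by $F[\overline{\sigma}]$, and on each $\overline{\sigma}$-cycle of blocks $(p_1\,\cdots\,p_\ell)$ the structures $\gamma_{p_2},\ldots,\gamma_{p_\ell}$ are the transports of a structure $\gamma_{p_1}$ fixed by $G[\sigma^\ell|_{p_1}]$; and your cycle-type lemma (every $\sigma$-cycle inside $p_1 \cup \cdots \cup p_\ell$ has length $\ell c'$ and meets $p_1$ in a cycle of length $c'$ of $\sigma^\ell|_{p_1}$, so its variable $x_{\ell c'}$ is exactly the one sitting in position $c'$ of $Z_G(x_\ell,x_{2\ell},\ldots)$) is the decisive observation.

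What you have not done is the part you yourself flag as the ``main obstacle,'' and it is a genuine gap rather than a formality: the argument is not finished until the weighted sum over all $\sigma$, all $\sigma$-stable partitions, and all compatible pairs $(\vfi,\gamma)$ is actually shown to factor as the plethystic substitution. The workable way to close it is to abandon the $\frac{1}{n!}\sum_{\sigma \in S_n}$ form and use the conjugacy-grouped form of the Zyklenzeiger, equation (\ref{z48}) of the paper: both sides then become sums over cycle-type data --- a cycle type $(b_1,b_2,\ldots)$ for $\overline{\sigma}$ together with, for each of the $b_\ell$ block-cycles of length $\ell$, a cycle type for the return permutation $\tau = \sigma^\ell|_{p_1}$ --- and one must verify that the number of ways of assembling a permutation $\sigma$ of $[n]$ and a $\sigma$-stable partition realizing these data, divided by $n!$, reproduces exactly the automorphism weights $1/\prod_i i^{a_i}a_i!$ generated when $Z_G(x_\ell,x_{2\ell},\ldots)$ is substituted into the $\ell$-th slot of (\ref{z48}) for $Z_F$. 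That orbit-counting computation is the actual content of the citation the paper hides behind; without it, your outline is a correct plan and matches the standard proof, but it is not yet a proof.
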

\begin{proof}
    It is enough to prove the third equality. This requires a characterization of the \emph{Zyklenzeiger}, and can be found in \cite[Section 4.3]{bergeron2}.
\end{proof}

Let us now see some examples of composition of species.
\begin{exa}
    An endofunction can be seen as a permutation of rooted trees, as illustrated in the following picture.
    \begin{center}

\tikzset{every picture/.style={line width=0.75pt}} 


    \end{center}
\end{exa}

\begin{exa}
    A partition can be seen as a set of non-empty sets, that is, $\text{Par} = E(E_+)$. This gives us a formula for enumerating partitions: $\text{Par}(x) = e^{e^x - 1}$.
\end{exa}

\begin{defi}
    The Möbius function $\mu : \N \setminus \{0\} \to \Z$ is defined as
    \begin{equation*}
        \mu(n) = \begin{cases}
            1\text{, if }n = 1;\\
            (-1)^r\text{, if }n\text{ is a product of }r\text{ distinct primes};\\
            0\text{, otherwise}.
        \end{cases}
    \end{equation*}
    This function satisfies the following property.
    \begin{equation*}
        \sum_{d|n} \mu(d) = \begin{cases}
            1,\text{ if }n=1;\\
            0,\text{ otherwise.}
        \end{cases}
    \end{equation*}
    This is clear in the case $n=1$. For $n = p_1^{e_1} \cdots p_r^{e_r} > 1$, we have 
    \begin{equation*}
        \sum_{d | n} \mu(d) = \sum_{d | p_1 \cdots p_r} \mu(d) = \sum_{k=0}^r (-1)^k \binom{r}{k} = 0.
    \end{equation*}
\end{defi}

\begin{lemma}[Möbius Inversion, series version]
    Let $a(x_1,x_2,\ldots), b(x_1,x_2,\ldots)$ be two power series. Denote $a_k = a(x_k,x_{2k},\ldots), b_k = b(x_k,x_{2k},\ldots)$. Then
    \begin{equation*}
        b_1 = \sum_{k \geq 1} \frac{1}{k} a_k \iff a_1 = \sum_{k \geq 1} \frac{\mu(k)}{k} b_k.
    \end{equation*}
\end{lemma}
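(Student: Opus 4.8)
The plan is to encode the substitution $f \mapsto f(x_k, x_{2k}, \ldots)$ as a single linear operator and to exploit its multiplicative behaviour in the index, so that both implications collapse onto the defining property $\sum_{d \mid n}\mu(d) = \delta_{1,n}$ of the Möbius function recorded just above.

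For each $k \geq 1$, let $T_k$ be the operator sending a power series $f(x_1,x_2,\ldots)$ to $f(x_k, x_{2k}, x_{3k}, \ldots)$, i.e. performing the substitution $x_i \mapsto x_{ik}$. Then $T_1$ is the identity, each $T_k$ is linear, and in the notation of the statement $a_k = T_k(a)$, $b_k = T_k(b)$. The structural fact that makes everything work is that these operators compose multiplicatively: applying $T_k$ after $T_j$ replaces $x_i$ first by $x_{ij}$, and then replaces $x_{ij} = x_m$ by $x_{mk}$, so the net substitution is $x_i \mapsto x_{ijk}$. Hence
\begin{equation*}
T_k \circ T_j = T_{jk} = T_j \circ T_k .
\end{equation*}

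First I would prove the forward implication. Assuming $b = b_1 = \sum_{k \geq 1}\frac{1}{k}a_k = \sum_{k\geq 1}\frac{1}{k}T_k(a)$, I apply each $T_k$ to $b$ and substitute, then regroup by $n = kj$:
\begin{align*}
\sum_{k \geq 1}\frac{\mu(k)}{k}b_k
&= \sum_{k \geq 1}\frac{\mu(k)}{k}T_k\!\Big(\sum_{j \geq 1}\tfrac{1}{j}T_j(a)\Big)
= \sum_{k \geq 1}\sum_{j \geq 1}\frac{\mu(k)}{kj}\,T_{kj}(a) \\
&= \sum_{n \geq 1}\frac{1}{n}\Big(\sum_{k \mid n}\mu(k)\Big)T_n(a)
= \sum_{n \geq 1}\frac{1}{n}\,\delta_{1,n}\,T_n(a) = T_1(a) = a = a_1 ,
\end{align*}
where in the third equality the terms with $kj = n$ are grouped (so $\tfrac{1}{kj} = \tfrac1n$ and $T_{kj} = T_n$), and then $\sum_{k \mid n}\mu(k) = \delta_{1,n}$ is used. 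The converse follows from the identical computation with $a$ and $b$ interchanged: starting from $a = \sum_{k}\frac{\mu(k)}{k}T_k(b)$ one obtains $\sum_{k}\frac{1}{k}T_k(a) = \sum_{n}\frac{1}{n}\big(\sum_{k\mid n}\mu(k)\big)T_n(b) = b$.

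The routine algebra above hides the single point that genuinely needs care: the interchange of summation and the regrouping by $n = kj$. The hard part is therefore verifying that the doubly-indexed family $\big(\frac{\mu(k)}{kj}T_{kj}(a)\big)_{k,j}$ is summable, which legitimizes the rearrangement. This holds in the setting of interest because $T_m(a)$ involves only the variables $x_i$ with $m \mid i$; consequently, for any fixed monomial, a contribution from $T_n(a)$ can occur only when $n$ divides the greatest common divisor of the indices appearing in that monomial, and (assuming $a$ has zero constant term, as the cycle-index-type series in question do) this leaves only finitely many relevant $n$, each with finitely many factorizations $n = kj$. I would state this summability observation explicitly before carrying out the interchange, since it is what turns the formal manipulation into a valid identity of power series.
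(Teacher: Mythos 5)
Your proof is correct and follows essentially the same route as the paper's: substitute the assumed identity into the Möbius-weighted sum, regroup the double sum by $n=kj$, and invoke $\sum_{k\mid n}\mu(k)=\delta_{1,n}$, with the converse obtained by the symmetric computation. Your explicit operator formalism $T_k$ and the closing remark on summability of the doubly-indexed family merely make precise what the paper's proof leaves implicit (the paper performs the same substitution $a_d \mapsto a_{dk}$ and rearrangement without comment), so this is a welcome clarification rather than a different argument.
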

\begin{proof}
    Assume the left-hand side equation holds. Then
    \begin{equation*}
        \sum_{k \geq 1} \frac{\mu(k)}{k} b_k = \sum_{k \geq 1} \frac{\mu(k)}{k} \sum_{d \geq 1} \frac{1}{d} a_{dk} = \sum_{k \geq 1} \mu(k) \sum_{d \geq 1} \frac{1}{dk} a_{dk} = \sum_{n \geq 1} \frac{1}{n} a_n \sum_{k|n} \mu(k) = a_1.
    \end{equation*}

    Now assume the right-hand side equation holds. Then
    \begin{equation*}
        \sum_{k \geq 1} \frac{1}{k} a_k = \sum_{k \geq 1} \frac{1}{k} \sum_{d \geq 1} \frac{\mu(d)}{d} b_{dk} = \sum_{k \geq 1} \sum_{d \geq 1} \frac{\mu(d)}{dk} b_{dk} = \sum_{n \geq 1} \frac{1}{n} b_n \sum_{d|n} \mu(d) = b_1.
    \end{equation*}
\end{proof}

\begin{exa}\label{permacyc}
    A permutation is an assembly of cycles, that is, $\perm = E(\cyc)$. This gives us another proof for the identity $\cyc(x) = -\log(1-x)$, as it comes from $\frac{1}{1-x} = \perm(x) = e^{\cyc(x)}$. This also allows us to calculate the \emph{Zyklenzeiger} of the species of cycles by using Möbius inversion.
    \begin{equation*}
        \prod_{k \geq 1} \frac{1}{1-x_k} = \exp \left( \sum_{k \geq 1} \frac{1}{k} Z_\cyc(x_k,x_{2k},...) \right) \implies \log\left( \prod_{k \geq 1} \frac{1}{1-x_k}\right) = \sum_{k \geq 1}\frac{1}{k} Z_\cyc(x_k,x_{2k},...) 
    \end{equation*}
    \begin{equation*}
        \implies Z_\cyc(x_1,x_2,...) = \sum_{k \geq 1} \frac{\mu(k)}{k} \log \left( \prod_{s \geq 1} \frac{1}{1-x_{ks}} \right) = \sum_{k \geq 1} \sum_{s \geq 1} \frac{\mu(k)}{k} \log \frac{1}{1-x_{ks}}.
    \end{equation*}
    This last series can be rewritten as 
    \begin{equation*}
        \sum_{k \geq 1} \sum_{d|k} \frac{\mu(k/d)}{k/d} \log \frac{1}{1-x_k} = \sum_{k \geq 1} \frac{1}{k} \log \left( \frac{1}{1-x_k} \right) \sum_{d|k} {d\mu(k/d)} = \sum_{k \geq 1} \frac{\phi(k)}{k} \log \frac{1}{1-x_k},
    \end{equation*}
    where $\phi$ is Euler's totient function. By taking $\widetilde{\cyc}(x)$ as in Example \ref{extil}, this also gives us the neat identity
    \begin{equation*}
        \frac{x}{1-x} = \sum_{k \geq 1} \frac{\phi(k)}{k} \log \frac{1}{1-x^k}.
    \end{equation*}
\end{exa}

Composition of species also gives us a proper way to define connected structures.

\begin{defi}
    Let $F$ be a species of structures. Then the species $F^c$ of connected $F$-structures is defined as the species such that $F = E(F^c)$.
\end{defi}

This naturally gives us the following series.

\begin{itemize}
    \item[ ] $F(x) = e^{F^c(x)}$
    \item[ ] $\widetilde{F}(x) = \exp \sum_{k \geq 1} \frac{1}{k} \widetilde{F^c}(x^k)$
    \item[ ] $Z_F(x_1,x_2,...) = \exp \sum_{k \geq 1} \frac{1}{k} Z_{F^c}(x_k,x_{2k},...)$
\end{itemize}

By using Möbius inversion just like in Example \ref{permacyc}, we can properly calculate the series of any species of connected structures from the original species.

\begin{itemize}
    \item[ ] $F^c(x) = \log F(x)$
    \item[ ] $\widetilde{F^c}(x) = \sum_{k \geq 1} \frac{\mu(k)}{k} \log \widetilde{F}(x^k)$
    \item[ ] $Z_{F^c}(x_1, x_2, \ldots) = \sum_{k \geq 1} \frac{\mu(k)}{k} \log Z_F(x_k,x_{2k}, \ldots)$.
\end{itemize}

It is also worth mentioning that there exists a second type of composition of species, namely functorial composition, denoted by $F$\scalebox{0.75}{$\square$}$G$. The interested reader can find more about it in \cite[Section 2.4]{bergeron1}.

\section{Derivatives and Pointing}

Next comes the operation of derivatives.

\begin{defi}
    Let $F$ be a species of structures. The derivativee of $F$, denoted $F'$ or $\frac{d}{dX}F(X)$, is defined as follows: an $F'$-structure on a finite set $U$ is an $F$-structure on the set $U^+ := U \sqcup \{ * \}$, where $* = *_U$ is an element outside of $U$. In other words, $F'[U] = F[U^+]$, and the transport along $\sigma : U \to V$ is given by $F'[\sigma] = F[\sigma^+]$, where $\sigma^+ : U^+ \to V^+$ is such that $\sigma^+|_U = \sigma$ and $\sigma^+(*_U) = *_V$. In pictures, this translates to
    \begin{center}

\tikzset{every picture/.style={line width=0.75pt}} 


    \end{center}
    One can also see an $F'$-structure as an $F$-structure with a hole, associating $*$ with the removal of a vertex.
\end{defi}

Naturally, the derivative extends to the exponential generating series and the \emph{Zyklenzeiger}.

\begin{theorem}
    Let $F$ be a species of structures. Then
    \begin{itemize}
        \item[ ] $F'(x) = \frac{d}{dx} F(x)$
        \item[ ] $\widetilde{F'}(x) = \left( \frac{\del}{\del x_1} Z_F \right)(x,x^2,x^3,...)$
        \item[ ] $Z_{F'}(x_1,x_2,x_3,...) = \left( \frac{\del}{\del x_1} Z_F \right)(x_1,x_2,x_3,...)$
    \end{itemize}
\end{theorem}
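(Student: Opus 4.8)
The plan is to establish the third identity, concerning the \emph{Zyklenzeiger}, and then obtain the first two as formal consequences, exactly as in the proofs for the product and the composite. The starting point is the observation that, by definition of the derivative, $F'[\sigma] = F[\sigma^+]$, where $\sigma^+$ is the permutation of $U^+ = U \sqcup \{*\}$ that agrees with $\sigma$ on $U$ and fixes $*$. Consequently $\sigma^+$ has cycle type $(\sigma_1 + 1, \sigma_2, \sigma_3, \ldots)$: it carries exactly one extra fixed point compared to $\sigma$. Since $\mathbf{fix}(F[\cdot])$ is constant on conjugacy classes (this is precisely the observation used to pass to equation \ref{z48}), the quantity $\mathbf{fix}(F'[\sigma]) = \mathbf{fix}(F[\sigma^+])$ depends only on the cycle type $(\sigma_1 + 1, \sigma_2, \ldots)$.

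With this in hand, I would rewrite both sides in the ``cycle type'' form of equation \ref{z48}. On the one hand, grouping the permutations $\sigma \in S_n$ by cycle type $(a_1, a_2, \ldots)$ (of which there are $n!/\prod_i i^{a_i}a_i!$) gives
\begin{equation*}
 Z_{F'}(x_1,x_2,\ldots) = \sum_{n \geq 0} \underset{+na_n=n}{\sum_{a_1+2a_2+\ldots}} \mathbf{fix}(F[\sigma^+]) \frac{x_1^{a_1}\cdots x_n^{a_n}}{\prod_{i=1}^n i^{a_i}a_i!},
\end{equation*}
where $\sigma^+$ has cycle type $(a_1+1, a_2, \ldots)$. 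On the other hand, differentiating the analogous expression for $Z_F$ term by term with respect to $x_1$ and performing the substitution $a_1 \mapsto a_1 - 1$ in the resulting sum, the factor $a_1$ produced by the derivative cancels against the excess factorial in the denominator, since $a_1/(1^{a_1}a_1!) = 1/(1^{a_1-1}(a_1-1)!)$. Matching the two sums index by index then yields $Z_{F'} = (\partial Z_F/\partial x_1)(x_1, x_2, \ldots)$.

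The remaining two identities follow formally. For the generating series, I would either invoke $F(x) = Z_F(x,0,0,\ldots)$ together with the chain rule --- noting that setting $x_2 = x_3 = \cdots = 0$ annihilates every partial except $\partial/\partial x_1$, so that $\frac{d}{dx}F(x) = (\partial Z_F/\partial x_1)(x,0,\ldots) = Z_{F'}(x,0,\ldots) = F'(x)$ --- or verify it directly from $|F'[n]| = |F[n+1]| = f_{n+1}$, which gives $F'(x) = \sum_{n} f_{n+1}x^n/n! = \frac{d}{dx}F(x)$. For the type series, the relation $\widetilde{G}(x) = Z_G(x,x^2,x^3,\ldots)$ applied to $G = F'$, combined with the third identity just proved, immediately gives $\widetilde{F'}(x) = Z_{F'}(x,x^2,x^3,\ldots) = (\partial Z_F/\partial x_1)(x,x^2,x^3,\ldots)$. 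I expect the only delicate point to be the bookkeeping in the second paragraph: tracking the index shift and confirming that the differentiation factor exactly absorbs the change in the automorphism denominator. Everything else is routine.
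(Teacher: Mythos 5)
Your proposal is correct and follows essentially the same route as the paper's proof: reduce everything to the \emph{Zyklenzeiger} identity, use $F'[\sigma]=F[\sigma^+]$ together with the fact that $\sigma^+$ has cycle type $(\sigma_1+1,\sigma_2,\ldots)$, pass to the cycle-type form of the sum, and observe that the factor produced by $\partial/\partial x_1$ exactly cancels the change in the automorphism denominator ($a_1/(1^{a_1}a_1!)=1/(1^{a_1-1}(a_1-1)!)$), which is the same cancellation the paper exploits, merely written in the opposite direction (you differentiate $Z_F$ and reindex, the paper rewrites $Z_{F'}$ as a derivative). Your explicit derivation of the first two identities from the third is a harmless elaboration of the paper's one-line reduction ``it is sufficient to prove the third equality.''
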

\begin{proof}
    It is sufficient to prove the third equality. We have 
    \begin{equation}\label{zf'1}
        Z_{F'}(x_1,x_2,x_3,\ldots) = \sum_{n \geq 0} \frac{1}{n!} \sum_{\sigma \in S_n} \mathbf{fix}(F'[\sigma]) x_1^{\sigma_1} x_2^{\sigma_2} \ldots =  \sum_{n \geq 0} \frac{1}{n!} \sum_{\sigma \in S_n} \mathbf{fix}(F[\sigma^+]) x_1^{\sigma_1} x_2^{\sigma_2} \ldots
    \end{equation}
    Consider a sequence $\alpha = (a_1,a_2, \ldots )$ of non-negative integers. We say that $\alpha$ partitions $n$, and write $\alpha \vdash n$, if $a_1 + 2a_2 + \ldots + na_n = n$ and $a_k = 0$ for $k > n$. If $\alpha \vdash n$, we use the notation $\sigma_\alpha$ to denote a permutation on $[n]$ with cycle type $\alpha$, and write $\sigma_\alpha^+$ for the permutation on $[n+1]$ that has cycle type $(a_1 + 1, a_2, a_3, \ldots)$ and is equal to $\sigma_\alpha$ when restricted to $[n]$. Then
    \begin{equation*}
        (\ref{zf'1}) = \sum_{n \geq 0} \sum_{\alpha \vdash n}\mathbf{fix}(F[\sigma_\alpha^+]) \frac{1}{\prod i^{a_i}a_i!} x_1^{a_1} x_2^{a_2}... = \frac{\del}{\del x_1}\sum_{n \geq 0} \sum_{\alpha \vdash n+1}\mathbf{fix}(F[\sigma_\alpha]) \frac{1}{\prod i^{a_i}a_i!} x_1^{a_1} x_2^{a_2}...
    \end{equation*}
    \begin{equation*}
         = \frac{\del}{\del x_1}\sum_{n \geq 1} \sum_{\alpha \vdash n}\mathbf{fix}(F[\sigma_\alpha]) \frac{1}{\prod i^{a_i}a_i!} x_1^{a_1} x_2^{a_2}... = \frac{\del}{\del x_1}\sum_{n \geq 1} \frac{1}{n!} \sum_{\alpha \vdash n}\mathbf{fix}(F[\sigma_\alpha]) \frac{n!}{\prod i^{a_i}a_i!} x_1^{a_1} x_2^{a_2}...
    \end{equation*}
    \begin{equation*}
         = \frac{\del}{\del x_1}\sum_{n \geq 1} \frac{1}{n!} \sum_{\sigma \in S_n}\mathbf{fix}(F[\sigma]) x_1^{\sigma_1} x_2^{\sigma_2}...  = \frac{\del}{\del x_1}(Z_F - \text{const.})(x_1,x_2,...) = \frac{\del}{\del x_1}Z_F(x_1,x_2,...)
    \end{equation*}
    as the only non-zero parts of that derivative sum are those where $a_1 > 0$, and we can see the new $\sigma_\alpha$ as being the old $\sigma_\alpha^+$.
\end{proof}

\begin{theorem}
    The addition, product, and chain rules for derivatives still stand in the case of derivatives of species.
\end{theorem}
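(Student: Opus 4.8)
The plan is to prove each of the three rules by exhibiting a natural isomorphism between the two species in question, working directly from the definition $F'[U] = F[U^+]$ with $U^+ = U \sqcup \{*\}$. In every case the decisive device is to track where the adjoined element $*$ lands in the combinatorial decomposition defining the operation, and then to reinterpret the piece containing $*$ as a derivative. For the sum the argument is immediate: one computes $(F+G)'[U] = (F+G)[U^+] = F[U^+] \sqcup G[U^+] = F'[U] \sqcup G'[U]$, and since the transports agree term by term, $(F+G)' = F'+G'$.

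For the product I would begin from
$$(FG)'[U] = (FG)[U^+] = \sum_{U_1 \sqcup U_2 = U^+} F[U_1] \times G[U_2],$$
and split the sum according to whether $* \in U_1$ or $* \in U_2$, exactly one of which holds. When $* \in U_1$, writing $U_1 = V^+$ with $V = U_1 \setminus \{*\} \subseteq U$ gives $F[U_1] = F'[V]$, so the corresponding partial sum is precisely $(F'G)[U]$; symmetrically, the case $* \in U_2$ yields $(FG')[U]$. Hence $(FG)'[U] = (F'G)[U] \sqcup (FG')[U] = (F'G + FG')[U]$, establishing the Leibniz rule $(FG)' = F'G + FG'$, the compatibility with transports being clear from the definitions.

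The chain rule $(F \circ G)' = (F' \circ G) \cdot G'$ is the main obstacle and demands the most care. I would analyze an $(F \circ G)$-structure on $U^+$, that is, a triple $(\pi, \varphi, \gamma)$ with $\pi$ a partition of $U^+$, $\varphi$ an $F$-structure on the set of blocks, and $\gamma$ a family of $G$-structures, one per block. The element $*$ lies in a unique block $p^* \in \pi$; I set $U_2 := p^* \setminus \{*\}$ and $U_1 := U \setminus U_2$, so the remaining blocks form a partition $\pi_1$ of $U_1$. Since $p^* = U_2^+$, the $G$-structure $\gamma_{p^*} \in G[p^*] = G[U_2^+]$ is exactly a $G'$-structure on $U_2$, while $\pi_1$, the $G$-structures on its blocks, and the $F$-structure $\varphi$ on $\pi = \pi_1 \sqcup \{p^*\}$ — read as an $F$-structure on $\pi_1^+$ by taking $p^*$ as the distinguished point, hence an $F'$-structure on $\pi_1$ — together constitute an $(F' \circ G)$-structure on $U_1$. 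This assignment is a bijection with an evident inverse, compatible with transports, so it realizes $(F \circ G)'[U] \cong \sum_{U_1 \sqcup U_2 = U} (F' \circ G)[U_1] \times G'[U_2] = ((F' \circ G) \cdot G')[U]$. The one subtlety I would flag is the degenerate case $p^* = \{*\}$, where $U_2 = \emptyset$ and the second factor is a $G'$-structure on the empty set; this is legitimate precisely because $G'[\emptyset] = G[\{*\}]$ need not vanish even though composition requires $G[\emptyset] = \emptyset$, and the case simply corresponds to $*$ occupying a block of its own, so the bookkeeping goes through unchanged.
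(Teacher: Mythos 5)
Your proposal is correct and follows essentially the same route as the paper: the sum rule read off the definition, the product rule by casing on whether $*$ lands in the $F$-part or the $G$-part, and the chain rule by isolating the block of the partition containing $*$ as a $G'$-structure while the remaining blocks together with the pointed block yield an $F'(G)$-structure. The paper presents the product and chain rules only informally (the latter via a picture), so your explicit bijection, including the check of the degenerate block $p^*=\{*\}$, is simply a careful formalization of the same argument.
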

\begin{proof}
    The addition rule states that $(F+G)' = F'+G'$. This follows directly from the definition of addition of species.

    The product rule states that $(FG)' = F'G + FG'$. To see this note that in an $(FG)'$ structure $s = (f,g)$, the $*$ element must either be in $f$ or in $g$.

    The chain rule states that $(F \circ G)' = F'(G) \cdot G'$. This can be best explained with a picture:
    \begin{center}

\tikzset{every picture/.style={line width=0.75pt}} 


    \end{center}
\end{proof}

Let us now look at some concrete examples of derivatives of structures.

\begin{exa}
    The derivative of a cycle is a linear order, that is, $L = \cyc'$. This can be seen by removing the element $*$, or conversely by adding $*$ in between the maximum and the minimum. The following picture shows this in a clearer way.
    \begin{center}
        \begin{tikzcd}
                         & a \arrow[ld, bend right] &                          &   &                          & a \arrow[ld, bend right] \\
b \arrow[rd, bend right] &                          & * \arrow[lu, bend right] & = & b \arrow[rd, bend right] &                          \\
                         & c \arrow[ru, bend right] &                          &   &                          & c                       
\end{tikzcd}
    \end{center}
\end{exa}

\begin{exa}
    The derivative of a set is a set, that is, $E' = E$. This gives a combinatorial version of the classical identity $\frac{d}{dx}e^x = e^x$.
\end{exa}

\begin{exa}
    The derivative of a linear order is an ordered pair of linear orders, that is, $L' = L^2$. The following picture shows this in a clearer manner.
    \begin{center}

\tikzset{every picture/.style={line width=0.75pt}} 


    \end{center}
\end{exa}

We can now move on to the last operation: pointing.

\begin{defi}
    Let $F$ be a species of structures. Then the species $F^\bullet$, called $F$-dot, is defined as follows: an $F^\bullet$-structure on a finite set $U$ is a pair $s = (f,u)$, where $f \in F[U]$ and $u \in U$. Transport along $\sigma$ is given by $F^\bullet[\sigma](f,u) = (F[\sigma](f),\sigma(u))$. The pair $(f,u)$ is called a pointed $F$-structure. Graphically, this can be represented in the following manner.
    \begin{center}

\tikzset{every picture/.style={line width=0.75pt}} 


    \end{center}
    The enumeration of pointed structures satisfies the equation $|F^\bullet[n]| = n|F[n]|$. The operation of pointing corresponds to the differential operator $x \frac{d}{dx}$, which can be seen more clearly in the following theorem.
\end{defi}

\begin{theorem}
    Let $F$ be a species of structures. Then
    \begin{itemize}
        \item[ ] $F^\bullet(x) = x \frac{d}{dx} F(x)$
        \item[ ] $\widetilde{F^\bullet}(x) = x \left( \frac{\del}{\del x_1} Z_F \right)(x,x^2,x^3,...)$
        \item[ ] $Z_{F^\bullet}(x_1,x_2,x_3,...) = x_1 \left( \frac{\del}{\del x_1} Z_F \right) (x_1,x_2,x_3,...)$
    \end{itemize}
\end{theorem}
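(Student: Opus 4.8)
The plan is to mirror the strategy used in the two preceding theorems: establish the third identity (for the \emph{Zyklenzeiger}) by a direct computation, and then deduce the first two by specializing the variables via the relationship $F^\bullet(x) = Z_{F^\bullet}(x,0,0,\ldots)$ and $\widetilde{F^\bullet}(x) = Z_{F^\bullet}(x,x^2,x^3,\ldots)$. The one combinatorial input needed is the number of fixed points of $F^\bullet[\sigma]$. Since an $F^\bullet$-structure on $[n]$ is a pair $(f,u)$ with $f \in F[n]$ and $u \in [n]$, and $F^\bullet[\sigma](f,u) = (F[\sigma](f),\sigma(u))$, the pair $(f,u)$ is fixed exactly when $F[\sigma](f)=f$ and $\sigma(u)=u$. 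As the number of points fixed by $\sigma$ is $\sigma_1$, this gives $\mathbf{fix}(F^\bullet[\sigma]) = \sigma_1 \cdot \mathbf{fix}(F[\sigma])$.

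For the third identity I would observe that the operator $x_1 \frac{\del}{\del x_1}$ multiplies each monomial $x_1^{\sigma_1} x_2^{\sigma_2} \cdots$ by its $x_1$-exponent $\sigma_1$. Applying it to the definition of $Z_F$ and then using the fixed-point identity above,
\begin{align*}
    x_1 \left( \frac{\del}{\del x_1} Z_F \right)(x_1,x_2,\ldots)
    &= \sum_{n \geq 0} \frac{1}{n!} \sum_{\sigma \in S_n} \sigma_1 \, \mathbf{fix}(F[\sigma]) \, x_1^{\sigma_1} x_2^{\sigma_2} \cdots \\
    &= \sum_{n \geq 0} \frac{1}{n!} \sum_{\sigma \in S_n} \mathbf{fix}(F^\bullet[\sigma]) \, x_1^{\sigma_1} x_2^{\sigma_2} \cdots
    = Z_{F^\bullet}(x_1,x_2,\ldots).
\end{align*}

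The first two identities then follow by specialization. Setting $x_1 = x$ and $x_i = 0$ for $i \geq 2$ gives $F^\bullet(x) = Z_{F^\bullet}(x,0,\ldots) = x \, (\frac{\del}{\del x_1} Z_F)(x,0,\ldots)$; since $F(x) = Z_F(x,0,\ldots)$ and the remaining variables are held constant, the chain rule yields $(\frac{\del}{\del x_1} Z_F)(x,0,\ldots) = \frac{d}{dx} F(x)$, which is the first identity. Setting $x_i = x^i$ gives $\widetilde{F^\bullet}(x) = Z_{F^\bullet}(x,x^2,\ldots) = x \,(\frac{\del}{\del x_1} Z_F)(x,x^2,x^3,\ldots)$, which is the second identity verbatim. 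An alternative route would be to first prove the combinatorial equality $F^\bullet = X \cdot F'$ (pointing at $u$ amounts to distinguishing the singleton $\{u\}$ and viewing $f$ as an $F$-structure with a hole on the remainder) and then invoke the product and derivative theorems together with $X(x)=x$, $Z_X = x_1$, $\widetilde{X}(x)=x$. The only delicate point in that second approach is the naturality check for the relabelling bijection $u \mapsto *$ against an arbitrary transport $\sigma$; the direct fixed-point computation sidesteps this entirely, so I would present it as the main argument, leaving $F^\bullet = X \cdot F'$ as a remark.
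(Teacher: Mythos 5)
Your proposal is correct and follows essentially the same route as the paper's own proof: both reduce everything to the third identity, establish it via the fixed-point count $\mathbf{fix}(F^\bullet[\sigma]) = \sigma_1\,\mathbf{fix}(F[\sigma])$, and read off the \emph{Zyklenzeiger} computation (the paper runs the same chain of equalities starting from $Z_{F^\bullet}$ rather than from $x_1 \frac{\partial}{\partial x_1} Z_F$, which is immaterial). The only cosmetic difference is that you spell out the specializations $x_i := 0$ and $x_i := x^i$ for the first two identities, which the paper leaves implicit, and your closing remark about $F^\bullet = X \cdot F'$ matches the discussion the paper gives immediately after the theorem.
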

\begin{proof}
    It is sufficient to prove the third equality. For that, let $\sigma \in S_n$ be a permutation. An $F^\bullet$-structure is fixed by $F^\bullet[\sigma]$ if and only if the underlying $F$-structure is fixed, as well as the pointed element. That means that, for each $F$-structure fixed by $F[\sigma]$, there are $\sigma_1$ fixed $F^\bullet$-structures, that is, $\mathbf{fix}(F^\bullet[\sigma]) = \sigma_1 \mathbf{fix}(F[\sigma])$. Therefore,
    \begin{align*}
        Z_{F^\bullet}(x_1,x_2, \ldots) &= \sum_{n \geq 0} \frac{1}{n!} \sum_{\sigma \in S_n} \mathbf{fix}(F^\bullet[\sigma]) x_1^{\sigma_1} x_2^{\sigma_2} \ldots = \sum_{n \geq 0} \frac{1}{n!} \sum_{\sigma \in S_n} \sigma_1 \mathbf{fix}(F[\sigma])x_1^{\sigma_1}x_2^{\sigma_2} \ldots \\
        &= x_1 \sum_{n \geq 0} \frac{1}{n!} \sum_{\sigma \in S_n} \mathbf{fix}(F[\sigma])\sigma_1x_1^{\sigma_1 - 1}x_2^{\sigma_2} \ldots = x_1 \left( \frac{\del}{\del x_1} Z_F \right)(x_1,x_2, \ldots).
    \end{align*}
\end{proof}

There are two ways to express pointing in terms of other operations. The first one, and the clearer one, is to see a pointed $F$-structure as a pairing of an $F'$-structure and a singleton, which becomes clear from the following picture.
\begin{center}

\tikzset{every picture/.style={line width=0.75pt}} 


\end{center}
This translates to the equation $F^\bullet = X \cdot F'$, which gives us a clear visualization of the theorem above. The second way to see pointing through other operations comes from the Cartesian product. Since pointing an $(F \times G)$-structure can be seen as pointing either structure and then pairing them together, we have the distributive property $(F \times G)^\bullet = F^\bullet \times G = F \times G^\bullet$. From that, since $F = F \times E$ for any species $F$, we have $F^\bullet = (F \times E)^\bullet = F \times E^\bullet = F \times (X \cdot E)$, and we can write pointing as a combination of the two products.

Let us now see a few examples of pointing in action.

\begin{exa}
    A rooted tree can be seen as a tree with a pointed element, that is, the root. From that, we have the combinatorial equality $\rooted = \tree^\bullet$.
\end{exa}

\begin{exa}\label{vapp}
    Pointing can be used to enumerate the number $\alpha_n$ of trees on $n$ vertices. Consider the species $\mathcal{V} = \tree^{\bullet\bullet}$ of vertebrates. This gives the equality $\nu_n = n^2 \alpha_n$.
    
    A $\mathcal{V}$-structure can also be seen as a tree with two (not necessarily distinct, but ordered) selected points. This creates the equality $\mathcal{V} = L_+(\rooted)$, as can be seen in the picture below.
    \begin{center}

\tikzset{every picture/.style={line width=0.75pt}} 


    \end{center}
    Since we only care about the cardinality, we can replace $L_+$ with the equipotent $\perm_+$, and since $\perm(\rooted) = \text{End}$, we have that, for $n > 0$, there are as many vertebrates as endofunctions; thus $\nu_n = n^n$. Coming back to trees, we have that, for $n > 0$, there are $n^{n-2}$ trees on $n$ vertices, and therefore $n^{n-1}$ rooted trees.
\end{exa}

\section{Multisort Species}

In this section, we will present the concept of multisort species, which will be useful in the next few chapters. 

\begin{defi}
    A $k$-(multi)set is a $k$-tuple $U = (U_1, \ldots ,U_k)$ of sets. An element of $U_i$ is said to be of sort $i$.
    We say that $U$ is a finite $k$-set if $U_i$ is a finite set for each $i=1, \ldots, k$. 
\end{defi}

\begin{defi}
    A multifunction $f : (U_1, \ldots ,U_k) \to (V_1, \ldots ,V_k)$ between $k$-sets is a $k$-tuple of functions 
    $f = (f_1, \ldots ,f_k)$ such that $f_i: U_i \to V_i$ for each $i=1, \ldots, k$. 
    Composition of multifunctions is done componentwise.
    We say that $f$ is a bijective multifunction if each component function $f_i$ is bijective.
\end{defi}

\begin{defi}
    A $k$-sort species is a functor $F$ from the category $\B^k$ of finite $k$-sets and bijective multifunctions to the category $\B$ of finite sets and bijections. 
    In other words, $F$ is a rule that assigns 
    \begin{enumerate}
        \item to each finite $k$-set $U = (U_1, \ldots,U_k)$, a finite set $F[U_1, \ldots ,U_k]$;
        \item to each bijective multifunction $\sigma = (\sigma_1, \ldots ,\sigma_k) : (U_1, \ldots ,U_k) \to (V_1, \ldots ,V_k)$, a bijection $F[\sigma_1,\ldots,\sigma_k] : F[U_1, \ldots,U_k] \to F[V_1,\ldots,V_k]$ such that the functoriality conditions are satisfied, \emph{i.e.},  
        $$F[\tau \circ \sigma] = F[\tau] \circ F[\sigma] \quad \text{and} \quad {F[Id_U] = Id_{F[U]}}.$$
    \end{enumerate}
\end{defi}

\begin{defi}
    The $k$-sort species $X_i$ of singletons of sort $i$ is defined as follows:
    \begin{equation*}
        X_i[U] = \begin{cases}
            \{U\}\text{, if } |U_i| = 1\text{ and }|U_j| = 0\text{ for all }j \neq i;\\
            \emptyset\text{, otherwise}.
        \end{cases}
    \end{equation*}
    Sometimes it is more convenient to use $X,Y,Z,T$ instead of $X_1,X_2,X_3,X_4$.
\end{defi}

\begin{notn}
    If $F$ is a $k$-sort species, it is sometimes useful to denote it as $F(X_1,\ldots,X_k)$.
    This notation is compatible with the composition introduced later in this section.
\end{notn}

\begin{exa}
    Another example of $k$-sort species is the species of tri-colored simple graphs, that is, graphs whose vertices can be of three distinct colors. In this case, transport along a multifunction corresponds to relabelling vertices while preserving their color.
\end{exa}

Let us now see how the operations introduced in the previous section extend to $k$-sort species. For that, we must first define dissections and partitions.

\begin{defi}
    A $k$-set dissection of a $k$-set $U$ is a pair of $k$-sets $(V,W)$ such that ${V_i \cup W_i = U_i}$ and $V_i \cap W_i = \emptyset$ for each $i=1, \ldots, k$. We denote by $\text{Dis}[U]$ the set of dissections of $U$.
\end{defi}

\begin{defi}
    A $k$-set partition $\pi$ of a $k$-set $U=(U_1, \ldots, U_k)$ is a partition of the set $U_1 \sqcup \ldots \sqcup U_k$. Each class $C \in \pi$ can be seen as a $k$-set with $C_i = C \cap U_i$. We denote by Par$[U]$ the set of partitions of $U$.
\end{defi}

We are now ready to define operations on $k$-sort species.

\begin{defi}
    The addition of two $k$-sort species is defined by taking disjoint unions, 
    just like in the case of regular species.
\end{defi}

\begin{defi}
    The product of two $k$-sort species on a $k$-set $U$ is given by
    \begin{equation*}
        (FG)[U] = \sum_{(V,W) \in \text{Dis}[U]} (F[V] \times G[W]).
    \end{equation*}
\end{defi}

\begin{defi}
    The Cartesian product of two $k$-sort species on a $k$-set $U = (U_1,\ldots,U_k)$ is given by $(F \times G)[U_1,\ldots U_k] = F[U_1,\ldots,U_k] \times G[U_1,\ldots,U_k]$. 
    The following picture will make this clearer.
    \begin{center}

\tikzset{every picture/.style={line width=0.75pt}} 


    \end{center}
\end{defi}

\begin{defi}
    Let $F(X_1, \ldots ,X_m)$ be an $m$-sort species, and $G_1,\ldots ,G_m$ be $m$ $k$-sort species. Then the composition $F(G_1, \ldots ,G_m)$ is the $k$-sort species defined by
    \begin{equation*}
        F(G_1, \ldots ,G_m)[U] = \underset{\chi : \pi \to [m]}{\sum_{\pi \in \text{Par}[U]}} (F[\chi^{-1}] \times \underset{C \in \chi^{-1}(j)}{\prod_{j \in [m]}}G_j[C])
    \end{equation*}
    where, for each function $\chi : \pi \to [m]$, $\chi^{-1}$ is the $m$-set $(\chi^{-1}(1), \ldots ,\chi^{-1}(m))$. A description in words is that an $F(G_1,\ldots ,G_m)$-structure is an $F$-structure in which each element of sort $X_j$ has been turned into a $G_j$-structure.
\end{defi}

\begin{exa}
    Consider two sorts of elements $X$ and $Y$. An $(X+Y)$-structure is a singleton of either sort. If $F$ is a $1$-sort species, an $F(X+Y)$-structure is an $F$-structure whose underlying set 
    $U=U_1 \sqcup U_2$ is composed of elements of two sorts, namely, $X$ and $Y$. 
    Indeed, by the definition of composition, we have 
    \begin{align*}
        F(X + Y)[U] = \underset{\chi : \pi \to [1]}{\sum_{\pi \in \text{Par}[U]}} (F[\chi^{-1}] \times \underset{C \in \chi^{-1}(j)}{\prod_{j \in [1]}} (X+Y)[C]) = F[U_1 \sqcup U_2] \times \prod_{z \in U_1 \sqcup U_2} (X+Y)[\{z\}],
    \end{align*}
    as the only partition where this product doesn't vanish is the maximal partition.
\end{exa}

\begin{exa}
    The species $\cyc(X+Y)$ of cycles with beads of two colors can be described by the equation $\cyc(X+Y) = \cyc(X) + \cyc(YL(X))$. This reflects that if the cycle has at least one element of color $Y$, then it can be seen as a cycle of chains where the only element of sort $Y$ is the first one. The following picture will make this equation clearer.
    \begin{center}

\tikzset{every picture/.style={line width=0.75pt}} 


    \end{center}
\end{exa}

\begin{defi}
    For a $k$-sort species, one can define partial derivatives in the following way:
    \begin{equation*}
        \left(\frac{\del}{\del X_j} F\right)[U_1, \ldots ,U_k] = F[U_1, \ldots ,U_j \sqcup \{*_j\}, \ldots ,U_k]
    \end{equation*}
    The usual rules of calculus still apply. For example, in the case of $2$-sort species,
    \begin{equation*}
        \frac{\del}{\del X} F(G,H) = \frac{\del F}{\del X}(G,H) \cdot \frac{\del G}{\del X} + \frac{\del F}{\del Y}(G,H) \cdot \frac{\del H}{\del X}.
    \end{equation*}
\end{defi}

\begin{defi}
    The partial pointing of a $k$-sort species is defined as $F^{\bullet_i} = X_i \frac{\del}{\del X_i} F$.
\end{defi}

For each operation, the transport of structures is defined in the natural manner, as for regular ($1$-sort) species.

We will focus here on 2-sort species, that is, species with two different sorts of elements, but the series shown below can be generalized in the natural manner to $k$-sort species. 
We simplify notation by writing $F[[n],[k]]$ as simply $F[n,k]$.
\begin{defi}
    Let $F(X,Y)$ be a 2-sort species. Then the generating series $F(x,y)$, the type generating series $\widetilde{F}(x,y)$, and the \emph{Zyklenzeiger} $Z_F(x_1,x_2, \ldots ;y_1,y_2, \ldots )$ are defined as
    \begin{itemize}
        \item[ ] $\displaystyle F(x,y) = \sum_{n,k \geq 0} |F[n,k]| \frac{x^n}{n!} \frac{y^k}{k!}$
        \item[ ] $\displaystyle \widetilde{F}(x,y) = \sum_{n,k \geq 0} |F[n,k]/\til| x^n y^k$
        \item[ ] $\displaystyle Z_F(x_1,x_2, \ldots ;y_1,y_2, \ldots) = \sum_{n,k \geq 0} \frac{1}{n!k!} \sum_{\sigma \in S_n, \tau \in S_k} \mathbf{fix}(F[\sigma,\tau])x_1^{\sigma_1}x_2^{\sigma_2} \cdots y_1^{\tau_1}y_2^{\tau_2} \ldots$
    \end{itemize}
    where $\til$ is the equivalence relation defined by $s \til t \iff (\exists \sigma) F[\sigma](s) = t$. More so, the following relations stand, and the passage to series is compatible with operations in the same sense as before.
    \begin{itemize}
        \item[ ] $F(x,y) = Z_F(x,0, \ldots ;y,0, \ldots)$
        \item[ ] $\widetilde{F}(x,y) = Z_F(x,x^2, \ldots;y,y^2, \ldots)$
    \end{itemize}
\end{defi}


\section{Virtual Species}

In this section, we will provide a way to cover a flaw in the space of species: the absence of properly defined subtraction and division operations. 

Recall that a semiring is an algebraic structure similar to a ring, but without the requirement that each element must have an additive inverse. For example, the natural numbers with the standard operations of addition and multiplication form a semiring.
Another example is the set of species (or more precisely, the set of equivalence classes of naturally isomorphic species) 
with the operations of addition and multiplication. 

\begin{defi}
    Consider the semiring $(\text{Spe},+,\cdot)$ of species of structures. By analogy to the construction of $\Z$ from $\N$, we define a virtual species as being an element of the quotient set
    \begin{equation*}
        \text{Virt} = (\text{Spe} \times \text{Spe})/\til
    \end{equation*}
    where the equivalence relation $\til$ is defined by
    \begin{equation*}
        (F,G) \til (H,K) \iff F + K = G + H.  
    \end{equation*}
    We denote by $F-G$ the class of $(F,G)$ in regards to $\til$.
\end{defi}

The fact that $\til$ is an equivalence relation relies on the cancellation law for addition of species.

\begin{prop}
    The set \emph{Virt} of virtual species constitutes a commutative ring $($\emph{Virt}$,+,\cdot)$, with the operations defined by
    \begin{itemize}
        \item[ ] $(F-G) + (H-K) = (F+H) - (G + K)$
        \item[ ] $(F-G) \cdot (H-K) = (FH+GK)-(FK+GH)$
    \end{itemize}
    and neutral elements $0 = (0-0)$ and $1 = (1-0)$. The additive inverse of $(F-G)$ is $(G-F)$.
Moreover, there is an obvious injective homomorphism of semirings $\emph{Spe} \to \emph{Virt}$, $F \mapsto F - 0$.
\end{prop}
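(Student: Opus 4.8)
The plan is to recognize this proposition as the standard Grothendieck construction that manufactures a commutative ring out of a commutative, additively cancellative semiring, exactly mirroring the passage from $\N$ to $\Z$. I will take as given (from the remark preceding the statement) that $\til$ is an equivalence relation, the transitivity resting on the cancellation law for species addition. Almost every ring axiom for $(\text{Virt},+,\cdot)$ then descends, representative-wise, from the semiring axioms for $(\text{Spe},+,\cdot)$ already recorded in the preceding sections: associativity and commutativity of $+$ and $\cdot$, distributivity of $\cdot$ over $+$, and the units $0$ and $1$. The genuinely substantive work is concentrated in two places: checking that the two operations on $\til$-classes are independent of the chosen representatives, and verifying the new features (additive inverses and the homomorphism $\text{Spe}\to\text{Virt}$).

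First I would settle well-definedness. For $+$ it is immediate: if $(F,G)\til(F',G')$ and $(H,K)\til(H',K')$, then $F+G'=G+F'$ and $H+K'=K+H'$, and adding these two species identities (using commutativity and associativity of species addition) yields $(F+H)+(G'+K')=(G+K)+(F'+H')$, which is exactly $(F+H)-(G+K)\til(F'+H')-(G'+K')$. For the product, by commutativity of $\cdot$ it suffices to vary one slot, say to show $(F-G)(H-K)\til(F'-G')(H-K)$ whenever $(F,G)\til(F',G')$. Expanding both products via the stated multiplication rule, the required $\til$-equivalence reduces to the single species identity $FH+GK+F'K+G'H=FK+GH+F'H+G'K$. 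I would derive it by multiplying the defining relation $F+G'=G+F'$ by $H$ and by $K$, invoking the distributive law to obtain $FH+G'H=GH+F'H$ and $FK+G'K=GK+F'K$, and then chaining these two identities by adding a common summand to both sides of the first and substituting the second. I expect this bookkeeping to be the main obstacle: not because it is deep, but because it must be carried out \emph{purely additively}, since there is no subtraction inside $\text{Spe}$, so every manipulation has to be phrased as ``add equals to equals'' rather than as cancellation.

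With well-definedness in hand the remaining verifications are short. The abelian group structure follows from the species-level axioms: associativity and commutativity of $+$ on classes descend from those on $\text{Spe}$; the class $0-0$ is neutral because $(F+0)-(G+0)=F-G$; and $(F-G)+(G-F)=(F+G)-(G+F)\til 0-0$, since $F+G=G+F$ forces $(F+G)+0=(G+F)+0$, so $G-F$ is the additive inverse of $F-G$. The multiplicative monoid axioms and distributivity transfer the same way, applied to representatives and then seen to be compatible with $\til$ by the well-definedness just proved, with $1-0$ acting as multiplicative unit via $(F-G)(1-0)=(F\cdot 1+G\cdot 0)-(F\cdot 0+G\cdot 1)=F-G$. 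Finally I would check that $F\mapsto F-0$ respects the operations and units — $(F-0)+(F'-0)=(F+F')-0$, $(F-0)(F'-0)=(FF')-0$, with $0\mapsto 0-0$ and $1\mapsto 1-0$, each a one-line computation from the definitions — and that it is injective: $F-0=F'-0$ unwinds directly to $F+0=0+F'$, hence $F=F'$, so no appeal to cancellation is needed for injectivity itself.
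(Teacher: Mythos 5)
Your proposal is correct. Note that the paper itself offers no proof of this proposition: it is asserted as the standard Grothendieck-style construction (``by analogy to the construction of $\Z$ from $\N$''), with only the preceding remark that transitivity of $\sim$ rests on the cancellation law for addition of species. Your write-up therefore supplies precisely the verification the paper leaves implicit, and it does so correctly. The two points of genuine content are handled properly: the purely additive derivation of $FH+GK+F'K+G'H = FK+GH+F'H+G'K$ from $F+G'=G+F'$ (multiply by $H$ and by $K$ using distributivity, then add the two identities) is exactly what is needed for well-definedness of the product, and your reduction of two-slot well-definedness to one slot via commutativity is legitimate because the defining formula $(F-G)\cdot(H-K)=(FH+GK)-(FK+GH)$ is visibly symmetric under swapping the factors, so the one-slot case plus transitivity of $\sim$ finishes the argument. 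Your observation that injectivity of $F \mapsto F-0$ unwinds to $F+0=0+F'$ and thus requires no appeal to cancellation is also accurate; cancellation enters only in making $\sim$ an equivalence relation, which both you and the paper take as given.
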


\begin{defi}
    Let $\Phi = F-G$ be a virtual species. Then the series associated to $\Phi$ are defined by
    \begin{itemize}
        \item[ ] $\Phi(x) = F(x) - G(x)$
        \item[ ] $\widetilde{\Phi}(x) = \widetilde{F}(x) - \widetilde{G}(x)$
        \item[ ] $Z_\Phi(x_1,x_2,\ldots) = Z_F(x_1,x_2, \ldots) - Z_G(x_1,x_2, \ldots)$
    \end{itemize}
\end{defi}

It is easy to verify that the above definitions don't depend on the choice of representative species $F$ and $G$.
Moreover, the properties for product of series still stand, that is, if $\Phi,\Psi$ are virtual species, then
\begin{itemize}
    \item[ ] $(\Phi\Psi)(x) = \Phi(x)\Psi(x)$
    \item[ ] $\widetilde{(\Phi\Psi
)}(x) = \widetilde{\Phi}(x)\widetilde{\Psi}(x)$
    \item[ ] $Z_{\Phi\Psi}(x_1,x_2, \ldots ) = Z_\Phi(x_1,x_2, \ldots)Z_\Psi(x_1,x_2, \ldots)$
\end{itemize}

With subtraction properly defined, our next task is to define the multiplicative inverse of a species of structures, which naturally gives us a sense of division. 
\begin{defi}
    Let $F$ and $G$ be species of structures. We call $G$ a subspecies of $F$, and write $G \subseteq F$, if
    \begin{enumerate}
        \item for any finite set $U$, $G[U] \subseteq F[U]$;
        \item for any bijection $\sigma : U \to V$ between finite sets, $G[\sigma] = F[\sigma]|_{G[U]}$.
    \end{enumerate}
\end{defi}

For example, the species $\tree$ of trees and the species $\mathcal{G}^c$ of simple connected graphs are both subspecies 
of the species $\mathcal{G}$ of simple graphs.

\begin{defi}
    Two species $F$ and $G$ are said to be unrelated if their only subspecies in common is the empty species. A virtual species $\Phi = F-G$ is said to be in reduced form if $F$ and $G$ are unrelated.
\end{defi}

Every species can be written in reduced form. We denote the reduced form by ${\Phi = \Phi^+ - \Phi^-}$, and call 
$\Phi^+$ and $\Phi^-$ the positive and negative parts of $\Phi$, respectively. 
The proof of this fact requires the study of molecular decompositions, and can be found in \cite[Section 2.6]{bergeron2}.

\begin{defi}
    A family $\{ \Phi_i \}$ of virtual species is summable if each of the two families of species $\{ \Phi_i^+ \}, \{\Phi_i^-\}$ is summable. In this case,
    \begin{equation*}
        \sum_{i} \Phi_i = \sum_i \Phi_i^+ - \sum_i \Phi_i^-.
    \end{equation*}
\end{defi}

\begin{theorem}
    Let $G$ be a species of structures such that $G(0) = 1$. Then the multiplicative inverse of $G$ is given by
    \begin{equation*}
        G^{-1} = \frac{1}{G} = \sum_{k \geq 0} (-1)^k(G_+)^k
    \end{equation*}
    where $-1 = (0-1)$ is the additive inverse of the species $1$ and $G_+ = G-1$.
\end{theorem}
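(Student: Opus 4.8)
The plan is to reproduce, at the level of (virtual) species, the telescoping argument used to sum the geometric series in Cauchy's algebra. The starting observation is that the hypothesis $G(0)=1$ means $|G[\emptyset]|=1$, so the degree-zero part $G_0$ of the canonical decomposition $G=G_0+G_1+G_2+\cdots$ is combinatorially equal to the species $1$. Hence $G=1+G_+$ as genuine species, and in particular $G_+=G-1$ is an honest (non-virtual) species satisfying $G_+[\emptyset]=\emptyset$. I would record this identity $G=1+G_+$ at the outset, since it is what makes the cancellation work.

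Next I would check that the right-hand side is a well-defined virtual species, i.e. that the family $\{(-1)^k(G_+)^k\}_{k\geq 0}$ is summable. A $(G_+)^k$-structure on a finite set $U$ consists of an ordered decomposition $U=U_1\sqcup\cdots\sqcup U_k$ together with a $G_+$-structure on each block; because $G_+$ has no structure on the empty set, every block $U_i$ must be non-empty, forcing $|U|\geq k$. Therefore, for a fixed finite set $U$ with $|U|=n$, one has $(G_+)^k[U]=\emptyset$ whenever $k>n$, so only finitely many indices $k$ contribute on $U$. This shows $\{(G_+)^k\}_{k\geq 0}$ is summable, and consequently so are the two families $\{(G_+)^{2j}\}_j$ and $\{(G_+)^{2j+1}\}_j$ of positive and negative parts; by the definition of summability for virtual species, the family $\{(-1)^k(G_+)^k\}_{k\geq 0}$ is summable and $H:=\sum_{k\geq 0}(-1)^k(G_+)^k$ is defined.

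It then remains to compute $G\cdot H$ and show it equals the neutral element $1$. Writing $G=1+G_+$ and distributing the product over the summable sum,
\[
G\cdot H=\sum_{k\geq 0}(-1)^k(G_+)^k+\sum_{k\geq 0}(-1)^k(G_+)^{k+1}.
\]
Reindexing the second sum by $j=k+1$ turns it into $-\sum_{j\geq 1}(-1)^j(G_+)^j$, which cancels every term with $j\geq 1$ in the first sum, leaving only the $k=0$ term $(G_+)^0=1$. Hence $G\cdot H=1$, and by commutativity of the product $H\cdot G=1$ as well, so $H$ is the multiplicative inverse of $G$ (uniqueness of inverses in the ring $\mathrm{Virt}$ justifies writing $G^{-1}=1/G=H$).

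The main obstacle, and the one point deserving careful justification rather than a one-line appeal, is the interchange $G_+\cdot\sum_k(-1)^k(G_+)^k=\sum_k(-1)^k(G_+)^{k+1}$, i.e. that the product of a species with a summable family of virtual species is again summable and distributes over the sum. This is the species analogue of the corresponding result for $\mathbb{C}^{\mathbb{N}}$ proved in Chapter~\ref{ch2}; I would verify it directly on each finite set $U$, noting that $(G_+\cdot F_i)[U]=\sum_{U_1\sqcup U_2=U}G_+[U_1]\times F_i[U_2]$ involves only the finitely many subsets $U_2\subseteq U$, each of which sees $F_i[U_2]\neq\emptyset$ for only finitely many $i$, so summability is preserved and the two sides agree blockwise. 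Everything else (the reindexing and the termwise cancellation) is then purely formal and mirrors the geometric series computation exactly.
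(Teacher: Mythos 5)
Your proposal is correct and follows essentially the same route as the paper: write $G = 1 + G_+$, invoke the geometric series, and ground everything in the summability of $\{(G_+)^k\}_{k \geq 0}$, which holds because $G_+[\emptyset] = \emptyset$ forces $(G_+)^k[U] = \emptyset$ whenever $k > |U|$. The only difference is one of thoroughness: the paper asserts $(1+G_+)^{-1} = \sum_{k \geq 0}(-1)^k (G_+)^k$ once summability is checked, whereas you explicitly carry out the telescoping verification $G \cdot H = 1$ and the distributivity of the product over summable sums, details the paper leaves implicit.
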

\begin{proof}
    Since $G = 1 + G_+$, we have $G^{-1} = (1 + G_+)^{-1} = \sum_{k \geq 0} (-1)^k(G_+)^k$. 
    For this to make sense, we must prove that the family of virtual species $\{(-1)^k(G_+)^k\}$ is summable.
    This follows easily from the fact that the family of species $\{(G_+)^k\}$ is summable.
\end{proof}

\chapter{Tree-like Structures}
\label{ch5}
In this chapter, we will show techniques to study structures which, at first glance, look like tree structures with a particular extra property. We begin by presenting enriched trees, which will prove useful later when solving a specific kind of differential equation. We then show an interesting way to relate trees and rooted trees via the Dissymmetry Theorem, and study some particular cases in which it is useful for enumeration.

\section{Enriched Trees}

Those familiar with analysis may recall the Lagrange inversion formula, which states that, if $f(x) \in \C[[x]]$ with $f(0) = 0 \neq f'(0)$, then
\begin{equation*}
    f^{\langle -1 \rangle}(x) = \sum_{n=1}^\infty \frac{d^{n-1}}{dt^{n-1}} \left( \frac{t}{f(t)} \right)^n \Biggr\rvert_{t=0} \frac{x^n}{n!},
\end{equation*}
where $f^{\langle -1 \rangle}(x)$ denotes the compositional inverse of $f(x)$, that is, the unique series satisfying 
$f(f^{\langle -1 \rangle}(x))=x$ and $f^{\langle -1 \rangle}(f(x))=x$.

For notational convenience, we denote by $A(x):=f^{\langle -1 \rangle}(x)$ the compositional inverse of $f(x)$ 
and set $\displaystyle R(x) = \frac{x}{f(x)}$. Then the series $A(x)$ is determined by the functional equation $A(x) = xR(A(x))$, 
and also by
\begin{equation}\label{lag}
    A(x) = \sum_{n = 1}^\infty a_n \frac{x^n}{n!}\text{, with }a_n = \frac{d^{n-1}}{dt^{n-1}}[R(t)]^n \Bigr\rvert_{t=0}.
\end{equation}

We have already seen one particular case of species satisfying an equation similar to $A(x) = xR(A(x))$, namely, the species $\rooted$ of rooted trees. Indeed, by Example \ref{axea}, we have ${\rooted = X \cdot E(\rooted)}$.
As we will see, combinatorial equations of the form $F=X \cdot R(F)$ (with $R$ any given species), 
always have a unique solution $F$.

\begin{defi}
    Let $R$ be a species of structures. An $R$-enriched rooted tree on a finite set $U$ is given by
    \begin{enumerate}
        \item a rooted tree on $U$;
        \item an $R$-structure on the fiber of each vertex $u$ of this rooted tree,
    \end{enumerate}
    where the fiber of a vertex $u$ is defined as being the set of vertices connected to $u$, excluding the vertex closest to the root. The following picture will make this definition clearer.
    \begin{center}
    
\tikzset{every picture/.style={line width=0.75pt}} 


    \end{center}
\end{defi}

The leaves of a rooted tree are exactly those vertices with empty fiber. 
Since the empty fibers of an $R$-enriched rooted tree are also provided with an $R$-structure,
it is reasonable to impose the hypothesis $R(0) \neq 0$ (otherwise there cannot exist any    
$R$-enriched rooted trees).

\begin{theorem}
    Let $R$ be a species of structures with $R(0) \neq 0$. The species $\rooted_R$ of $R$-enriched rooted trees is uniquely determined (up to isomorphism) by the combinatorial equation ${\rooted_R = X \cdot R(\rooted_R)}$.
\end{theorem}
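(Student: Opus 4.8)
The plan is to establish the two halves of the claim separately: first that the species $\rooted_R$ does satisfy $\rooted_R = X \cdot R(\rooted_R)$, and then that this functional equation has, up to isomorphism, only one solution.

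For the first half, I would exhibit an explicit natural isomorphism by decomposing an enriched rooted tree at its root. Given an $R$-enriched rooted tree $t$ on a finite set $U$, its root $r$ is a distinguished element, accounting for the factor $X$; deleting $r$ together with the edges incident to it breaks $U \setminus \{r\}$ into the subtrees hanging from the children of $r$, and each such subtree inherits the structure of an $R$-enriched rooted tree. The fiber of $r$ — the set of children — is precisely the set of blocks of this partition, and it already carries an $R$-structure by the definition of enrichment. Reading this off against the definition of composition, the data (root, partition of $U \setminus \{r\}$ into blocks, an $\rooted_R$-structure on each block, an $R$-structure on the set of blocks) is exactly an $(X \cdot R(\rooted_R))$-structure on $U$. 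The inverse map reattaches $r$ to the root of each subtree, and both assignments evidently commute with transport along bijections, yielding the required natural isomorphism.

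For the uniqueness half, suppose $F$ is any species with $F = X \cdot R(F)$. First note that $F[\emptyset] = \emptyset$, since $X[\emptyset] = \emptyset$ forces $(X \cdot R(F))[\emptyset] = \emptyset$; this is also what makes the composite $R(F)$ legitimate. Unwinding the product and the species $X$ gives, for $|U| \geq 1$, a natural identification $F[U] \cong \bigsqcup_{r \in U} R(F)[U \setminus \{r\}]$, and since every block of a partition of $U \setminus \{r\}$ is a nonempty subset of cardinality at most $|U|-1$, the right-hand side depends only on the values of $F$ on sets strictly smaller than $U$. I would then build a natural isomorphism $\theta \colon F \to \rooted_R$ by strong induction on $|U|$: on the empty set it is trivial, and for $|U| = n$ one transports the inductively defined isomorphisms $\theta_p \colon F[p] \to \rooted_R[p]$ (for the blocks $p$, each of size $< n$) through the displayed decomposition, leaving the $R$-structure on the set of blocks untouched.

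The step I expect to demand the most care is verifying that this inductively assembled family $\{\theta_U\}$ is genuinely a natural transformation, that is, that it commutes with the transports $F[\sigma]$ and $\rooted_R[\sigma]$ for every bijection $\sigma$, rather than being merely a cardinality-indexed family of bijections. The argument is that the decomposition $F[U] \cong \bigsqcup_{r} R(F)[U\setminus\{r\}]$ is itself natural and that the induction hypothesis supplies naturality of the $\theta_p$ on the smaller blocks; one must check that applying a transport, decomposing, and applying $\theta$ agree regardless of the order, which reduces to the functoriality of $R$ and of composition together with the inductive naturality. Once this is in place, $\theta$ witnesses $F \cong \rooted_R$, so the solution is unique up to isomorphism, completing the proof.
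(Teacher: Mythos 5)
Your proposal is correct, and its first half is the paper's argument: the paper dismisses that direction with ``true by construction,'' which is exactly the root decomposition you spell out (root $=$ factor $X$; subtrees hanging from the children $=$ the blocks, each carrying an $\rooted_R$-structure; the $R$-structure on the fiber transported along the canonical bijection between children and blocks). Where you genuinely diverge is the uniqueness half: the paper does not prove it at all, but outsources it to the Implicit Species Theorem (citing Labelle--Rattan, \cite[Th. 2.1]{labelle2}), whereas you reprove the special case of that theorem needed here, constructing a natural isomorphism $\theta \colon F \to \rooted_R$ by strong induction on cardinality via $\theta_U = \alpha_U^{-1} \circ \bigl(X \cdot R(\theta)\bigr)_U \circ \beta_U$, where $\beta \colon F \to X\cdot R(F)$ and $\alpha \colon \rooted_R \to X \cdot R(\rooted_R)$ are the two structural isomorphisms. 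This is sound precisely because the decomposition strictly lowers cardinality, and the naturality worry you flag is resolved the way you indicate: your construction makes no arbitrary (hence possibly non-equivariant) choices, so naturality of $\theta$ on sets of size $n$ follows from naturality of $\alpha$ and $\beta$ together with the fact that $X \cdot (-)$ and $R(-)$ send natural families to natural families, with the induction hypothesis supplying naturality on the blocks. It is worth noting that your argument is the $\B$-species analogue of the uniqueness proof the dissertation itself gives in Chapter~\ref{ch6} for the equation $Y' = R(Y)$ on linear species. The trade-off: the paper's citation buys brevity and generality (the Implicit Species Theorem handles implicit equations well beyond $F = X\cdot R(F)$), while your induction buys a self-contained, elementary proof of exactly the statement at hand; the only cosmetic blemishes are the mild circularity in deriving $F[\emptyset]=\emptyset$ (which is really a precondition for $R(F)$ to be defined) and the identification of the fiber of the root with the set of blocks, which is a canonical bijection rather than an equality.
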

\begin{proof}
    The fact that $\rooted_R$ satisfies the equation is true by construction. The uniqueness is a consequence of the Implicit Species Theorem (see \cite[Th. 2.1]{labelle2}).
\end{proof}

We have a way to enumerate $R$-enriched rooted trees. This is done, naturally, by applying formula \ref{lag}.
However, it is interesting to note that using the algebra of species of structures, it is possible to 
enumerate $R$-enriched rooted trees without using Lagrange inversion, and thus obtain a combinatorial 
proof of formula \ref{lag} (see \cite[Section~3.1]{bergeron2}).  

\begin{exa}
    When $R = E$, we have $\rooted_E = \rooted$, the species of rooted trees. We can then use Lagrange inversion to enumerate 
    rooted trees:
    \begin{equation*}
        |\rooted[n]| = \frac{d^{n-1}}{dt^{n-1}} e^{nt} \Bigr\rvert_{t=0} = n^{n-1} e^{nt} \Bigr\rvert_{t=0} = n^{n-1},
    \end{equation*}
    which coincides with the result found in Example \ref{vapp}.
\end{exa}
\begin{exa}
    When $R = L$ (the species of linear orders), $\rooted_L$ is the species of planar rooted trees. 
    From $\rooted_L=X \cdot L(\rooted_L)$, we obtain 
    $$\rooted_L(x)=\frac{x}{1 - \rooted_L(x)},$$
    and solving the corresponding quadratic equation, we get
    $$\rooted_L(x)=\frac{1 - \sqrt{1-4x}}{2}.$$
    Comparing coefficients (or using Lagrange inversion), we deduce
    $$|\rooted_L[n]|=\frac{(2n-2)!}{(n-1)!}.$$
\end{exa}

Until now, we have been looking at a generalization of rooted trees. One might wonder how to generalize, then, the concept of tree. This is what comes next.

\begin{defi}
    An $R$-enriched tree on a finite set $U$ is given by
    \begin{itemize}
        \item a tree on $U$;
        \item an $R$-structure on the set of adjacent vertices of each vertex of the tree.
    \end{itemize}
    The following picture will make this clearer.
    \begin{center}

\tikzset{every picture/.style={line width=0.75pt}} 


    \end{center}
\end{defi}

\begin{notn}
    Denote by $\tree_R$ the species of $R$-enriched trees.
\end{notn}

The species $\rooted$ of rooted trees is related to the species $\tree$ of trees by the combinatorial equations $\tree' = E(\rooted)$ and $\tree^{\bullet} = \rooted$. Unfortunately, these equations do not hold in the case of $R$-enriched trees, however, there is a similar pair of equations.

\begin{theorem}
    Let $R$ be a species of structures. Then $\tree_R' = R(\rooted_{R'})$ and $\tree_R^\bullet = X \cdot R(\rooted_{R'})$, where $\rooted_{R'} = X \cdot R'(\rooted_{R'})$ is the species of $R'$-enriched rooted trees. 
\end{theorem}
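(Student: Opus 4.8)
The plan is to establish the first equation $\tree_R' = R(\rooted_{R'})$ by exhibiting a natural isomorphism, and then to deduce the second equation immediately from the general identity $F^\bullet = X \cdot F'$ proved in the previous section, which gives $\tree_R^\bullet = X \cdot \tree_R' = X \cdot R(\rooted_{R'})$. Thus essentially all the work lies in the first equation, and the argument is entirely combinatorial.

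First I would unwind the definition of the derivative: a $\tree_R'$-structure on a finite set $U$ is an $R$-enriched tree $t$ on $U^+ = U \sqcup \{*\}$. The natural operation to perform is to delete the adjoined point $*$. Since $t$ is a tree, deleting $*$ splits $U$ into the connected components of $t - *$, each of which contains exactly one vertex adjacent to $*$ in $t$; I would root each component at this distinguished vertex. The set of roots obtained is precisely the neighborhood $N(*)$ of $*$, and it is in canonical bijection with the set of components, i.e.\ with the parts of the induced partition $\pi$ of $U$. The $R$-structure that $t$ places on the fiber of $*$ --- which is exactly $N(*)$ --- then becomes, through this bijection, an $R$-structure on the set of parts of $\pi$, which is precisely the top-level datum required by the partitional composite $R(\rooted_{R'})$.

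The main step, and the place where the derivative $R'$ enters, is the analysis of the enrichment on each rooted component. For any vertex $w$ of $t$ other than $*$, its set of adjacent vertices in $t$ equals its fiber in the rooted component together with exactly one further vertex: the parent of $w$ if $w$ is an interior vertex of its component, or the point $*$ if $w$ is the chosen root. In either case, writing $\Phi_w$ for the fiber, the $R$-structure carried by $t$ on the neighborhood $\Phi_w \sqcup \{\mathrm{extra}\}$ is, after relabelling the extra adjacent vertex as the distinguished point of the derivative, exactly an $R'$-structure on $\Phi_w$, by the defining identity $R'[\Phi_w] = R[\Phi_w \sqcup \{*\}]$. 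Hence every vertex of the component carries an $R'$-structure on its fiber, so each component is an $R'$-enriched rooted tree, i.e.\ a $\rooted_{R'}$-structure. Assembling the top-level $R$-structure on the parts with the $\rooted_{R'}$-structures sitting on those parts yields an $R(\rooted_{R'})$-structure on $U$, and the construction is plainly invertible (reattach $*$ to the roots and reconstitute each neighborhood) and commutes with transport of structures, hence is a natural isomorphism.

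I expect the main obstacle to be the bookkeeping in this enrichment step: one must verify that the single ``extra'' neighbor plays the role of the marked point of $R'$ uniformly --- coming from the parent for interior vertices but from the adjoined $*$ for the roots --- and that matching these two cases against the fixed external point in the definition of the derivative is natural in $U$. Once this identification is set up cleanly, the first equation follows, and I would close by invoking $\tree_R^\bullet = X \cdot \tree_R'$ to obtain $\tree_R^\bullet = X \cdot R(\rooted_{R'})$.
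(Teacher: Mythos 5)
Your proposal is correct and takes essentially the same approach as the paper: the paper's proof is precisely this decomposition---delete the adjoined point $*$, root each resulting component at its unique former neighbor of $*$, and reinterpret the $R$-structure at each vertex as an $R'$-structure on its fiber with the parent (respectively $*$) serving as the marked point---except that the paper presents it as a picture rather than in words, and it likewise obtains $\tree_R^\bullet = X \cdot R(\rooted_{R'})$ as an immediate consequence of $F^\bullet = X \cdot F'$. Your write-up simply makes explicit the bookkeeping that the paper delegates to its figure.
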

\begin{proof}
    Let $U$ be a finite set, and take an $\tree_R'$-structure on $U$, that is, an $\tree_R$-structure on $U \sqcup \{*\}$. The following picture shows how one can pair it with an $R$-assembly of $\rooted_{R'}$-structures.
    \begin{center}

\tikzset{every picture/.style={line width=0.75pt}} 


    \end{center}
    We then have the equality $\tree_R' = R(\rooted_{R'})$. The equation $\tree_R^\bullet = X \cdot R(\rooted_{R'})$ is an immediate consequence.
\end{proof}

\section{The Dissymmetry Theorem}

In this section, we will study another method to relate trees and rooted trees. For that, we must first define the concept of centre.

\begin{defi}
    The centre of a tree is its subgraph generated by the vertices of minimum eccentricity, where the eccentricity of a vertex is the maximum distance from this vertex to any other vertex on the tree.
\end{defi}

It is easy to see that the centre of any tree is constituted by one vertex or two connected vertices. This creates the concept of canonical pointing.

\begin{defi}
    A tree is said to be canonically pointed if it is pointed at its centre, that is, if the centre is a vertex, the tree is pointed at this vertex, and if the centre is a pair of connected vertices, the tree is pointed at the edge that connects these two vertices.
\end{defi}

With these definitions, we can then prove the dissymmetry theorem for trees.

\begin{theorem}[Dissymmetry Theorem for Trees]\label{distree}
    The species $\tree$ of trees and $\rooted$ of rooted trees are related by ${\rooted + E_2(\rooted) = \tree + \rooted^2}$.
\end{theorem}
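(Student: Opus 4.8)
The plan is to exhibit a natural bijection between the structures enumerated on each side, built around the canonical pointing at the centre introduced above. First I would reinterpret all four species as suitably pointed trees. The species $\rooted = \tree^\bullet$ consists of trees with a distinguished vertex, and $E_2(\rooted)$ consists of trees with a distinguished edge: cutting a distinguished edge of a tree on $U$ yields an unordered pair of rooted trees (each rooted at a former endpoint of the edge) on complementary blocks of $U$, and this is readily checked to be a bijection. Similarly, $\rooted^2 = \rooted \cdot \rooted$ consists of trees with a distinguished \emph{oriented} edge: an ordered pair of rooted trees on complementary blocks corresponds to the tree obtained by joining the two roots, with the edge oriented from the first root to the second. (Since $\rooted[\emptyset] = \emptyset$, both blocks are nonempty, matching the two endpoints of a genuine edge.) Thus the identity to prove reads, in words, (vertex-pointed trees) $+$ (edge-pointed trees) $=$ (trees) $+$ (oriented-edge-pointed trees).

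With these interpretations, I would define the bijection $\Phi$ by ``pointing toward the centre''. Given a vertex-pointed tree $(T,v)$: if $v$ is the centre, send it to the unpointed tree $T$; otherwise let $w$ be the neighbour of $v$ on the path toward the centre and send $(T,v)$ to $T$ equipped with the oriented edge $v \to w$. Given an edge-pointed tree $(T,\{a,b\})$: if $\{a,b\}$ is the central edge, send it to $T$; otherwise exactly one endpoint, say $b$, lies closer to the centre, and we send $(T,\{a,b\})$ to $T$ equipped with the oriented edge $b \to a$, which points away from the centre. All the ingredients of this construction—the centre, the distance to the centre, and the toward/away orientation—are preserved by relabelling bijections $\sigma : U \to V$, so $\Phi$ will be natural and the transport squares will commute essentially by construction.

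The main obstacle is the bookkeeping needed to verify that $\Phi$ is a bijection, which requires splitting into the two cases (centre a vertex versus centre an edge) and checking that each target structure is produced exactly once. The plain tree $T$ is hit exactly once: by the central vertex when the centre is a vertex, and by the central edge when the centre is an edge, and precisely one of these occurs. For oriented edges, the key observation is that every non-central edge has a well-defined endpoint closer to the centre, so its two orientations split cleanly—the orientation \emph{toward} the centre arises from the vertex-pointing at its far endpoint, while the orientation \emph{away} from the centre arises from the edge-pointing at that edge. The one delicate point is the central edge itself (when the centre is an edge): edge-pointing there yields the tree, so its two orientations must instead be produced by the vertex-pointings at its endpoints $c_1, c_2$, which works because from $c_i$ the direction toward the centre runs along the central edge. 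Once this case analysis confirms that $\Phi$ is a bijection on every finite set, the natural isomorphism $\rooted + E_2(\rooted) = \tree + \rooted^2$ follows.
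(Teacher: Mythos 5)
Your proof is correct and is essentially the same as the paper's: both rest on canonical pointing at the centre and the same toward-the-centre/away-from-the-centre dichotomy, including the special handling of an endpoint of a central edge. The only difference is presentational — you recast $E_2(\rooted)$ and $\rooted^2$ as (oriented-)edge-pointed trees and verify bijectivity by counting preimages, while the paper cuts edges to produce ordered pairs of rooted trees and exhibits the explicit inverse; under your identification the two bijections coincide.
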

\begin{proof}
    The left-hand side of the equation enumerates the trees that are either pointed at a vertex $(\rooted = \tree^\bullet)$ or at an edge, each such tree being paired with two rooted trees whose roots are the vertices originally connected to the distinguished edge, from where we get $E_2(\rooted)$.

    On the right-hand side, the term $\tree$ identifies those trees that are canonically pointed. All that is left to do is prove that there is an isomorphism between trees pointed elsewhere and ordered pairs of rooted trees.
    \begin{enumerate}
        \item If the tree is pointed at a vertex $u$ different from the centre, let $v$ be the vertex adjacent to $u$ closest to the centre (in case the center is an edge and $u$ is one of its vertices, take $v$ to be the other vertex of the center). Cut the edge that connects $u$ to $v$, and pair the two remaining rooted trees putting the one with root $u$ on the left. See the first picture in Figure \ref{fig:pairings}.
        \item If the tree is pointed at an edge distinct from the centre, let $u$ be the vertex of this edge that is closest to the centre. Cut this edge, and pair the two remaining rooted trees putting the one with $u$ on the left. See the second picture in Figure \ref{fig:pairings}.
    \end{enumerate}
    
    Conversely, given an $\rooted^2$-structure, join with an edge the roots of the two trees and find the centre of the newly formed tree. If the centre comes from the right-hand side of the original pair, or if the centre is precisely the newly added edge, then we are in the first case and the root of the left-hand side tree is distinguished. Otherwise, if the centre comes from the left-hand side tree, we are in the second case and the added edge is distinguished.

\begin{figure}
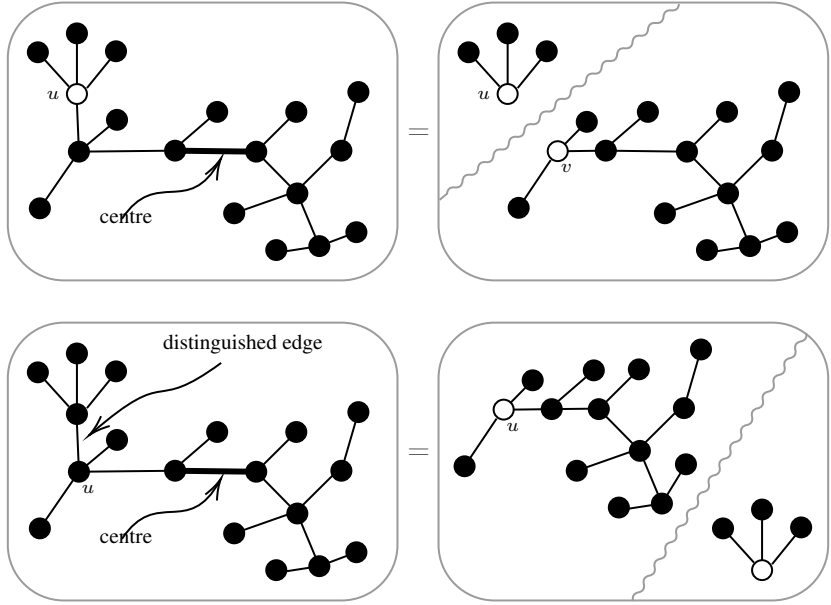

    \centering
    \tikzset{every picture/.style={line width=0.75pt}} 


    \caption{The two pairings between trees and ordered pairs of rooted trees}
    \label{fig:pairings}
\end{figure}
    
\end{proof}

To generalize this theorem to $R$-enriched trees, we must first introduce some notation.

\begin{notn}
    Denote by $\tree_R^{-}$ the species of $R$-enriched trees with a distinguished edge, and by $\tree_R^{\rightarrow}$ the species of $R$-enriched trees with a distinguished oriented edge.
\end{notn}

It is easy to see that $\tree_R^- = E_2(\rooted_{R'})$ and $\tree_R^{\rightarrow} = \rooted_{R'}^2$. With that in mind, we can prove the generalized version of the dissymmetry theorem.

\begin{theorem}[Dissymmetry Theorem for $R$-Enriched Trees]
    Let $R$ be a species such that $R'(0) \neq 0$. Then ${\tree_R = XR(\rooted_{R'})+E_2(\rooted_{R'})-\rooted_{R'}^2}$.
\end{theorem}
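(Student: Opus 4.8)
The plan is to reduce the statement to a single combinatorial identity among the four species $\tree_R$, $\tree_R^\bullet$, $\tree_R^-$ and $\tree_R^{\rightarrow}$, and then substitute the three formulas already available. Concretely, I claim the \emph{dissymmetry identity}
\begin{equation}\label{core}
    \tree_R + \tree_R^{\rightarrow} = \tree_R^\bullet + \tree_R^-
\end{equation}
holds as a combinatorial equality. This is the exact analogue of the ordinary case: recall that in Theorem \ref{distree} the relation $\rooted + E_2(\rooted) = \tree + \rooted^2$ amounts to $\tree^\bullet + \tree^- = \tree + \tree^{\rightarrow}$, where each term records a tree pointed at a vertex, at an (unoriented) edge, canonically at its centre, and at an oriented edge, respectively. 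Granting \eqref{core}, the theorem follows at once: the previous theorem gives $\tree_R^\bullet = X \cdot R(\rooted_{R'})$, while the remarks preceding the statement give $\tree_R^- = E_2(\rooted_{R'})$ and $\tree_R^{\rightarrow} = \rooted_{R'}^2$. Substituting these into \eqref{core} produces $\tree_R + \rooted_{R'}^2 = XR(\rooted_{R'}) + E_2(\rooted_{R'})$, and since subtraction is well-defined in the ring \emph{Virt} of virtual species, rearranging yields precisely $\tree_R = XR(\rooted_{R'}) + E_2(\rooted_{R'}) - \rooted_{R'}^2$. The hypothesis $R'(0) \neq 0$ is exactly what guarantees (via the Implicit Species Theorem) that $\rooted_{R'} = X \cdot R'(\rooted_{R'})$ exists, so all four terms are defined.

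It remains to establish \eqref{core}, and here I would replay the bijective argument of Theorem \ref{distree} essentially verbatim, observing that the $R$-enrichment is carried along passively. The key point is that the centre of a tree is a graph-theoretic invariant, defined through the eccentricities of the \emph{underlying} tree alone; hence the canonical pointing and the two-case analysis of Theorem \ref{distree} are wholly insensitive to the decoration. I would set up the same $\til$-respecting bijection: given an $R$-enriched tree pointed away from its centre, cut the appropriate edge to produce an ordered pair of rooted subtrees (placing the subtree nearer the distinguished vertex or edge on the left), and conversely rejoin the roots, locate the centre of the new tree, and read off the correct pointing. Under these cut-and-join operations each vertex simply retains its $R$-structure on its full neighbourhood, so the bijection commutes with transport of structures and is natural.

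The main obstacle — indeed the only point requiring genuine verification beyond citing Theorem \ref{distree} — is checking that these bijections are compatible with the enrichment, i.e.\ that cutting an edge of an $R$-enriched tree really produces $R'$-enriched rooted trees, so that the cut components are legitimate $\rooted_{R'}$-structures. The bookkeeping is that at every vertex $v$ the ambient $R$-structure lives on the full set of neighbours of $v$; after rooting, the neighbour pointing toward the root plays the role of the distinguished element $*$, and an $R$-structure on $\text{fibre}(v) \sqcup \{*\}$ is by definition an $R'$-structure on the fibre. Since this reattachment of the marked point is carried out identically on both sides of the bijection and respects relabelling, \eqref{core} follows, and with the substitution above the proof is complete.
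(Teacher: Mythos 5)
Your proposal is correct and takes essentially the same approach as the paper: the paper's proof likewise rewrites the identity as $\tree_R^\bullet + \tree_R^- = \tree_R + \tree_R^{\rightarrow}$, substitutes $\tree_R^\bullet = X\cdot R(\rooted_{R'})$, $\tree_R^- = E_2(\rooted_{R'})$, $\tree_R^{\rightarrow} = \rooted_{R'}^2$, and then invokes the centre-based bijection of Theorem \ref{distree}. Your explicit check that cutting an edge converts the ambient $R$-structures into $R'$-structures on fibres (the parent playing the role of $*$) is precisely the detail the paper leaves implicit in its remark that the proof is ``similar to that of Theorem \ref{distree}.''
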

\begin{proof}
    Note that this equation can be rewriten as $\tree_R^\bullet + \tree_R^- = \tree_R + \tree_R^\rightarrow$. The proof is similar 
    to that of Theorem \ref{distree}.
\end{proof}

Let us now see some examples of enriched trees and their enumeration.

\begin{exa}
    Denote by $\hit$ the species of homeomorphically irreducible trees, that is, trees without vertices of degree 2. These are 
    $R$-enriched trees, where $R=E-E_2$. Observe that $R' = (E-E_2)' = E-X$, and denote by $\hirt = \rooted_{R'}$ the species of homeomorphically irreducible rooted trees, that is, rooted trees without fibers of cardinality 1. Note that $\hirt \neq \hit^\bullet$. We have that $\hirt = XE(\hirt) - X\hirt$ and $\hit = X(E-E_2)(\hirt) + E_2(\hirt) - \hirt^2$.

    In any rooted tree, the chains of consecutive vertices having fibers of cardinality 1 can be grouped together and be seen as an order, from where $\rooted = \hirt \left( \frac{X}{1-X}\right)$, as $\frac{X}{1-X} = L - L_0$.
    \begin{center}

\tikzset{every picture/.style={line width=0.75pt}} 


    \end{center}
    Inverting for substitution, we have $\hirt = \rooted\left(\frac{X}{1+X} \right)$. From the Dissymmetry Theorem,\newline
    $\tree = (X + E_2 - X^2)\circ\rooted$, and $X(E-E_2)(\hirt) = \hirt + X\hirt - XE_2(\hirt)$, we get
    \begin{equation*}
        \hit = X(E-E_2)(\hirt)+E_2(\hirt)-\hirt^2 = \hirt + X \hirt - XE_2(\hirt) + E_2(\hirt)-\hirt^2
    \end{equation*}
    \begin{equation*}
        = (X + E_2 - X^2)\circ \hirt + X\hirt - XE_2(\hirt) = \tree \left( \frac{X}{1+X} \right) + X\rooted\left( \frac{X}{1+X} \right) - XE_2\left( \rooted\left(\frac{X}{1+X} \right)\right)
    \end{equation*}
    After computation, one can enumerate unlabelled homeomorphically irreducible trees.
    \begin{equation*}
        \widetilde{\hit}(x) = x + x^2 + x^4 + x^5 + 2x^6 + 2x^7 + 4x^8 + 5x^9 + 10x^{10} + 14x^{11} + 26x^{12} + 42x^{13} + ...
    \end{equation*}
\end{exa}

\begin{exa}
    An oriented tree is a tree where each edge has been given an orientation. Denote by $\orit$ the species of oriented trees and by $\orbit = \orit^\bullet$ the species of rooted oriented trees. The labelled enumeration of these structures is very simple, as each of the $n-1$ edges can be oriented in two different ways, and this yields $|\orit[n]|=2(2n)^{n-2}$ and $|\orbit[n]|=(2n)^{n-1}$.

    Now, considering a fixed $\orbit$-structure. It can be decomposed into a singleton (the root), and a pair of sets of $\orbit$-structures (the set of those that connect inwards and the set of those that connect outwards). This yields the equation $\orbit = XE^2(\orbit)$, from where we have ${\widetilde{\orbit}(x) = x \exp \left( 2 \sum_{k \geq 1} \frac{\widetilde{\orbit}(x^k)}{k} \right)}$. Through recursive computation, one then finds
    \begin{equation*}
        \widetilde{\orbit}(x) = x + 2x^2 + 7x^3 + 26x^4 + 107x^5 + 458x^6 + 2058x^7 + 9498x^8 + 44947x^9 + ...
    \end{equation*}
\end{exa}

In the case of oriented trees, the Dissymmetry Theorem takes a beautifully simple form.

\begin{theorem}[Dissymmetry Theorem for Oriented Trees]
    The species $\orit$ of oriented trees and $\orbit$ of rooted oriented trees are related by the equation ${\orbit = \orit + \orbit^2}$.
\end{theorem}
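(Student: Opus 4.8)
The plan is to construct a natural bijection $\orbit[U] \cong \orit[U] \sqcup \orbit^2[U]$ for every finite set $U$, refining the idea behind Theorem \ref{distree} but exploiting that the orientation of the central edge removes the symmetry that forced the $E_2$ term there. Recall that an $\orbit$-structure is a pair $(t,u)$ with $t$ an oriented tree on $U$ and $u$ a marked vertex, and that the centre of $t$ is either a single vertex or a single (here oriented) edge. I would first define the \emph{canonical root} of an oriented tree: the centre vertex when the centre is a vertex, and the head of the central edge when the centre is an edge. Because the central edge carries an orientation, this is always well defined. Forgetting the marked vertex then identifies the canonically rooted pairs $(t,u)$ (those where $u$ is the canonical root) with $\orit[U]$, since each oriented tree has exactly one canonical root and is recovered by deleting it; this produces the summand $\orit$.

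The core of the argument is to match the non-canonically rooted pairs with $\orbit^2[U]$. Given such a pair $(t,u)$, I would let $v$ be the neighbour of $u$ on the path toward the centre, cut the edge $\{u,v\}$, and output the ordered pair $(P_1,P_2)$ of the two resulting rooted oriented trees, ordered by the orientation of the cut edge: the tail component first and the head component second. The inverse would join $\mathrm{root}(P_1) \to \mathrm{root}(P_2)$ by an edge oriented from the first root to the second, locate the centre of the tree so formed, and re-mark as root whichever of the two roots is farther from the centre, with the convention that when the joining edge is itself the centre one marks $\mathrm{root}(P_1)$.

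The step I expect to be delicate is verifying that this pair of maps is genuinely inverse, and in particular that the single ordering bit of $\orbit^2$ faithfully records the orientation of the cut edge without the map collapsing two-to-one. The key points to check are that exactly one of the two roots is strictly closer to the centre (so ``farther from the centre'' is unambiguous except when the joining edge is the centre, where the tie-break applies), and that the non-canonicality of $u$ rules out the only problematic configuration, namely the centre being the cut edge with $u$ as its head. I would run through the two subcases (centre a vertex distinct from the cut edge versus centre equal to the cut edge) to confirm that marking the tail endpoint in the latter case agrees with the tie-break, so that composing the maps in either order returns the original datum. Finally, since the centre, the edge orientations, cutting, rejoining and the distance comparison are all preserved under relabelling bijections, both maps are natural in $U$, yielding the species isomorphism $\orbit = \orit + \orbit^2$.
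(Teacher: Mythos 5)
Your proposal is correct and takes essentially the same route as the paper's proof: both split $\orbit$-structures according to whether the root is the orientation-determined canonical centre (giving $\orit$), and both match the remaining structures with $\orbit^2$ by cutting the edge from the root toward the centre, letting that edge's own orientation in the tree supply the ordering of the pair. The only differences are conventions, not substance: you take the \emph{head} of a central edge as canonical root where the paper takes the origin (tail), and you keep the possibly-edge centre of the underlying tree together with an explicit tie-break, where the paper instead redefines the centre of an oriented tree so that it is always a single vertex, which makes the inverse map uniform.
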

\begin{proof}
    Firstly, note that $\orbit^2$ is identified as the species of oriented trees where one oriented edge is distinguished. Define now the centre of an oriented tree as being the vertex which is either the centre of the equivalent non-oriented tree or, if the centre of that tree is an edge, the origin of that oriented edge. Consider now a rooted oriented tree. If the root is the centre of the oriented tree, then pair it canonically to an oriented tree. Otherwise, distinguish the oriented edge pointing towards the centre whose origin is the root, and associate it with an $\orbit^2$-structure.
\end{proof}

\begin{corollary}
    The enumeration of unlabelled oriented trees comes directly from that of unlabelled rooted oriented trees via the equation ${\widetilde{\orit}(x) = \widetilde{\orbit}(x)-\widetilde{\orbit}^2(x)}$, and therefore
    \begin{equation*}
        \widetilde{\orit}(x) = x + x^2 + 3x^3 + 8x^4 + 27x^5 + 91x^6 + 350x^7 + 1376x^8 + 5743x^9 + ...
    \end{equation*}
\end{corollary}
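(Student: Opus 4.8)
The plan is to obtain the identity by simply transporting the combinatorial equation from the Dissymmetry Theorem for Oriented Trees, namely $\orbit = \orit + \orbit^2$, to the level of type generating series. First I would invoke the two passage-to-series rules proved earlier for unlabelled enumeration: for a sum one has $\widetilde{(F+G)}(x) = \widetilde{F}(x) + \widetilde{G}(x)$, and for a product $\widetilde{(FG)}(x) = \widetilde{F}(x)\widetilde{G}(x)$. Applying these to the right-hand side of $\orbit = \orit + \orbit^2$ gives
\begin{equation*}
\widetilde{\orbit}(x) = \widetilde{\orit}(x) + \widetilde{\orbit^2}(x) = \widetilde{\orit}(x) + \widetilde{\orbit}(x)^2,
\end{equation*}
and isolating $\widetilde{\orit}(x)$ yields exactly the claimed relation $\widetilde{\orit}(x) = \widetilde{\orbit}(x) - \widetilde{\orbit}(x)^2$.

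Next I would produce the displayed power series. Having already determined, by the recursive computation of the functional equation $\widetilde{\orbit}(x) = x\exp\!\big(2\sum_{k\geq 1}\widetilde{\orbit}(x^k)/k\big)$, that
\begin{equation*}
\widetilde{\orbit}(x) = x + 2x^2 + 7x^3 + 26x^4 + 107x^5 + 458x^6 + 2058x^7 + 9498x^8 + 44947x^9 + \cdots,
\end{equation*}
I would form the Cauchy square $\widetilde{\orbit}(x)^2$, whose coefficient of $x^n$ is $\sum_{i+j=n,\,i,j\geq 1} a_i a_j$ with $a_i$ the coefficients above, and subtract it term by term. For instance the coefficient of $x^5$ in $\widetilde{\orbit}(x)^2$ is $2(a_1a_4) + 2(a_2a_3) = 2\cdot 26 + 2\cdot 14 = 80$, so the $x^5$ coefficient of $\widetilde{\orit}(x)$ is $107 - 80 = 27$, matching the stated expansion; the remaining coefficients follow identically.

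Honestly, there is no serious obstacle here: the corollary is an immediate formal consequence of the just-proved Dissymmetry Theorem together with the multiplicative and additive compatibility of $\widetilde{(\cdot)}$. The only place demanding care is the routine bookkeeping of the Cauchy-product coefficients in the numerical verification, and the implicit reliance on the previously tabulated series for $\widetilde{\orbit}(x)$, whose accuracy is what ultimately propagates into the final list. Thus the proof reduces to one line of algebra on generating series followed by a finite coefficient computation.
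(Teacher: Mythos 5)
Your proposal is correct and matches the paper's intended argument exactly: the corollary follows by passing the dissymmetry identity $\orbit = \orit + \orbit^2$ to type generating series using the additivity and multiplicativity of $\widetilde{(\cdot)}$ under sum and (partitional) product, then solving for $\widetilde{\orit}(x)$ and subtracting the Cauchy square of the previously tabulated series $\widetilde{\orbit}(x)$. Your numerical bookkeeping is also accurate (e.g.\ $107 - 80 = 27$ for the coefficient of $x^5$), so nothing is missing.
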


\chapter{Differential Equations}\label{ch6}

In this chapter, we will present a manner of studying and solving differential equations in the context of species of structures. Firstly, we consider the case of virtual species. We then study linearly ordered sets – and consequently linear species – as a manner to overcome an issue that shows up regarding the solutions to differential equations in the virtual sense. Finally, we provide a method to describe the solutions to a particular type of differential equation on linear species.

\section{Differential Equations on Virtual Species}

Let us begin by investigating the case of virtual species. We start by observing that one of the properties that we take for granted in usual calculus is false. 

\begin{theorem}
    The differential equation $\Phi' = 0$ has infinitely many non-constant solutions in the context of virtual species.
\end{theorem}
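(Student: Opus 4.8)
The plan is to exhibit an explicit infinite family of non-constant virtual species killed by differentiation, exploiting the fact that differentiation is \emph{blind} to the full symmetry of a structure. Recall that a constant virtual species is one of the form $\mathbf{a}-\mathbf{b}$ with $a,b\in\N$, i.e.\ concentrated on the empty set; such a species has type generating series (and Zyklenzeiger) of degree $0$. The subtlety I want to use is that $F'[U]=F[U^+]$ records only the transports $F[\sigma^+]$ along bijections \emph{fixing} the adjoined point $*$, so that $F'\simeq G'$ is strictly weaker than $F\simeq G$: two species can become isomorphic after marking a point without being isomorphic. Consequently $\Phi'=0$ will be far weaker than $\Phi$ being constant, and the exponential generating series will be unable to detect the difference — indeed every solution I produce will satisfy $\Phi(x)=0$, so the argument must be run through the type generating series or the Zyklenzeiger.

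For each $n\ge 2$ let $\cyc_n$ denote the species of cyclic permutations on sets of cardinality $n$ (the degree-$n$ homogeneous part of $\cyc$), and set $\Phi_n := n\,\cyc_n - X^n$, a virtual species. First I would compute $\cyc_n' = X^{n-1}$: an element of $\cyc_n'[U]=\cyc_n[U^+]$ is an oriented cycle on $U\sqcup\{*\}$, and cutting it at $*$ (reading the cycle forward from $*$) produces a linear order on $U$; this is a bijection, natural in $U$ because any $\sigma\colon U\to V$ is extended to the point-fixing $\sigma^+$. Hence $\cyc_n' = L_{n-1}=X^{n-1}$, the size-graded refinement of the identity $L=\cyc'$. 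On the other hand, $X'=1$ together with the product rule for species derivatives gives $(X^n)' = nX^{n-1}$. Therefore $\Phi_n' = n\,\cyc_n' - (X^n)' = nX^{n-1}-nX^{n-1}=0$ in the ring of virtual species.

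Next I would verify that each $\Phi_n$ is genuinely non-constant and that the $\Phi_n$ are pairwise distinct, which is exactly where the finer invariants are essential. Since all cyclic permutations, resp.\ all linear orders, on an $n$-set form a single isomorphism type, $\widetilde{\cyc_n}(x)=\widetilde{X^n}(x)=x^n$, whence $\widetilde{\Phi_n}(x) = n x^n - x^n = (n-1)x^n \neq 0$ for $n\ge 2$. A constant virtual species has type generating series of degree $0$, so no $\Phi_n$ is constant; and the polynomials $(n-1)x^n$ have distinct degrees, so the $\Phi_n$ are pairwise distinct, producing infinitely many solutions. As a clean check, the smallest case is transparent on the Zyklenzeiger: since $\cyc_2\simeq E_2$, one has $Z_{\Phi_2}=2\cdot\tfrac12(x_1^2+x_2)-x_1^2=x_2$, which is visibly independent of $x_1$ — confirming $\Phi_2'=0$ via $Z_{F'}=\partial Z_F/\partial x_1$ — yet non-constant; already the integer multiples $k\Phi_2$, $k\ge 1$, give infinitely many distinct non-constant solutions.

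The step deserving the most care, and the main obstacle, is the naturality in the identification $\cyc_n'\simeq X^{n-1}$: I must check that cutting the cycle at the marked point commutes with relabelling, which works precisely because the transport in the derivative is by the point-fixing extensions $\sigma^+$. The second point to guard against is a false impression of triviality coming from the generating series: because $\Phi_n(x)=0$ for every $n$, constancy cannot be refuted at the level of $\Phi(x)$, and the argument hinges on passing to $\widetilde{\Phi_n}$ and $Z_{\Phi_n}$, which survive exactly because they encode the action of the full symmetric group rather than only that of the point stabilizers.
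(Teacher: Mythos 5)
Your proposal is correct and takes essentially the same route as the paper: the identical family $\Phi_n = n\cyc_n - X^n$ for $n \geq 2$, annihilated by differentiation via $\cyc_n' = L_{n-1} = X^{n-1}$ and $(X^n)' = nX^{n-1}$. Your additional verification of non-constancy and pairwise distinctness through the type generating series $\widetilde{\Phi_n}(x) = (n-1)x^n$ carefully fills in a step the paper leaves implicit, but it does not alter the approach.
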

\begin{proof}
    For $n \geq 2$, we take $\Phi_n = n\cyc_n - X^n$. Since $\cyc_n' = L_{n-1}$, we have $$\Phi_n' = n\cyc_n' - (X^n)' = nL_{n-1} - nL_{n-1} = 0.$$
\end{proof}

Next we show that every virtual species admits many antiderivatives.

\begin{theorem}
    Let $\Psi$ be a virtual species. Then the equation $\Phi' = \Psi$ admits the solution 
    $\Phi = \Omega + \int \Psi$, where 
    $$ \int \Psi = E_1 \Psi - E_2 \Psi' + E_3 \Psi'' - \ldots + (-1)^{n+1}E_n\Psi^{(n-1)} + \ldots$$
    and $\Omega$ is any solution to $\Omega' = 0$.
\end{theorem}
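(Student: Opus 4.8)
The plan is to reduce the statement to the single identity $\left( \int \Psi \right)' = \Psi$. Indeed, once this is established, the addition rule for derivatives of virtual species gives $\Phi' = \Omega' + \left( \int \Psi \right)' = 0 + \Psi = \Psi$, using the hypothesis $\Omega' = 0$. Thus the entire content of the theorem lies in differentiating the series defining $\int \Psi$ and verifying that it telescopes down to $\Psi$.

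The engine of the computation is the combinatorial identity $E_n' = E_{n-1}$, valid for all $n \geq 1$, where $E_0 = 1$ and $E_1 = X$. This follows immediately from the definition of the derivative: an $E_n'$-structure on $U$ is an $E_n$-structure on $U^+ = U \sqcup \{*\}$, and $E_n[U^+]$ is non-empty precisely when $|U^+| = n$, that is, when $|U| = n-1$, in which case it consists of the single structure $U^+$; the transports match as well. Applying the product rule to each summand, I would compute
\begin{equation*}
\left( E_n \Psi^{(n-1)} \right)' = E_n' \Psi^{(n-1)} + E_n \Psi^{(n)} = E_{n-1} \Psi^{(n-1)} + E_n \Psi^{(n)}.
\end{equation*}
Summing against the signs $(-1)^{n+1}$ and reindexing the first group of terms by $m = n-1$, the contribution $(-1)^m E_m \Psi^{(m)}$ from the $E_{n-1}\Psi^{(n-1)}$ terms cancels the contribution $(-1)^{m+1} E_m \Psi^{(m)}$ from the $E_n \Psi^{(n)}$ terms, for every $m \geq 1$. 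The sole survivor is the $m = 0$ term $E_0 \Psi^{(0)} = 1 \cdot \Psi = \Psi$, so that $\left( \int \Psi \right)' = \Psi$, as desired.

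The main obstacle is not this formal cancellation but the rigour surrounding the infinite sum: I must confirm that $\int \Psi$ is a well-defined virtual species (summability of the family) and that the derivative may be taken term by term, since the addition rule has only been stated for finite sums. Both points follow from one finiteness observation. For any fixed finite set $U$, an $E_n \Psi^{(n-1)}$-structure decomposes $U$ into two blocks, one of which carries the $E_n$-structure and therefore has exactly $n$ elements; hence $n \leq |U|$, and only finitely many summands are non-empty on $U$. Applying the same bound to $U^+$ shows that $\left( \int \Psi \right)[U^+]$ involves only the terms with $n \leq |U| + 1$. Thus, after splitting each $\Psi^{(n-1)}$ into its positive and negative parts, the relevant families are summable in the sense of the previous section, and on every finite set the derivative reduces to a finite sum, so the finite additivity of the derivative already available justifies the term-by-term differentiation carried out above.
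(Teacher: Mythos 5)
Your proof is correct and follows essentially the same route as the paper's: reduce everything to the single identity $\left( \int \Psi \right)' = \Psi$, then apply $E_n' = E_{n-1}$ together with the product rule to each summand and let the resulting series telescope down to $\Psi$. The only difference is that you additionally verify summability of the family and justify term-by-term differentiation via the bound $n \leq |U|+1$ on the non-vanishing terms, points which the paper's proof leaves implicit.
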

\begin{proof}
    It suffices to prove that $(\int \Psi)'=\Psi$. Recall that $E_n' = E_{n-1}$ and that the product rule for derivatives holds 
    in the context of virtual species. Hence,
    \begin{align*}
        \left( \int \Psi \right)' &= E_1'\Psi + E_1\Psi' - E_2'\Psi' - E_2 \Psi'' + E_3'\Psi'' + E_3\Psi''' + \ldots \\
         &= \Psi + E_1\Psi' - E_1\Psi' - E_2\Psi'' + E_2\Psi'' + E_3\Psi''' - \ldots = \Psi
    \end{align*}
\end{proof}

This shows that, in the context of virtual species, differential equations typically admit solutions, however, the solutions are almost never uniquely determined by initial conditions. Fortunately, this nuisance can be removed by equipping the underlying set 
of our combinatorial structures with a linear order. This takes us to the realm of linear species.

\section{Linear Species}

In this section, we will present linear species and some operations on them. 
\begin{defi}
    A linearly ordered set is a pair $\ell = (U,\leq)$, where $U$ is a finite set and $\leq$ is an $L$-structure on $U$. We call $U$ the underlying set of $\ell$ and $\leq$ the order of $\ell$.
\end{defi}

\begin{notn}
    We write $u \in \ell$ instead of $u \in U$ when $U$ is the underlying set of $\ell$.
\end{notn}

\begin{notn}
    Let $u,v \in \ell$. We write $u \leq v$ if $u$ is smaller than or equal to $v$ with respect to the order of $\ell$, and $u < v$ when $u \leq v$ and $u \neq v$.
\end{notn}

\begin{notn}
    We denote by $\min \ell$ the smallest element with respect to the order of $\ell$.
\end{notn}

Consider a linearly ordered set $\ell = (U, \leq)$. For each $V \subseteq U$, we can define a linearly ordered set 
$\ell_V = (V,\leq_V)$, where $\leq_V$ is the restriction of $\leq$ to $V$. 
This gives us a way to decompose linearly ordered sets.

\begin{defi}
    Let $\ell = (U,\leq)$ be a linearly ordered set and $U_1 \sqcup \ldots \sqcup U_k = U$ a partition of $U$. 
    Define $\ell_i = (U_i,\leq_i)$ to be the the linearly ordered set whose order $\leq_i$ is obtained by restricting $\leq$ to $U_i$. 
    We then say that $\{\ell_1, \ldots, \ell_k\}$ is a partition of $\ell$, and write  
    $\ell_1 \sqcup \ldots \sqcup \ell_k = \ell$. We denote by Par$[\ell]$ the set of all partitions of $\ell$.
\end{defi}

Every partition $\pi = \{p_1, \ldots ,p_k\}$ of a linearly ordered set $\ell = (U,\leq)$ can be turned into a 
linearly ordered set $\ell_\pi = (\pi,\leq_\pi)$, where $p_i \leq_\pi p_j \iff \min p_i \leq \min p_j$.

\begin{notn}
    Denote by $\emptyset$ the empty linearly ordered set, that is, the pair $(\emptyset,\leq_\emptyset)$, where $\leq_\emptyset$ is the unique element in $L[0]$. In the same manner, denote by 1 the linearly ordered set $1 = ([1],\leq_1)$, where $\leq_1$ is the unique element of $L[1]$. Also, for $n \geq 1$, denote by $[n]$ the linearly ordered set $([n],\leq)$, where $\leq$ is the order induced from the natural order on $\Z$.
\end{notn}

\begin{defi}
    Let $\ell_1 = (U_1, \leq_1), \ell_2 = (U_2, \leq_2)$ be two linearly ordered sets. Then the ordinal sum $\ell_1 \oplus \ell_2$ is given by $\ell = (U,\leq)$, where $U = U_1 \sqcup U_2$, and the new linear order is defined by
    \begin{equation*}
        u \leq v \iff \begin{cases}
            u \leq_1 v\text{, if }u,v \in \ell_1;\\
            u \in \ell_1, v \in \ell_2;\\
            u \leq_2 v\text{, if }u,v \in \ell_2.
        \end{cases}
    \end{equation*}
    In particular, $1 \oplus \ell$ is the linearly ordered set obtained by adding a new minimal element to $\ell$.
\end{defi}

\begin{defi}
    Let $\ell_1 = (U_1,\leq_1), \ell_2 = (U_2,\leq_2)$ be two linearly ordered sets. A function $f : U_1 \to U_2$ is said to be increasing if, for $u,v \in \ell_1$, $u \leq_1 v$ implies $f(u) \leq_2 f(v)$.
\end{defi}

Note that for any linearly ordered set 
$\ell$ of cardinality $n$, there is a unique increasing bijection from $[n]$ to $\ell$. 

We now have the necessary tools to define linear species and the operations on them.

\begin{defi}
    A linear species is a functor $F$ from the category $\mathbb{L}$ of linearly ordered sets and increasing bijections to the category $\mathbb{B}$ of finite sets and bijections. We refer to linear species as $\mathbb{L}$-species, and to regular species 
    as $\mathbb{B}$-species.
\end{defi}

Note that any $\mathbb{B}$-species $F$ gives rise to an $\mathbb{L}$-species, also denoted by $F$, by setting 
$F[\ell]=F[U]$ for any linearly ordered set $\ell=(U, \leq)$; the transport of structures being obtained by restricting 
to increasing bijections. 

The notions of isomorphism of structures and isomorphism type for $\mathbb{L}$-species are defined in the same manner as for $\mathbb{B}$-species. However, since the only increasing bijection from a linearly ordered set to itself is the identity bijection, any structure of an $\mathbb{L}$-species has a unique automorphism, and no two distinct structures are isomorphic. 
Consequently, the concepts of isomorphism type generating series and \emph{Zyklenzeiger} are irrelevant in the context of 
$\mathbb{L}$-species: the only relevant series is the (exponential) generating series.

Two $\mathbb{L}$-species are said to be combinatorially equal if they are naturally isomorphic. 
In contrast to the theory of $\mathbb{B}$-species, two $\mathbb{L}$-species are cominatorially equal if and only if 
they are equipotent. In other words, if $F$ and $G$ are two linear species, $F = G$ if and only if $F(x) = G(x)$. In particular, the species $L$ and $\perm$ are (combinatorially) equal in the context of linear species.

Let us now define operations on linear species.

\begin{defi}
    Let $F$ and $G$ be linear species, and $\ell = (U,\leq)$ a linearly ordered set. 
    Then we define the following operations (in each case, the transport of structures being defined in 
    the obvious manner):
    \begin{itemize}
        \item Sum: $$(F + G)[\ell] = F[\ell] \sqcup G[\ell]$$
        \item Cartesian product: $$(F \times G)[\ell] = F[\ell] \times G[\ell]$$
        \item Product: $$ (F \cdot G)[\ell] = \sum_{\ell_1 \sqcup \ell_2 = \ell} F[\ell_1] \times G[\ell_2] $$
        \item Composition $(G(0) = 0)$: $$ (F \circ G)[\ell] = \sum_{\pi \in \text{Par}[\ell]} \left[ F[\ell_\pi] \times \prod_{p \in \pi} G[\ell|_p] \right] $$
        \item Derivative: $$ \frac{d}{dX}F(X)[\ell] = F'[\ell] = F[1 \oplus \ell] $$
        \item Integral: $$ \left( \int_{0}^X F(T) \, \mathrm{d}T \right)[\ell] = \left(\int F\right)[\ell] = \begin{cases}
            \emptyset\text{, if }\ell=\emptyset;\\
            F[\ell \setminus \{\min\ell\}]\text{, otherwise}
        \end{cases} $$
        \item Ordinal product: $$ (F \otimes G)[\ell] = \sum_{\ell_1 \oplus \ell_2 = \ell} F[\ell_1] \times G[\ell_2] $$
        \item Convolution: $$ (F * G)[\ell] = (F \otimes X \otimes G)[\ell] $$
    \end{itemize}
\end{defi}

It is not difficult to see that passage to generating series preserves the operations on $\mathbb{L}$-species.
Moreover, it can be verified that 
$$\int F'=F_+=F - F_0 \quad \text{and} \quad \left(\int F \right)'=F.$$

Let us now see some examples of linear species.
\begin{exa}
    Recall that, for a species $F$, the (virtual) species $F^c$ of connected $F$-structures is defined by the equation $F = E(F^c)$. In the context of linear species, we can give a simple definition of a connected list. An $L^c$-structure, that is, a connected list, is a non-empty list that begins with the minimum element of its underlying linearly ordered set. Naturally, $L = E(L^c)$, as any list can be cut into blocks that start with left-to-right minima. For example, the list $(3\ 4\ 1\ 2\ 5)$ can be paired with the set of lists $\{(3\ 4),(1\ 2\ 5)\}$. This gives a list equivalent to the cycle decomposition of permutations. 
\end{exa}

\begin{exa}
    Let $E_e$ and $E_o$ be the species of sets with even and odd cardinality, respectively. 
    Then $E_e(x) = \cosh(x)$ and $E_o(x) = \sinh(x)$. We have $E_e^2 = 1 + E_o^2$, 
    an equation that is easily seen not to hold in the context of $\mathbb{B}$-species. 
    Indeed, if $\ell = (U,\leq)$ is a linearly ordered set with $U \neq \emptyset$, then given a partition of $U$ into two sets of even cardinality, remove the element $\min\ell$ from the part where it occurs and add it to the complementary part to obtain a partition of $U$ into two sets of odd cardinality; this clearly defines a bijection from $E_e^2$ to $E_o^2$. 
    Passing to generating functions, we recover the well-known identity 
    $$\cosh^2(x) - \sinh^2(x) = 1.$$
\end{exa}

\begin{exa}
    The linear species $\rooted^\uparrow$ of increasing rooted trees is defined as follows: an $\rooted^\uparrow$-structure on a linearly ordered set $\ell = (U,\leq)$ is a rooted tree on $U$ such that the vertices on any path from the root towards a leaf are in increasing order. More generally, denote by $\rooted_R^\uparrow$ the species of $R$-enriched increasing rooted trees.
\end{exa}

Before moving on to the study of differential equations, let us extend the concept of linear species to the multisort setting.

\begin{defi}
    A $k$-(multi)sort linear species is a functor from the category $\mathbb{L}^k$ of linearly ordered $k$-(multi)sets and increasing bijective multifunctions to the category $\B$ of finite sets and bijections. We follow the convention that every element of sort $i$ is smaller than all elements of sort $i+1$; equivalently, one can say that a linearly ordered $k$-set $(\ell_1,\ldots,\ell_k)$ gives rise to a linearly ordered set $\ell_1 \oplus \ldots \oplus \ell_k$.
\end{defi}

This definition allows us to extend naturally the operations from $\mathbb{L}$-species to the multisort context. For example, the partial derivatives of a 2-sort species $F$ are given by
\begin{equation*}
    \frac{\del}{\del X}F[\ell,m] = F[1 \oplus \ell,m] \quad\quad\quad \frac{\del}{\del Y}F[\ell,m] = F[\ell,1 \oplus m].
\end{equation*}
The (exponential) generating series is also derived in the natural manner, with
\begin{equation*}
    F(x,y) = \sum_{k,n \in \N} F[k,n] \frac{x^k}{k!} \frac{y^n}{n!},
\end{equation*}
where $[k,n]$ denotes the pair of ordered sets $([k],[n])$.

\section{Differential Equations on Linear Species}

In this section, we will present a method for solving differential equations of the type ${Y' = R(Y)}$, where $R$ is a linear species. Let us start with an example.

\begin{exa}
    Consider the equation $Y' = E(Y)$, with initial condition ${Y(0) = 0}$. It is easy to see that $\rooted^\uparrow$ is a solution to this differential equation. Indeed, removing the minimal element (\emph{i.e.} the root) of an $\rooted^\uparrow$-structure results in an assembly of $\rooted^\uparrow$-structure.  This is analogous to Example \ref{dtf}. Now, the analytic solution to $y' = e^y$ is $y(x) = -\log(1-x)$, which gives us $\rooted^\uparrow(x) = -\log(1-x)$, but also $L = E(\rooted^\uparrow)$.
\end{exa}

With this example in mind, we can move on to solve more general differential equations. Consider the differential equation 
$Y' = R(Y)$, with initial condition $Y(0) = Z$. We can see $Z$ as a variable representing a second sort of elements.
Hence, the solution to our differential equation is a 2-sort $\mathbb{L}$-species $Y=Y(X, Z).$
Integrating, we get an equivalent (integral) equation 
\begin{equation*}
    Y(X,Z) = Z + \int_0^X R(Y(T,Z)) dT,
\end{equation*}
which can be visualised as follows:
\begin{center}

\tikzset{every picture/.style={line width=0.75pt}} 


\end{center}
Iterating this decomposition we obtain, for any pair $(\ell,m)$ of linearly ordered sets, $R$-enriched increasing rooted trees 
like the one pictured below. The vertices of sort $X$ (black points) are called fertile vertices and they all carry  
$R$-structure (on the set of their descendants); the elements of sort $Z$ (white points) always occur as leaves of the tree and they do not carry $R$-structure. 
We denote the $2$-sort $\mathbb{L}$-species of $R$-enriched increasing rooted trees by $\rooted_R^\uparrow(X,Z)$. 
\begin{center}

\tikzset{every picture/.style={line width=0.75pt}} 


\end{center}
It is clear that $\rooted_R^\uparrow(0,Z) = Z$ and that the decomposition of any structure $s \in \rooted_R^\uparrow[\ell,m]$, with $\ell \neq \emptyset$, into a minimal element and an $R$-assembly of $\rooted_R^\uparrow$-structures gives an isomorphism
\begin{equation*}
    \alpha : \frac{\del}{\del X}\rooted_R^\uparrow(X,Z) \to R(\rooted_R^\uparrow(X,Z)).
\end{equation*}
Therefore, the pair $(\rooted_R^\uparrow,\alpha)$ constitutes a solution to the equation $Y' = R(Y)$ with $Y(0)=Z$.

Now, suppose that a different pair $(\mathcal{B},\beta)$ is also a solution to the equation $Y' = R(Y)$ with $Y(0)=Z$. We will prove that there is a unique isomorphism $\Phi: \rooted_R^\uparrow \to \mathcal{B}$ such that the following diagram commutes
(where $\frac{\del}{\del X}\Phi$ and $R(\Phi)$ are defined in an obvious manner):

\begin{equation}\label{diag}
\begin{tikzcd}
\frac{\del}{\del X} \rooted_R^\uparrow \arrow{r}{\alpha} \arrow{d}[left]{\frac{\del}{\del X}\Phi} & R(\rooted_R^\uparrow) \arrow{d}{R(\Phi)} \\
\frac{\del}{\del X} \mathcal{B} \arrow{r}[below]{\beta}                                              & R(\mathcal{B})                  
\end{tikzcd}
\end{equation}
Since $\rooted_R^\uparrow(0,Z) = \mathcal{B}(0,Z) = Z$, we must define $\Phi_{0,[n]}$ as the identity $Z[n] \to Z[n]$ for all $n \in \N$. Suppose, for $n \geq 0$, that the natural bijection $\Phi_{h,s}$ has already been defined in a unique manner for any pair of linearly ordered sets $(h,s)$, with $|h| \leq n$. Let $(\ell,m)$ be a pair of linearly ordered sets with $|\ell| = n+1$. Without loss of generality, assume $\ell = 1 \oplus \ell^-$, where $\ell^- = \ell \setminus \{\min\ell\}$. We then have bijections
\begin{equation*}
    \rooted_R^\uparrow[\ell,m] = \rooted_R^\uparrow[1 \oplus \ell^-,m] = \frac{\del}{\del X} \rooted_R^\uparrow[\ell^-,m] \overset{\alpha_{\ell^-,m}}{\longrightarrow} R(\rooted_R^\uparrow)[\ell^-,m]
\end{equation*}
and
\begin{equation*}
    \mathcal{B}[\ell,m] = \mathcal{B}[1 \oplus \ell^-,m] = \frac{\del}{\del X} \mathcal{B}[\ell^-,m] \overset{\beta_{\ell^-,m}}{\longrightarrow} R(\mathcal{B})[\ell^-,m].
\end{equation*}
An $R(\rooted_R^\uparrow)$-structure on $(\ell^-, m)$ is an $R$-assembly of $\rooted_R^\uparrow$-structures on pairs of linearly ordered sets $(h,s)$ such that $|h| \leq n$. Now, the induction hypothesis guarantees the existence of bijections 
$\Phi_{h,s} : \rooted_R^\uparrow[h,s] \to \mathcal{B}[h,s]$ on each member of the $R$-assembly, and thus we obtain an induced  
bijection $R(\Phi)_{\ell^-,m} : R(\rooted_R^\uparrow)[\ell^-,m] \to R(\mathcal{B})[\ell^-,m]$. We then set
\begin{equation*}
    \Phi_{\ell,m} = \frac{\del}{\del X} \Phi_{\ell^-,m} = \beta_{\ell^-,m}^{\langle -1 \rangle} \circ R(\Phi)_{\ell^-,m} \circ \alpha_{\ell^-,m}
\end{equation*}
and we obtain a bijection from $\rooted_R^\uparrow[\ell,m]$ to $\mathcal{B}[\ell,m]$, and this is the only choice that makes Diagram \ref{diag} commute.

Finally, we conclude with the following result.

\begin{theorem}
    Let $R$ be a linear species, and consider the differential equation $Y' = R(Y)$ with $Y(0)=Z$. Then $\rooted_R^\uparrow(X,Z)$ is the unique solution to that equation, in the sense that, if $\mathcal{B}(X,Z)$ is another solution, then there exists a unique isomorphism $\Phi : \rooted_R^\uparrow \to \mathcal{B}$ that makes Diagram \ref{diag} commute.
\end{theorem}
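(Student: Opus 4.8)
The plan is to split the argument into an existence part and a uniqueness part, following the decomposition sketched just before the statement. First I would verify that the pair $(\rooted_R^\uparrow, \alpha)$ genuinely solves the initial value problem. The initial condition is immediate: an $\rooted_R^\uparrow$-structure on a pair $(\emptyset, m)$ is just a configuration of $Z$-leaves, so $\rooted_R^\uparrow(0,Z) = Z$. For the equation itself, I would make precise the decomposition ``root plus $R$-assembly of descendant subtrees'': given a nonempty first sort written as $\ell = 1 \oplus \ell^-$, deleting the minimal (root) vertex of an $R$-enriched increasing rooted tree leaves an $R$-structure on the set of its children together with an $\rooted_R^\uparrow$-structure hanging from each child. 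This is exactly what $\frac{\del}{\del X} \rooted_R^\uparrow[\ell^-,m] = \rooted_R^\uparrow[1 \oplus \ell^-, m]$ records, and the bijection $\alpha$ identifies it with $R(\rooted_R^\uparrow)[\ell^-,m]$. I would then check that $\alpha$ is natural with respect to increasing bijective multifunctions, which is routine since deleting the minimum commutes with increasing bijections.

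For uniqueness, given a second solution $(\mathcal{B}, \beta)$, I would construct the comparison isomorphism $\Phi$ by induction on $|\ell|$, the cardinality of the first sort. The base case $|\ell| = 0$ forces $\Phi_{0,[n]}$ to be the identity of $Z[n]$, since both species restrict to $Z$ there and Diagram~\ref{diag} imposes no freedom. In the inductive step, for $|\ell| = n+1$ I would write $\ell = 1 \oplus \ell^-$ and use the two horizontal bijections $\alpha_{\ell^-,m}$ and $\beta_{\ell^-,m}$ together with the componentwise map $R(\Phi)_{\ell^-,m}$ obtained by applying the already-defined $\Phi_{h,s}$ on each member of the $R$-assembly, which is legitimate because every such member lives over a first sort $h$ with $|h| \le n$. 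Setting $\Phi_{\ell,m} = \beta_{\ell^-,m}^{\langle -1\rangle} \circ R(\Phi)_{\ell^-,m} \circ \alpha_{\ell^-,m}$ produces a bijection $\rooted_R^\uparrow[\ell,m] \to \mathcal{B}[\ell,m]$, and this equation is precisely the requirement that Diagram~\ref{diag} commute at this level; hence it is the only admissible choice, so existence and uniqueness of $\Phi$ are obtained simultaneously.

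The two points I would treat most carefully are naturality and the well-definedness of $R(\Phi)$. I must confirm that the family $\{\Phi_{\ell,m}\}$ assembles into a natural transformation, so that for every increasing bijective multifunction $(\sigma,\tau)$ the corresponding square commutes; I would verify this by carrying an extra naturality clause through the same induction, using that transport commutes with ``delete the minimum'' and with the formation of $R$-assemblies. The step needing genuine attention is that $R(\Phi)_{\ell^-,m}$ is independent of how the $R$-assembly is presented, which rests on the functoriality of $R$ together with the induction hypothesis being stated for the whole range $|h| \le n$ rather than $|h| < n$. Once naturality is in place, the simplifications special to $\mathbb{L}$-species, namely that combinatorial equality coincides with equipotence and that structures admit no nontrivial automorphisms, eliminate any remaining rigidity issues, so I expect the main obstacle to be the bookkeeping of this simultaneous induction rather than any conceptual difficulty.
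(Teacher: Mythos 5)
Your proposal is correct and follows essentially the same route as the paper: the existence part via the root-deletion decomposition defining $\alpha$, and uniqueness by induction on $|\ell|$ with the forced formula $\Phi_{\ell,m} = \beta_{\ell^-,m}^{\langle -1\rangle} \circ R(\Phi)_{\ell^-,m} \circ \alpha_{\ell^-,m}$, including the same induction hypothesis over the full range $|h| \leq n$. Your added attention to naturality of $\alpha$ and of the family $\{\Phi_{\ell,m}\}$ is a point the paper leaves implicit, but it does not change the structure of the argument.
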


So far, we have introduced the species $\rooted_R^\uparrow(X,Z)$, as well as seen that they are solutions to a specific kind of differential equations. In the following theorem, we present a different way of describing $\rooted_R^\uparrow(X,Z)$. For the reader to fully understand the notation used in the proof, we recommend reading \cite[Section 2.6]{bergeron2}.

\begin{theorem}
    Let $R$ be a linear species. Then the species $\rooted_R^\uparrow(X,Z)$ can be expressed as $\rooted_R^\uparrow(X,Z) = \exp(XR(Z)\frac{\del}{\del Z})Z$, or more explicitly,
    \begin{equation*}
        \rooted_R^\uparrow(X,Z) = \sum_{n \geq 0} \mathcal{D}^n(Z)E_n(X)
    \end{equation*}
    where $\mathcal{D} = R(Z)\frac{\del}{\del Z}$.
\end{theorem}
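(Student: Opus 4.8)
The plan is to verify that the explicit series $\Psi(X,Z) := \sum_{n \geq 0}\mathcal{D}^n(Z)E_n(X)$, with $\mathcal{D} = R(Z)\frac{\del}{\del Z}$, is itself a solution of the initial value problem $Y' = R(Y)$, $Y(0) = Z$, and then to invoke the uniqueness theorem just proved in order to conclude $\rooted_R^\uparrow \cong \Psi$. Here $\mathcal{D}^n(Z)$ is a species in the sort $Z$ alone and $E_n(X)$ a species in the sort $X$ alone, so each summand is a genuine $2$-sort $\mathbb{L}$-species; the family is summable because the summand indexed by $n$ has all its structures supported on pairs $(\ell,m)$ with $|\ell| = n$.

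First I would check the initial condition: setting $X = 0$ amounts to restricting to pairs $(\ell,m)$ with $\ell = \emptyset$, and since $E_n[\emptyset] = \emptyset$ for $n \geq 1$ while $E_0[\emptyset] = \{\emptyset\}$, only the term $n = 0$ survives, giving $\Psi(0,Z) = \mathcal{D}^0(Z)\cdot E_0 = Z$. Next I would differentiate in $X$. Because $\mathcal{D}^n(Z)$ carries no $X$-sort elements, the product rule together with $E_n' = E_{n-1}$ gives
\begin{equation*}
    \frac{\del}{\del X}\Psi = \sum_{n \geq 1}\mathcal{D}^n(Z)E_{n-1}(X) = \sum_{n \geq 0}\mathcal{D}^{n+1}(Z)E_n(X).
\end{equation*}
Since $\frac{\del}{\del Z}Z = 1$, one has $\mathcal{D}(Z) = R(Z)$ and hence $\mathcal{D}^{n+1}(Z) = \mathcal{D}^n(R(Z))$, so that $\frac{\del}{\del X}\Psi = \sum_{n \geq 0}\mathcal{D}^n(R(Z))E_n(X)$.

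The crux is then to identify this last series with $R(\Psi)$; equivalently, to prove the substitution identity
\begin{equation*}
    R(\Psi) = R\!\left(\sum_{n\geq 0}\mathcal{D}^n(Z)E_n(X)\right) = \sum_{n \geq 0}\mathcal{D}^n(R(Z))E_n(X),
\end{equation*}
i.e.\ that the operator $\exp(X\mathcal{D}) = \sum_n E_n(X)\mathcal{D}^n$ commutes with the substitution $Z \mapsto \Psi$. I would first establish that $\exp(X\mathcal{D})$ is multiplicative: since $\mathcal{D}$ is a derivation (the map $\frac{\del}{\del Z}$ obeys the product rule for species derivatives, and multiplication by $R(Z)$ preserves this), the Leibniz formula $\mathcal{D}^n(FG) = \sum_{a+b=n}\binom{n}{a}\mathcal{D}^a(F)\mathcal{D}^b(G)$ combines with the linear-species product identity $E_a(X)E_b(X) = \binom{a+b}{a}E_{a+b}(X)$ to yield $\exp(X\mathcal{D})(FG) = \exp(X\mathcal{D})(F)\cdot\exp(X\mathcal{D})(G)$, and in particular $\exp(X\mathcal{D})(Z^k) = \Psi^k$ for every $k$. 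To pass from monomials to an arbitrary $R$, I would invoke the molecular decomposition of $R$ (cf.\ \cite[Section 2.6]{bergeron2}), writing $R$ as a summable sum of molecular species $Z^k/H$ and using linearity of $\exp(X\mathcal{D})$ together with its naturality (its commutation with the $S_k$-actions) to descend multiplicativity through the quotients.

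I expect this substitution identity to be the main obstacle: establishing multiplicativity of $\exp(X\mathcal{D})$ is a routine derivation-plus-binomial computation, but upgrading it to a genuine substitution rule $\exp(X\mathcal{D})(S(Z)) = S(\Psi)$ for an arbitrary species $S$ requires controlling the interaction of $\exp(X\mathcal{D})$ with composition, which is cleanest through molecular decomposition. Once the identity is in hand, combining the displayed formula for $\frac{\del}{\del X}\Psi$ with $R(\Psi) = \sum_{n}\mathcal{D}^n(R(Z))E_n(X)$ gives $\frac{\del}{\del X}\Psi = R(\Psi)$; together with $\Psi(0,Z) = Z$, this exhibits $(\Psi,\beta)$ --- where $\beta$ is the natural isomorphism $\frac{\del}{\del X}\Psi \to R(\Psi)$ just produced --- as a solution of $Y' = R(Y)$, $Y(0)=Z$. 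The uniqueness theorem then furnishes a (unique) isomorphism $\rooted_R^\uparrow \cong \Psi$, which is exactly the asserted formula.
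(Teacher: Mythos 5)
Your proposal is correct in outline but takes a genuinely different route from the paper. The paper's proof never invokes the uniqueness theorem: it argues directly and combinatorially, interpreting each application of $X\mathcal{D}$ to a $2$-sort species as an \emph{eclosion} (the minimal white point is replaced by a black point carrying an $R$-assembly of white points), and then observing that an $n$-fold eclosion sequence started from $Z$, with orderly (increasing) labelling of the eclosion points, is exactly an $R$-enriched increasing rooted tree whose fertile vertices are the eclosion points; the formula $\rooted_R^\uparrow(X,Z) = \exp(XR(Z)\frac{\del}{\del Z})Z$ is read off from this bijection. You instead verify that $\Psi = \sum_{n}\mathcal{D}^n(Z)E_n(X)$ solves the initial value problem $Y'=R(Y)$, $Y(0)=Z$, and appeal to the uniqueness theorem proved just before --- the species analogue of checking that $\exp(tR(z)\del_z)z$ is the flow of the vector field $R(z)\del_z$. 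Your checks of the initial condition, of $\frac{\del}{\del X}\Psi = \sum_n \mathcal{D}^{n}(R(Z))E_n(X)$, and of multiplicativity of $\exp(X\mathcal{D})$ via Leibniz and $E_a(X)E_b(X)=\binom{a+b}{a}E_{a+b}(X)$ (an identity valid for $\mathbb{L}$-species, though not for $\mathbb{B}$-species) are all sound. The paper's route buys a combinatorial explanation of \emph{why} the exponential formula holds; yours is more formal and systematic, outsourcing the combinatorics to the uniqueness theorem.

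One step does need repair: your plan to obtain the substitution identity $\exp(X\mathcal{D})(R(Z)) = R(\Psi)$ from molecular decomposition, writing $R$ as a sum of species $Z^k/H$ and descending through $S_k$-quotients, imports $\mathbb{B}$-species machinery into a setting where it does not apply. Linear structures admit no nontrivial automorphisms, so in the category of $\mathbb{L}$-species there are no quotients $Z^k/H$ to descend through; the molecular species of $\mathbb{L}$-species are simply the $E_k$'s. Fortunately the needed step is easier here, not harder: two linear species are isomorphic if and only if they are equipotent, i.e.\ have the same generating series. Hence $R \cong \sum_k \lvert R[k]\rvert\,E_k$, and from your multiplicativity result $\exp(X\mathcal{D})(Z^k)=\Psi^k$ together with the equipotences $Z^k \cong k!\,E_k(Z)$ and $\Psi^k \cong k!\,E_k(\Psi)$, one cancels the factor $k!$ at the level of generating series to conclude $\exp(X\mathcal{D})(E_k(Z)) \cong E_k(\Psi)$; summing over $k$ (the family is summable) gives the substitution identity, and with it your argument is complete.
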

\begin{proof}
    Let the elements of sort $X$ be coloured black and the elements of sort $Z$ be coloured white, and let $\Psi(X,Z)$ be a 2-sort linear species. An $X\mathcal{D}\Psi$-structure can be seen as a $\Psi$-structure that has undergone an eclosion, that is, its minimal white point has been replaced by an $XR(Z)$-structure, that is, by a black point (called the eclosion point) followed by an $R$-assembly of white points. The following picture will make the concept of eclosion clearer.
    \begin{center}

\tikzset{every picture/.style={line width=0.75pt}} 


    \end{center}
    For $n \geq 0$, an $(X\mathcal{D})^n\Psi$-structure is built from a $\Psi$-structure by performing the eclosion $n$ times. The order of eclosions is relevant, and should always be noted. Naturally, there are $n!$ ways to label the $n$ eclosion points of an $(X\mathcal{D})^n\Psi$-structure. We call a labelling orderly when it coincides with the order of eclosions. With that, we can interpret an $\frac{(X\mathcal{D})^n}{n!}\Psi$-structure as an $(X\mathcal{D})^n\Psi$-structure with orderly labelling. We can also define $\exp(X\mathcal{D})$ as simply $$\exp(X\mathcal{D}) := \sum_{n \geq 0} \frac{(X\mathcal{D})^n}{n!}$$
    from where an $\exp(X\mathcal{D})\Psi$-structure is an $\frac{(X\mathcal{D})^n}{n!}\Psi$-structure for some $n \in \N$.

    Now, take $\Psi(X,Z) = Z$, that is, a $\Psi$-structure being simply a white point, and a $\Psi$-structure after one eclosion being an $XR(Z)$-structure. After multiple eclosions, we have an $R$-enriched rooted tree with black fertile vertices and white leaves. Then all of the black points are eclosion points, and in the case of an orderly labelling, \emph{id est}, an $\exp(X\mathcal{D})\Psi$-structure, the labels are in increasing order from the root to the leaves, since naturally that is the direction that the eclosions take, and we conclude that ${\rooted_R^\uparrow(X,Z) = \exp(XR(Z)\frac{\del}{\del Z})Z}$.

    Since the operator $\mathcal{D}$ commutes with multiplication by $X$, and $E_n(X) = \frac{X^n}{n!}$, we conclude that $\rooted_R^\uparrow(X,Z) = \sum_{n \geq 0} \mathcal{D}^n(Z)E_n(X)$. This can also be deduced from the fact that the right-hand side of the equation tells us to perform eclosions considering white points only, that is, perform $\mathcal{D}^n(Z)$, and then add the black points in the order that the eclosions happened.
\end{proof}

\chapter{General Differential Operators}
\label{ch7}

In this chapter, we return to the study of ($k$-sort) $\B$-species. More precisely, we will present a combinatorial interpretation of $\Omega(X,D)$, where $\Omega$ is a 2-sort species and $D = \frac{d}{dX}$ is the differential operator. We then extend this definition to the case where $\Omega$ is a 1-sort species, and use it to connect the first chapter to the concept of species by providing a way to interpret the finite difference operator in the broader sense of species of structures.

\section{Definition and Operations}

\begin{defi}
    Let $\Omega(X, T)$ be a two sort species. To every finite set $A$ (of sort $X$), we associate a $1$-sort species
$\Omega_A$ defined as follows:
\begin{itemize}
\item $\Omega_A[U]=\Omega[(A, U)]$ for every finite set $U$;
\item for a given bijection $\tau:U \to V$ between finite sets, we set 
$\Omega_A[\tau]=\Omega[(id_A, \tau)]$. 
\end{itemize}
\end{defi}

Note that each bijection $\sigma: A \to B$ between finite sets (of sort $X$) gives rise to a natural isomorphism 
$\overline{\sigma}=\{\overline{\sigma}_U:=\Omega[(\sigma, id_U)]\}_{U \in \mathbf{Ob}(\mathbb{B})}$ from $\Omega_A$ to $\Omega_B$,
\emph{i.e.}, all diagrams of the following form commute: 

\begin{equation*}
    \begin{tikzcd}
\Omega_A[U] \arrow{r}{\overline{\sigma}_U} \arrow{d}[left]{\Omega_A[\tau]} & \Omega_B[U] \arrow{d}{\Omega_B[\tau]} \\
\Omega_A[V] \arrow{r}[below]{\overline{\sigma}_V}                                              & \Omega_B[V]                  
\end{tikzcd}
\end{equation*}

Consequently, $s_1 \in \Omega_A[U]$ and $s_2 \in \Omega_A[V]$ are isomorphic structures of species $\Omega_A$ if and only if 
$\overline{\sigma}_U[s_1]$ and $\overline{\sigma}_V[s_2]$ are isomorphic structures of species $\Omega_B$. In other words,
the natural isomorphism $\overline{\sigma}$ induces a bijection from the (possibly infinite) set of isomorphism types of 
structures of species $\Omega_A$ to the set of isomorphism types of structures of species $\Omega_B$. 

\begin{defi}
    We say that a $2$-sort species $\Omega(X, T)$ is finitary in $T$ if for every finite set $U$ of elements of sort $X$ there are no $\Omega$-structures on the pair of sets $(U,V)$ for every sufficiently large set $V$.
If $\Omega(X, T)$ is finitary in $T$, we may define a new $1$-sort species $\Omega(X, 1)$ (also denoted by 
$[\Omega(X, T)]|_{T:=1}$) by the following rules:    
\begin{itemize}
    \item the $\Omega(X, 1)$-structures on a finite set $A$ are the isomorphism types of structures of species $\Omega_A$;
    \item the transport along a bijection $\sigma: A \to B$ ($A, B$ finite sets) is induced by the natural isomorphism 
    $\overline{\sigma}$.
\end{itemize}
\end{defi}
 
Behind this formal definition of the species $\Omega(X, 1)$, hides a very simple idea, namely, an $\Omega(X, 1)$-structure on a finite set $A$ is nothing else but an $\Omega$-structure on the set $A$ of (labelled) elements of sort $X$ and an arbitrary set of unlabelled elements of sort $T$. A justification for the choice of notation is provided by the following formula expressing the \emph{Zyklenzeiger} of $\Omega(X, 1)$ in terms of the \emph{Zyklenzeiger} of $\Omega$:
\[Z_{\Omega(X, 1)}(x_1, x_2, \ldots)=[Z_{\Omega}(x_1, x_2, \ldots; t_1, t_2, \ldots)]|_{t_j=1}=Z_{\Omega}(x_1, x_2, \ldots; 1, 1, \ldots).\]

Before defining general differential operators, we first provide some conventions for the graphical representations. We will follow that used by Labelle and Lamathe in \cite{labelle}.

\begin{notn}
$\ $

    \begin{enumerate}
        \item For 1-sort species $F(X)$, we will maintain the same graphical convention used in the previous chapters, with elements of sort $X$ represented by black circles.
        \begin{center}

\tikzset{every picture/.style={line width=0.75pt}} 


        \end{center}
    \end{enumerate}
\end{notn}

We also need to define yet another operation on 2-sort species: the partial Cartesian product.

\begin{defi}
    Let $\Phi(X,T), \Omega(X,T)$ be two 2-sort species. Then the partial Cartesian product of $\Phi$ and $\Omega$ with respect to 
    the sort $T$, denoted by $\Phi(X,T) \times_T \Omega(X,T)$, is defined as follows: a $\Phi \times_T \Omega$-structure $s$ on a pair of sets $(U,V)$ is a pair $s = (f,w)$, where $f$ is a $\Phi$-structure on $(U_1,V)$, $w$ is an $\Omega$-structure on $(U_2,V)$, $U_1 \cup U_2 = U$ and $U_1 \cap U_2 = \emptyset$. It is easy to see that the \emph{Zyklenzeiger} of $\Phi \times_T \Omega$ can be described as follows:   
    \begin{align*}
        Z_{\Phi \times_T \Omega}(x_1,x_2,\ldots;t_1,t_2, \ldots) = \sum_{n_1,n_2, \ldots} \omega_{\Phi,n_1,n_2, \ldots}(x_1,x_2, \ldots) \omega_{\Omega,n_1,n_2,\ldots} (x_1,x_2,\ldots) \frac{t_1^{n_1}t_2^{n_2}\ldots}{1^{n_1}n_1!2^{n_2}n_2!\ldots},
    \end{align*}
    where the coefficients $\omega_{\Phi,n_1,n_2,\ldots}$ and $\omega_{\Omega,n_1,n_2,\ldots}$ are given by
    \begin{equation*}
        Z_{\Phi} (x_1,x_2, \ldots;t_1,t_2,\ldots) = \sum_{n_1,n_2,\ldots} \omega_{\Phi,n_1,n_2,\ldots}(x_1,x_2, \ldots) \frac{t_1^{n_1}t_2^{n_2} \ldots}{1^{n_1}n_1!2^{n_2}n_2! \ldots}
    \end{equation*}
    \begin{equation*}
        Z_{\Omega} (x_1,x_2,\ldots;t_1,t_2,\ldots) = \sum_{n_1,n_2,\ldots} \omega_{\Omega,n_1,n_2,\ldots}(x_1,x_2,\ldots) \frac{t_1^{n_1}t_2^{n_2} \ldots}{1^{n_1}n_1!2^{n_2}n_2! \ldots}
    \end{equation*}
    The right-hand side of this equation expressing $Z_{\Phi \times_T \Omega}$ is denoted by $Z_\Phi \times_\mathbf{t} Z_\Omega$ and is called the Hadamard product of $Z_\Phi$ and $Z_\Omega$ with respect to $\mathbf{t} = (t_1,t_2, \ldots)$.

    The following picture offers a graphical representation of a partial Cartesian product.
    \begin{center}

\tikzset{every picture/.style={line width=0.75pt}} 


    \end{center}
    Note that the limitations on the hypothesis are needed. For example, the picture below shows an $\rooted(X,D)\cyc_{16}(X)$-structure, where $\rooted(X,T)$ is the species of rooted trees with internal vertices of sort $X$ and leaves of sort $T$, and $\cyc_{16}$ is the species of cycles of length $16$. Note that $\rooted(X,D)\cyc(X)$ would not be a well-defined species, as there could be infinitely many such structures on any finite set of elements of sort $X$. From now on, we will tacitly assume that the hypothesis of the definition is satisfied.
    \begin{center}

\tikzset{every picture/.style={line width=0.75pt}} 


    \end{center}
\end{defi}

Let us now see how this general differentiation influences the power series.

\begin{theorem}\label{714}
    Let $F(X)$ be a 1-sort species, and $\Omega(X,T)$ be a 2-sort species. Denote $G(X) = \Omega(X,D)F(X)$. Then
    \begin{itemize}
        \item[ ] $G(x) = \sum_{n_1,n_2, \ldots} \omega_{n_1,n_2, \ldots}(x,0,0, \ldots) \left[ \frac{\left(\frac{\del}{\del x_1}\right)^{n_1}\left(\frac{\del}{\del x_2}\right)^{n_2} \ldots}{n_1!n_2! \ldots}Z_F \right](x,0,0, \ldots)$
        \item[ ] $\widetilde{G}(x) = \sum_{n_1,n_2, \ldots} \omega_{n_1,n_2, \ldots}(x,x^2,x^3,\ldots) \left[ \frac{\left(\frac{\del}{\del x_1}\right)^{n_1}\left(\frac{\del}{\del x_2}\right)^{n_2} \ldots}{n_1!n_2! \ldots}Z_F \right](x,x^2,x^3, \ldots)$
        \item[ ] $Z_G(x_1,x_2,x_3, \ldots) = Z_\Omega(x_1,x_2,x_3, \ldots;\frac{\del}{\del x_1},2\frac{\del}{\del x_2},3\frac{\del}{\del x_3}, \ldots)Z_F(x_1,x_2,x_3, \ldots)$
    \end{itemize}
    where $\omega_{n_1,n_2, \ldots}$ comes from
    \begin{equation*}
        Z_\Omega(x_1,x_2,x_3, \ldots;t_1,t_2,t_3,\ldots) = \sum_{n_1,n_2,\ldots} \omega_{n_1,n_2,\ldots}(x_1,x_2,x_3,\ldots) \frac{t_1^{n_1}t_2^{n_2}t_3^{n_3}\ldots}{1^{n_1}n_1!2^{n_2}n_2!3^{n_3}n_3!\ldots}
    \end{equation*}
\end{theorem}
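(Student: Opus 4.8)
The plan is to prove the third identity — the one for the \emph{Zyklenzeiger} $Z_G$ — first, and then deduce the formulas for $G(x)$ and $\widetilde{G}(x)$ as immediate corollaries. Since $G = \Omega(X,D)F$ is an ordinary ($1$-sort) species, the general identities $G(x) = Z_G(x,0,0,\ldots)$ and $\widetilde{G}(x) = Z_G(x,x^2,x^3,\ldots)$ apply; once $Z_G$ is known, substituting $(x_1,x_2,\ldots) := (x,0,0,\ldots)$ and $(x_1,x_2,\ldots) := (x,x^2,x^3,\ldots)$ into the expanded form of $Z_G$ yields precisely the first and second displayed formulas. So the whole theorem reduces to computing $Z_G$, which I would do by tracking the effect on the \emph{Zyklenzeiger} of each operation appearing in the definition $G = [\Omega(X,T) \times_T F(X+T)]|_{T:=1}$.

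The key lemma is that $Z_{F(X+T)}(x_1,x_2,\ldots;t_1,t_2,\ldots) = Z_F(x_1+t_1, x_2+t_2,\ldots)$. I would prove this directly: an $F(X+T)$-structure on a pair $(U,V)$ is an $F$-structure on $U \sqcup V$, so a permutation $(\sigma,\tau)$ fixes it iff $F[\sigma \sqcup \tau]$ fixes the corresponding $F$-structure, whence $\mathbf{fix}(F(X+T)[\sigma,\tau]) = \mathbf{fix}(F[\sigma \sqcup \tau])$. Grouping the double sum defining $Z_{F(X+T)}$ by the cycle types of $\sigma$ and $\tau$, counting the $\frac{n!}{\prod_i i^{\sigma_i}\sigma_i!}$ and $\frac{k!}{\prod_i i^{\tau_i}\tau_i!}$ permutations of each type, and using that $\sigma \sqcup \tau$ has cycle type $(\sigma_i + \tau_i)_i$, one matches term-by-term the expansion of $Z_F(x_1+t_1,\ldots)$ obtained by substituting $y_i = x_i + t_i$ into the cycle-type form of $Z_F$ (equation \ref{z48}) and expanding each $(x_i+t_i)^{a_i}$ by the binomial theorem.

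With the lemma in hand, I would expand $Z_F(x_1+t_1, x_2+t_2,\ldots)$ by Taylor's theorem in the variables $t_i$, giving $\sum_{n_1,n_2,\ldots} [\left(\frac{\del}{\del x_1}\right)^{n_1}\left(\frac{\del}{\del x_2}\right)^{n_2}\cdots Z_F]\,\frac{t_1^{n_1}t_2^{n_2}\cdots}{n_1!n_2!\cdots}$, and rewrite this in the $\mathrm{aut}$-normalized form used for the Hadamard product by inserting the factors $1^{n_1}2^{n_2}\cdots$. Taking the Hadamard product $\times_{\mathbf{t}}$ against $Z_\Omega = \sum_{\mathcal N}\omega_{\mathcal N}\,\frac{t_1^{n_1}t_2^{n_2}\cdots}{1^{n_1}n_1!2^{n_2}n_2!\cdots}$ multiplies matching coefficients, and finally setting $t_j := 1$ — legitimate by the formula $Z_{\Lambda(X,1)}(x_1,\ldots) = Z_\Lambda(x_1,\ldots;1,1,\ldots)$ applied to $\Lambda = \Omega \times_T F(X+T)$ — collapses the $t$-monomials and, after the inserted $1^{n_1}2^{n_2}\cdots$ cancel the $i^{n_i}$ in the denominators, leaves $Z_G = \sum_{\mathcal N}\omega_{\mathcal N}(x_1,x_2,\ldots)\frac{\left(\frac{\del}{\del x_1}\right)^{n_1}\left(\frac{\del}{\del x_2}\right)^{n_2}\cdots}{n_1!n_2!\cdots}Z_F$. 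Recognizing that substituting $t_i \mapsto i\frac{\del}{\del x_i}$ into the defining expansion of $Z_\Omega$ produces exactly this operator identifies the sum as $Z_\Omega(x_1,x_2,\ldots;\frac{\del}{\del x_1},2\frac{\del}{\del x_2},\ldots)Z_F$, which is the third formula.

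The main obstacle is the normalization bookkeeping: the Hadamard product is defined with respect to the weight $\mathrm{aut}(\mathcal N) = 1^{n_1}n_1!2^{n_2}n_2!\cdots$, whereas the Taylor expansion of $Z_F(x_1+t_1,\ldots)$ naturally carries the weight $n_1!n_2!\cdots$, and the substitution $t_i \mapsto i\frac{\del}{\del x_i}$ in the target introduces a further $1^{n_1}2^{n_2}\cdots$. I would verify once and for all that these three factors conspire so that the $i^{n_i}$ powers cancel exactly, leaving the clean coefficient $\frac{1}{n_1!n_2!\cdots}$; this is where an index slip is most likely, so I would carry a single generic multi-index $\mathcal N = (n_1,n_2,\ldots)$ through the computation rather than treating the $t_i$ separately. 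The two remaining formulas then follow with no further work by the substitutions recorded in the first paragraph, noting only that the partial derivatives must act on $Z_F$ \emph{before} the specialization of the $x_i$.
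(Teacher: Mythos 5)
Your proposal is correct and follows essentially the same route as the paper: reduce everything to the $Z_G$ identity via $G(x)=Z_G(x,0,0,\ldots)$ and $\widetilde{G}(x)=Z_G(x,x^2,x^3,\ldots)$, write $Z_G=[Z_\Omega \times_{\mathbf{t}} Z_F(x_1+t_1,x_2+t_2,\ldots)]|_{t_j=1}$, expand in the $t_i$ (your Taylor expansion is exactly the paper's binomial manipulation of the cycle-type form of $Z_F$), and check that the $i^{n_i}$ factors from the $\mathrm{aut}$-normalization cancel against the substitution $t_i\mapsto i\,\frac{\del}{\del x_i}$. The only difference is that you make explicit, and propose to prove by fixed-point counting, the lemma $Z_{F(X+T)}(x_1,\ldots;t_1,\ldots)=Z_F(x_1+t_1,x_2+t_2,\ldots)$, which the paper uses tacitly in its first line — a worthwhile filling-in of a gap rather than a different argument.
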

\begin{proof}
    It is sufficient to prove the equation for $Z_G$. We have
    \begin{align*}
        &Z_G(x_1,x_2, \ldots) = [Z_\Omega(x_1,x_2, \ldots;t_1,t_2, \ldots) \times_\mathbf{t} Z_F(x_1+t_1,x_2+t_2,\ldots)]|_{t_j = 1}\\
        &= \left[ Z_\Omega(x_1,x_2,\ldots;t_1,t_2,\ldots) \times_\mathbf{t} \sum_{n_1,n_2,\ldots} f_{n_1,n_2,\ldots} \frac{(x_1+t_1)^{n_1}(x_2+t_2)^{n_2}\ldots}{1^{n_1}n_1!2^{n_2}n_2!\ldots} \right]_{t_j = 1}\\
        &= \left[ Z_\Omega \times_\mathbf{t} \sum_{n_1,n_2,\ldots} f_{n_1,n_2,\ldots} \frac{\left(\sum_{i_1=0}^{n_1}\binom{n_1}{i_1}x_1^{n_1-i_1}t_1^{i_1} \right)\left(\sum_{i_2=0}^{n_2}\binom{n_2}{i_2}x_2^{n_2-i_2}t_2^{i_2} \right)\ldots}{1^{n_1}n_1!2^{n_2}n_2!\ldots} \right]_{t_j = 1}\\
        &= \left[ Z_\Omega \times_\mathbf{t} \sum_{n_1,n_2,\ldots} f_{n_1,n_2,\ldots} \left(\sum_{i_1=0}^{n_1}\frac{x_1^{n_1-i_1}}{1^{n_1}(n_1-i_1)!}\frac{t_1^{i_1}}{i_1!} \right)\left(\sum_{i_2=0}^{n_2}\frac{x_2^{n_2-i_2}}{2^{n_2}(n_2-i_2)!}\frac{t_2^{i_2}}{i_2!} \right)\ldots \right]_{t_j = 1}\\
        &= \left[ Z_\Omega \times_\mathbf{t} \sum_{i_1,i_2,\ldots} 
        \frac{t_1^{i_1}t_2^{i_2}\ldots}{i_1!i_2!\ldots} \left( \frac{\del}{\del x_1} \right)^{i_1}\left( \frac{\del}{\del x_2} \right)^{i_2}\ldots \left(\sum_{n_1,n_2,\ldots} f_{n_1,n_2,\ldots} \frac{x_1^{n_1}x_2^{n_2}\ldots}{1^{n_1}n_1!2^{n_2}n_2!\ldots}\right) \right]_{t_j = 1}\\
        &= \left[ Z_\Omega \times_\mathbf{t} \sum_{i_1,i_2,\ldots} \left( 1\frac{\del}{\del x_1} \right)^{i_1}\left( 2\frac{\del}{\del x_2} \right)^{i_2}\ldots Z_F(x_1,x_2,\ldots) 
        \frac{t_1^{i_1}t_2^{i_2}\ldots}{1^{i_1}i_1!2^{i_2}i_2!\ldots}  \right]_{t_j = 1}\\
        &= \left[ \sum_{i_1,i_2,\ldots} \omega_{i_1,i_2,\ldots}(x_1,x_2,\ldots)\left( 1\frac{\del}{\del x_1} \right)^{i_1}\left( 2\frac{\del}{\del x_2} \right)^{i_2} \ldots Z_F(x_1,x_2,\ldots) 
        \frac{t_1^{i_1}t_2^{i_2}\ldots}{1^{i_1}i_1!2^{i_2}i_2!\ldots} \right]_{t_j=1}\\
        &= \sum_{i_1,i_2, \ldots} \omega_{i_1,i_2,\ldots}(x_1,x_2,\ldots)\frac{\left( 1\frac{\del}{\del x_1} \right)^{i_1}\left( 2\frac{\del}{\del x_2} \right)^{i_2}\ldots}{1^{i_1}i_1!2^{i_2}i_2!\ldots}Z_F(x_1,x_2,\ldots)\\
        &= Z_\Omega(x_1,x_2,\ldots;1\frac{\del}{\del x_1},2\frac{\del}{\del x_2},\ldots)Z_F(x_1,x_2,\ldots).
    \end{align*}
\end{proof}

We can now proceed to the second operation: composition of differential operators.

\begin{defi}
    Let $\Omega_1(X,T), \Omega_2(X,T)$ be two 2-sort species. Then the composition of $\Omega_1$ and $\Omega_2$ is defined as
    \begin{equation*}
        \Omega_3(X,T) = \Omega_2(X,T) \odot \Omega_1(X,T) := [\Omega_2(X,T+Y) \times_Y \Omega_1(X+Y,T)]|_{Y:=1}
    \end{equation*}
    The differential operator $\Omega_3(X,D)$, denoted ${\Omega_2(X,D) \odot \Omega_1(X,D)}$, is called the composition of $\Omega_1(X,D)$ and $\Omega_2(X,D)$. The following picture presents an ${[\Omega_2(X,T) \odot \Omega_1(X,T)]}$-structure, where black circles represent $X$-sort elements, black squares represent $T$-sort elements, and white triangles represent unlabelled $Y$-sort elements.
    \begin{center}

\tikzset{every picture/.style={line width=0.75pt}} 


    \end{center}
\end{defi}

The next theorem explains why this operation is called composition.

\begin{theorem}
    Let $F(X)$ be a 1-sort species. Then
    \begin{equation}\label{716}
        [\Omega_2(X,D) \odot \Omega_1(X,D)]F(X) = \Omega_2(X,D)[\Omega_1(X,D)F(X)]
    \end{equation}
\end{theorem}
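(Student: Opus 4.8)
The plan is to establish \eqref{716} as a combinatorial equality of species by exhibiting a natural isomorphism between its two sides. Since both sides are ordinary ($\B$-)species on finite sets of sort $X$, and since passage to the \emph{Zyklenzeiger} alone would only certify equipotence (the cycle index does not in general determine a species up to combinatorial isomorphism), the real work is to produce, for each finite set $A$, an explicit bijection between the two families of structures that commutes with transport along bijections of $A$. All the constituent operations ($\times_T$, the composition $\odot$, the substitutions $|_{T:=1}$, and $F \mapsto F(X+T)$) are functorial, so naturality will be automatic once the set-level bijection is described; the entire content lies in matching the labelled elements of $A$ against the several families of \emph{unlabelled} points introduced by the various $|_{\cdot:=1}$ substitutions.

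First I would unwind the left-hand side. Writing $\Omega_3 = \Omega_2 \odot \Omega_1$ and expanding the definitions of $\odot$ and of $\Omega(X,D)$, an $\Omega_3(X,D)F(X)$-structure on $A$ decomposes as a splitting $A = P_2 \sqcup P_1 \sqcup Q$, an unlabelled set $E$ of $Y$-points, an unlabelled set $B = B_2 \sqcup B_1$ of $T$-points, together with an $\Omega_2$-structure on $(P_2,\, B_2 \sqcup E)$, an $\Omega_1$-structure on $(P_1 \sqcup E,\, B_1)$, and an $F$-structure on $Q \sqcup B_2 \sqcup B_1$, the whole taken up to relabelling of $B$ and $E$. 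Here $E$ is simultaneously the $Y$-part of $\Omega_2$'s derivative slots and the $Y$-part of $\Omega_1$'s $X$-inputs (the connectors), while $B_2$ consists of those derivative slots of $\Omega_2$ fed directly into $F$, and $B_1$ of the derivative slots of $\Omega_1$ fed into $F$.

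Next I would unwind the right-hand side. Setting $G = \Omega_1(X,D)F$, an $\Omega_2(X,D)G$-structure on $A$ consists of a splitting $A = P_2 \sqcup R$, an unlabelled $T'$-set $S$, an $\Omega_2$-structure on $(P_2, S)$, and a $G$-structure on $R \sqcup S$. Unwinding $G = \Omega_1(X,D)F$ on $R \sqcup S$ further splits $R \sqcup S = P_1' \sqcup Q'$, introduces an unlabelled $T''$-set $W$, and records an $\Omega_1$-structure on $(P_1', W)$ and an $F$-structure on $Q' \sqcup W$. The crucial observation is that the unlabelled slots $S$ of $\Omega_2$ are themselves points of the $X$-set of $G$, hence get distributed by the inner $G$-structure, $S = (S \cap P_1') \sqcup (S \cap Q')$. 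I would then define the correspondence by $S \cap P_1' \leftrightarrow E$, $S \cap Q' \leftrightarrow B_2$, $W \leftrightarrow B_1$, $R \cap P_1' \leftrightarrow P_1$, and $R \cap Q' \leftrightarrow Q$; under this dictionary the three carried structures ($\Omega_2$, $\Omega_1$, $F$) and their underlying sets coincide on the two sides, so the correspondence is a genuine bijection, natural in $A$.

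The main obstacle I anticipate is the careful bookkeeping of labelled versus unlabelled elements, together with the fact that the two sides perform the passages $|_{\cdot:=1}$ in different groupings: $|_{Y:=1}$ then $|_{T:=1}$ on the left, versus $|_{T'':=1}$ inside $G$ and $|_{T':=1}$ for $\Omega_2$ on the right. To handle this cleanly I would first prove the purely functorial (pre-substitution) identity relating $[\Omega_2(X,T+Y) \times_Y \Omega_1(X+Y,T)] \times_T F(X+T)$ to the nested expression arising from $\Omega_2(X,D)[\Omega_1(X,D)F]$, and only then apply all the substitutions $|_{\cdot:=1}$ uniformly; this reduces matters to the associativity and commutativity of the partial Cartesian product and to a Fubini-type interchange showing that taking isomorphism types over $B$ and $E$ in stages agrees with taking them all at once. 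As a consistency check I would verify, using Theorem~\ref{714} and the Hadamard product $\times_\mathbf{t}$, that the induced operators satisfy $Z_{\Omega_2 \odot \Omega_1}(x_1,x_2,\ldots; 1\tfrac{\del}{\del x_1}, 2\tfrac{\del}{\del x_2}, \ldots) = Z_{\Omega_2}(x_1,x_2,\ldots; 1\tfrac{\del}{\del x_1}, \ldots) \circ Z_{\Omega_1}(x_1,x_2,\ldots; 1\tfrac{\del}{\del x_1}, \ldots)$, confirming that both sides at least carry the same \emph{Zyklenzeiger}.
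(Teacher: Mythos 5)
Your proposal is correct, and at bottom it follows the same route as the paper: both proofs rewrite $\Omega_2(X,D)[\Omega_1(X,D)F(X)]$ as the nested substitution $\bigl\{\Omega_2(X,T_2)\times_{T_2}\bigl[[\Omega_1(X,T_1)\times_{T_1}F(X+T_1)]|_{T_1:=1}\bigr]_{X:=X+T_2}\bigr\}_{T_2:=1}$, unwind the left-hand side via the definition of $\odot$, and match the constituent decompositions part by part. The difference is one of rigor rather than strategy: the paper draws the two generic structures (squares for one sort of unlabelled points, triangles for the other) and concludes that "it should then become clear" that they coincide, whereas you make the identification into an explicit, transport-compatible dictionary ($S\cap P_1'\leftrightarrow E$, $S\cap Q'\leftrightarrow B_2$, $W\leftrightarrow B_1$, $R\cap P_1'\leftrightarrow P_1$, $R\cap Q'\leftrightarrow Q$), and your bookkeeping of which unlabelled slots of $\Omega_2$ land inside $\Omega_1$ versus inside $F$ is exactly what the pictures encode. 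More importantly, you isolate the one point the pictures silently assume: the two sides perform the substitutions $|_{\cdot:=1}$ in different groupings, so one must know that taking isomorphism types in stages agrees with taking them simultaneously; your reduction of this to a Fubini-type interchange for the commuting relabelling actions of the two families of unlabelled points (together with associativity of the partial Cartesian products) is precisely the lemma needed to promote the paper's picture comparison to a complete proof of a natural isomorphism, rather than mere equality of associated series — which, as you rightly note, would not suffice for combinatorial equality of $\B$-species.
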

\begin{proof}
    Firstly, write $\Omega_2(X,D)[\Omega_1(X,D)F(X)]$ as
    \begin{equation*}
        \Omega_2(X,D)[\Omega_1(X,T_1) \times_{T_1} F(X+T_1)]_{T_1 := 1} 
    \end{equation*}
    \begin{equation*}
        = \Big\{ \Omega_2(X,T_2) \times_{T_2} \big[ [\Omega_1(X,T_1) \times_{T_1} F(X+T_1)]_{T_1 := 1} \big]_{X := X+T_2} \Big\}_{T_2 :=1}
    \end{equation*}
    Denote $T_1$-sort elements by squares, and $T_2$-sort elements by triangles. This yields the following diagram for a generic ${\Omega_2(X,D)[\Omega_1(X,D)F(X)]}$-structure.
    \begin{center}

\tikzset{every picture/.style={line width=0.75pt}} 


    \end{center}
\end{proof}

Composition of 2-sort species has the following effect on the \emph{Zyklenzeiger}.

\begin{theorem}
    Let $\Omega_1, \Omega_2$ be two 2-sort species. Then
    \begin{equation}\label{717}
        Z_{\Omega_2 \odot \Omega_1} = \sum_{n_1,n_2,\ldots} \frac{\left[ \left(\frac{\del}{\del x_1}\right)^{n_1}\left(2\frac{\del}{\del x_2}\right)^{n_2}\ldots Z_{\Omega_1} \right]\left[ \left(\frac{\del}{\del t_1}\right)^{n_1}\left(2\frac{\del}{\del t_2}\right)^{n_2}\ldots Z_{\Omega_2} \right]}{1^{n_1}n_1!2^{n_2}n_2!\ldots }
    \end{equation}
\end{theorem}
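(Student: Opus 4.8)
The plan is to unwind the definition of $\Omega_2 \odot \Omega_1$ one operation at a time, translating each combinatorial operation into its effect on the \emph{Zyklenzeiger}, following the same computation as in the proof of Theorem~\ref{714}. Recall that
\[
\Omega_2 \odot \Omega_1 = [\Omega_2(X, T+Y) \times_Y \Omega_1(X+Y, T)]|_{Y:=1},
\]
so there are three ingredients to handle: the sort substitutions $T \mapsto T+Y$ and $X \mapsto X+Y$ in the two factors, the partial Cartesian product $\times_Y$ with respect to the new sort $Y$, and the final specialization $Y := 1$.

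First I would record the effect of the two sort substitutions on the \emph{Zyklenzeiger}. Exactly as in the identity $Z_{F(X+T)}(\mathbf{x};\mathbf{t}) = Z_F(x_1+t_1, x_2+t_2, \ldots)$ used (for a $1$-sort $F$) inside the proof of Theorem~\ref{714}, one obtains for the $2$-sort species
\[
Z_{\Omega_2(X,T+Y)} = Z_{\Omega_2}(x_1, x_2, \ldots; t_1+y_1, t_2+y_2, \ldots), \qquad Z_{\Omega_1(X+Y,T)} = Z_{\Omega_1}(x_1+y_1, x_2+y_2, \ldots; t_1, t_2, \ldots).
\]
Both are instances of the multisort composition rule, since $X+Y$ and $T+Y$ are sums of singleton species whose own \emph{Zyklenzeiger} contributes the shift $x_j \mapsto x_j + y_j$ (respectively $t_j \mapsto t_j + y_j$) at each level $j$.

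Next I would expand each of these series as a power series in the new variables $y_1, y_2, \ldots$ by means of the multivariate Taylor-type formula already established inside the proof of Theorem~\ref{714}, namely $g(\mathbf{x}+\mathbf{y}) = \sum_{\mathbf{n}} \left[\prod_j \left(j\frac{\del}{\del x_j}\right)^{n_j} g\right](\mathbf{x}) \, \frac{y_1^{n_1}y_2^{n_2}\cdots}{\prod_j j^{n_j}n_j!}$. Applied levelwise, the coefficient of $y_1^{n_1}y_2^{n_2}\cdots / (1^{n_1}n_1!\,2^{n_2}n_2!\cdots)$ in the first factor is $\left[\left(\frac{\del}{\del t_1}\right)^{n_1}\left(2\frac{\del}{\del t_2}\right)^{n_2}\cdots Z_{\Omega_2}\right]$, and in the second factor it is $\left[\left(\frac{\del}{\del x_1}\right)^{n_1}\left(2\frac{\del}{\del x_2}\right)^{n_2}\cdots Z_{\Omega_1}\right]$. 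Now the partial Cartesian product $\times_Y$ is, by the definition of the partial Cartesian product, a Hadamard product with respect to the shared sort $Y$ and an ordinary product in the remaining sorts $X$ and $T$; hence $Z_{\Omega_2(X,T+Y)\times_Y\Omega_1(X+Y,T)}$ is obtained by multiplying, for each fixed tuple $(n_1,n_2,\ldots)$, the two coefficients just computed and reattaching the monomial $y_1^{n_1}y_2^{n_2}\cdots/\mathrm{aut}$. Finally, the specialization $Y:=1$ amounts to setting $y_j = 1$ for all $j$ (this is the rule $Z_{\Omega(X,1)} = Z_\Omega|_{t_j=1}$ applied to the sort $Y$), which collapses every monomial to $1$ and leaves precisely the sum in equation \ref{717}.

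The computation has no genuine obstacle; the only delicate points are bookkeeping. One must keep straight which sort each partial derivative acts on — the $\frac{\del}{\del t_j}$'s arise from differentiating $Z_{\Omega_2}$ in its second-sort variables, while the $\frac{\del}{\del x_j}$'s arise from $Z_{\Omega_1}$ in its first-sort variables — and one must confirm that the weight $1^{n_1}n_1!\,2^{n_2}n_2!\cdots$ produced by setting $y_j=1$ in the Hadamard product is exactly the denominator of equation \ref{717}. Both are settled by the single-level verification $g(x_j + y_j) = \sum_{i}\frac{y_j^i}{j^i\,i!}\left(j\frac{\del}{\del x_j}\right)^i g$; once this is in hand, the multivariate statement follows by treating the levels independently, and assembling the three steps yields the claimed identity.
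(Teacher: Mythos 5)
Your proposal is correct and follows essentially the same route as the paper's own proof: expand $Z_{\Omega_1(X+Y,T)}$ and $Z_{\Omega_2(X,T+Y)}$ as Taylor-type series in the $y$-variables exactly as in the proof of Theorem~\ref{714}, take the Hadamard product with respect to $Y$, and set each $y_j:=1$. The extra bookkeeping you supply (the single-level identity $g(x_j+y_j)=\sum_i \frac{y_j^i}{j^i i!}\left(j\frac{\del}{\del x_j}\right)^i g$ and the check of which sort each derivative acts on) is precisely what the paper leaves implicit under the phrase ``in a way analogous to Theorem~\ref{714}.''
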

\begin{proof}
    In a way analogous to that used in the proof of Theorem \ref{714} for $Z_F(x_1+t_1,x_2+t_2,...)$, we have
    \begin{equation*}
        Z_{\Omega_1(X+Y,T)} = \sum_{n_1,n_2, \ldots} \left(\frac{\del}{\del x_1}\right)^{n_1}\left(2\frac{\del}{\del x_2}\right)^{n_2}\ldots Z_{\Omega_1(X,T)} \frac{y_1^{n_1}y_2^{n_2} \ldots }{1^{n_1}n_1!2^{n_2}n_2! \ldots }
    \end{equation*}
    and
    \begin{equation*}
        Z_{\Omega_2(X,T+Y)} = \sum_{n_1,n_2, \ldots} \left(\frac{\del}{\del t_1}\right)^{n_1}\left(2\frac{\del}{\del t_2}\right)^{n_2} \ldots Z_{\Omega_2(X,T)} \frac{y_1^{n_1}y_2^{n_2} \ldots }{1^{n_1}n_1!2^{n_2}n_2! \ldots }
    \end{equation*}
    Equation \ref{717} is then simply a case of performing Hadamard product with respect to $Y$ and evaluating each $y_j := 1$.
\end{proof}

The next operation we will provide is Joyal's scalar product, which is defined in the following manner.

\begin{defi}
    Let $F(X), G(X)$ be two 1-sort species. Then Joyal's scalar product is the bilinear form $\langle\  F(X),G(X) \ \rangle$ defined as
    \begin{equation*}
        \langle\  F(X),G(X) \ \rangle = [ F(X) \times G(X) ] |_{X := 1} = \text{number of unlabelled $(F \times G)$-structures, if finite.} 
    \end{equation*}
\end{defi}


\begin{theorem}
    Let $\Omega(X,T)$ be a 2-sort species. Then
    \begin{equation*}
        \langle\  \Omega(X,D)F(X),G(X) \ \rangle = \langle\  F(X),\Omega(D,X)G(X) \ \rangle
    \end{equation*}
    In other words, the adjoint operator of $\Omega(X,D)$ is $\Omega^*(X,D) = \Omega(D,X)$.
\end{theorem}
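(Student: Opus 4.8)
The plan is to reduce the statement to a symmetry property of generating series, exploiting the fact that Joyal's scalar product is computed by evaluating a \emph{Zyklenzeiger} at $t_j := 1$. First I would observe that both sides of the claimed identity are, by definition, counts of unlabelled structures, so it suffices to show that the associated cycle index series coincide. The natural tool is Theorem~\ref{714}, which expresses $Z_{\Omega(X,D)F}$ in terms of $Z_\Omega$ and the differential operators $j\frac{\del}{\del x_j}$ applied to $Z_F$. Concretely, I would write
\begin{equation*}
    Z_{\Omega(X,D)F}(x_1,x_2,\ldots) = Z_\Omega\!\left(x_1,x_2,\ldots; \tfrac{\del}{\del x_1},2\tfrac{\del}{\del x_2},\ldots\right) Z_F(x_1,x_2,\ldots),
\end{equation*}
and similarly for $\Omega(D,X)G$, where the roles of the two groups of variables in $Z_\Omega$ are swapped (this is what $\Omega^*(X,D)=\Omega(D,X)$ means at the level of the \emph{Zyklenzeiger}: exchange $x_j \leftrightarrow t_j$).

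\textbf{Key steps in order.}
After setting up the series, I would unwind the left-hand side $\langle \Omega(X,D)F, G\rangle$ by forming the Cartesian product with $G$ and substituting $X:=1$, i.e. evaluating $x_j := 1$ in the Hadamard-type expression. Using the explicit coefficient expansion from Theorem~\ref{714}, the left-hand side becomes a double sum over multi-indices $(n_1,n_2,\ldots)$ of a product involving the coefficients $\omega_{n_1,n_2,\ldots}$ of $Z_\Omega$, the iterated derivatives $\left(\frac{\del}{\del x_1}\right)^{n_1}\left(\frac{\del}{\del x_2}\right)^{n_2}\cdots Z_F$, and the factor $Z_G$, all evaluated at $x_j := 1$. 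The central computation is then an integration-by-parts / adjointness identity for the operators $j\frac{\del}{\del x_j}$ with respect to the bilinear pairing induced by evaluation at $x_j := 1$: one checks that $\left\langle \left(j\tfrac{\del}{\del x_j}\right)P, Q\right\rangle$ and $\left\langle P, \left(j\tfrac{\del}{\del x_j}\right)Q\right\rangle$ match once the combinatorial normalization $1^{n_1}n_1!\,2^{n_2}n_2!\cdots$ is taken into account. Carrying this through symmetrically shows that interchanging the roles of $F$ and $G$ corresponds exactly to interchanging the $X$-variables and $T$-variables of $\Omega$, which is precisely the passage from $\Omega(X,D)$ to $\Omega(D,X)$.

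\textbf{Main obstacle.}
The delicate point is making the adjointness of the operator $j\frac{\del}{\del x_j}$ rigorous with respect to Joyal's scalar product, rather than merely formal. I expect the convergence/finiteness hypotheses (that $\Omega$ be finitary in $T$, or $F,G$ of appropriate finite degree, so that the scalar products are well-defined) to require care, since the naive ``integration by parts'' shifts powers of $x_j$ and could in principle produce ill-defined infinite sums. The cleanest route is probably to verify the identity combinatorially at the level of structures first—exhibiting a bijection between unlabelled $(\Omega(X,D)F)\times G$-structures and $F\times(\Omega(D,X)G)$-structures by reinterpreting which elements are ``labelled'' versus absorbed into the $T:=1$ and $X:=1$ substitutions—and then to confirm that this bijection is exactly what the series manipulation encodes. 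A useful sanity check along the way is the classical special case $\Omega(X,T)=T$, where $\Omega(X,D)=D$ is ordinary differentiation and the statement reduces to the known adjunction $\langle F', G\rangle = \langle F, XG\rangle$, matching $\Omega(D,X)=X$.
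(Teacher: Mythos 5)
There is a genuine gap in your central step. You assert that $\bigl\langle \bigl(j\tfrac{\del}{\del x_j}\bigr)P,\,Q\bigr\rangle$ and $\bigl\langle P,\,\bigl(j\tfrac{\del}{\del x_j}\bigr)Q\bigr\rangle$ match once the normalization $1^{n_1}n_1!\,2^{n_2}n_2!\cdots$ is taken into account; in other words, that the operators $j\tfrac{\del}{\del x_j}$ are self-adjoint for the pairing obtained by Hadamard multiplication followed by evaluation at $x_j:=1$. This is false, and no normalization repairs it. Write $P=\sum_{\mathcal{N}}p_{\mathcal{N}}\,\mathcal{X}^{\mathcal{N}}/\mathrm{aut}(\mathcal{N})$, so that the pairing is $\langle P,Q\rangle=\sum_{\mathcal{N}}p_{\mathcal{N}}q_{\mathcal{N}}/\mathrm{aut}(\mathcal{N})$. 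Using $\mathrm{aut}(\mathcal{N})=\mathrm{aut}(\mathcal{N}-e_j)\cdot jn_j$ (where $e_j$ is the multi-index with a single $1$ in position $j$), one computes $\bigl(j\tfrac{\del}{\del x_j}P\bigr)_{\mathcal{N}}=p_{\mathcal{N}+e_j}$ and $(x_jQ)_{\mathcal{N}}=jn_j\,q_{\mathcal{N}-e_j}$, whence
\begin{equation*}
\bigl\langle\, x_j P,\; Q \,\bigr\rangle \;=\; \bigl\langle\, P,\; j\tfrac{\del}{\del x_j} Q \,\bigr\rangle .
\end{equation*}
So the adjoint of $j\tfrac{\del}{\del x_j}$ is multiplication by $x_j$, not $j\tfrac{\del}{\del x_j}$ itself. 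This is not cosmetic: the entire content of the theorem is that $D$ and $X$ trade places under adjunction, and your own sanity check contradicts the identity you propose to use — self-adjointness would yield $\langle F',G\rangle=\langle F,G'\rangle$, which fails already for $F=X$, $G=1$ (the left side is $\langle 1,1\rangle=1$, the right side is $\langle X,0\rangle=0$). With the corrected identity your plan does go through: expanding $Z_\Omega=\sum_{\mathcal{K},\mathcal{N}}c_{\mathcal{K},\mathcal{N}}\,\mathcal{X}^{\mathcal{K}}\mathbf{t}^{\mathcal{N}}/(\mathrm{aut}(\mathcal{K})\,\mathrm{aut}(\mathcal{N}))$ and writing $D^{\mathcal{N}}$ for $\prod_j\bigl(j\tfrac{\del}{\del x_j}\bigr)^{n_j}$, the adjoint of each elementary operator $\mathcal{X}^{\mathcal{K}}D^{\mathcal{N}}$ (differentiate first, then multiply) is $\mathcal{X}^{\mathcal{N}}D^{\mathcal{K}}$, and summing over $\mathcal{K},\mathcal{N}$ is exactly the exchange of the two groups of variables of $Z_\Omega$, i.e.\ the passage from $\Omega(X,D)$ to $\Omega(D,X)$.

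Two further remarks. First, your opening reduction is misstated: the cycle index series of $(\Omega(X,D)F)\times G$ and of $F\times(\Omega(D,X)G)$ do \emph{not} coincide (for $\Omega=T$, $F=X$, $G=1$ they are $1$ and $x_1$, respectively); only their evaluations at all $x_j:=1$ — the two scalar products — agree, which is what your subsequent steps actually compute, so the reduction must be phrased at the level of those evaluations. Second, the paper's own proof is exactly your fallback: a single picture exhibiting both scalar products as counts of the same unlabelled configurations, namely an $\Omega$-structure whose two sorts of points are shared with the underlying sets of a $G$-structure and an $F$-structure respectively, the remaining points being common to $F$ and $G$. The corrected series computation is therefore a genuinely different, more computational route; it produces an explicit \emph{Zyklenzeiger}-level adjunction but requires the finiteness hypotheses you flag, whereas the bijective argument yields the equality of counts directly.
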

\begin{proof}
    See the following diagram.
    \begin{center}

\tikzset{every picture/.style={line width=0.75pt}} 


    \end{center}
\end{proof}

\begin{corollary}
    Let $\Phi(X,D), \Psi(X,D)$ be two general differential operators. Then
    \begin{equation*}
        [\Phi(X,D) \odot \Psi(X,D)]^* = \Psi^*(X,D) \odot \Phi^*(X,D) = \Psi(D,X) \odot \Phi(D,X)
    \end{equation*}
\end{corollary}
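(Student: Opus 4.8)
The plan is to reduce the corollary to a single identity between $2$-sort species and then verify that identity directly from the definition of $\odot$. Two equalities are asserted. The second, $\Psi^*(X,D) \odot \Phi^*(X,D) = \Psi(D,X) \odot \Phi(D,X)$, is immediate: by the adjoint theorem the adjoint of $\Omega(X,D)$ is by definition $\Omega(D,X)$, so $\Psi^*(X,D) = \Psi(D,X)$ and $\Phi^*(X,D) = \Phi(D,X)$, while $\odot$ of operators is by definition induced by $\odot$ of the underlying $2$-sort species. Hence all the content lies in the first equality $[\Phi(X,D) \odot \Psi(X,D)]^* = \Psi^*(X,D) \odot \Phi^*(X,D)$.

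To isolate that content, I would introduce, for a $2$-sort species $\Omega(X,T)$, the sort-swapped species $\Omega^\dagger(X,T) := \Omega(T,X)$, so that the adjoint theorem reads $\Omega(X,D)^* = \Omega^\dagger(X,D)$. Writing $\Xi := \Phi \odot \Psi$ for the composite $2$-sort species, the composite operator $\Phi(X,D) \odot \Psi(X,D)$ is exactly $\Xi(X,D)$, so its adjoint is $\Xi^\dagger(X,D)$; likewise $\Psi^*(X,D) \odot \Phi^*(X,D)$ is the operator of the species $\Psi^\dagger \odot \Phi^\dagger$. Therefore the first equality is equivalent to the purely combinatorial statement $(\Phi \odot \Psi)^\dagger = \Psi^\dagger \odot \Phi^\dagger$.

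The main step is then to prove this species identity by unfolding the definition $\Omega_2 \odot \Omega_1 = [\Omega_2(X,T+Y) \times_Y \Omega_1(X+Y,T)]|_{Y:=1}$. Expanding the right-hand side, and using $\Psi^\dagger(X,T+Y) = \Psi(T+Y,X)$ together with $\Phi^\dagger(X+Y,T) = \Phi(T,X+Y)$, gives $\Psi^\dagger \odot \Phi^\dagger = [\Psi(T+Y,X) \times_Y \Phi(T,X+Y)]|_{Y:=1}$. Expanding the left-hand side, $(\Phi \odot \Psi)(X,T) = [\Phi(X,T+Y) \times_Y \Psi(X+Y,T)]|_{Y:=1}$, and applying $\dagger$ (i.e. swapping $X \leftrightarrow T$, which leaves the auxiliary sort $Y$ untouched) yields $(\Phi \odot \Psi)^\dagger(X,T) = [\Phi(T,X+Y) \times_Y \Psi(T+Y,X)]|_{Y:=1}$. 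The two expressions coincide once we invoke the commutativity of the partial Cartesian product, $A(X,Y) \times_Y B(X,Y) \cong B(X,Y) \times_Y A(X,Y)$, which holds because the ordinary Cartesian product is commutative up to natural isomorphism.

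The delicate point, and the step I expect to be the main obstacle, is the bookkeeping of which role each occurrence of $X$, $T$ and the auxiliary sort $Y$ plays after the swap, and confirming that $(-)^\dagger$, $\times_Y$, and the specialization $|_{Y:=1}$ genuinely commute in the required sense. For completeness I would also record the slicker operator-theoretic derivation mirroring the Hilbert-space identity $(AB)^* = B^*A^*$: applying the composition theorem and the adjoint theorem twice gives $\langle (\Phi\odot\Psi)(X,D)F, G\rangle = \langle \Phi(X,D)[\Psi(X,D)F], G\rangle = \langle \Psi(X,D)F, \Phi(D,X)G\rangle = \langle F, [\Psi(D,X)\odot\Phi(D,X)]G\rangle$, which exhibits $\Psi(D,X)\odot\Phi(D,X)$ as an adjoint of $\Phi\odot\Psi$. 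Concluding equality of operators this way, however, requires non-degeneracy of Joyal's scalar product, which is precisely why I would base the actual proof on the species identity $(\Phi\odot\Psi)^\dagger = \Psi^\dagger\odot\Phi^\dagger$ rather than on cancellation in the bilinear form.
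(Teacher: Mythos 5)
Your proof is correct, but it takes a genuinely different route from the paper. The paper's proof is precisely the scalar-product computation you record at the end as an afterthought: it applies the composition theorem, then the adjoint theorem twice, obtaining $\langle\, [\Phi(X,D)\odot\Psi(X,D)]F,\,G \,\rangle = \langle\, F,\,[\Psi(D,X)\odot\Phi(D,X)]G \,\rangle$, and stops there. Your main argument instead establishes the identity at the level of $2$-sort species, $(\Phi\odot\Psi)^\dagger \cong \Psi^\dagger\odot\Phi^\dagger$, by unfolding the definition of $\odot$ and invoking the commutativity of $\times_Y$; I checked the sort-bookkeeping you flag as the delicate point, and it does go through, since the swap $X\leftrightarrow T$ touches neither the auxiliary sort $Y$ nor the substitution $Y:=1$. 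This is a strictly stronger conclusion than what the paper's computation yields, and your caveat about the operator-theoretic route is well taken: because the paper defines the adjoint at the species level by $\Omega^*(X,D) := \Omega(D,X)$, the scalar-product chain only exhibits $\Psi(D,X)\odot\Phi(D,X)$ as \emph{an} operator satisfying the adjoint property, and upgrading this to equality with $(\Phi\odot\Psi)(D,X)$ would need non-degeneracy of Joyal's scalar product. That non-degeneracy actually fails: the pairing $\langle F,G \rangle$ is computed from the Zyklenzeigers of $F$ and $G$ (via the Hadamard product), and non-isomorphic species can share a Zyklenzeiger, so pairing identically against all $G$ does not determine a species. In short, your species-level derivation proves the statement in the strong, definitional sense and closes a gap that the paper's own proof leaves open, while the paper's derivation is shorter and suffices only under the weaker reading of the corollary as an adjunction identity in the bilinear form.
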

\begin{proof}\begin{equation*}
    \langle\  [\Phi(X,D) \odot \Psi(X,D)]F(X),G(X) \ \rangle = \langle\  \Phi(X,D)[\Psi(X,D)F(X)],G(X) \ \rangle 
\end{equation*}
\begin{equation*}
    = \langle\  \Psi(X,D)F(X),\Phi(D,X)G(X) \ \rangle = \langle\  F(X),\Psi(D,X)[\Phi(D,X)G(X)] \ \rangle 
\end{equation*}
\begin{equation*}
    = \langle\  F(X),[\Psi(D,X)\odot\Phi(D,X)]G(X) \ \rangle
\end{equation*}
\end{proof}

\section{Hammond Differential Operators}

In this section, we will take a brief look at the definition and some examples of Hammond differential operators.

\begin{defi}
    A Hammond differential operator is a differential operator of the form $\Phi(D)$, that is, a general differential operator of the form $\Omega(X,T) = \Phi(T)$. Observe that
    \begin{equation*}
        \Phi(D)F(X) = [\Phi(T) \times_T F(X+T)]|_{T := 1} = \big\{[E(X)\Phi(T)] \times F(X+T)\big\}\big|_{T := 1}
    \end{equation*}\newpage
    The following picture shows a $\Phi(D)F(X)$-structure.
    \begin{center}

\tikzset{every picture/.style={line width=0.75pt}} 


    \end{center}
\end{defi}

The composition of Hammond operators coincides with the product operation, that is, ${\Phi(D) \odot \Psi(D) = (\Phi \cdot \Psi)(D)}$. 
Therefore, restricted to Hammond operators, composition is commutative, which is not the case for the composition of general differential operators. 
One also has that the adjoint of a Hammond differential operation $\Phi(D)$ is $\Phi(X)$, that is, 
$\langle\  \Phi(D)F(X),G(X) \ \rangle = \langle\  F(X),\Phi(X)G(X) \ \rangle$. 

\begin{exa}
    Consider the Hammond differential operator $E_2(D)$ associated with the species $E_2$ of sets of cardinality two. Then, as it can be seen from the following pictures, we have
    \begin{equation*}
        E_2(D)(F \cdot G) = (E_2(D)F) \cdot G + F' \cdot G' + F \cdot (E_2(D)G)
    \end{equation*}
    \begin{equation*}
        E_2(D)(E \circ F) = (E \circ F) \cdot (E_2(D)F + E_2(F'))
    \end{equation*}
    \begin{center}

\tikzset{every picture/.style={line width=0.75pt}} 

        
    \end{center}
\end{exa}

\begin{exa}
    Consider the Hammond differential operator $T^n(D)$. Naturally, one has the relation $T^n(D) = D^n$, from where
    \begin{equation*}
        T^n(D)F(X) = D^n F(X) = \frac{d^n F(X)}{dX^n}
    \end{equation*}
\end{exa}

\begin{exa}
    A general combinatorial differential operator $\Omega(X,D)$ is called self-adjoint when $\Omega(X,T) = \Omega(T,X)$. For example, given any 1-sort species $\Phi(X)$, the operator $\Phi(X+D)$ is self-adjoint. One important kind of self-adjoint operators is a generalization of the concept of pointing. As the reader may recall, pointing is the operation given by the operator $XD$, or also $T(XD)$. One can then generalize this idea to provide the concept of $\Phi$-pointing, that is, an application of the operator $\Phi(XD)$, where $\Phi$ is a 1-sort species. The composition of pointing operations is given by the kiss product $\Phi(XD) \odot \Psi(XD) = (\Phi \dot{\times} \Psi)(XD)$, which is defined below.
\end{exa}

\begin{defi}
    Let $F(X), G(X)$ be two 1-sort species. Then the kiss product of $F$ and $G$ is given by
    \begin{equation*}
        (F \dot{\times} G)(X) = F(X+XD)G(X) = G(X+XD)F(X).
    \end{equation*}
    (See the pictures below for a justification of the last equality.)
    The two regular products $F \cdot G$ and $F \times G$ can be seen as the "empty" and "full" kiss products, and therefore
    \begin{equation*}
        F \cdot G \subset F \dot{\times} G \quad \text{and} \quad F \times G \subset F \dot{\times} G.
    \end{equation*}
    \begin{center}

\tikzset{every picture/.style={line width=0.75pt}} 


    \end{center}
\end{defi}

Another example of pointing is the operator $T^2(XD) = (XD)^2 = X^2D^2$, which corresponds to pointing two distinct elements of the $F$-structure. Be careful, as this is not equal to ${(XD) \odot (XD)}$, as the latter one is equal to $XD + X^2D^2$, for it includes the case of pointing the same element twice.

\begin{exa}
    Consider the Hammond operator given by $E(D)$. Then, as per the definition, ${E(D)F(X) = [E(T) \times_T F(X+T)]|_{T := 1} = F(X+1)}$, which gives us an equivalent to the shift operator in the sense of species of structures (recall Definition \ref{shift}). More so, since ${F(X) \subset F(X+1)}$, this allows us to define the finite difference operator in the sense of species of structures without needing to enter the realm of virtual species.
\end{exa}

\begin{defi}
    The finite difference operator $\dif$ admits an extension to species of structures through the definition 
    $\dif F(X) := E_+(D)F(X)$, where $E_+ = E - 1$ is the species of non-empty finite sets.
\end{defi}

\begin{exa}
    Let $L_n = X^n$ be the species of linear orders restricted to sets of cardinality $n$, and 
    set $G(X)=\dif L_n$.
    Since $Z_{L_n}(x_1, x_2, \ldots)=x_1^n$ and 
    \[ Z_{E_+}(t_1,t_2,t_3, \ldots) = \left[\sum_{n_1,n_2,\ldots}  \frac{t_1^{n_1}t_2^{n_2}t_3^{n_3}\ldots}{1^{n_1}n_1!2^{n_2}n_2!3^{n_3}n_3!\ldots} \right] - 1,\]
    it follows from Theorem~\ref{714} that 
    \[Z_G(x_1,x_2,x_3, \ldots) =\left[\sum_{n_1,n_2,\ldots}  \frac{\left(\frac{\del}{\del x_1}\right)^{n_1}\left(\frac{\del}{\del x_2}\right)^{n_2} \ldots}{n_1!n_2! \ldots}  -1 \right] x_1^n =\sum_{k=0}^{n-1} \binom{n}{k}x^{k}=(x_1+1)^n - x_1^n, \]
    and thus 
    \[G(x)=(x+1)^n - x^n=\sum_{k=0}^{n-1} k!\binom{n}{k}\frac{x^{k}}{k!}.\]
 \end{exa}

The extension of the finite difference operator to the realm of species opens the possibility of a theory of general combinatorial difference calculus, that is, the study of operators of the form $\Omega(X,\dif)$, where $\Omega(X,T)$ is a 2-sort species as usual.

\nocite{streicher}
\nocite{joyal2}
\nocite{garcia}
\printbibliography

\end{document}